\renewcommand{\gg}{\gamma}
\newcommand{\bR}{{\bf P}}
\newcommand{\rest}{\restriction}
\newcommand{\la}{\langle}
\newcommand{\ra}{\rangle}
\newcommand{\card}[1]{{\vert #1 \vert} }
\newcommand{\forces}{\Vdash}
\renewcommand{\models}{\vDash}
\newcommand{\powerset}{{\cal P}}
\newcommand{\cp}{{\rm cp }}
\newcommand{\cf}{{\rm cf}}
\newcommand{\K}{{ k:V \to V }}
\newtheorem{theorem}{Theorem}[section]
\newtheorem{proposition}[theorem]{Proposition}
\newtheorem{definition}[theorem]{Definition}
\newtheorem{lemma}[theorem]{Lemma}
\newtheorem{corollary}[theorem]{Corollary}
\newtheorem{conjecture}[theorem]{Conjecture}
\numberwithin{figure}{section}
\newenvironment{proof}{{\it{
Proof.}}}{\nopagebreak\mbox{}{\hfill$\square$}
\par\bigskip}
\newcommand{\rthm}[1]{Theorem~\ref{#1}}
\newcommand{\rlem}[1]{Lemma~\ref{#1}}
\newcommand{\rcor}[1]{Corollary~\ref{#1}}
\newcommand{\rdef}[1]{Definition~\ref{#1}}
\newcommand{\rsec}[1]{Section~\ref{#1}}
\def\k{\kappa}
\def\a{\alpha}
\def\b{\beta}
\def\d{\delta}
\def\l{\lambda}
\def\P{{\mathcal{P} }}
\def\W{{\mathcal{W} }}
\def\Q{{\mathcal{ Q}}}
\def\K{{\mathcal{ K}}}
\def\R{{\mathcal R}}
\def\H{{\rm{HOD}}}
\def\M{{\mathcal{M}}}
\def\N{{\mathcal{N}}}
\def\T {{\mathcal{T}}}
\def\U{{\mathcal{U}}}
\def\S{{\mathcal{S}}}
\def\VT{{\vec{\mathcal{T}}}}
\def\VU{{\vec{\mathcal{U}}}}
\def\VS{{\vec{\mathcal{S}}}}
\def\VW{{\vec{\mathcal{W}}}}
\def\bR{{\mathbb{R}}}
\def\cp #1{{ crit  #1 }}
\def\card#1{\left|#1\right|}
\def\iff{\mathrel{\leftrightarrow}}
\def\and{\mathrel{\kern1pt\&\kern1pt}}
\def\elesub{\prec}
\def\inseg{\vartriangleleft}
\def\insegeq{\trianglelefteq}
\def\<#1>{\langle\,#1\,rnggle}
\title{The core model induction beyond $L(\mathbb{R})$: Non-tame mouse from PFA \thanks{2000 Mathematics Subject Classifications:
03E15, 03E45, 03E60.}
\thanks{Keywords: Mouse, inner model theory, descriptive set theory, hod mouse.}}
\author{Grigor Sargsyan\thanks{This material is partially based upon work supported by the National Science Foundation under Grant No DMS-0902628. Part of this paper was written while the author was a Leibniz Fellow at the Mathematisches Forschungsinstitut Oberwolfach.} \\
        Department of Mathematics\\
        Rutgers University, \\
        New Brunswick, NJ, 08854 USA\\
        http://math.rutgers.edu/$\sim$gs481\\
        grigor@math.rutgers.edu}
\date{\today}
\begin{document}

\maketitle

\begin{abstract}
Building on the work of Schimmerling (\cite{Sch}) and Steel (\cite{PFA}), we show that the failure of square principle at a singular strong limit cardinal implies that there is a non-tame mouse. This is the first step towards getting a model of $AD_{\mathbb{R}}+``\Theta$ is regular" from PFA via the \textit{core model induction}. 
\end{abstract}

\baselineskip=19pt

One of the wholly grails of inner model program has been determining the exact consistency strength of forcing axioms such as \textit{Proper Forcing Axiom} (PFA) and \textit{Martin's Maximum} (MM). As the consistency of PFA, MM and other similar axioms have been established relative to one supercompact cardinal, it is natural to conjecture that the exact consistency strength of such forcing axioms is one supercompact cardinal. 

\begin{conjecture}[The PFA Conjecture]\label{pfa conjecture} PFA is equiconsistent with one supercompact cardinal. 
\end{conjecture}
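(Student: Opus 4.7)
The plan is to establish the equiconsistency in two directions. The upper bound is classical: given a supercompact cardinal $\kappa$, Baumgartner's countable support iteration of length $\kappa$, with a Laver-diamond bookkeeping that runs through all proper forcings of size less than $\kappa$, produces a generic extension in which PFA holds and $\kappa$ becomes $\omega_2$. The hard and genuinely open direction is the lower bound: from PFA alone, build an inner model with a supercompact cardinal.

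For the lower bound, I would extend the core model induction framework of Steel (\cite{PFA}), pushed in the present paper to non-tame mice, through a long sequence of further stages. At each stage, the strategy is to use combinatorial consequences of PFA -- failure of square at singulars (Todorcevic), stationary reflection at $\omega_2$, the tree property, saturated ideals -- to force the failure of weak covering for the current candidate core model, thereby producing the next mouse operator and a strictly richer model of determinacy. One would successively capture: models of $AD_{\mathbb R}$, models of $AD_{\mathbb R}+$``$\Theta$ is regular'', LSA hod mice, hod mice at the level of a Woodin limit of Woodins, at the level of superstrongs, and eventually at the level of supercompacts, with each stage yielding a fully iterable hod pair witnessing the next large cardinal strength in the HOD of the corresponding AD model. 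The final step would invoke a Woodin-style derived model theorem for the terminal hod mouse to extract a genuine inner model with a supercompact.

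The main obstacle, and the reason the conjecture remains central and open, is that the fine structural and iterability theory of hod mice is currently developed only up to a bounded level, far below supercompact. To carry the induction all the way one needs: a satisfactory theory of long-extender mice resolving the known iterability problems at and above superstrong (closely tied to Woodin's Ultimate-$L$ program); the correct formulation of a ``supercompact mouse'' together with its comparison theory; and a hod pair capturing framework at the level of supercompactness. The present paper contributes one rung of this ladder, isolating the step from tame to non-tame, and it suggests that the general pattern -- PFA yields a combinatorial obstruction, which yields the next mouse, which yields the next AD-level model -- should persist, provided the underlying structural theory can be pushed forward. Isolating and executing each subsequent step is where the real work lies.
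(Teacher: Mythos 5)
The statement you are addressing is stated in the paper as a \emph{conjecture}, not a theorem; the paper offers no proof of it and explicitly presents it as one of the open motivating problems of the inner model program. Your text does not close this gap: what you have written is a research program, not a proof. The forward direction (consistency of PFA from a supercompact via Baumgartner's iteration with a Laver function) is indeed classical and fine. But the reverse direction is precisely the open content of the conjecture, and your own account concedes the decisive point --- that the fine structural and iterability theory of hod mice and long-extender mice needed to carry the core model induction to the level of a supercompact cardinal does not currently exist. A proof cannot be conditional on ``provided the underlying structural theory can be pushed forward''; each of the steps you list (LSA hod mice, hod mice at a Woodin limit of Woodins, at superstrongs, a comparison theory for ``supercompact mice,'' a derived model theorem extracting a supercompact from a terminal hod mouse) is itself an unsolved problem, and several require resolving the iterability issues at and above superstrong that are the subject of Woodin's Ultimate-$L$ program.

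For calibration against what the paper actually does: the paper proves only a very early rung of this ladder, namely that $\neg\square_\kappa$ at a singular strong limit cardinal (a consequence of PFA by Todorcevic's theorem) yields a non-tame mouse, via constructing an inner model of $AD^+ + \Theta = \theta_1$ and applying Woodin's Theorem~\ref{equiconsistency}. The author also announces, without proof here, the stronger Theorem~\ref{full theorem} reaching $AD_{\mathbb{R}}+{}$``$\Theta$ is regular,'' which by the Sargsyan--Zhu result is still far below a supercompact (it is equiconsistent with a hypothesis weaker than a Woodin limit of Woodin cardinals). So the honest statement of your contribution is a correct high-level description of the intended strategy for attacking the conjecture, together with an accurate identification of why it is not yet a proof. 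You should present it as such rather than as a proof proposal.
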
 

Recently, in \cite{VW}, Viale and Weiss showed that any of the known methods for forcing PFA requires a strongly compact cardinal and if the forcing used is proper then it requires a supercompact cardinal. This result suggest that The PFA Conjecture must indeed be true.  

Many constructions in inner model theory have been motivated by The PFA Conjecture or by its sister conjectures. The \textit{core model induction}, a method for establishing reversals pioneered by Woodin and further developed by Schimmerling, Schindler, Steel and others, has been a very successful method for attacking  The PFA Conjecture. The known partial results, however, do not use the full force of PFA, but only a consequence of it, namely that $\square$ principle fails at some singular strong limit cardinal. Letting $Lim$ be the set of limit ordinals, recall that $\square_\kappa$ is the statement: there is a sequence $\la C_\a: \a\in Lim\cap \kappa^+\ra$ such that
\begin{enumerate}
\item $C_\a\subseteq \a$ is a club subset of $\a$,
\item if $\a\in C_\b\cap Lim$ then $C_\a=C_\b\cap \a$,
\item $o.t.(C_\a)\leq \k$.
\end{enumerate}

\begin{theorem}[Todorcevic, \cite{Jech}] PFA implies $\neg \square_\kappa$ for all $\k\geq \omega_1$. 
\end{theorem}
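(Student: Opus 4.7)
The plan is to derive a contradiction by assuming $\square_\k$ for some uncountable $\k$, fixing a witnessing sequence $\vec C = \la C_\a : \a \in Lim\cap \k^+ \ra$, and then using PFA to manufacture an object that violates its coherence. First I would pick $\d < \k^+$ of cofinality $\omega_1$ on which to concentrate, and introduce a forcing $P = P_{\vec C,\d}$ whose conditions are countable closed bounded subsets $p \of \d$, ordered by end-extension, but constrained by an explicit ``anti-coherence'' clause: for instance, that consecutive points of $p$ are always chosen to evade $C_\gamma$ for the relevant $\gamma$, so that a fully generic club $E = \bigcup G \of \d$ of order type $\omega_1$ cannot lie coherently inside $\vec C$.

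The central step is to verify that $P$ is proper. Given a countable elementary submodel $N \prec H_\t$ containing $\vec C, \d, P$ and a condition $p \in P \cap N$, I would build an $(N,P)$-generic extension of $p$ by a fusion construction along a cofinal $\omega$-sequence in $\sup(N\cap \d)$, meeting successively all dense open subsets of $P$ listed inside $N$. Here the coherence property $C_\a = C_\b \cap \a$ whenever $\a \in C_\b \cap Lim$ is what allows one to identify the limit of the fusion as a legitimate condition, while the bound $o.t.(C_\a) \leq \k$ controls the complexity of the intermediate data and keeps it in $N$. The argument is in the spirit of Todorcevic's minimal walks through $\vec C$, which provide the canonical objects guiding the fusion.

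Finally, applying PFA to $P$ together with the $\omega_1$ natural dense sets (demanding arbitrarily large countable order types of conditions and cofinality in $\d$) produces a filter whose union is a club $E \of \d$ in $V$ of order type $\omega_1$; from the interaction between $E$ and $\vec C$ one then reads off a direct contradiction to the coherence clauses $(1)$--$(2)$ of $\square_\k$. I expect the main obstacle to be the properness argument described in the middle paragraph: any naive recipe for the conditions of $P$ either fails to be proper or produces a generic too feeble to refute $\square_\k$, and calibrating the two constraints requires the delicate use of minimal walks (or an analogous technology extracted from $\vec C$) to choose, inside each countable elementary submodel $N$, a canonical cofinal $\omega$-chain along which the fusion can be successfully built.
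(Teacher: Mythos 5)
This theorem is quoted in the paper without proof (it is Todorcevic's result, cited via \cite{Jech}), so there is no in-paper argument to measure yours against; judged on its own, your proposal has a genuine gap at both places where the mathematical content would have to live. First, the forcing is never actually defined: ``consecutive points of $p$ are chosen to evade $C_\gamma$ for the relevant $\gamma$'' is not a workable condition, and everything downstream (properness, what the fusion limit is, what the generic object is) depends on that missing definition. Second, and more fundamentally, the target object cannot produce a contradiction in the form you describe. Since $\cf(\delta)=\omega_1$, a club $E\subseteq\delta$ of order type $\omega_1$ already exists in $V$, so PFA must be purchasing some additional property of $E$ tied to $\vec{C}$; but the property you propose is purely negative (``$E$ evades the $C_\gamma$'s''), and clauses (1)--(3) of $\square_\kappa$ constrain only the sets $C_\alpha$ themselves, not arbitrary clubs, so no amount of evasion contradicts them. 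A genuine contradiction must exploit coherence positively --- for instance by arranging, via the generic filter together with a pressing-down or counting argument, that some single $C_\delta$ both contains and omits a stationary-in-$\delta$ piece of $E$, or that a thread arises whose order type is incompatible with $o.t.(C_\alpha)\leq\kappa$ --- and that derivation is exactly what your final paragraph leaves blank.

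The properness discussion has the same problem in a different guise. If the conditions really are countable closed bounded sets ordered by end-extension subject only to a closure-plus-evasion constraint, the poset is $\sigma$-closed and properness is trivial, but then (as you yourself worry) the generic is too feeble to refute anything. The constraints that would make the generic strong enough are precisely those that destroy $\sigma$-closure, and restoring properness then requires the actual technology of Todorcevic's argument (the $\rho$-functions from minimal walks, or elementary-submodel side conditions) deployed in a specific way, not merely invoked by name. Naming the tension between ``proper'' and ``strong enough to kill $\square_\kappa$'' is not the same as resolving it; as written, this is a plan for a proof rather than a proof.
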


Building on an earlier work of Schimmerling and Woodin, Steel, in \cite{PFA}, established the following theorem.

\begin{theorem}[Steel, \cite{PFA}]\label{steel} Assume $\neg\square_\kappa$ for some singular strong limit cardinal $\k$. Then $AD$ holds in $L(\mathbb{R})$. In particular, there is an inner model with infinitely many Woodin cardinals. 
\end{theorem}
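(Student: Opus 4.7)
The plan is to run a \emph{core model induction} at the level of $L(\mathbb{R})$, the method developed by Woodin and refined by Schimmerling, Schindler and Steel. Organized at its simplest as an induction on $n<\omega$, the goal at step $n$ is to show that on every real $x$ the minimal active sound mouse $M_n^{\#}(x)$ with $n$ Woodin cardinals exists and is $\omega_1$--iterable. Since $\Pi^1_{n+1}$--determinacy is equivalent to the totality of $M_n^{\#}$ on the reals (Martin--Steel, Woodin), the induction delivers projective determinacy, and its transfinite continuation through the Wadge hierarchy of $L(\mathbb{R})$ will deliver $AD^{L(\mathbb{R})}$.

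The successor step from $n$ to $n+1$ goes by contradiction. Assume $M_n^{\#}$ is total and $\omega_1$--iterable but $M_{n+1}^{\#}$ fails to exist on some real. Build the $K^c$ construction relativized to $M_n^{\#}$ inside an appropriate hull; the absence of $M_{n+1}^{\#}$ guarantees that no Woodin cardinal appears in the output, so Steel's core model theory yields a fully iterable core model $K = K(M_n^{\#})$. Two facts about such a $K$ then collide with the hypothesis:
\begin{enumerate}
\item[(a)] (Schimmerling--Zeman) In $K$, the principle $\square_\lambda$ holds at every uncountable $\lambda$, witnessed by a sequence definable over $K$.
\item[(b)] (Weak covering) If $\lambda$ is a singular strong limit cardinal of $V$, then $\lambda^{+K} = \lambda^+$.
\end{enumerate}
Applied at the singular strong limit $\kappa$ supplied by the hypothesis, (a) produces a $\square_\kappa$ sequence in $K$ living at $\kappa^{+K}$, and (b) identifies $\kappa^{+K}$ with $\kappa^+$. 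Since closure, unboundedness, coherence and the order-type bound are all absolute between $K$ and $V$, the same sequence witnesses $\square_\kappa$ in $V$, contradicting $\neg\square_\kappa$. Therefore $M_{n+1}^{\#}$ exists and is iterable, completing the inductive step. Iterating through every $n < \omega$ and amalgamating the resulting premice yields the claimed inner model with infinitely many Woodin cardinals.

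To strengthen the conclusion from projective determinacy to $AD^{L(\mathbb{R})}$, the same dichotomy is run at transfinite Wadge levels inside $L(\mathbb{R})$: at each gap in the scales hierarchy one identifies the next mouse operator -- essentially, the minimal operator whose derived model computes the $\Sigma_1$--theory just beyond the current pointclass -- constructs the associated relativized core model, and again derives the contradiction from Schimmerling--Zeman $\square$ plus weak covering. The main obstacle lies in the limit-of-limits passages, where the relevant operator is not of the form ``one more Woodin'' but is built from a pointclass $\Gamma$ stabilized by a lim-inf argument. There one must verify iterability of hybrid (strategy) mice and weak covering for the associated $K$ in a setting where only a fragment of determinacy is available; this is the technical heart of Steel's argument in \cite{PFA}. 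Once these passages are handled, the $\square_\kappa$--contradiction bootstraps through every Wadge level of $L(\mathbb{R})$, and $AD^{L(\mathbb{R})}$ follows.
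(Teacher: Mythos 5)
This theorem is quoted from Steel's \cite{PFA} and the paper gives no proof of it, but your outline is essentially the argument of the cited reference: a core model induction whose inductive step derives a contradiction from the Schimmerling--Zeman $\square$ theorem in the relativized core model $K$ together with weak covering ($\kappa^{+K}=\kappa^+$ at the singular strong limit $\kappa$), and whose transfinite continuation through the scales/gap analysis of $L(\mathbb{R})$ yields $AD^{L(\mathbb{R})}$. One small imprecision: merely ``amalgamating'' the $M_n^{\#}$'s does not produce a single inner model with infinitely many Woodin cardinals --- one needs one further application of the same $K$-dichotomy (relativized to the join of the $M_n^{\#}$ operators) to obtain $M_\omega^{\#}$ --- but this is subsumed by the transfinite continuation you describe.
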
 

The failure of the square principle is an anti inner model hypothesis as it usually implies $V$ isn't an inner model. For instance, Jensen showed that $L\models \forall \k\geq \omega_1\square_\k$. More generally, Schimmerling and Zeman showed that if $\M$ is a mouse then for $\k\geq \omega_1^\M$, $\M\models \square_\k$ iff $\M\models ``\kappa$ isn't subcompact" (see \cite{SchZem}). Subcompactness is a large cardinal notion reminiscent of supercompactness. It is much stronger than Woodin cardinals but it is weaker than supercompact cardinals. Unlike supercompactness, subcompact cardinals can exist in \textit{short extender mice}. The theory of such mice has been fully developed (see \cite{FSIT} or \cite{OIMT}). \rthm{SchZem} is the exact consequence of the aforementioned Schimmerling-Zeman result that we will need in this paper. 

Arranging the failure of $\square_\k$ for regular cardinal $\k$ is relatively easy. In fact, Velickovic showed that if $\kappa$ is a regular cardinal and $\l>\k$ is a Mahlo cardinal then after Levy collapsing $\l$ to $\k^+$, $\square_\k$ fails (see Theorem 1.3 of \cite{Sch}).  Arranging the failure of $\square_\k$ for a singular cardinal $\k$ is much harder and the known constructions use a subcompact cardinal. Reversing this result has been another wholly grail of the inner model program.

\begin{conjecture}
Suppose $\k$ is a singular cardinal such that $\neg\square_\k$ holds. Then there is an inner model with a subcompact cardinal. 
\end{conjecture}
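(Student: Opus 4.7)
The plan is to extend the core model induction past the level at which \rthm{steel} and the present paper stop, eventually reaching an inner model with a subcompact cardinal. The underlying principle is the Schimmerling--Zeman dichotomy: in a short extender mouse $\M$, $\M \models \square_\kappa$ iff $\M \models ``\kappa$ is not subcompact''. So if one can carry a $K^c$-style construction up to a mouse $\M$ that captures $\kappa$ (with $\M$ computing $\kappa^+$ correctly, or at least satisfying weak covering), then the hypothesis $\neg\square_\kappa$ transferred into $\M$ forces $\M \models ``\kappa$ is subcompact''. The induction proceeds level by level: at each stage one assumes the non-existence of an inner model with a subcompact cardinal and derives, in $V$, progressively richer models of determinacy, realized as derived models of hod premice.

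First I would pick up where this paper leaves off. Having obtained a non-tame mouse under $\neg\square_\kappa$, the next step is to cross the gap into $AD_\mathbb{R}$ and then into $AD_\mathbb{R} + ``\Theta$ is regular'', which is the goal the abstract identifies. After that, one must push through the hod mouse hierarchy corresponding to long games, $\Theta$-Woodin hypotheses, and eventually into the large cardinal region where strongly compact and subcompact cardinals first appear inside hod mice. At each step the inductive hypothesis is that the maximal determinacy theory established so far arises as the derived model of a hod pair $(\mathcal{P}, \Sigma)$ whose strategy $\Sigma$ has branch condensation, and whose background universe is provided by a $K^c$-construction in $V$. The use of $\neg\square_\kappa$ at a singular strong limit $\kappa$, following Steel's method from \cite{PFA}, is precisely what guarantees that enough background certificates of the right shape exist, so that the next $K^c$-construction both converges and delivers weak covering at $\kappa$.

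The hard part will be threefold. First, the short extender hod mouse theory must be extended into the long extender realm, which is where subcompactness naturally lives; branch condensation, the comparison of hod pairs, and the analysis of hod strategies all become significantly more delicate once long extenders appear, and current iterability arguments lose their grip. Second, the Schimmerling--Zeman characterization (or a suitable substitute) must be verified for mice containing long extenders at and above the subcompact level, since the fine structural condensation used in their proof is not automatic in that setting. Third, and most serious, one must bridge the gap between a hod pair living in $V$ and an actual short extender inner model with a subcompact cardinal: the hod pair yields a derived model of very strong determinacy, but reading off a genuine subcompact inner model from this requires a $\H$-analysis that is currently complete only below $AD_\mathbb{R} + ``\Theta$ is regular''. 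The present paper should be viewed as the first genuine step in building up the machinery needed to cross this last bridge.
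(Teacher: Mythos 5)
This statement is an open conjecture in the paper, not a theorem: the author explicitly calls the reversal ``another wholly grail of the inner model program'' and offers no proof of it. What the paper actually proves is far weaker --- the Main Theorem produces a non-tame mouse from $\neg\square_\kappa$ at a singular \emph{strong limit} cardinal, explicitly billed as only ``the first step'' of a core model induction whose later stages have not been carried out. So there is no proof in the paper for your proposal to be compared against, and your proposal cannot be accepted as a proof either.

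The gap is that your argument is a program, not a proof, and you have identified the missing pieces yourself without filling any of them. Concretely: (i) the hod mouse theory with branch condensation and comparison that the induction relies on is developed only in the short extender realm well below subcompactness, and you give no argument that it extends to the long extender region where subcompact cardinals live; (ii) the Schimmerling--Zeman equivalence $\M\models\square_\kappa \iff \M\models``\kappa$ is not subcompact'' is proved for short extender mice, and you do not establish (or cite) a version valid for whatever models your extended induction would produce; (iii) the passage from a hod pair realizing a strong determinacy theory to an actual inner model with a subcompact cardinal requires a $\H$-analysis that, as you note, is complete only up to roughly $AD_{\mathbb{R}}+``\Theta$ is regular''. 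Each of these is an open problem, and asserting that the induction ``eventually'' reaches subcompactness presupposes their solutions. A further, smaller issue: the conjecture as stated assumes only that $\kappa$ is singular, whereas every covering and $Lp$-closure argument in the paper (and in Steel's \cite{PFA}) uses that $\kappa$ is a singular \emph{strong limit}; your sketch silently imports that extra hypothesis.
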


The author recently generalized Steel's proof of \rthm{steel} and obtained a model containing the reals and ordinals and satisfying $AD_{\mathbb{R}}+``\Theta$ is regular". Recall that AD is the Axiom of Determinacy which states that all two player games of perfect information on integers are determined, i.e., one of the players has a winning strategy. $AD_{\mathbb{R}}$ is the axioms which states that all two player games of perfect information on reals are determined. We let
\begin{center}
$\Theta=\sup \{ \a :$ there is a surjection of $\mathbb{R}$ onto $\a\}$.
\end{center}
The axiom $AD_{\mathbb{R}}+``\Theta$ is regular" is a natural closure point of the \textit{Solovay hierarchy} which is a determinacy hierarchy whose base theory is $AD^+$. In the subsequent sections, we will give a more precise definitions of $AD^+$ and the Solovay hierarchy. 

\begin{theorem}\label{full theorem} Suppose $\neg \square_\k$ holds for some singular strong limit cardinal $\k$. Then there is $M$ such that $Ord, \bR\subseteq M$ and $M\models AD_{\bR}+``\Theta$ is regular".
\end{theorem}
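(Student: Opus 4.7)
The plan is to establish the theorem by running a core model induction through the Solovay hierarchy, using the hypothesis $\neg\square_\kappa$ to prevent the induction from terminating prematurely. \rthm{steel} supplies the starting point: from $\neg\square_\kappa$ at a singular strong limit $\kappa$ we already obtain $AD^{L(\mathbb{R})}$, which provides the pointclass of all sets of reals Suslin-captured by ordinary short-extender mice and thereby initiates the inductive process.

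Beyond $L(\mathbb{R})$ the correct fine-structural objects are \emph{hod mice}, whose iteration strategies encode the HOD of a given model of $AD^+$. I would inductively construct, at each stage $\alpha$ of the Solovay sequence, a hod pair $(\mathcal{P}_\alpha,\Sigma_\alpha)$ whose derived model realizes the fragment of $AD^+$ of Wadge rank $\alpha$. Successor stages are handled by a backgrounded $K^c$-style construction inside a suitably chosen extender model (for example, a symmetric collapse extension over a sufficiently large $H_\theta$), read off with respect to the pointclass available at stage $\alpha$; limit stages are handled by taking appropriate direct limits of the hod pairs and iteration strategies already built.

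To see that the induction never stops short of $AD_{\mathbb{R}}+$``$\Theta$ is regular'', suppose for contradiction that it terminates at some stage $\alpha<\Theta$. Then the $K^c$-construction at that stage produces a fine-structural (possibly hod) mouse $\mathcal{M}$ in which no Woodin cardinal of the required type emerges. Adapting standard covering arguments to the hod mouse setting, such an $\mathcal{M}$ correctly computes $\square_\gamma$ for every $\gamma \geq \omega_1^{\mathcal{M}}$ that fails to be subcompact in $\mathcal{M}$. Applying the Schimmerling-Zeman result from \cite{SchZem} to $\gamma=\kappa$ would then yield $\square_\kappa$ in $V$, contradicting the hypothesis, so the induction must in fact continue through every stage of the Solovay sequence.

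The main obstacle will lie at the first level at which one must pass from tame mice to \emph{non-tame} mice, that is, mice possessing extenders whose critical points sit above the least Woodin cardinal on the sequence. Producing such a mouse from $\neg\square_\kappa$, and verifying that its canonical iteration strategy satisfies the hull and branch condensation properties required for the induction to proceed uniformly, is the technical core of the whole program; this is precisely the step carried out in the main construction of the present paper. Once the non-tame barrier is crossed, the remaining successor and limit stages of the Solovay hierarchy are treated uniformly, and the union $M$ of the derived models produced along the way yields the desired inner model of $AD_{\mathbb{R}}+$``$\Theta$ is regular''.
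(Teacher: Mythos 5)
You should be aware that the paper does not actually prove \rthm{full theorem}: the author states explicitly that the core model induction at the level of $AD_{\mathbb{R}}+$``$\Theta$ is regular'' relies on machinery that ``hasn't yet appeared in print,'' and that only the first step of the induction --- \rthm{main theorem}, the production of a non-tame mouse --- is carried out here, with the full proof deferred to future publications. Your proposal has exactly the same shape: it defers the ``technical core'' to the present paper's main construction and then asserts that the remaining successor and limit stages of the Solovay hierarchy ``are treated uniformly.'' That is a description of a research program, not a proof. The genuinely hard content --- constructing hod pairs with branch condensation at each $\theta_\alpha$, verifying fullness preservation, getting past limit stages of the Solovay sequence, and proving the relevant comparison and uniqueness theory for the hod mice involved --- is precisely what is missing both from your sketch and (by the author's own admission) from the literature at the time of writing, so none of it can be waved through as routine.

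Two concrete points beyond that. First, your termination argument misuses the Schimmerling--Zeman theorem. \rthm{SchZem} as invoked in this paper concerns the ordinary lower-part closure $Lp(A)$ for $A\subseteq V_\kappa$, and the way $\neg\square_\kappa$ actually enters the argument (see the Remark following \rthm{main theorem} and \rthm{summary}) is to guarantee that $o(Lp(A))<\kappa^+$ (failure of lower-part covering) and to supply the determinacy facts \rthm{strategies are determined} and \rthm{determinacy in the max model}; it is not applied inside a terminal hod mouse to conclude that $\square_\kappa$ holds in $V$. Hod mice carry strategy predicates, and establishing $\square$ principles in them is itself a substantial fine-structural project, not an adaptation of ``standard covering arguments.'' Second, your characterization of non-tameness is backwards: a non-tame mouse has an extender on its sequence that \emph{overlaps} a local Woodin cardinal, i.e., whose critical point lies at or below some $\delta$ that is Woodin in the model at the index of the extender --- not an extender whose critical point sits above the least Woodin.
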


\begin{corollary} Assume PFA. Then there is $M$ such that $Ord, \bR\subseteq M$ and $M\models AD_{\bR}+``\Theta$ is regular". 
\end{corollary}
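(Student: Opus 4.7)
The plan is to establish \rthm{full theorem} by the \emph{core model induction}, extending Steel's \rthm{steel} step-by-step up the Solovay hierarchy of $AD^+$ models until one reaches the level $AD_{\bR}+``\Theta$ is regular''. The objects carrying the induction are \emph{hod pairs} $(\P,\Sigma)$, each consisting of a hod mouse $\P$ together with a sufficiently condensing iteration strategy $\Sigma$; direct limits of iterates of such pairs realize $\H$ in the successive $AD^+$-models. The hypothesis $\neg\square_\k$ is used to prevent the induction from halting: at any putative halting stage one builds, relative to the current maximal hod pair $(\P,\Sigma)$, a relativized core model $\K(\P,\Sigma)$ in $V$ computing $(\k^+)^V$ correctly (weak covering), and then the Schimmerling--Zeman theorem applied inside $\K(\P,\Sigma)$ either (a) locates a subcompact cardinal in $\K(\P,\Sigma)$, from which one extracts a strategy mouse strictly extending $\P$ and so a strictly larger $AD^+$-model, contradicting maximality, or (b) yields a $\square_\k$-sequence in $\K(\P,\Sigma)$, which via weak covering lifts to a $\square_\k$-sequence in $V$, contradicting the hypothesis. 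Either horn therefore forces the induction past the putative halting point.

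To organize the argument I would fix, at each stage, the pointclass $\Gamma$ of the largest $AD^+$-model so far produced, choose a $\Gamma$-maximal hod pair $(\P,\Sigma)$, and then develop the fine structure of $\K(\P,\Sigma)$ in $V$ by using $\Sigma$ as an additional predicate in a $K^c$-style construction. Three technical pillars must be verified to make this go through: universality of $\K(\P,\Sigma)$ above $\k$, weak covering at $\k^+$, and the branch-condensation and hull-condensation properties of $\Sigma$ needed for the first two. Once these are in hand, the Schimmerling--Zeman dichotomy above promotes $(\P,\Sigma)$ to a strictly larger hod pair $(\P',\Sigma')$, from which one reads off the next pointclass $\Gamma'$; iterating through all stages of the Solovay hierarchy --- and checking that no cofinal obstruction appears at the limit levels through $AD_{\bR}$ and ``$\Theta$ is regular'' --- yields the required $M$.

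The principal obstacle, and the main contribution of the present paper, is the \emph{first non-tame step} of this induction: the first stage at which the relevant mice carry extenders overlapping Woodin cardinals. Below this threshold the required hod-pair analysis is essentially Steel's, but once long extenders appear the short-extender fine structure of \cite{FSIT,OIMT} no longer suffices, and one must both construct the first non-tame mouse and develop enough of its iterability, comparability, and condensation theory to feed into the induction template. The paper supplies exactly this non-tame step under $\neg\square_\k$; the remaining stages, while considerably more elaborate, proceed by iterating the same template over progressively larger hod-pair hierarchies.
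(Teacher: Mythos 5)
Your proposal follows the paper's route: the corollary is immediate from \rthm{full theorem} together with Todorcevic's theorem that PFA implies $\neg\square_\kappa$ for all $\kappa\geq\omega_1$ (applied at any singular strong limit cardinal, e.g.\ $\beth_\omega$), and your outline of the core model induction behind \rthm{full theorem} matches the strategy the paper describes while explicitly deferring its full execution to future publications, this paper supplying only the first non-tame step. The one step worth stating explicitly is the appeal to Todorcevic to pass from PFA to $\neg\square_\kappa$ at a singular strong limit cardinal, since that is the only PFA-specific content of the corollary.
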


In recent years, the Solovay hierarchy has been used as intermediary for establishing reversals. Using the core model induction, working under $PFA$ or any other theory in question, we construct a model satisfying some theory from the Solovay hierarchy. The next step, then, is to reduce the resulting theory from the Solovay hierarchy to one in large cardinal hierarchy. \rthm{full theorem} is an example of the first step of this process. Recently the author and Yizheng Zhu showed that $AD_{\mathbb{R}}+``\Theta$ is regular" is equiconsistent with $\Theta$-regular hypothesis, which is weaker than a Woodin limit of Woodin cardinals but much stronger than a proper class of Woodin cardinals and strong cardinals. For the details see \cite{BSL} or \cite{SargZhu}. We then get the following corollary.

\begin{corollary} Suppose PFA holds or just that $\neg \square_\k$ holds for some singular strong limit cardinal $\k$. Then there is an inner model satisfying ZFC+$\Theta$-regular hypothesis. In particular, there is an inner model with a proper class of Woodin cardinals and strong cardinals. 
\end{corollary}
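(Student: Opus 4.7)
The plan is to derive the corollary by concatenating \rthm{full theorem} with the Sargsyan--Zhu equiconsistency cited in the paragraph immediately preceding the statement; essentially no new mathematics is required, only a short chain of implications.

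First I would reduce the PFA alternative to the square alternative. By Todorcevic's theorem (recorded in the excerpt), PFA implies $\neg\square_\kappa$ for every $\kappa\geq\omega_1$, and in particular for $\kappa=\beth_\omega$, which is always a singular strong limit cardinal. Hence in either case the hypothesis of \rthm{full theorem} is satisfied, and it suffices to derive the conclusion from $\neg\square_\kappa$ at some singular strong limit $\kappa$.

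Applying \rthm{full theorem}, I obtain a class model $M$ with $Ord,\bR\subseteq M$ and $M\models AD_{\bR}+``\Theta$ is regular". Into this I would feed the Sargsyan--Zhu theorem (\cite{BSL,SargZhu}): from this determinacy theory one extracts, via the internal HOD analysis of $M$ (or equivalently a direct limit of hod mice carried out inside $M$), an inner model satisfying ZFC together with the $\Theta$-regular hypothesis. Since $Ord\subseteq M$, such an inner model of $M$ is automatically an inner model of $V$, which delivers the first conclusion of the corollary. The ``in particular'' clause then follows from the remark in the text preceding the statement that the $\Theta$-regular hypothesis strictly exceeds the existence of a proper class of Woodin cardinals and strong cardinals.

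The main obstacle lies not in this corollary but in \rthm{full theorem}, whose proof via the core model induction beyond $L(\bR)$ constitutes the body of the paper. The further passage from $AD_{\bR}+``\Theta$ is regular" to an actual inner model satisfying the $\Theta$-regular hypothesis is also highly non-trivial, but it is imported here as a black box from \cite{BSL,SargZhu}, so from the perspective of the present corollary the only genuine step is the invocation of \rthm{full theorem}.
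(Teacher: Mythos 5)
Your proposal is correct and matches the paper's intended argument: the paper offers no separate proof of this corollary, treating it as immediate from Theorem~\ref{full theorem} together with the cited Sargsyan--Zhu work, exactly as you do (including the routine reduction of the PFA case to $\neg\square_{\beth_\omega}$ via Todorcevic). Your observation that one needs the inner-model direction of the Sargsyan--Zhu analysis carried out inside $M$, rather than the bare consistency statement, is the right reading of how the paper uses that citation.
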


The proof of \rthm{full theorem} uses the core model induction at the level of $AD_{\mathbb{R}}+``\Theta$ is regular" and basic machinery at this level hasn't yet appeared in print. Because of this, it is unrealistic to hope to present an intelligible proof of \rthm{full theorem}. Instead, we will concentrate on the first step of the induction and will present the full proof of \rthm{full theorem} in future publications. The main theorem of this paper is the following.

\begin{theorem}[Main Theorem]\label{main theorem} Suppose $\neg \square_\k$ holds for some singular strong limit cardinal $\k$. Then there is a non-tame mouse. 
\end{theorem}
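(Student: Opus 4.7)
The plan is to argue by contradiction: assume there is no non-tame mouse in $V$, and combine this with the hypothesis $\neg\square_\k$ to derive a contradiction. The outline follows the Schimmerling--Woodin--Steel template one level up in the Solovay hierarchy: push the core model induction past $L(\bR)$ to obtain a fine-structural inner model in which $\k$ appears and is not subcompact, apply the Schimmerling--Zeman theorem to conclude $\square_\k$ holds there, and transfer this upward to $V$ by weak covering.

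First, \rthm{steel} provides the base case: $\A$ holds in $L(\bR)$. Under the hypothesis that no non-tame mouse exists, every $K^c$-construction carried out downstream produces only tame premice, so every hod pair constructed during the induction has a tame underlying premouse. Now run the core model induction through the initial levels of the Solovay hierarchy above $L(\bR)$, constructing at each stage a hod pair $(\P,\Sigma)$ whose derived model realizes the intended fragment of $\A^+$. At any stage where the induction threatens to stall, the obstruction is the non-existence of a cofinal well-founded branch in an iteration tree built from background extenders; analyzing this obstruction with the usual Q-structure and certificate machinery yields a premouse with an extender overlapping a Woodin cardinal, i.e.\ a non-tame mouse, contradicting our standing assumption. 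Hence the induction continues indefinitely.

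Push the induction far enough to produce a tame hod premouse $\P$ reaching past $\k$. Since $\k$ is singular of uncountable cofinality in $V$, weak covering for tame hod mice forces $(\k^+)^\P=\k^+$ and $\cf^\P(\k)<\k$, so $\k$ is singular in $\P$ and therefore not subcompact there. By the Schimmerling--Zeman characterization, $\P\models\square_\k$; a standard thread-extension argument then promotes this sequence to a genuine $\square_\k$-sequence of $V$, producing the desired contradiction.

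The main obstacle is the induction in the second paragraph. Running the core model induction past $L(\bR)$ requires a substantive theory of hod pairs---including comparison, condensation, Dodd--Jensen-type arguments, and the capturing correspondence between hod pairs and levels of $\A^+$---much of which must be set up in this paper. The technical heart of the proof is the branch-existence analysis for iteration trees on hod premice, where the dichotomy ``either extend the induction or produce a non-tame mouse'' is forced; verifying the non-tame alternative demands the careful extraction of an extender overlapping a Woodin cardinal from the failure of branch existence, which is the most delicate piece of the argument.
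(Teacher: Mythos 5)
Your proposal inverts the logical structure of the paper's argument and, in doing so, leans on two ingredients that are not available. First, the non-tame mouse here is not extracted from a stalled $K^c$-construction or from a failure of branch existence in a background-certified tree; it comes from Woodin's theorem (\rthm{equiconsistency}): once one produces an inner model containing the reals and satisfying $AD^++\Theta=\theta_1$, a non-tame mouse follows. Accordingly, the paper assumes towards a contradiction the statement $(*)$ that no such model exists in any $Coll(\omega,\mu)$-extension, uses $(*)$ to show that the maximal model $\S_{\mu,g}$ satisfies $AD^++\theta_0=\Theta$ (\rthm{determinacy in the max model}), carries out the $\H$ analysis of these models (suitable premice, quasi-self-justifying systems, excellent hulls, the $\omega$-suitable premouse), and finally manufactures a fullness-preserving $(\k^+,\k^+)$-iteration strategy $\Lambda$ with branch condensation for a suitable premouse; then $L(\Lambda\rest H_{\omega_1},\bR)\models AD^++\theta_0<\Theta$ in the collapse extension, contradicting the maximality of $\S_{\mu,g}$. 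Your dichotomy ``either the induction continues or the failure of branch existence yields an extender overlapping a Woodin cardinal'' is exactly the step that cannot be carried out directly at this level; no such extraction is known, and it is not how the non-tame mouse is obtained.

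Second, your endgame --- push a tame hod premouse $\P$ past $\k$, invoke ``weak covering for tame hod mice'' to get $(\k^+)^{\P}=\k^+$, apply Schimmerling--Zeman inside $\P$, and thread the resulting sequence up to $V$ --- presupposes a covering lemma for hod mice that is neither proved in this paper nor available. The paper uses \rthm{SchZem} in the opposite direction and at the very outset: for $A\subseteq\k$ coding $V_\k$, $Lp(A)\models\square_\k$, so $\neg\square_\k$ forces $o(Lp(A))<\k^+$; this failure of lower-part covering (together with \rthm{strategies are determined}) is the only place the square hypothesis enters, and it is what powers the good-hull and condensation arguments later. Note also that $\k$ is merely assumed singular, so your appeal to ``uncountable cofinality in $V$'' is unavailable as stated.
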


\textbf{Remark.} The hypothesis of \rthm{main theorem} might seem somewhat confusing as we almost never use it to establish any of the facts in this paper. It is only used to establish \rthm{strategies are determined} and \rthm{determinacy in the max model}. If the conclusions of the aforementioned theorems were true, we could prove the conclusion of \rthm{main theorem} from just the assumption that various $Lp$-closures of $V_\k$ have height $<\k^+$. \rthm{summary} summarizes the exact hypothesis we need to carry out our proof of the conclusion of \rthm{main theorem}.

\section{Preliminaries}

\subsection{Premice, mice and their iteration strategies}

We establish some notation and list some basic facts about mice and their iteration strategies.
The reader can find more detail in \cite{OIMT}. Suppose $\M$ is a premouse. We  then let $o(\M)=Ord\cap \M$. If $\M$ is a premouse and $\xi\leq o(\M)$ then we let $\M||\xi$ be $\M$ cutoff at $\xi$, i.e., we keep the predicate indexed at $\xi$. We let $\M|\xi$ be $\M||\xi$ without the last predicate. We say $\xi$ is a \textit{cutpoint} of $\M$ if there is no extender $E$ on $\M$ such that $\xi\in(\cp(E), lh(E)]$. We say $\xi$ is a \textit{strong cutpoint} if there is no $E$ on $\M$ such that $\xi\in[\cp(E), lh(E)]$. We say $\eta<o(\M)$ is \textit{overlapped} in $\M$ if $\eta$ isn't a strong cutpoint of $\M$. Given $\eta<o(\M)$ we let
\begin{center}
$\mathcal{O}^M_\eta= \cup\{ \N\inseg\M: \rho(\N)=\eta $ and $\eta$ is not overlapped in $\N\}$. 
\end{center}

If $\M$ is a $k$-sound premouse, then a {\em $(k,\theta)$-iteration strategy} for $\M$
is a winning strategy for player $\rm{II}$ in the iteration game $G_k(\M,\theta)$,
and a {\em $k$-normal iteration tree} on $\M$ is a play of this game in which $\rm{II}$ has
not yet lost. (That is, all models are wellfounded.)
$k$-normal trees are called ``$k$-maximal" in 
\cite{OIMT}, but we shall use ``maximal" for a completely different property of trees here.)
We shall drop the reference to the fine-structural parameter $k$ whenever it
seems safe to do so, and speak simply of normal trees. 

We say $\M$ is $\theta$-iterable if $II$ has a winning strategy in  $G_k(\M,\theta)$. We say $\M$ is \textit{countably} $\theta$-iterable if any countable substructure of $\M$ is $\theta$-iterable. It follows from the copying construction that a $\theta$-iterable mouse is countably $\theta$-iterable. We say $\M$ is \textit{countably} iterable if all of it is countable substructures are $\omega_1+1$-iterable. 
 
If $\T$ is a normal iteration tree, then $\T$ has the form 
\begin{center} 
$\T=\la T, deg, D, \la E_\a, \M_{\a+1}| \a+1<\eta\ra\ra$. 
\end{center} 
Recall that $D$ is the set of \textit{dropping} points. Recall also that if $\eta$ is limit then 
\begin{center} 
$\vec{E}(\T)= \cup_{\a<\eta}(\dot{E}^{\M_\a\rest  lh(E_\a)})$,\\ 
$\M(\T)=\cup_{\a<\eta}\M_\a\rest lh(E_\a)$,\\ 
$\d(\T)=\sup_{\a<\eta} lh(E_\a)$. 
\end{center} 
If $b$ is a branch of $\T$ such that $D \cap b$ is finite,
 then $\M^\T_b$ is the direct limit  of the models along $b$ . If $\a \leq_T\b$
and $(\a, \b]_\T\cap D=\emptyset$ then  
\begin{center} 
 $\pi_{\a, \b}^\T:\M_\a^\T \rightarrow \M_\b^\T$ 
\end{center} 
 is the iteration map, and if $\a\in b$ and $(b-\a) \cap D =\emptyset$,
then  
\begin{center} 
 $\pi_{\a, b}^\T:\M^*_\a\rightarrow \M_b^\T$ 
\end{center} 
 is the iteration map. If $\T$ has a last model $\M_\a^\T$, and the branch
$[0,\alpha]_T$ does not drop, then we often write $\pi^\T$ for $\pi_{0,\a}^\T$.

 Recall that the strategy for a sound mouse projecting to $\omega$ is determined by
{\em $\Q$-structures}. For $\T$ normal, let $\Phi(\T)$ be the phalanx of $\T$. 

\begin{definition}\label{qstructures} Let $\T$ be a $k$-normal tree of limit
length on a $k$-sound
premouse, and let $b$ be a cofinal branch of $\T$. Then $\Q(b,\T)$ is the shortest
initial segment $\Q$ of $M_b^{\T}$, if one exists, such that
$\Q$ projects strictly across $\d(\T)$ or defines a function witnessing $\d(\T)$ is not 
Woodin via extenders on the sequence of $\M(\T)$.
\end{definition}

\begin{theorem}\label{strategyunique} Let $\M$ be a $k$-sound premouse such that
$\rho_k(\M) = \omega$. Then $\M$ has at most one $(k,\omega_1+1)$ iteration strategy.
Moreover, any such strategy $\Sigma$ is determined by: $\Sigma(\T)$ is the unique
cofinal $b$ such that the phalanx
 $\Phi(\T)^\frown \la \d(\T), \deg^{\T}(b), \Q(b,\T)\ra$ is $\omega_1+1$-iterable.
\end{theorem}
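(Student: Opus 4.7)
The proof splits naturally into existence and uniqueness of a branch $b$ with the stated iterability property. For existence, given any $(k,\omega_1+1)$-strategy $\Sigma$ for $\M$ and any $k$-normal tree $\T$ of limit length played by $\Sigma$, set $b = \Sigma(\T)$. The phalanx $\Phi(\T)^\frown \la \d(\T), \deg^\T(b), \Q(b,\T)\ra$ embeds canonically into the situation produced by $\Sigma$, so $\Sigma$ itself induces a winning strategy for the associated iteration game by playing on the extended tree. The key point is that $\Q(b,\T)$ actually exists; this is where the hypothesis $\rho_k(\M)=\omega$ enters. Along $b$, past the last drop (if any), the resulting premouse inherits from $\M$ the fact that its $k$-projectum is $\omega$, so some proper initial segment of $\M_b^\T$ must project strictly across $\d(\T)$; the shortest such initial segment, or a shorter one defining a non-Woodinness witness, is $\Q(b,\T)$.

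For uniqueness, suppose $b_1 \neq b_2$ are distinct cofinal branches of $\T$ whose phalanx extensions are both $\omega_1+1$-iterable, and set $\Q_i = \Q(b_i,\T)$. Each $\Q_i$ extends $\M(\T)$ as a premouse and is $\omega_1+1$-iterable above $\d(\T)$ via the strategy inherited from its iterable phalanx. I would coiterate $\Q_1$ and $\Q_2$. Since no extender on either sequence has critical point $\leq \d(\T)$, the comparison is entirely above $\d(\T)$, terminates, and produces a common iterate; by the minimality built into the definition of Q-structure neither $\Q_i$ may be a proper initial segment of the other's image, so $\Q_1 = \Q_2$, call it $\Q$.

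Having identified the Q-structures, I invoke the Branch Uniqueness theorem of Martin--Steel. The common $\Q$ supplies, in $V$, either a cofinal function from some $\eta < \d(\T)$ onto $\d(\T)$ (when $\Q$ projects strictly across $\d(\T)$), or a partial function on $\d(\T)$ witnessing that $\d(\T)$ is not Woodin with respect to the extenders on $\vec{E}(\T)$. In either case the standard zipper argument applied to the two cofinal branches $b_1, b_2$ produces genuine Woodinness of $\d(\T)$ in $V$, or more directly contradicts the non-Woodinness function furnished by $\Q$ by exhibiting a reflecting extender on $\M(\T)$. This contradicts $b_1 \neq b_2$, and then the theorem follows since any two strategies must agree at every limit stage.

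The main obstacle is this last step, the zipper argument in the Q-structure formulation: it requires setting up the realizable branch embeddings at the correct fine-structural degree, and using the fact that $\Q$ is a genuine object in $V$ (so its witness to non-Woodinness or to projection cannot be undone by further iteration) to force the zipper contradiction. This is standard in the literature (see \cite{OIMT}), but one has to be careful about drops on $b_1, b_2$ and about the interplay between $\deg^\T(b_i)$ and the fine structure of $\Q$. Existence of $\Q(b,\T)$ is comparatively easy but does require verifying that enough of $\rho_k(\M) = \omega$ is preserved along $b$ past all drops to force projection across $\d(\T)$.
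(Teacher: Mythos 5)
The paper does not prove Theorem~\ref{strategyunique}; it is quoted as a standard result (essentially Theorem 6.12 of \cite{OIMT} together with the Martin--Steel branch uniqueness theorem), so there is no in-paper argument to compare against. Your outline is the standard proof of that result and its overall architecture is right: existence of $\Q(b,\T)$ for $b=\Sigma(\T)$ from $\rho_k(\M)=\omega$ (note only that $\Q(b,\T)$ may be all of $\M_b^\T$ rather than a proper initial segment, and that when $b$ drops the projectum of $\M_b^\T$ is below $\d(\T)$ for a different reason); iterability of the extended phalanx induced by $\Sigma$; identification of the two $\Q$-structures; and then branch uniqueness to rule out $b_1\neq b_2$.

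Two steps are stated loosely enough that, as written, they would not go through. First, in the comparison step you assert that no extender on the sequence of $\Q_i$ has critical point $\leq\d(\T)$; that is not justified in this generality (nothing in Definition~\ref{qstructures} makes $\d(\T)$ a cutpoint of $\Q(b,\T)$ --- that is exactly the extra requirement imposed on $\Q(\T)$ in Definition~\ref{qoft}). The correct move is to compare the two \emph{phalanxes} $\Phi(\T)^\frown\Q_1$ and $\Phi(\T)^\frown\Q_2$, whose iterability you have by hypothesis; extenders overlapping $\d(\T)$ are then applied to earlier models of $\Phi(\T)$, and the soundness-above-$\d(\T)$ plus minimality argument still yields $\Q_1=\Q_2$. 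Second, the zipper argument does not ``produce genuine Woodinness of $\d(\T)$ in $V$''; what Martin--Steel gives is that for every $A\subseteq\d(\T)$ lying in (or suitably definable over) both $\M_{b_1}^\T$ and $\M_{b_2}^\T$, some $\kappa<\d(\T)$ is $A$-reflecting via extenders on the $\M(\T)$-sequence. Since the common $\Q$ supplies (or, after a short extra step in the case where $\Q$ merely projects across $\d(\T)$, can be converted into) a specific such $A$ with no reflecting $\kappa$, this is the contradiction; one must also check that this $A$ really lives in both branch models, which requires the mild care about drops and degrees that you flag. With those corrections your outline is the intended argument.
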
  

In some cases, however, it is enough to assume that $\Q(b, \T)$ is countably iterable. This happens, for instance, when $\M$ has no local Woodin cardinals with extenders overlapping it. While the mice we will consider do have local overlapped Woodin cardinals, the mice themselves will not have such Woodin cardinals. This simplifies our situation somewhat and below we describe exactly how this will be used.  We say an iteration tree $\T$ is above $\eta$ if all the extenders used in $\T$ have critical points $>\eta$.

\begin{definition}\label{fatal drop} Suppose $\M$ is a premouse and $\T$ is a normal tree on $\M$. We say $\T$ has a \textit{fatal drop} if for some $\a$ such that $\a+1<lh(\T)$, there is some $\eta<o(\M_\a^\T)$ such that the rest of $\T$ is an iteration tree on $\mathcal{O}^{\M_\a^\T}_\eta$ above $\eta$. Suppose $\T$ has a fatal drop. Let $(\a, \eta)$ be lexicographically least witnessing the fact that $\T$ has a fatal drop. Then we say $\T$ has a fatal drop at $(\a, \eta)$.
\end{definition}

\begin{definition}[{Definition 2.1} of \cite{CMWMWC}]\label{qoft} Let $\T$ be a normal iteration tree;
then  $\Q(\T)$ is the unique premouse extending $\M(\T)$
 that has
$\d(\T)$ as a strong cutpoint, is $\omega_1+1$-iterable above $\d(\T)$, and either 
projects strictly across $\d(\T)$ or defines a function witnessing $\d(\T)$ is not
Woodin via extenders on the sequence of $\M(\T)$, if there is any such
premouse.
\end{definition}

Countable iterability is enough to guarantee there is at most one premouse with
the properties of $\Q(\T)$. If it exists, $\Q(\T)$ might identify the good branch of $\T$,
the one any sufficiently powerful iteration strategy must choose. This is the content of
the next lemma which can be proved by analyzing the proof of Theorem 6.12 of \cite{OIMT}

\begin{lemma}\label{qstructures} Suppose $\M$ is a $k$-sound
 premouse such that no measurable cardinal of $\M$ is a limit of Woodin cardinals. Suppose
   $\T$ is a $k$-normal iteration tree on $\M$ of limit length which doesn't have a fatal drop  and suppose $\Q(\T)$ exists.
 Then there is at most one cofinal branch $b$ of $\T$ such that
either $\Q(\T) = \M_b^{\T}$ or $\Q(\T) = \M_b^{\T} |\xi$ for some $\xi$ in the wellfounded
part of $\M_b^\T$.
\end{lemma}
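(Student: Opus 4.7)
The plan is to adapt the Zipper Lemma argument behind Theorem 6.12 of \cite{OIMT}. Suppose toward contradiction that $b\neq c$ are two cofinal branches of $\T$ both having the stated property, and set $\d=\d(\T)$. Since $\Q(\T)$ sits in the wellfounded part of each of $\M_b^\T$ and $\M_c^\T$, both branches produce structures that extend $\M(\T)$, agree with $\Q(\T)$ in a neighborhood of $\d$, and for which the iteration embeddings $\pi_{0,b}^\T$ and $\pi_{0,c}^\T$ (past any drops on each branch) are cofinal into $\d$. By the definition of $\Q(\T)$, the witness that $\d$ is not Woodin -- either the collapse definable over $\Q(\T)$ or the failure-of-Woodin function definable from extenders on the sequence of $\M(\T)$ -- is an object purely determined by $\M(\T)$ and $\Q(\T)$, hence common to both branches.

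First I would reduce to the non-dropping case. The hypothesis that $\T$ has no fatal drop prevents the awkward situation in which, from some stage on, one of the branches becomes an iteration tree on a smaller overlap-closure $\mathcal{O}^\N_\eta$; in such a case the $\Q$-structure analysis would have to be redone relative to that smaller mouse, and alternative branches could appear. The hypothesis that no measurable cardinal of $\M$ is a limit of Woodin cardinals rules out the remaining degenerate configuration in which an extender used on one branch sits below a measurable limit of Woodin cardinals, which is precisely the scenario in which the classical Zipper argument would otherwise require an extra iterability assumption on $\Q(\T)$ above $\d$.

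Next I would run the Zipper argument proper. Let $\gamma=\min(b\triangle c)$, say $\gamma\in b\setminus c$, and set $E=E_\gamma^\T$ with $\k=\mathrm{crit}(E)$, so $\k<\mathrm{lh}(E)\leq \d$. Since $\gamma$ is the first splitting point the two embeddings agree on $\k$, but $\pi_{0,b}^\T$ propagates through $E$ and so sends $(\k^+)^\M$ to at least $\mathrm{lh}(E)$, whereas $\pi_{0,c}^\T$ computes the image of $(\k^+)^\M$ without ever applying $E$; after tracing through the subsequent extenders used on $c$, all of which have critical points $>\k$ by minimality of $\gamma$, this image is placed differently relative to the extender sequence of $\M(\T)$ in the interval $(\k,\mathrm{lh}(E)]$. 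Since both embeddings must send $(\k^+)^\M$ into the single object $\Q(\T)$ and the $\Q$-structure uniquely encodes this segment of the $\M(\T)$-sequence, the two placements would have to coincide, contradicting the fact that only $b$ uses $E$. The main obstacle will be the bookkeeping at $(\k^+)^\M$, specifically verifying that no intervening extender collapses the relevant cardinal successors on either branch; this is exactly what the ``no measurable limit of Woodin cardinals'' hypothesis on $\M$ is designed to guarantee, while the ``no fatal drop'' hypothesis ensures that the final segments of $b$ and $c$ remain iteration trees on all of $\M$, so that the standard zipper mechanism applies without alteration.
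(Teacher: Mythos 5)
The paper does not actually prove this lemma; it is quoted as something one extracts from the proof of Theorem 6.12 of \cite{OIMT}, so citing the Zipper Lemma is the right instinct. But the argument you then run is not the zipper argument, and its central step fails. You track $(\k^+)^\M$ along the two branches and conclude that ``the two placements would have to coincide'' because both embeddings map into the single object $\Q(\T)$. This is not a contradiction: the images $\pi^\T_{0,b}((\k^+)^\M)$ and $\pi^\T_{0,c}((\k^+)^\M)$ are just two ordinals below $\d(\T)$, both of which live in $\M(\T)\trianglelefteq\Q(\T)$, and distinct cofinal branches routinely send the same ordinal to different places (compare \rlem{partial agreement}, which only gives agreement \emph{below} a common value). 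Nothing in the definition of $\Q(\T)$ forces the two images to agree, so no contradiction is reached. Relatedly, the claim that all extenders used on $c$ past the splitting point have critical points $>\k$ is false in general; the interleaving of critical points and lengths along the two diverging branches is exactly what the zipper construction exploits, not something it rules out.

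The missing idea is that you never use the defining property of $\Q(\T)$: that it projects strictly across $\d(\T)$ or defines a function witnessing that $\d(\T)$ is not Woodin \emph{via extenders on the sequence of $\M(\T)$}. The actual argument runs as follows: if $b\neq c$ are distinct cofinal wellfounded branches, the zipper construction shows that for every $A\subseteq\d(\T)$ lying in (the wellfounded parts of) both $\M^\T_b$ and $\M^\T_c$ there is $\k<\d(\T)$ which is $A$-reflecting in $\d(\T)$ as witnessed by restrictions of extenders on the $\M(\T)$-sequence; taking $A$ to be the non-Woodinness witness (or the new subset of $\d(\T)$ coded by the projectum) supplied by the common $\Q(\T)$ --- which is available on both sides precisely because the same $\Q(\T)$ is an initial segment of, or equal to, each branch model --- yields the contradiction. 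An alternative route, the one the paper itself gestures at right after \rlem{uniqueness of branches}, is to show that $\Q(\T)$ determines a canonical cofinal subset of $\d(\T)$ contained in $rng(i^\T_{\a,b})$ for appropriate $\a\in b$ (and likewise for $c$) and then invoke \rlem{uniqueness of branches}; your proposal does not produce such a common cofinal set either. Finally, the hypotheses on fatal drops and on measurable limits of Woodin cardinals serve to guarantee that $\d(\T)$ is a strong cutpoint of the relevant initial segments of the branch models, so that the countably iterable $\Q(\T)$ can be matched with them; they are not, as you suggest, patches to the zipper mechanism itself.
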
 

We shall need to look more closely at what is behind the uniqueness results above, at how the
``fragments" of $\Q(b,\T)$ (or better, its theory) determine the initial segments of
$b$. The following is the crucial lemma which is essentially due to Martin and Steel (\cite{IT})
 
 \begin{lemma}\label{uniqueness of branches} Suppose $\T$ is a normal iteration tree on $\M$ of
 limit length and $s$ is a cofinal subset of $\d(\T)$; then there is at most one cofinal branch $b$
 such that there is $\a\in b$ with the property that $i^\T_{\a, b}$ exists and $s\subseteq rng(i^{\T}_{\a, b})$. 
\end{lemma}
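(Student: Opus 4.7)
The plan is to execute the classical ``Zipper'' argument of Martin and Steel (\cite{IT}); the statement is essentially a relative version of their uniqueness-of-branches lemma, with the witness embedding starting at $\a$ rather than at $0$. Suppose for contradiction that $b$ and $c$ are two distinct cofinal branches of $\T$ with respective witnesses $\a \in b$ and $\a' \in c$, so $i^\T_{\a, b}$ and $i^\T_{\a', c}$ both exist and $s \subseteq rng(i^\T_{\a, b}) \cap rng(i^\T_{\a', c})$.

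The first step is to normalize the witnesses. Because $i^\T_{\a, b}$ is defined there are no drops on $(\a, b]_T$, so for any $\beta \in b$ with $\beta \geq_T \a$ the map $i^\T_{\beta, b}$ is defined and factors as $i^\T_{\a, b} = i^\T_{\beta, b}\circ i^\T_{\a, \beta}$; in particular $rng(i^\T_{\beta, b}) \supseteq rng(i^\T_{\a, b}) \supseteq s$. Thus $\a$ may be pushed arbitrarily far up $b$, and symmetrically $\a'$ up $c$. Because $b$ and $c$ are distinct cofinal branches and each is $\leq_T$-closed, their common part $b \cap c$ is a $\leq_T$-initial segment of both and must be bounded in the tree order by some $\gamma$ (else $b = c$). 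Promote $\a$ and $\a'$ so that both are strictly $>_T \gamma$; in particular $\a \notin c$ and $\a' \notin b$, and the first extender used on $b$ past $\gamma$ lies strictly in $b \setminus c$, while the first extender used on $c$ past $\gamma$ lies strictly in $c \setminus b$.

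Next comes the Zipper step. Using cofinality of $s$ in $\d(\T)$, pick $x \in s$ exceeding the lengths of the first extenders used on $b$ past $\a$ and on $c$ past $\a'$. Write $x = i^\T_{\a, b}(\bar x_b) = i^\T_{\a', c}(\bar x_c)$; then on each branch the image $x$ is produced by a chain of extender applications above the fork. Comparing the two chains using normality of $\T$ (critical points of later extenders on a branch strictly exceed the lengths of earlier extenders on that branch), one extracts a pair of extenders $E_\xi$ used on $b \setminus c$ and $E_\eta$ used on $c \setminus b$ whose compatibility relation is forced by the shared image $x$ yet forbidden by the normality constraints on $\T$, giving the desired contradiction. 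The main obstacle is executing this matching cleanly; what does the work is that the cofinality of $s$ in $\d(\T)$ allows us to take $x$ arbitrarily large, so that both branches must engage many of their late extenders to realize $x$, and the resulting overlap between the two extender chains past $\gamma$ is exactly what normality rules out.
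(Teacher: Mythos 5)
Your overall plan is the right one and coincides with the paper's: both reduce the statement to the Martin--Steel zipper argument of \cite{IT}, relativized so that the witnessing embeddings start at $\a$ rather than at the root. Your normalization step is correct and is a legitimate variant of what the paper does (the paper instead takes the \emph{least} witnesses and observes that the branches diverge at or before them; pushing the witnesses up past the fork, as you do, works equally well, since $rng(i^{\T}_{\b,b})\supseteq rng(i^{\T}_{\a,b})\supseteq s$ whenever $\a\leq_T\b$ with $\b\in b$).

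The gap is in the punchline. The zipper does not conclude by exhibiting ``a pair of extenders whose compatibility relation is forced by the shared image $x$ yet forbidden by normality'': there is no compatibility argument here, and normality imposes no such constraint between extenders used on different branches, so this step as described would not go through. What the computation in \cite{IT} actually yields is an \emph{excluded-interval} argument about the single ordinal $x$. Having fixed a sufficiently large $x\in s$, one locates on $b$ the least node $\a_n$ with $\cp(i^{\T}_{\a_n,b})>x$; the interleaving of critical points and lengths forced by normality then places $x$ inside $[\cp(E^{\T}_{\b_m-1}), lh(E^{\T}_{\b_m-1}))$ for some extender used on $c$ strictly above the fork. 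But the range of the tail embedding $i^{\T}_{\b_{m-1},c}$ is disjoint from that interval (in essence, an ordinal $\geq\cp(E)$ lying in the range of an ultrapower by $E$ must be at least the image of $\cp(E)$, so the generators of $E$ are omitted), while $x\in s\subseteq rng(i^{\T}_{\a',c})\subseteq rng(i^{\T}_{\b_{m-1},c})$. That single ordinal being simultaneously in and out of one range is the contradiction; your final step should be replaced by this computation rather than by any claim about two extenders being incompatible.
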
 
\begin{proof} 
Towards a contradiction, suppose there are two cofinal branches $b$ and $c$ such that for some $\a, \b$, both $i^\T_{\a, b}$ and $i^\T_{\b, c}$ exist and $s\subseteq ran(i^\T_{\a, b})\cap ran(i^\T_{\b, c})$. Without loss of generality we can assume that $\a$ and $\b$ are the least ordinals with this property, $\a\leq \b$ and that $b$ and $c$ diverge at $\a$ or earlier, i.e., if $\gg$ is the least ordinal in $b\cap c$ then $\gg\leq \a$. By \cite{IT}, we can assume that $b=\la \a_n: n<\omega\ra$, $c=\la \b_n: n<\omega\ra$, $\a_0=\a$ and $\b_0=\b$. Let then $\xi$ be the least ordinal in $ran(i^\T_{\a, b})\cap ran(i^\T_{\b, c})$. Let $n$ be the least such that $\cp(i^\T_{\a_n, b})>\xi$. This means that $\cp(E^\T_{\a_{n+1}-1})>\xi$ and that $lh(E^\T_{\a_n})<\xi$. By the proof of Theorem 2.2 of \cite{IT}, this means that for some $m\geq 1$, $\xi \in [\cp(E^\T_{\b_m-1}), lh(E^\T_{\b_m-1}))$. This then implies that $\xi\not \in ran(i^\T_{\b_{m-1}, c})$, which is a contradiction. 
\end{proof} 
 
$\Q(b,\T)$ identifies $b$ because it determines a canonical cofinal subset
of $rng(i_{\a,b}^{\T} \cap \d(\T))$, for some $\a \in b$, to which we can apply \rlem{uniqueness of branches}.
But now, the proof of \rlem{uniqueness of branches} gives the following refinement: 
 
\begin{lemma}\label{partial agreement} Suppose $\T$ is an iteration tree on $\M$
 of limit length and $b, c$ are two cofinal branches of $\T$ such
 that $i^\T_b$ and $i^\T_c$ exist. Suppose that for some $\a$, 
\begin{center} 
$i^\T_b(\a)=i^\T_c(\a)<\d(\T)$. 
\end{center} 
Then $i^\T_b\rest \a = i^\T_c\rest \a$. Moreover, if $\xi\in b$ is the least such that
 $\cp(i_{\xi,b}^\T)>i^\T_b(\a)$ then $\xi \in c$, so that 
$b\cap (\xi +1) = c\cap (\xi +1)$. 
\end{lemma}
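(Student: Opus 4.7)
The plan is to mimic the proof of \rlem{uniqueness of branches}, applied to the single ordinal $\gamma := i^\T_b(\a) = i^\T_c(\a) < \d(\T)$ in place of a cofinal subset of $\d(\T)$. First I would use the decomposition $i^\T_b = i^\T_{\xi, b} \circ i^\T_{0, \xi}$ together with $\cp(i^\T_{\xi, b}) > \gamma$ to conclude that $i^\T_{\xi, b}$ is the identity on $\gamma + 1$; this gives $i^\T_b \rest (\a + 1) = i^\T_{0, \xi} \rest (\a + 1)$ and $i^\T_{0, \xi}(\a) = \gamma$.

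The heart of the proof will be showing $\xi \in c$; once this is established, $b \cap (\xi + 1) = c \cap (\xi + 1)$ is immediate from the downward closure of branches under $<_T$. My approach is by contradiction: if $\xi \notin c$, then $b$ and $c$ must diverge at some tree-stage $\leq_T \xi$. Following the enumeration setup used in the proof of \rlem{uniqueness of branches}, I would write $b = \la \a_n : n < \omega \ra$ and $c = \la \b_n : n < \omega \ra$ starting from the divergence stage, and pick $n_0$ with $\a_{n_0} = \xi$. By the minimality of $\xi$ in $b$, $\cp(E^\T_{\a_{n_0+1} - 1}) > \gamma$, and (from the normality of $\T$ together with the Martin-Steel enumeration rules) $lh(E^\T_{\a_{n_0}}) < \gamma$. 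Applying Theorem 2.2 of \cite{IT} to $\gamma$ then produces some $m \geq 1$ with $\gamma \in [\cp(E^\T_{\b_m - 1}), lh(E^\T_{\b_m - 1}))$, hence $\gamma \notin rng(i^\T_{\b_{m-1}, c})$. But $\gamma = i^\T_c(\a) \in rng(i^\T_c) \subseteq rng(i^\T_{\b_{m-1}, c})$, the desired contradiction.

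The first conclusion $i^\T_b \rest \a = i^\T_c \rest \a$ then follows by a parallel argument: it suffices to establish $\cp(i^\T_{\xi, c}) > \gamma$, and if this failed, the first extender $E$ used along $c$ beyond $\xi$ with $\cp(E) \leq \gamma$ would place $\gamma$ in its critical segment $[\cp(E), lh(E))$, contradicting $\gamma \in rng(i^\T_c)$ by the same Martin-Steel argument. The hardest part, I expect, will be justifying the bracketing inequality $lh(E^\T_{\a_{n_0}}) < \gamma$ at the divergence stage (and its analog for $c$ beyond $\xi$); these both require unpacking the precise combinatorics of the Martin-Steel enumeration together with the minimality of $\xi$. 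Once those inequalities are in hand, the rest is a routine adaptation of the uniqueness argument from \cite{IT}.
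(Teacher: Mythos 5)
Your proposal is correct and follows exactly the route the paper intends: the paper gives no separate proof of this lemma, saying only that it is a refinement extracted from the proof of \rlem{uniqueness of branches}, and your argument is precisely that adaptation --- running the Martin--Steel zipper against the single ordinal $\gamma=i^\T_b(\a)$ and deriving the contradiction from $\gamma\in rng(i^\T_c)$ rather than from a cofinal set. The bracketing inequalities you flag as the hard part are the same ones already invoked (without further justification) in the paper's proof of \rlem{uniqueness of branches}, so no new idea is required.
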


In addition to normal trees, we must consider linear stacks of normal trees. These are plays
of the iteration game $G_k(\M,\a,\theta)$ in which $\rm{II}$ has not yet lost. 
See \cite[Def. 4.4]{OIMT} for the formal definition. We shall generally use the vector notation
$\VT$ for a stack of normal trees, and then $\T_\eta$ for its $\eta^{\mbox{th}}$ normal component. $\M_\eta$ will denote the model at the beginning of the $\eta$th round of $\VT$. 
If $\Sigma$ is a $(k,\alpha,\theta)$-iteration strategy for $\M$, and $\VT$ is a stack of
trees on $\M$ played according to $\Sigma$ and having last model $\N$, then we let
$\Sigma_{\N,\VT}$ be the $(l,\alpha,\theta)$ strategy for $\N$ induced by $\Sigma$.
($l$ is the degree of the branch $\M$-to-$\N$ of $\VT$.
We assume here $\alpha$ is additively closed, so that there is such a strategy.) We say $\M$ is countable $(\a, \theta)$-iterable if all of its countable submodels are $(\a, \theta)$-iterable.

%
%
 
Suppose $\M$ is a mouse and $\Sigma$ is an $(\a, \theta)$-iteration strategy. We then let 
\begin{center} 
$I(\M, \Sigma)=\{ \N :$ there is a stack $\VT$ on $\M$ according to $\Sigma$ with last model $\N$ and $\pi^{\VT}$ exists $\}$. 
\end{center} 

Given a premouse $\M$ with a unique 
Woodin cardinal $\d$, we let $\mathbb{B}^{\M}$ be the countably generated extender algebra of $\M$ at
$\d$. In order to have a unique choice, we stipulate that the identities
determining  $\mathbb{B}^{\M}$ are precisely those coming from extenders $E$ on the
$\M$-sequence such that $\nu(E)$ is an inaccessible, but not a limit of inaccessibles,
in $\M$, and $\nu(E) < \d$.  If $G\subseteq \mathbb{B}^M$ then we let $x_G$ be the set naturally coded by $G$. 
For basic facts about the extender algebra, we refer the reader to
\cite{EA} and \cite{OIMT}.

\subsection{AD$^+$ and the Solovay hierarchy}

We will not need the exact formulation of $AD^+$. The interested readers can consult \cite{Woodin} or the introductory chapters of \cite{ATHM}. The \textit{Solovay hierarchy} is a hierarchy of axioms extending $AD^+$. To define the hierarchy, we first need to define the \textit{Solovay sequence}. First recall the \textit{Wadge ordering} of $\powerset(\mathbb{R})$. For $A, B\subseteq \mathbb{R}$, we say $A$ is \textit{Wadge reducible} to $B$ and write $A\leq_W B$ if there is a continuous function $f:\mathbb{R} \rightarrow \mathbb{R}$ such that $f^{-1}``B=A$. Martin showed that under $AD$, $\leq_W$ is a wellfounded relation. For $A\subseteq \mathbb{R}$, we let $w(A)$ be the rank of $A$ in $\leq_W$. Under $AD$, $\Theta=\sup_{A\subseteq \bR}w(A)$.

 The Solovay sequence is defined as follows. 
 \begin{definition}[The Solovay sequence] Assume $AD$. The Solovay sequence is a closed increasing sequence $\la \theta_\a : \a\leq \Omega\ra$ of ordinals defined by
\begin{enumerate}
\item $\theta_0=\sup\{ \a :$ there is an $OD$ surjection $f: \bR \rightarrow \a\}$,
\item if $\theta_{\b}<\Theta$ then letting $A\subseteq \bR$ be such that $w(A)=\theta_\b$,
    \begin{center}
    $\theta_{\b+1}=\sup \{ \a :$ there is an $OD_A$ surjection $f: \powerset(\theta_\b)\rightarrow \a\}$,
    \end{center}
\item if $\l$ is a limit then $\theta_\l=\sup_{\a<\l}\theta_\a$.
\end{enumerate}
\end{definition}

The Solovay hierarchy is the hierarchy we get by requiring that $\Omega$ is large. The following are the first few theories of this hierarchy. $\leq_{con}$ is the consistency strength relation: given two theories $T$ and $S$, $S\leq_{con} T$ if $Con(T)\vdash Con(S)$.
\begin{center}
$AD^{+}+\Omega=0<_{con} AD^{+}+\Omega=1 < AD^{+}+\Omega=2 \cdot \cdot \cdot <_{con} AD^{+}+\Omega=\omega <_{con}\cdot \cdot \cdot <_{con} AD^{+}+\Omega=\omega_1 <_{con} AD^{+}+\Omega= \omega_1+1 \cdot \cdot \cdot$.
\end{center}
For more on the Solovay hierarchy consult \cite{BSL}. We will use the following theorem to construct a non-tame mouse from our hypothesis. 

\begin{theorem}[Woodin, \cite{DMATM}]\label{equiconsistency} Assume there is a transitive inner model $M$ containing $\bR$ such that $M\models AD^++\Theta=\theta_1$. Then there is a non-tame mouse. 
\end{theorem}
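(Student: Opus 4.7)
The plan is to obtain the non-tame mouse by first constructing a transitive inner model $M$ with $\bR \subseteq M$ satisfying $AD^+ + \Theta = \theta_1$, and then invoking \rthm{equiconsistency}. The construction of such an $M$ is the substantive work; Woodin's theorem is only the outer frame that converts the descriptive-set-theoretic conclusion into the required inner-model-theoretic object. So the real task is: under $\neg\square_\kappa$ at a singular strong limit, produce such an $M$.

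To construct $M$, the plan is to run a \emph{core model induction}. The base case is \rthm{steel}, which under our hypothesis delivers $AD^{L(\bR)}$; this is the initial Solovay stage, corresponding to $AD^+ + V = L(\bR)$ with $\Theta = \theta_0$. From there I would inductively define a sequence of mouse (and, past $L(\bR)$, hybrid strategy-mouse) operators $J_\alpha$, each producing a pointclass $\Gamma_\alpha$ of sets of reals closed under the standard operations, such that the $\Gamma_\alpha$ ascend through the Wadge hierarchy. At each stage one must argue that either the next operator $J_{\alpha+1}$ exists --- pushing $\Gamma_\alpha$ further up the Solovay sequence --- or else the construction stabilizes in a way that directly produces the desired $M$. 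Below $L(\bR)$ the $J_\alpha$ are short-extender mouse operators of the $\M_n^{\#}$ type; past $L(\bR)$ they are hybrid operators of the form $Lp^{\Sigma}$ which record iteration strategies $\Sigma$ of hod premice $\P$.

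The hypothesis $\neg\square_\kappa$ enters through the Schimmerling--Zeman result cited immediately after \rthm{steel}: in a tame mouse $\M$ with $\kappa \geq \omega_1^{\M}$ and $\kappa$ not subcompact in $\M$, $\square_\kappa$ holds. As the Remark following the Main Theorem makes explicit, what the induction actually uses is a height bound of the form $o(Lp(V_\kappa)) < \kappa^+$, together with its hybrid-operator analogues $o(Lp^{\Sigma}(V_\kappa)) < \kappa^+$. This bound is precisely what $\neg\square_\kappa$ supplies, because any $Lp$-closure reaching $\kappa^+$ would be a tame mouse satisfying $\square_\kappa$. The bound is what prevents the induction from terminating at a tame level and thereby forces it past $L(\bR)$, where the non-tame behaviour needed for the final step lives.

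The technical heart of the argument, and the expected main obstacle, lies in the step just past $L(\bR)$, where the induction switches from plain premice to hybrid hod-strategy premice. At each such stage I need the two facts singled out by the Remark, namely the determinacy of strategies and determinacy in the max model: first, that the iteration strategies $\Sigma$ of the hod pairs $(\P,\Sigma)$ produced by the induction are determined when coded as sets of reals; and second, that the derived model of the direct limit $M_\infty$ of all such $(\P,\Sigma)$ satisfies $AD^+$. Both arguments rely on the uniqueness-of-branches machinery isolated in the Preliminaries --- \rlem{uniqueness of branches} and \rlem{partial agreement}, together with the $\Q$-structure identification of branches --- to pin $\Sigma$ down from its $\Q$-structure fragments, and on the $Lp^{\Sigma}$-capturing of $V_\kappa$ on the combinatorial side, where $\neg\square_\kappa$ is invoked. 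Granting these, maximality of $M_\infty$ forces $\Theta^{M_\infty} = \theta_1^{M_\infty}$: any further progress in the induction would either exhibit a non-tame mouse outright, or yield a tame hod pair whose internal $Lp^{\Sigma}$-height would contradict the bound extracted from $\neg\square_\kappa$. Taking $M$ to be the derived model of this maximal $M_\infty$ and applying \rthm{equiconsistency} completes the argument.
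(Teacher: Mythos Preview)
You have misidentified the statement to be proved. \rthm{equiconsistency} is Woodin's theorem: its hypothesis is the existence of a transitive inner model $M$ with $\bR\subseteq M$ and $M\models AD^{+}+\Theta=\theta_1$, and its conclusion is the existence of a non-tame mouse. The failure of $\square_\kappa$ appears nowhere in the statement. What you have sketched is not a proof of \rthm{equiconsistency} but a proof of the Main Theorem (\rthm{main theorem}) that \emph{uses} \rthm{equiconsistency} as a black box --- indeed you say so explicitly when you write that ``Woodin's theorem is only the outer frame'' and that you will finish by ``invoking \rthm{equiconsistency}''. That is circular with respect to the statement you were asked to prove.

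The paper itself does not prove \rthm{equiconsistency}; it is quoted from \cite{DMATM} and used exactly as you describe, as the final step converting $AD^{+}+\Theta=\theta_1$ into a non-tame mouse. A proof of \rthm{equiconsistency} would have to work entirely inside the determinacy model $M$: one analyzes $\H^M$ at the level $\theta_0^M$, shows that the associated hod mouse carries an extender overlapping a Woodin cardinal, and extracts from this a non-tame mouse. None of the $\neg\square_\kappa$ machinery, the covering arguments, or the core model induction over $V$ is relevant to that argument. Your outline is a reasonable high-level summary of the paper's proof of \rthm{main theorem}, but it says nothing about the content of \rthm{equiconsistency} itself.
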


\subsection{Suitable mice}\label{suitable mice}

The core model induction is a method for constructing iteration strategies for various models and often times the iteration strategies we construct are strategies for $\H$'s of various models of determinacy. In many situation, including our current situation, such $\H$'s are known to be fine structural models and the proof that they are will be used throughout this paper. Because of this we take a moment to review the background material for the $\H$ computation of models of determinacy. In later sections we will heavily rely on notions introduced in this section. In particular, notions such as \textit{suitable premouse} or \textit{quasi-iterable premouse} will be of crucial importance for us. Many of the notions introduced in this section are due to Woodin who introduced them in his computation of $\H^{L[x][g]}$ (see \cite{SSW}). We start with suitability.
 
 Often times, the notion of a suitable premouse is needed simultaneously in models of determinacy and in ZFC context. Our current situation is an instance of this and we chose to define the notion in a most general way to avoid confusions when applying it in different contexts. 

Fix then some cardinal $\lambda$ and let $\Gamma\subseteq \powerset(\powerset(\lambda))$. We assume $ZF+DC_\lambda$. Notice that any function $f:H_{\l^+}\rightarrow H_{\l^+}$ can be naturally coded by a subset of $\powerset(\lambda)$. We then let $Code_\lambda:  H_{\lambda^+}^{H_{\l^+}}\rightarrow \powerset(\powerset(\lambda))$ be one such coding. If $\l=\omega$ thence just write $Code$. Because any $\lambda^+$-iteration strategy for a premouse of size $\leq\l$ is in $H_{\lambda^+}^{H_{\l^+}}$, we have that any such strategy is in the domain of $Code_\l$. Given a premouse $\M$, we say $\M$ \textit{has an iteration strategy} in $\Gamma$ if $\card{\M}\leq \l$ and $\M$ has a $\lambda^+$-iteration strategy  (or $(\a, \lambda^+)$-iteration strategy for $\a\leq \l^+$) $\Sigma$ such that $Code_\l(\Sigma)\in\Gamma$. We let $Mice^\Gamma$ be the set of mice that have an iteration strategy in $\Gamma$.  Given a countable set $a$ we let 
\begin{center}
$\W^\Gamma(a)=\cup\{ \N: \N$ is a sound mouse over $a$ such that $\rho(\N)=a$ and $\N\in Mice^\Gamma\}$
\end{center}
and define $Lp^\Gamma_\xi(a)$ for $\xi<\omega_1$ by induction as follows:
\begin{enumerate}
\item $Lp^\Gamma_0(a)=\W^\Gamma(a)$,
\item for $\xi<\omega_1$, $Lp^\Gamma_{\xi+1}=\W^{\Gamma}(Lp^\Gamma_\xi(a))$,
\item for limit $\xi<\omega_1$, $Lp^\Gamma_{\xi}=\cup_{\a<\xi}Lp^\Gamma_\a(a)$.
\end{enumerate}

\begin{definition}[$\Gamma$-suitable premouse] \label{suitable premouse}
 A premouse $\P$ is $\Gamma$-suitable if there is a unique cardinal $\delta$ such that
\begin{enumerate}
 \item $\P\models ``\d$ is the unique Woodin cardinal",
 \item $o(\P)=\sup_{n<\omega} (\d^{+n})^\P$,
 \item for every $\eta\not =\d$, $\W^\Gamma(\P|\eta)\models ``\eta$ isn't Woodin".
 \item for any $\eta<o(\P)$, $\mathcal{O}_\eta^\P=\W^{\Gamma}(\P|\eta)$.
 \end{enumerate}
\end{definition}

Suppose $\P$ is $\Gamma$-suitable.  Then we let $\d^\P$ be the $\d$ of \rdef{suitable premouse}. Given an iteration tree $\T$ on $\P$, we say $\T$ is \textit{nice} if $\T$ has no fatal drops. Notice that $\Gamma$-suitable premice satisfy the hypothesis of \rlem{qstructures}. A nice tree $\T$ is 
\textit{$\Gamma$-correctly guided} if for every limit $\a<lh(\T)$, $\Q(\T\rest \a)$ exists and 
\begin{center}
 $\Q(\T\rest \a)\trianglelefteq \W^{\Gamma}(\M(\T\rest \a))$.
\end{center}
 $\T$ is $\Gamma$-\textit{short} if it is nice, correctly guided and $\W^\Gamma(\M(\T))\models ``\d(\T)$ is not Woodin". $\T$ is $\Gamma$-\textit{maximal} if it is nice, $\Gamma$-correctly guided yet not $\Gamma$-short. Notice that if $\T$ is a maximal tree and $b$ is a branch such that $i_b^\T(\d^\P)=\d(\T)$ then $\T$ doesn't have a nice normal continuation. 

\begin{definition}[$\Gamma$-correctly guided finite stack]
 Suppose $\P$ is $\Gamma$-suitable. We say $\la \T_i, \P_i : i<m\ra$ is a $\Gamma$-\textit{correctly guided finite
stack} on $\P$ if
\begin{enumerate}
\item $\P_0=\P$,
\item $\P_i$ is $\Gamma$-suitable and $\T_i$ is a nice $\Gamma$-correctly guided tree on $\P_i$ below $\d^{\P_i}$,
\item for every $i$ such that $i+1< m$ either $\T_i$ has a last model and $\pi^\T$-exists or $\T$ is
maximal, and
    \begin{enumerate}
    \item if $\T_i$ has a last model then $\P_{i+1}$ is the last model of $\T_i$ and if $lh(\T_i)=\a+1$ where $\a$ is limit then if $\T^-_i$ is $\T_i$ without its last branch then $\Q(\T^-_i)$-exists,
    \item if $\T_i$ is $\Gamma$-maximal then $\P_{i+1}=Lp_\omega^{\Gamma}(\M(\T_i))$.
    \end{enumerate}
\end{enumerate}
\end{definition}

Notice that if $\la \T_i, \P_i : i\leq m\ra$ is a correctly guided finite  stack on $\P$ then only
$\T_m$ can have a dropping last branch. 

\begin{definition}[The last model of a $\Gamma$-correctly guided finite stack]\label{last model of finite stack}
 Suppose $\P$ is $\Gamma$-suitable and $\VT=\la \T_i, \P_i : i\leq k\ra$ is a $\Gamma$-correctly guided finite stack
on $\P$. We say $\R$ is the last model of $\VT$ if one of the following holds:
 \begin{enumerate}
 \item $\T_{k}$ has a last model and $\R$ is the last model of $\T_{k}$,
 \item $\T_k$ is of limit length, $\T_{k}$ is $\Gamma$-short and there is a cofinal well-founded branch $b$
such that $\Q(b, \T_k)$ exists, $\Q(b, \T)\insegeq \W^\Gamma(\M(\T_k))$ and $\R=\M^\T_b$,
 \item $\T_k$ is of limit length, $\T_{k}$ is $\Gamma$-maximal, $\R$ is $\Gamma$-suitable and
 \begin{center}
$\R=Lp_{\omega}^\Gamma(\M(\T_{k}))$.
\end{center}
 \end{enumerate}
We say $\R$ is a \textit{correct iterate} of $\P$ if there is a correctly guided finite stack on $\P$ with last model
$\R$.
\end{definition}

\begin{definition}[$S(\Gamma)$ and $F(\Gamma)$] We let $S(\Gamma)=\{ \Q: \Q$ is $\Gamma$-suitable$\}$. Also, we let $F(\Gamma)$ be the set of functions $f$ such that $dom(f)=S(\Gamma)$ and for each $\P\in S(\Gamma)$, $f(\P)\subseteq \P$ and $f(\P)$ is amenable to $\P$, i.e., for every $X\in \P$, $X\cap f(\P)\in \P$.
\end{definition}

 Given $\P\in S(\Gamma)$ and $f\in F(\Gamma)$ we let $f^n(\P)=f(\P)\cap \P|((\d^\P)^{+n})^\P$. Then $f(\P)=\cup_{n<\omega}f^n(\P)$. We also let 
\begin{center}
$\gg^\P_{f}=\d^\P \cap Hull^{\P}_1( \{ f^n(\P) : n<\omega \}) $.
\end{center}
Notice that
\begin{center}
$\gg^\P_{f}=\d^\P \cap Hull^{\P}_1(\gg_{f}^\P\cup\{ f^n(\P) : n<\omega \} )$.
\end{center}
We then let 
\begin{center}
$H_{f}^\P =Hull^{\P}_1(\gg^\P_{f}\cup \{ f^n(\P) : n<\omega \} )$.
\end{center}
If $\P\in S(\Gamma)$, $f\in F(\Gamma)$ and $i: \P\rightarrow \Q$ is an embedding then we let $i(f(\P))=\cup_{n<\omega}i(f^n(\P))$.

\begin{definition}[$f$-iterability]\label{f-iterability} Suppose $\P\in S(\Gamma)$ and $f\in F(\Gamma)$. We say $\P$ is $f$-iterable if whenever $\la \T_k, \P_k : k<m\ra$ is a
finite correctly guided stack on $\P$ with last model $\R$ then there is a sequence $\la b_k: k<
m\ra$ such that the following holds.
\begin{enumerate}
\item For $k<m-1$,
 \begin{center}
   $b_k = \begin{cases}
           \emptyset  &:  \T_k \ \text{has a successor  length} \\
      \text{cofinal well-founded branch}\\ \text{such that}\ \M^\T_{b_k}=\P_\k &:  \T_k \ \text{is
$\Gamma$-maximal}
          \end{cases}$
\end{center}
\item The following three cases hold.
\begin{enumerate}
 \item If $\T_{m-1}$ has a successor length then $b_{m-1}=\emptyset$.
 \item If $\T_{m-1}$ is $\Gamma$-short then there is a cofinal well-founded branch $b$ 
   such that $\Q(b, \T_{m-1})$ exists, $\Q(b, \T_{m-1})\insegeq\W^\Gamma(\M(\T_{m-1}))$ and $b_{m-1}$ is the unique such branch.
 \item  If $\T_{m-1}$ is $\Gamma$-maximal then $b_{m-1}$ is a cofinal well-founded branch. 
\end{enumerate}

\item Letting
 \begin{center}
   $\pi_k = 
     \begin{cases}
      \pi^{\T_k} &:  \T_k \ \text{has a successor length} \\
      \pi_{b_k}^{\T_k} &: \T_k \ \text{is $\Gamma$-maximal}
     \end{cases}$
   
\end{center}
and $\pi=\pi_{m-1}\circ \pi_{m-2}\circ\cdot \cdot\cdot \pi_0$ then 
    \begin{center}
  $\pi(f(\P))=f(\R)$.
    \end{center}
\end{enumerate}
\end{definition}

Suppose again that $\P\in S(\Gamma)$ and $f\in F(\Gamma)$. Suppose $\VT=\la \T_k, \P_k : k<m\ra$ is a $\Gamma$-correctly guided finite stack on $\P$ with last model $\R$. We
say \textit{$\vec{b}=\la b_k: k< m\ra$ witness $f$-iterability for $\VT=\la \T_k, \P_k : k< m\ra$}
if 2 above is satisfied. We then let
 \begin{center}
   $\pi_{\VT, \vec{b}, k} = 
     \begin{cases}
      \pi^{\T_k} &: \T_k \ \text{has a successor length} \\
      \pi_{b_k}^{\T_k} &: \T_k \ \text{is maximal}\\
     \text{undefined} &: \text{otherwise}
     \end{cases}$
\end{center}
and $\pi_{\VT, \vec{b}}=\pi_{\VT, \vec{b}, m-1}\circ \pi_{\VT, \vec{b}, m-2}\circ\cdot \cdot\cdot
\pi_{\VT, \vec{b}, 0}$. Notice that clause three isn't vacuous as it might be that $\T_k$ is $\Gamma$-short
and its unique branch has a drop.  

Continuing with the notation of the previous paragraph, let  $\vec{b}$ and $\vec{c}$ be two $f$-iterability branches for $\VT$. It then follows
from \rthm{uniqueness of branches} that 
\begin{center}
 $\pi_{\VT, \vec{b}}\rest H_{f}^\P=\pi_{\VT, \vec{c}}\rest H_f^\P$. 
\end{center}
\begin{lemma}[Uniqueness of $f$-iterability embeddings] \label{uniqueness of f-iterability
embeddings} Suppose $\P\in S(\Gamma)$, $f\in F(\Gamma)$ and $\VT$ is a
finite correctly guided stack on $\P$. Suppose $\vec{b}$ and $\vec{c}$ are two $f$-iterability
branches for $\VT$. Then
\begin{center}
$\pi_{\VT, \vec{b}}\rest H_{f}^\P=\pi_{\VT, \vec{c}}\rest H_f^\P$.
\end{center}
Moreover, if $\VT$ consists of just one normal tree $\T$, $\Q$ is the last model of $\T$ and $b$ and
$c$ witness $f$-iterability for $\T$ then if $\xi\in b$ is the least such that
$\cp(E^\T_\xi)>\gg_f^\Q$ then $b\cap \xi =c\cap \xi$. 
\end{lemma}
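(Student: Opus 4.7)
The plan is to reduce the lemma to showing that both $\pi_{\VT,\vec b}$ and $\pi_{\VT,\vec c}$ send each $f^n(\P)$ set-wise to $f^n(\R)$, from which agreement on $H_f^\P$ will follow by the uniqueness of $\Sigma_1$-definable objects. First I would verify the set-wise equality $\pi_{\VT,\vec b}(f^n(\P)) = f^n(\R) = \pi_{\VT,\vec c}(f^n(\P))$ for every $n<\omega$. Since $f(\P)$ is amenable, $f^n(\P) = f(\P)\cap \P|((\d^\P)^{+n})^\P$ is an element of $\P$; full elementarity of $\pi := \pi_{\VT,\vec b}$ together with $\pi(\d^\P) = \d^\R$ gives $\pi(f^n(\P)) \subseteq \R|((\d^\R)^{+n})^\R$, while $\pi(f^n(\P)) \subseteq \pi(f(\P)) = f(\R)$ (the $f$-iterability hypothesis unfolded via the convention $\pi(f(\P)) = \bigcup_m \pi(f^m(\P))$) gives $\pi(f^n(\P))\subseteq f^n(\R)$. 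For the reverse inclusion, any $x\in f^n(\R)$ lies in some $\pi(f^m(\P))$; taking $m\geq n$ and applying elementarity to the identity $f^n(\P) = f^m(\P)\cap \P|((\d^\P)^{+n})^\P$ places $x$ in $\pi(f^n(\P))$.

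Next I would invoke the uniqueness of $\Sigma_1$-definable witnesses. Any $\alpha\in \gg_f^\P$ is the unique $y\in\P$ satisfying some $\Sigma_1$ formula $\varphi(y, f^{n_1}(\P),\ldots,f^{n_k}(\P))$; by the first step and full elementarity, $\pi_{\VT,\vec b}(\alpha)$ and $\pi_{\VT,\vec c}(\alpha)$ are both the unique $y$ satisfying $\varphi(y, f^{n_1}(\R),\ldots,f^{n_k}(\R))$ in $\R$ (where the computation takes place, both branch models agreeing with $\R$ on the relevant initial segment), so they coincide. The same reasoning applied to an arbitrary $x\in H_f^\P$, which is $\Sigma_1$-definable from parameters in $\gg_f^\P\cup\{f^n(\P):n<\omega\}$, yields the first conclusion $\pi_{\VT,\vec b}\rest H_f^\P = \pi_{\VT,\vec c}\rest H_f^\P$.

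For the moreover, specialize to $\VT=\la\T\ra$. Since $\gg_f^\P$ is itself $\Sigma_1$-definable from $\{f^n(\P):n<\omega\}$, it lies in $H_f^\P$, so the first conclusion forces $\pi_b(\gg_f^\P) = \pi_c(\gg_f^\P)$; elementarity places this common value below $\d^\R = \d(\T)$ and identifies it with $\gg_f^\Q$. Thus \rlem{partial agreement} applied with $\alpha = \gg_f^\P$ locates the first possible divergence of $b$ and $c$ at the least extender used along $b$ whose critical point exceeds $\gg_f^\Q$, which is the $\xi$ appearing in the statement once one translates between $\cp(i_{\xi,b}^\T)$ and $\cp(E^\T_\xi)$. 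The main obstacle is the set-wise equality in the first step: the fact that $f(\P)$ is a subclass rather than an element of $\P$ requires the $f^n$-decomposition together with amenability to handle correctly. After that, the argument is a direct application of the uniqueness-of-$\Sigma_1$-witnesses principle underpinning \rlem{uniqueness of branches} and \rlem{partial agreement}.
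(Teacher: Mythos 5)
Your opening step is fine: unwinding $\pi(f(\P))=f(\R)$ into the level-by-level identities $\pi_{\VT,\vec b}(f^n(\P))=f^n(\R)=\pi_{\VT,\vec c}(f^n(\P))$ is exactly right and is needed. The gap is in the central step, where you conclude agreement on $H_f^\P$ from ``uniqueness of $\Sigma_1$-definable witnesses.'' The two maps $\pi_{\VT,\vec b}$ and $\pi_{\VT,\vec c}$ take values in two \emph{different} premice, the branch models $\M^{\VT}_{\vec b}$ and $\M^{\VT}_{\vec c}$; when the last component $\T_{m-1}$ is $\Gamma$-maximal these are only guaranteed to agree below $\d(\T_{m-1})$ (both end-extend $\M(\T_{m-1})$), and neither need agree with $\R=Lp_\omega^\Gamma(\M(\T_{m-1}))$ above $\d(\T_{m-1})$. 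So $\pi_{\VT,\vec b}(\a)$ is the unique witness to $\varphi(\,\cdot\,,f^{n_1}(\R),\dots)$ computed in $\M^{\VT}_{\vec b}$, not in $\R$, and likewise for $\vec c$; with the same parameters these witnesses can a priori differ, since a $\Sigma_1$ witness may live arbitrarily high in the model. Your parenthetical ``both branch models agreeing with $\R$ on the relevant initial segment'' is precisely the assertion that needs proof, and it is what the machinery of \rlem{uniqueness of branches} and \rlem{partial agreement} — which the paper points to as the source of the first conclusion — is there to supply. The correct order of deduction is the reverse of yours: one first uses the critical-point analysis of \rlem{partial agreement} to show that the two embeddings agree on $\gg_f^\P$, whose images lie below $\d(\T_{m-1})$ where the branch models genuinely coincide, and only then propagates the agreement to all of $H_f^\P$ using that every element of $H_f^\P$ is $\Sigma_1$-definable from parameters in $\gg_f^\P\cup\{f^n(\P):n<\omega\}$ and that the two hulls have the same generators and the same transitive shape. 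You instead derive the first conclusion directly from $\Sigma_1$-uniqueness and invoke \rlem{partial agreement} only for the ``moreover,'' which leaves the cross-model step unsupported.

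A secondary point: you assert that $\gg_f^\P$ is $\Sigma_1$-definable from $\{f^n(\P):n<\omega\}$ and hence belongs to $H_f^\P$. But $\gg_f^\P$ is the supremum of the $\Sigma_1$-definable ordinals below $\d^\P$, and the supremum of a $\Sigma_1$-definable set need not be $\Sigma_1$-definable; indeed the paper's own identity only gives $H_f^\P\cap\d^\P=\gg_f^\P$, not $\gg_f^\P\in H_f^\P$. This is repairable: apply \rlem{partial agreement} to each $\a\in\gg_f^\P$ (or to a set of such $\a$ whose images are cofinal in $\gg_f^\Q$) and take the union of the resulting agreements $b\cap(\xi_\a+1)=c\cap(\xi_\a+1)$ to obtain $b\cap\xi=c\cap\xi$ for the $\xi$ in the statement. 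With that modification, and with the cross-model issue handled as above, your outline matches the intended argument.
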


\begin{definition}\label{bf} Suppose $\P\in S(\Gamma)$ and $f$-iterable. Given a $\Gamma$-correctly guided maximal $\T$ on $\P$ with last model $\Q$, we let $b_{\T, f}=b\cap \xi$ where $b$ witnesses $f$-iterability of $\P$ for $\T$ and $\xi\in b$ is the least such that
$\cp(E^\T_\xi)>\gg_f^\Q$.
\end{definition}

Notice that, if $\P$ is $f$-iterable, $\VT$ is a correctly guided finite stack on $\P$, and
$\vec{b}$ witnesses $f$-iterability of $\P$ for $\VT$, then even though $\pi_{\VT, \vec{b}}\rest
H_{f}^\P$ is independent of $\vec{b}$ it may very well depend on $\VT$. This observation motivates
the following definition.

\begin{definition}[Strong $f$-iterability]\label{strong f-itearbility} Suppose $\P\in S(\Gamma)$ and $f\in F(\Gamma)$. We say
$\P$ is \emph{strongly $f$-iterable} if $\P$ is $f$-iterable  and whenever
$\VT=\la \T_j , \P_j : j< u\ra\in H_{\l^+}$ and $\VU=\la \U_j, \P_j : j < v\ra\in H_{\l^+}$ are two
correctly guided finite stacks on $\P$ with common last model $\R$, $\vec{b}$ witnesses
$f$-iterability for $\VT$ and $\vec{c}$ witnesses $f$-iterability for $\VU$ then $\pi^{\VT}_b$ is
defined iff $\pi^{\VU}_c$ is defined and
\begin{center}
$\pi_{\VT, \vec{b}}\rest H_f^\P = \pi_{\VU, \vec{c}}\rest H_f^\P$.
\end{center}
\end{definition}

If $\P$ is strongly $f$-iterable and $\VT$ is a correctly guided finite stack on $\P$ with last
model $\R$ then we let
\begin{center}
$\pi_{\P, \R, f}:H_f^\P\rightarrow H_f^\R$
\end{center}
be the embedding given by any $\vec{b}$ which witnesses the $f$-iterability of $\VT$, i.e., fixing
$\vec{b}$ which witnesses $f$-iterability for $\VT$,
\begin{center}
$\pi_{\P, \R, f} =\pi_{\VT, \vec{b}}\rest H_f^\P$.
\end{center}
Clearly, $\pi_{\P, \R, f}$ is independent of $\VT$ and $\vec{b}$. 

Given a finite sequence of functions $\vec{f}=\la f_i : i<n\ra \in F(\Gamma)$, we let $\oplus_{i<n}f_i\in F(\Gamma)$ be the function given by $(\oplus_{i<n}f_i)(\P)=\la f_i(\P): i<n\ra$. We set $\oplus\vec{f}= \oplus_{i<n}f_i$. 

We then let
\begin{center}
$\mathcal{I}_{\Gamma, F}=\{ (\P, \vec{f}): \P\in S(\Gamma)$, $\vec{f}\in (F(\Gamma))^{<\omega}$ and $\P$ is strongly $\oplus\vec{f}$-iterable$\}$.
\end{center}

\begin{definition}
Given $F\subseteq F(\Gamma)$, we say $F$ is \textit{closed} if for any $\vec{f}\subseteq  F^{<\omega}$ there is $\P$ such that $(\P, \oplus\vec{f})\in \mathcal{I}_{\Gamma, F}$ and for any $\vec{g}\subseteq F^{<\omega}$, there is a $\Gamma$-correct iterate $\Q$ of $\P$ such that $(\Q, \vec{f}\cup\vec{g})\in \mathcal{I}_{\Gamma, F}$.
\end{definition}  

Fix now a closed $F\subseteq F(\Gamma)$. Let 
\begin{center}
$\mathcal{F}_{\Gamma, F}=\{ H^\P_f: (\P, f)\in \mathcal{I}_{\Gamma, F}\}$.
\end{center}
We then define $\preceq_{\Gamma, F}$ on $\mathcal{I}_{\Gamma, F}$ by letting $(\P, \vec{f})\preceq_{\Gamma, F} (\Q, \vec{g})$ iff $\Q$ is a $\Gamma$-correct iterate of $\P$ and $\vec{f}\subseteq \vec{g}$. Given $(\P, \vec{f})\preceq_{\Gamma, F} (\Q, \vec{g})$, we have that 
\begin{center}
$\pi_{\P, \Q, \vec{f}}: H^\P_{\oplus\vec{f}}\rightarrow H^\Q_{\oplus\vec{f}}$.
\end{center}
Notice that if $F$ is closed then $\preceq_{\Gamma, F}$ is directed. Let then 
\begin{center}
$\M_{\infty, \Gamma, F}$
\end{center}
be the direct limit of $(\mathcal{F}_{\Gamma, F}, \preceq_{\Gamma, F})$ under $\pi_{\P, \Q, \vec{f}}$'s. Given $(\P, \vec{f})\in \mathcal{I}_{\Gamma, F}$, we let $\pi_{\P, \vec{f}, \infty}: H^\P_{\oplus\vec{f}}\rightarrow \M_{\infty, \Gamma, F}$ be the direct limit embedding. 

\begin{lemma} $\M_{\infty, \Gamma, F}$ is wellfounded.
\end{lemma}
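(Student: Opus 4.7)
The plan is to derive a contradiction by converting a putative $\in$-descending $\omega$-sequence in $\M_{\infty,\Gamma,F}$ into one inside a wellfounded premouse. Suppose toward a contradiction that $\M_{\infty,\Gamma,F}$ is illfounded and fix $\la a_n : n<\omega\ra$ with $a_{n+1}\in^{\M_{\infty,\Gamma,F}} a_n$ for every $n$. By the definition of the direct limit, for each $n$ we can choose $(\P_n,\vec{f}_n)\in\mathcal{I}_{\Gamma,F}$ and $\xi_n\in H^{\P_n}_{\oplus\vec{f}_n}$ such that $\pi_{\P_n,\vec{f}_n,\infty}(\xi_n)=a_n$.

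Next, using that $\preceq_{\Gamma,F}$ is directed (which follows from the closedness of $F$, as remarked just before the lemma), recursively construct a $\preceq_{\Gamma,F}$-increasing chain $\la(\Q_n,\vec{g}_n):n<\omega\ra\subseteq\mathcal{I}_{\Gamma,F}$ so that $(\P_k,\vec{f}_k)\preceq_{\Gamma,F}(\Q_n,\vec{g}_n)$ for all $k\leq n$. Each successor step witnesses $(\Q_n,\vec{g}_n)\preceq_{\Gamma,F}(\Q_{n+1},\vec{g}_{n+1})$ via a $\Gamma$-correctly guided finite stack $\VT_n$ together with a fixed choice of $\oplus\vec{g}_n$-iterability branches. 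Concatenate the $\VT_n$ to obtain a $\Gamma$-correctly guided play of length $\omega$ on $\Q_0$; the $\lambda^+$-iteration strategy in $\Gamma$ possessed by $\Q_0$ (as a $\Gamma$-suitable premouse with iteration strategy in $\Gamma$) processes this play and outputs a wellfounded direct limit $\Q_\omega$, equipped with canonical maps $\pi_{\Q_n,\Q_\omega,\vec{g}_n}:H^{\Q_n}_{\oplus\vec{g}_n}\to H^{\Q_\omega}_{\oplus\vec{g}_n}$ for which $\pi_{\P_k,\Q_\omega,\vec{f}_k}=\pi_{\Q_n,\Q_\omega,\vec{g}_n}\circ\pi_{\P_k,\Q_n,\vec{f}_k}$ whenever $k\leq n$.

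Since each $\Q_n$ is a common $\preceq_{\Gamma,F}$-refinement of $(\P_n,\vec{f}_n)$ and $(\P_{n+1},\vec{f}_{n+1})$, the definition of $\in^{\M_{\infty,\Gamma,F}}$ gives, inside $\Q_n$, that $\pi_{\P_{n+1},\Q_n,\vec{f}_{n+1}}(\xi_{n+1})\in\pi_{\P_n,\Q_n,\vec{f}_n}(\xi_n)$. Applying $\pi_{\Q_n,\Q_\omega,\vec{g}_n}$ and using the commutativity above yields $\pi_{\P_{n+1},\Q_\omega,\vec{f}_{n+1}}(\xi_{n+1})\in\pi_{\P_n,\Q_\omega,\vec{f}_n}(\xi_n)$ in $\Q_\omega$. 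Thus $\la\pi_{\P_n,\Q_\omega,\vec{f}_n}(\xi_n):n<\omega\ra$ is an infinite $\in$-descending chain inside the wellfounded premouse $\Q_\omega$, the desired contradiction.

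The main obstacle is the step producing $\Q_\omega$: \rdef{strong f-itearbility} only regulates finite correctly guided stacks, so one must verify that the $\omega$-concatenation of the $\VT_n$ is a legal play of $\Q_0$'s strategy in $\Gamma$ and that the $\oplus\vec{g}_n$-iterability branches already chosen inside each $\VT_n$ agree with the branches the strategy would pick. This coherence is the content of \rlem{uniqueness of f-iterability embeddings} combined with the $\Q$-structure uniqueness supplied by \rlem{qstructures}; once it is in hand, the $\lambda^+$-iterability of $\Q_0$ immediately produces a wellfounded limit model and the descent through the hulls transfers to it, closing the argument.
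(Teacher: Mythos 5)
There is a genuine gap at exactly the step you flag as ``the main obstacle,'' and the patch you propose does not work. A $\Gamma$-suitable premouse is \emph{not} assumed to carry a $\lambda^+$-iteration strategy coded into $\Gamma$ (or any total iteration strategy at all); membership in $Mice^\Gamma$ is required of the levels $\mathcal{O}^\P_\eta=\W^\Gamma(\P|\eta)$, not of $\P$ itself, and if suitable premice had genuine strategies the entire apparatus of $f$-iterability, strong $f$-iterability and quasi-iterability would be superfluous. Worse, the links of your chain need not be honest iterations: a $\Gamma$-correctly guided finite stack may end with a $\Gamma$-maximal tree whose ``last model'' is $Lp^\Gamma_\omega(\M(\T))$, and the map $\pi_{\Q_n,\Q_{n+1},\vec{g}_n}$ is then only a map on the hull $H^{\Q_n}_{\oplus\vec{g}_n}$ pieced together from $f$-iterability branches, not an iteration embedding of the full model. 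So the $\omega$-concatenation of the $\VT_n$ is not a legal play of any iteration game, and \rlem{uniqueness of f-iterability embeddings} and \rlem{qstructures} give you uniqueness of branches, not wellfoundedness of the limit. As it stands, the wellfoundedness of your $\Q_\omega$ is the direct limit of a cofinal subsystem of $(\mathcal{F}_{\Gamma,F},\preceq_{\Gamma,F})$, i.e.\ essentially the statement you are trying to prove, so the argument is circular.

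The paper's proof sidesteps the limit model entirely: from the descending sequence one extracts witnesses $\a_i\in H^{\P_i}_{f_i}$ and then \emph{simultaneously compares} the countably many $\P_i$'s to obtain a single common $\Gamma$-correct iterate $\P$ --- one fixed $\Gamma$-suitable premouse, hence wellfounded --- together with maps $\pi_{\P_i,\P,f_i}$. Setting $\b_i=\pi_{\P_i,\P,f_i}(\a_i)$ yields an infinite descending sequence of ordinals of $\P$, a contradiction. The essential point you are missing is that the target of the descent must be produced in one step (by comparison) rather than as the limit of an $\omega$-chain whose wellfoundedness would itself need proof. Your reduction of $\in$-descent to the hulls and the commutativity bookkeeping are fine; it is only the construction of a wellfounded common target that needs to be replaced by the simultaneous comparison argument.
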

\begin{proof}
If not then we can fix $\la (\P_i, f_i): i<\omega\ra\subseteq F$ and $\la \a_i: i<\omega\ra$ such that $\a_i\in H^{\P_i}_{f_i}$, $(\P_i, \oplus_{j\leq i} f_j)\in \mathcal{I}_{\Gamma, F}$ and $\pi_{\P_{i+1}, f_{i+1}, \infty}(\a_{i+1})<\pi_{\P_{i}, f_{i}, \infty}(\a_{i})$. By simultaneously comparing $\P_i$'s we get a common $\Gamma$-correct iterate $\P$ such that if $\b_i=\pi_{\P_i, \P, f_i}(\a_i)$ then $\b_{i+1}<\b_i$, contradiction!
\end{proof}

It turns out that $V_\Theta^\H$ of many models of determinacy can be obtained as $\M_{\infty, \Gamma, F}$ for some $\Gamma$ and $F$. We will give the details in the next few subsection. 

\subsection{Quasi-iterability}\label{quasi iterability section}

We are now in a position to define quasi-iterability which will later be used to construct strategies for certain $\H$s of models of determinacy. We continue with the set up of \rsec{suitable mice}. Thus, recall that we are assuming $ZF$ and $\Gamma\subseteq \powerset(\powerset(\lambda))$ for some cardinal $\l$. Fix some $\Gamma$-suitable $\P$ and a closed $F\subseteq F(\Gamma)$. Let $G\subseteq F$.

\begin{definition}\label{semi quasi iteration} We say $\la \T_\a, \M_\a: \a<\nu<\l^+\ra$ is a semi $(F, G)$-quasi iteration of $\P$ of length $\nu$ if 
\begin{enumerate}
\item $\M_0=\P$ and for all $\a<\nu$, $\M_\a$ is $\Gamma$-suitable,
\item for $\a<\nu$ then $\T_\a$ is a $\Gamma$-correctly guided tree on $\M_\a$ and if $\a+1<\nu$ then $\M_{\a+1}$ is the last model of $\T_\a$,
\item for all $\a<\nu$, $\M_\a$ is strongly $f$-iterable for every $f\in G$,
\item for all limit $\a<\nu$, for any $g\in F$ there is $\b<\a$ such that for any $\gg\in [\b, \a)$ 
\begin{enumerate}
\item if $lh(\T_\gg)$ is a successor then $i^{\T_\gg}(g(\M_\gg))=g(\M_{\gg+1})$,
\item if $lh(\T_\gg)$ is limit then there is a cofinal wellfounded branch $b$ of $\T_\gg$ such that $\M^{\T_\gg}_b=\M_{\gg+1}$ and $i^{\T_\gg}_b(g(\M_\gg))=g(\M_{\gg+1})$, 
\end{enumerate}
\end{enumerate}
\end{definition}

Suppose $\la \T_\a, \M_\a: \a<\nu<\l^+\ra$ is a semi $(F, G)$-quasi iteration of $\P$ of length $\nu$. Given limit $\a<\nu$ and $g\in F$ we let $\b_{g, \a}<\a$ be the least ordinal satisfying 4b of \rdef{semi quasi iteration}. 

\begin{definition}\label{embedding of semi quasi iteration} Suppose $\la \T_\a, \M_\a: \a<\nu<\l^+\ra$ is a semi $(F, G)$-quasi iteration of $\P$ of length $\nu$. We say $\la \pi^{g, \a}_{\gg, \xi} : \gg<\xi <\a\leq \nu \wedge \a\in Lim\cap \nu+1\ra$ are the embeddings of $\la \T_\a, \M_\a: \a<\nu<\l^+\ra$ if for all limit $\a\leq \nu$ and for all $g\in F$
\begin{enumerate}
\item $\pi^{g, \a}_{\gg, \xi}$ is defined for all $\gg, \xi\in [\b_{g,\a} , \a]$,
\item $\pi^{g, \a}_{\gg, \gg+1}=i^{\T_\gg}$ if it exists and otherwise $\pi^{g, \a}_{\gg, \gg+1}=i^{\T_\gg}_b\rest H_g^{\M_\gg}$ where $b$ witnesses clause 4b for $\a$ and $g$, 
\item fixing $\gg\in [\b_{g, \a}, \a]$, $\pi^{g, \a}_{\gg, \xi}$ is defined by induction on $\xi\in (\gg,\a]$ where the first step of the induction is clause 2 above and the rest is given by the following scheme:
\begin{enumerate}
\item if $\xi\leq \a$ is limit then if $H_g^{\M_\xi}$ is a direct limit of $H_\xi^{\M_\l}$'s under $\pi_{\gg, \l}^{g, \a}$'s for $\gg<\l<\xi$ then $\pi_{\gg, \xi}^{g, \a}$ is the direct limit embedding and otherwise it is undefined,
\item if $\xi=\l+1$ then $\pi_{\gg, \xi}^{g, \a}=\pi_{\l, \l+1}^{g, \a}\circ \pi_{\gg, \l}^{g, \a}$.
\end{enumerate} 
\end{enumerate} 
\end{definition}

Next we define $(F, G)$-quasi iterations.

\begin{definition}\label{quasi iteration} Suppose $\la \T_\a, \M_\a: \a<\nu<\l^+\ra$ is a semi $(F, G)$-quasi iteration of $\P$ of length $\nu$. We say $\la \T_\a, \M_\a: \a<\nu<\l^+\ra$ is a $(F, G)$-quasi iteration of $\P$ of length $\nu$ if letting $\la \pi^{g, \a}_{\gg, \xi} : \gg<\xi <\a\leq \nu \wedge \a\in Lim\cap \nu+1\ra$ be the embeddings of $\la \T_\a, \M_\a: \a<\nu<\l^+\ra$  then
\begin{enumerate}
\item  for every $\a$, $\M_\a=\cup_{g\in F} H^{\M_\a}_g$, and
\item for every limit $\a< \nu$ and for every $g\in F$, $H^{\M_\a}_g$ is the direct of $H^{\M_\gg}_g$ for $\gg\in[b_{g,\a}, \a)$ under the maps $\pi^{g, \a}_{\gg, \xi}$'s for $\gg<\xi\in [\b_{g, \a}, \a)$.
\end{enumerate}
\end{definition}

Finally we define the last model of $(F, G)$-quasi iterations.

\begin{definition}\label{last model of quay iteration} Suppose $\la \T_\a, \M_\a: \a<\nu<\l^+\ra$ is a $(F, G)$-quasi iteration of $\P$ of length $\nu$ with embeddings $\la \pi^{g, \a}_{\gg, \xi} : \gg<\xi <\a\leq \nu \wedge \a\in Lim\cap \nu+1\ra$ . Then we say $\Q$ is the last model of $\la \T_\a, \M_\a: \a<\nu<\l^+\ra$ if
$\Q$ is $\Gamma$-suitable, $\Q$ is strongly $f$-iterable for all $f\in G$ and one of the following holds:
\begin{enumerate}
\item  $\nu=\a+1$, $\Q$ is the last model of $\T_\a$ as defined in \rdef{last model of finite stack} and $\Q=\cup_{f\in F}H^\Q_f$ or
\item $\nu$ is limit, $\Q=\cup_{g\in F}H^\Q_g$ and for $g\in F$, $H^\Q_g$ is the direct limit of $H^{\M_\gg}_g$ under $\pi^{g, \nu}_{\gg, \xi}$'s for $\gg<\xi\in [\b_{g, \nu}, \nu)$.
\end{enumerate}
\end{definition}

Notice that it may be the case that quasi iterations don't have last models but when they do it is uniquely determined. 

\begin{definition}\label{quasi iterable} We say $\P$ is $(F, G)$-quasi iterable if all of its $(F, G)$-quasi iterations have a last model. 
\end{definition}

Before moving on we fix some notation. When $G=\{ f\}$ then we use $(F, f)$ instead of $(F, G)$. When $G=\emptyset$ then we write $F$ instead of $(F, \emptyset)$. Suppose $F\subseteq F(\Gamma)$ is closed. We say $\P$ is (strongly) $F$-iterable if $\P$ is (strongly) $f$-iterable for all $f\in F$. Suppose now that $\P$ is (strongly) $F$-iterable. Let $\Q$ be a correctly guided iterate of $\P$. Then we let $\pi_{\P, \Q, F}=\cup_{f\in F}\pi_{\P, \Q, f}$. Suppose further there is some $F^*\subseteq F(\Gamma)$ such that $\P$ is $(F^*, F)$-quasi iterable and suppose that $\Q$ is a $(F^*, F)$-quasi iterate of $\P$. We then let $\pi_{\P, \Q, F}:\cup_{f\in F}H_f^\P\rightarrow \cup_{f\in F}H_f^\Q$ be the iteration embedding coming from the composition of the quasi iterability embeddings. We also define $\pi_{\Q, F, \infty}$ similarly.

\subsection{$\vec{f}$-guided strategies}\label{f-guided strategies}

We continue with the set up of \rsec{suitable mice}. Thus, recall that we are assuming $ZF+DC_\l$.  In this subsection, our goal is to develop tools for constructing strategies by putting together pieces of various $f$-iterability branches. In later sections, we will need to construct iteration strategies that have the so-called branch condensation and one method for producing such strategies is via producing strongly $\vec{f}$-guided strategies. Below we let $ZFC^-$ stand for $ZFC$ without Replacement or without Powerset Axiom. We start by introducing \textit{branch condensation}, a notion that played an important role in \cite{ATHM}.

\begin{definition}\label{branch condensation} Suppose $\Sigma$ is an $(\a, \b)$-iteration strategy for some structure $M$ such that $M\models ZFC^-$ ($M$ need not be a fine structural model).  We say $\Sigma$ has branch condensation if whenever $(\R, \VT, \Q,  \VU, c, \pi)$ is such that 
\begin{enumerate}
\item $\VT$ is a stack on $M$ according to $\Sigma$ with last model $\R$ and $i^\VT$ exists,
\item $\VU$ is a stack on $M$ according to $\Sigma$ such that its last normal component is of limit length and $c$ is some branch on $\VU$ such that $\pi^{\VU}_c$ exists,
\item $\Q=\M^{\VU}_c$ and $\pi:\Q\rightarrow_{\Sigma_1}\R$ is such that $i^{\VT}=\pi\circ i^{\VU}_c$ 
\end{enumerate} 
then $c=\Sigma(\VU)$.
\end{definition}

Below we introduce one of the main methods for producing strategies with branch condensation. The basic idea is that if the strategy moves many operators correctly then it must have branch condensation. For example, let $\M=\M_1|(\d^+)^{\M_1}$ where $\M_1$ is the minimal mouse with a Woodin cardinal and $\d$ is the Woodin cardinal of $\M_1$. Then $\M$ may have many iteration strategies but it has one strategy that moves the theory of sharps correctly. Let $\Sigma$ be this strategy. It can be shown, using the fact that the theory of sharps \textit{condenses}, that $\Sigma$ has branch condensation. Below we will explain how exactly the aforementioned condensation of the theory of sharps works in a more general context. 

\begin{definition} Suppose $\P\in S(\Gamma)$ and $\Sigma$ is a $(\lambda^+, \lambda^+)$-iteration strategy for $\P$. We say $\Sigma$ is $\Gamma$-fullness preserving if whenever $i:\P\rightarrow \Q$ comes from an iteration produced by $\Sigma$, $\Q\in S(\Gamma)$. Given $f\in F(\Gamma)$, we say $\Sigma$ respects $f$ if whenever $i:\P\rightarrow \Q$ and $j:\Q\rightarrow \R$ are iterations produced via $\Sigma$ then $j(f(\Q))=f(\R)$. We say $\Sigma$ strongly respects $f$ if whenever $i, j, \P, \Q, \R$ are as above and $\S$ is such that there are $\sigma:\Q\rightarrow \S$ and $\tau:\S\rightarrow \R$ such that $j=\tau\circ \sigma$ then $\S\in S(\Gamma)$ and $\sigma(f(\Q))=f(\S)$.
\end{definition}

If $\Sigma$ strongly respects many $f$'s then it is possible to show that $\Sigma$ has branch condensation. Fix some $F\subseteq F(\Gamma)$.

\begin{definition}\label{qsjs}
We say $F$ is a quasi-self-justifying-system (qsjs) if there is $\P\in S(\Gamma)$ such that
\begin{enumerate}
\item $F$ is closed,
\item for any $\vec{f}\in F^{<\omega}$, $\oplus\vec{f}\in F$,
\item for every $f\in F$, $\P$ is $(F, f)$-quasi iterable,
\item whenever $\Q$ is a $(F, f)$-quasi iterate of $\P$, $\sup_{f\in F}\gg^\Q_f=\Q$ (this condition follows from clause 3 above),
\item whenever $\Q$ is a $F$-quasi iterate of $\P$ and $\sigma:\R\rightarrow_{\Sigma_1} \Q$ is such that $f(\Q)\in rng(\sigma)$ for all $f\in F$ then $\R\in S(\Gamma)$.
\end{enumerate}   
\end{definition}

The following lemmas are essentially due to Woodin and are based on \rlem{uniqueness of branches}. They collectively imply that if $F$ is a qsjs then it produces a $\Gamma$-fullness preserving iteration strategy for some $\P\in S(\Gamma)$.

\begin{lemma}[ZF]\label{one step lemma} Suppose $F$ is a qsjs and $\P\in S(\Gamma)$ witnesses it. Then there is a  $\lambda^+$-iteration strategy $\Sigma^F$ for $\P$ such that $\Lambda^F$ is $\Gamma$-fullness preserving.
\end{lemma}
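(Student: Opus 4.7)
The plan is to define $\Sigma^F$ stage by stage, using the $(F,f)$-quasi iterability of $\P$ for each $f\in F$, and to assemble the branches chosen by those iterabilities into a single branch via the uniqueness lemmas. Concretely, suppose inductively that a stack $\VT$ on $\P$ has already been played according to $\Sigma^F$, with last normal component $\T$ a nice $\Gamma$-correctly guided tree of limit length on an iterate $\M$ of $\P$ that is $\Gamma$-suitable and strongly $f$-iterable for every $f\in F$. For each $f\in F$, transfer the $(F,f)$-quasi iteration along the stack to obtain a canonical cofinal wellfounded branch $b_f$ of $\T$ whose associated embedding on $H_f^{\M}$ is $\sigma_f:=\pi^{\T}_{b_f}\rest H_f^{\M}$. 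Because $F$ is closed under $\oplus$ (clause~2 of \rdef{qsjs}), for any $f,g\in F$ the branch $b_{f\oplus g}$ witnesses that $\sigma_f$ and $\sigma_g$ agree on $H_f^{\M}\cap H_g^{\M}$, so \rlem{uniqueness of f-iterability embeddings} gives a common coherent embedding $\sigma:=\bigcup_{f\in F}\sigma_f$. By the hull-cover condition ($\sup_{f\in F}\gg^{\M}_f=\d^{\M}$, clause~4 of \rdef{qsjs}) the range of $\sigma$ contains a cofinal subset of $\d(\T)$, and then \rlem{uniqueness of branches} asserts that at most one cofinal branch $b$ of $\T$ has $rng(\sigma)$ contained in the range of its iteration map. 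I set $\Sigma^F(\VT):=b$; existence of such a $b$ is guaranteed by any single $b_f$, and the uniqueness argument shows all the $b_f$ pick out the same $b$.

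Next I verify that $\Sigma^F$ extends coherently to successor and limit stages of stacks of length $<\lambda^+$. At a successor step in a normal tree, the next model is the ultrapower, which is $\Gamma$-suitable because $\T$ is $\Gamma$-correctly guided. At the end of a normal tree $\T$ of successor length or at the limit of a stack, the last model $\Q$ is built so that $H^{\Q}_f$ is the direct limit of the $H^{\M_\gg}_f$ along the quasi iteration maps supplied by the $(F,f)$-iterability, for every $f\in F$; the hull-cover condition then yields $\Q=\bigcup_{f\in F}H^{\Q}_f$. Wellfoundedness of $\Q$ is inherited from the fact that each $(F,f)$-quasi iteration has a wellfounded last model (clause~3 of \rdef{qsjs}), and the direct limit system is directed by the coherence of embeddings already discussed. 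This both confirms that $\Sigma^F$ is a total $(\lambda^+,\lambda^+)$-strategy and inductively produces $\Gamma$-suitable iterates at every stage.

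Finally I check $\Gamma$-fullness preservation. Let $i:\P\to \Q$ be an iteration map arising from a play of $\Sigma^F$ ending in $\Q$. By the construction above, $\Q=\bigcup_{f\in F}H_f^{\Q}$, and each $H_f^{\Q}$ sits inside $\Q$ as the image of the direct-limit embedding from $H_f^{\M_\gg}$. Apply clause~5 of \rdef{qsjs} to the embedding whose range contains $f(\Q)$ for every $f\in F$: this forces the source to be $\Gamma$-suitable, hence $\Q\in S(\Gamma)$. The same argument applied to any intermediate model along the play shows every iterate produced by $\Sigma^F$ is $\Gamma$-suitable, which is the definition of $\Gamma$-fullness preservation.

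The main obstacle is proving that the branches $b_f$ produced by the various $f$-iterabilities really do fit together into a single wellfounded cofinal branch of $\T$. This is where the full strength of \rdef{qsjs} enters: closure of $F$ under $\oplus$ gives pairwise coherence of the $\sigma_f$, clause~3 guarantees that each $b_f$ exists and is wellfounded, clause~4 supplies a cofinal set in $rng(\sigma)$ so that \rlem{uniqueness of branches} can be applied, and clause~5 upgrades $\Gamma$-suitability through the direct limits. Once these conditions are in place the strategy is essentially forced on us; the delicate point is verifying their compatibility throughout the transfinite stage-by-stage construction.
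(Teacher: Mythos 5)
Your treatment of the main case --- a $\Gamma$-maximal correctly guided tree of limit length --- is essentially the paper's argument: for each $f\in F$ take an $f$-iterability branch, use closure of $F$ under $\oplus$ together with \rlem{uniqueness of f-iterability embeddings} to make the choices cohere, use clause 4 of \rdef{qsjs} to see that the assembled branch is cofinal, and use clause 5 to identify the branch model with the suitable model $Lp_\omega^\Gamma(\M(\T))$, which gives both wellfoundedness and fullness preservation. One imprecision: no single $b_f$ is guaranteed to have $rng(\sigma)$ inside the range of its iteration map, so existence of the desired $b$ does not follow from ``any single $b_f$''; what one actually takes is $b=\cup_{f\in F}b_{\T,f}$ where $b_{\T,f}$ is the truncation of \rdef{bf}, these being pairwise comparable initial segments by the ``moreover'' clause of \rlem{uniqueness of f-iterability embeddings}, with cofinality supplied by clause 4. (Also, most of your second paragraph --- limit stages of stacks, direct limits of the $\M_\gg$ --- belongs to \rlem{getting strategies from qsjs}; the present lemma only asks for a strategy for single normal trees.)

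There are two cases a total $\lambda^+$-strategy must answer that your recipe does not cover. First, if the tree built so far is $\Gamma$-short, the correct branch is the unique $b$ with $\Q(b,\T)\insegeq\W^\Gamma(\M(\T))$; this branch may drop, so $\pi^\T_b$ need not exist and the ``cofinal subset of $rng(\sigma)$'' mechanism is inapplicable --- the paper treats this as a separate case decided by $\Q$-structures. Second, normal trees with fatal drops are not correctly guided at all, yet the strategy must respond to them; the paper handles these by passing to the iteration strategy of $\mathcal{O}^{\M_\a^\T}_\eta$ from the drop onward. Neither omission reflects a wrong idea, but without them the strategy you define is only partial.
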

\begin{proof} We define $\Lambda^F$ for normal trees. First we describe $\Lambda^F$ on $\Gamma$-correctly guided trees. Given such a tree $\T$ on $\P$ we have two cases. If $\T$ is $\Gamma$-short then we let $\Lambda^F(\T)=b$ be the unique branch of $\T$ such that $\Q(b, \T)$-exists. If $\T$ is a maximal then we do the following. Let $\Q$ be the last model of $\T$. Recall the definition of $b_f$ for $f\in F$ (see \rdef{bf}). For each $f\in F$, let $b_f$ be a branch of $\T$ witnessing $f$-iterability of $\P$ for $\T$. Because $\sup_{f\in F}\gg_f^\Q=\d^\Q$, we have that $b=\cup_{f\in F}b_f$ is a cofinal branch of $F$. Notice now that if $\S=\cup_{f\in F}\pi_b^{\T}(H_f^\P)$ then there is $\sigma:\S\rightarrow_{\Sigma_1} \Q$. By clause 5 of \rdef{qsjs}, we have that $\S\in S(\Gamma)$ and by clause 4 of \rdef{qsjs}, we have that $\sigma\rest \d(\T)=id$. Hence, $\S=\Q$. We then let $\Lambda^F(\T)=b$.

If $\T$ has a fatal drop then let $(\a, \eta)$ be such that $\T$ has a fatal drop at $(\a, \eta)$. Then we let $\Lambda^F(\T)$ be the $\Lambda(\T^*)$ where $\Lambda$ is the strategy of $\mathcal{O}_\eta^M$ and $\T^*$ is the $\T$ from $\a$ on. 
\end{proof}

Notice that if $F$ is a qsjs and $\P\in S(\Gamma)$ witnesses it then any $F$-quasi iterate of $\P$ also witnesses that $F$ is a qsjs. Given then any $\Q\in S(\Gamma)$ witnessing that $F$ is a qsjs we let $\Lambda^F_\Q$ be the $\l^+$-iteration strategy given by \rlem{one step lemma}. Let then $\P\in S(\Gamma)$  witness that $F$ is qsjs. To get a $(\l^+, \l^+)$-strategy for $\P$, we just need to dovetail all of $\Lambda^F_\Q$'s together for $\Q$'s that are $F$-quasi iterate of $\P$. The following notion will be used in the proof of the next lemma. 

Given an iteration tree $\T$ such that $lh(\T)=\gg+1$ where $\gg$ is limit, we let $\T^-$ be $\T$ without its last branch.
Suppose $\VT$ is a stack on a $\Gamma$-suitable premouse $\P$. We say $\VT$ is \textit{good} if letting $\la \T_\a, \M_\a: \a<\eta\ra$ be the normal components of $\VT$, for each $\a<\eta$, $\M_\a$ is $\Gamma$-suitable and $\T_\a$ is a $\Gamma$-correctly guided tree on $\M_\a$. We let $\VT^*$ be defined as follows: $\VT^*=\la \U_\a, \M_\a: \a<\eta\ra$ where $\U_\a=\T_\a$ if either $\T_\a^-$ is undefined or it is defined but it is $\Gamma$-short and otherwise, $\U_\a=\T_\a^-$. We say $\VT^*$ is the quasi-rearrangement of $\VT$.

\begin{lemma}[ZF]\label{getting strategies from qsjs} Suppose $F$ is a qsjs and $\P\in S(\Gamma)$ witnesses it. Then there is a $(\lambda^+, \lambda^+)$-iteration strategy $\Sigma^F$ for $\P$ such that $\Sigma^F$ is $\Gamma$-fullness preserving. 
\end{lemma}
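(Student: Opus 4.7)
The plan is to extend the normal-tree strategy $\Lambda^F$ from \rlem{one step lemma} to a strategy $\Sigma^F$ on stacks by dovetailing the strategies $\Lambda^F_\Q$ across all $F$-quasi iterates $\Q$ of $\P$. Every such $\Q$ again witnesses that $F$ is a qsjs (the closure conditions of \rdef{qsjs} are preserved under quasi iteration), so \rlem{one step lemma} supplies a $\lambda^+$-strategy $\Lambda^F_\Q$ for normal trees on $\Q$. I would then define $\Sigma^F$ recursively on a stack $\la \T_\a, \M_\a : \a < \eta\ra$: on the normal component $\T_\a$, let $\Sigma^F$ play $\Lambda^F_{\M_\a}$, and at successor stages set $\M_{\a+1}$ to be the last model of $\T_\a$ when one exists and otherwise $Lp_\omega^\Gamma(\M(\T_\a))$; the argument inside the proof of \rlem{one step lemma} shows that the image of the chosen branch coincides with this model.

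The core of the work lies in handling a limit stage $\nu$ of the stack. Here I would pass to the quasi-rearrangement $\VT^* = \la \U_\a, \M_\a : \a < \nu\ra$ of $\VT$, replacing each $\Gamma$-maximal $\T_\a$ by its pre-branch part $\T_\a^-$. Because each $\Lambda^F_{\M_\a}$ picks, on a maximal tree, the branch obtained as the union of the $b_f$ for $f \in F$ (and in particular respects every $f \in F$), the system $\VT^*$ is an $(F, F)$-quasi iteration in the sense of \rdef{quasi iteration}: the hulls $H_f^{\M_\a}$ form directed systems under the iteration embeddings, and each $\M_\a$ equals $\bigcup_{f \in F} H_f^{\M_\a}$. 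Since $F$ is closed under $\oplus$ and $\P$ is $(F, f)$-quasi iterable for every $f \in F$ by clauses 2 and 3 of \rdef{qsjs}, a direct-limit argument yields a $\Gamma$-suitable model $\M_\nu$ serving as the last model of $\VT^*$ in the sense of \rdef{last model of quay iteration}. I take $\M_\nu$ to be the next model of $\VT$ under $\Sigma^F$; fullness preservation at this stage is built into the notion of \textit{last model of a quasi iteration}.

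Coherence---that $\Sigma^F$ does not depend on how one breaks $\VT$ into normal components---reduces to \rlem{uniqueness of f-iterability embeddings}, since the iteration map restricted to each $H_f^{\M_\a}$ is uniquely determined by the quasi iteration. The main obstacle I expect is verifying that $\M_\nu$ at a limit stage is again strongly $f$-iterable for every $f \in F$, so that the recursion can continue past $\nu$; for this I would invoke clause 5 of \rdef{qsjs} applied through the natural embeddings $\pi_{\P, \M_\nu, f}$ to transfer strong $f$-iterability and the qsjs-witnessing property from $\P$ to $\M_\nu$, ensuring that $\M_\nu$ itself witnesses that $F$ is a qsjs and the construction of $\Sigma^F$ can proceed above it.
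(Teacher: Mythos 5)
Your proposal is correct and follows essentially the same route as the paper: one dovetails the one-step strategies $\Lambda^F_\Q$ over the $F$-quasi iterates of $\P$, maintains inductively that the quasi-rearrangement $\VT^*$ of any play is an $F$-quasi iteration, and at limit stages uses the $(F,f)$-quasi iterability of $\P$ (clause 3 of \rdef{qsjs}) to see that the direct limit along the main branch coincides with the last model of the quasi iteration in the sense of \rdef{last model of quay iteration}, hence is $\Gamma$-suitable and strongly $f$-iterable so the recursion continues. The only cosmetic difference is that the continuation past a limit stage comes directly from the definition of the last model of a quasi iteration rather than from clause 5 of \rdef{qsjs}, but this does not affect the argument.
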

\begin{proof} We handle trees with fatal drops exactly the same way as in \rlem{one step lemma}. We leave the details to the reader. By induction we define $\la \Sigma^F_\a: \a<\l^+\ra$ such that 
\begin{enumerate}
\item $\Sigma_\a^F$ is a $\Gamma$-fullness preserving $(\a, \l^+)$-iteration strategy for $\P$,
\item for $\a<\b\leq\l^+$, $\Sigma_\a^F\subseteq \Sigma_\b^F$, 
\item for all $\a<\l^+$, whenever $\VT$ is according to $\Sigma^F_\a$, $\VT^*$ is an $F$-quasi iteration of $\P$, and
\item if $\b$ is limit and $\VT=\la \T_\a, \M_\a:\a<\b\ra$ is a stack on $\P$ such that for each $\a<\b$, $\oplus_{\xi<\a}\T_\a$ is according to $\Sigma^F_\a$ then letting $\M$ be the direct limit along the main branch of $\VT$, $\M$ is suitable and $(\VT^*)^\frown \M$ is a $F$-quasi iteration of $\P$.
\end{enumerate} 
 For $\a=1$ we let $\Sigma^F_\a=\Lambda^F_\P$. Clearly, $\Sigma^F_1$ satisfies clauses 1-4 above. Suppose now we have defined $\la \Sigma^F_\a: \a<\b\ra$. For $\a<\b$, let $I_\a$ be the set of pairs $(\VT, \M)$ such that  $\VT$ is according to $\Sigma^F_\a$, it has $\a$ many rounds, $\M$ is the last model of $\VT$, $\M$ is $\Gamma$-suitable and if $\a$ is limit then $\M$ is the direct limit of the models along the main branch. Suppose now $\b=\nu+1$ for some $\nu$.  Then $\Sigma^F_{\b}(\VT)=b$ iff either
 \begin{enumerate}
 \item $\VT$ is a run of $\mathcal{G}(\P, \nu,\lambda^+)$, $\VT$ is according to $\Sigma^F_\nu$ and $\Sigma^F_\nu(\VT)=b$ or
 \item $\VT$ is of the form $\VU^\frown\M^\frown \U$ such that $(\VU, \M)\in I_\nu$, $\U$ is according to $\Lambda^F_\M$ and $\Lambda^F_\M(\U)=b$.
 \end{enumerate}
 Again, clearly $\Sigma^F_\b$ has the desired properties. 
 
 Lastly, suppose $\b$ is limit. We have that $\la \Sigma^F_\a: \a<\b\ra$ satisfies 1-4. It is then enough to show that clause 4 holds. To see this, fix $\VT=\la \T_\a, \M_\a:\a<\b\ra$ as in clause 4. We have that for each $\a<\b$, $(\VT\rest \a)^*$ is an $F$-quasi iteration of $\P$. What we need to show is that the direct limit along the main branch of $\VT$ is the same as the direct limit as defined by the two clauses of \rdef{quasi iteration}. Let then, for $\a<\gg<\b$, $\pi_{\a, \gg}:\M_\a\rightarrow \M_\gg$ be the iteration embeddings given by $\VT$. We have that for each $f\in F$ and $\a<\gg<\b$, $\pi_{\a, \gg}(f(\M_\a))=f(\M_\gg)$ and that $\M_\a=\cup_{f\in F} H_f^{\M_\a}$. Let then $\M$ be the direct limit of $\M_\a$'s under the maps $\pi_{\a, \gg}$ and let $\pi_\a:\M_\a\rightarrow \M$ be the direct limit embedding. We then have that $\M=\cup_{\a<\b, f\in F}\pi_0(H_f^\P)$ and for every $f\in F$, $\pi_0(H_f^\P)$ is the direct limit under $\pi^{f, \b}_{\a, \gg}$ where $\la \pi^{f, \xi}_{\a, \gg}: \a<\gg\leq \xi \leq \b\ra$ are the embeddings of $\VT^*$. It then follows that $\M$ is indeed the model constructed by the two clauses of \rdef{quasi iteration}.
\end{proof}

Suppose $F$ is a qsjs and $\P\in S(\Gamma)$ witnesses it. We then let $\Sigma^F_\P$ be the $(\lambda^+, \l^+)$-iteration strategy given by \rlem{getting strategies from qsjs}. Next we show that $\Sigma^F_\P$ has branch condensation.

\begin{lemma}[ZF]\label{getting branch condensation} Suppose $F$ is a qsjs and $\P\in S(\Gamma)$ witnesses it. Then $\Sigma^F_\P$ is a $(\l^+, \l^+)$-iteration strategy which is $\Gamma$-fullness preserving and has branch condensation.
\end{lemma}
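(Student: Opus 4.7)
To verify that $\Sigma^F_\P$ has branch condensation, I would take a tuple $(\R, \VT, \Q, \VU, c, \pi)$ as in \rdef{branch condensation} and let $b = \Sigma^F_\P(\VU)$; the goal is to show $c = b$. A routine induction on the rounds of $\VU$, using that at successor rounds $\Sigma^F_\P$ reduces to $\Lambda^F_\M$ for the current $\Gamma$-suitable model $\M$ (as in the proof of \rlem{getting strategies from qsjs}), lets me reduce to the case where the last normal component of $\VU$ is a single tree $\T$ of limit length on $\M$, and $b = \Lambda^F_\M(\T)$. Fatal drops are handled by the same recursive device used in \rlem{one step lemma}, so I may assume $\T$ has none.

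The engine of the proof will be the factorisation $i^\VT = \pi \circ i^\VU_c$. For each $f \in F$, the $F$-quasi iterability built into $\Sigma^F_\P$ gives $i^\VT(f(\M)) = f(\R)$, whence $f(\R) \in \rge(\pi)$. Clause 5 of \rdef{qsjs}, applied to the $\Sigma_1$-embedding $\pi : \Q \to \R$ whose range contains every $f(\R)$, then yields $\Q \in S(\Gamma)$, and a short calculation using the $\Sigma_1$-elementarity of $\pi$ together with the uniform definability of the $f$-operation on $\Gamma$-suitable models forces $i^\VU_c(f(\M)) = f(\Q)$ and $\pi(f(\Q)) = f(\R)$ for every $f \in F$. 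If $\T$ is $\Gamma$-short, these identities place $\Q(c,\T)$ inside $\W^\Gamma(\M(\T))$ as a $\Q$-structure for $\d(\T)$, forcing $\Q(c,\T) = \Q(b,\T)$ and hence $c = b$ by \rlem{qstructures}. If $\T$ is $\Gamma$-maximal, the identity $i^\VU_c(f(\M)) = f(\Q)$ is precisely the statement that $c$ itself witnesses $f$-iterability of $\M$ for $\T$ through $f$; by \rlem{uniqueness of f-iterability embeddings}, $c$ therefore agrees with the branch $b_f$ used in the construction of $\Lambda^F_\M(\T)$ below the first $\xi \in c$ with $\cp(E^\T_\xi) > \gg^\Q_f$.

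To close the argument I would take the union over $f \in F$: by clause 4 of \rdef{qsjs}, the ordinals $\gg^\Q_f$ are cofinal in $\d^\Q$, so the partial agreements fuse into $c = b$, and $\Gamma$-fullness preservation is inherited from \rlem{getting strategies from qsjs}. The main obstacle I foresee is the step $i^\VU_c(f(\M)) = f(\Q)$: here $f(\Q)$ is the canonical evaluation of $f \in F(\Gamma)$ at $\Q$, while the left-hand side is the direct-limit image of $f(\M)$, and matching them cleanly demands that the $f$-operation commute with $\Sigma_1$-embeddings between $\Gamma$-suitable models. This is exactly the definability assumption hidden in the qsjs framework, and once it is unpacked the remainder is a uniform packaging of \rlem{uniqueness of f-iterability embeddings} parallel to the maximal-case argument in \rlem{one step lemma}.
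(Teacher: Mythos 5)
Your proposal is correct and follows essentially the same route as the paper's proof: both hinge on the factorisation $i^{\VT}=\pi\circ i^{\VU}_c$ together with clause 5 of the qsjs definition to get that $\Q$ is $\Gamma$-suitable and that $i^{\VU}_c$ moves each $f(\cdot)$ correctly, and then conclude that the last normal component of $\VU$ is according to $\Lambda^F$ of its base model, so that $c=\Sigma^F_\P(\VU)$. Your explicit short/maximal case split and the appeal to \rlem{uniqueness of f-iterability embeddings} plus the cofinality of the $\gg^\Q_f$ merely unpack the paper's final sentence, and the step you flag as the main obstacle ($i^{\VU}_c(f(\M))=f(\Q)$) is exactly the step the paper also takes implicitly from clause 5.
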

\begin{proof} We only need to show that $\Sigma^F_\P$ has branch condensation as the rest follows from \rlem{getting strategies from qsjs}. Let then $(\R, \VT, \Q, \VU, c, \pi)$ be as in \rdef{branch condensation} where we let $M=\P$ and $\Sigma=\Sigma^F_\P$. We need to see that $c=\Sigma^F_\P(\VU)$. Because we have that $\pi:\Q\rightarrow \R$ and $i^{\VT}=\pi\circ i^{\VU}_c$ we get that, using clause 5 of \rdef{qsjs}, that $\Q$ is $\Gamma$-suitable and $i^\VU_c(f(\P))=f(\Q)$ for all $f\in F$. Let then $\la \U_\a, \M_\a: \a\leq \eta\ra$ be the normal components of $\VU$. Because $\VU\rest \eta$ is via $\Sigma^F_\P$,  we have that $i_{0, \eta}^{\VU}(f(\P))=f(\M_\eta)$ for every $f\in F$. It then follows that $i^{\U_\eta}_c(f(\M_\eta))=f(\Q)$ for all $f\in F$. Hence, $\U_\eta$ is according to $\Lambda^F_{\M_\eta}$, implying that $\Sigma^F_\P(\VU)=c$.
\end{proof}

We finish with the following lemma whose proof we leave to the reader as it is very close to the proof of \rlem{getting strategies from qsjs}.

\begin{lemma} Suppose $F$ and $G$ are two qsjs such that $F\subseteq G$. Then for any $\P\in S(\Gamma)$ witnessing that $F$ is qsjs, $\P$ witnesses that  $G$ is also qsjs and $\Sigma^F_\P=\Sigma^G_\P$.  
\end{lemma}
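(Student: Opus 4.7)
My plan is to show simultaneously that $\P$ witnesses $G$ is a qsjs and that the strategy $\Sigma^F_\P$ produced by \rlem{getting branch condensation} coincides with the (yet to be constructed) $\Sigma^G_\P$. The core observation is that at each $\Gamma$-maximal step along any iteration of $\P$, both strategies are forced to choose the same cofinal branch, because that branch is pinned down already by the $F$-iterability data and the extra functions in $G\setminus F$ contribute no new constraints. Concretely, let $\M$ be an iterate of $\P$ reached so far and $\T$ a $\Gamma$-maximal tree on $\M$ with last model $\R = Lp^\Gamma_\omega(\M(\T))$. Any cofinal branch $b$ of $\T$ with $\M^\T_b = \R$ and $i^\T_b(g'(\M)) = g'(\R)$ for every $g'\in G$ in particular respects every $f\in F$ (since $F\subseteq G$), so by \rdef{bf} and \rlem{uniqueness of f-iterability embeddings} its initial segment $b_{\T, f}$ is uniquely determined by $f$. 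Because $\P$ witnesses clause 4 of \rdef{qsjs} for $F$ (a property inherited by all $F$-quasi iterates), $\sup_{f\in F}\gg^\R_f = \d^\R$, and so the segments $b_{\T, f}$ are cofinal in $b$ as $f$ ranges over $F$; hence any such $b$ is unique and equals $\Sigma^F_\M(\T)$.

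Given this branch-matching, I would imitate the inductive construction in the proof of \rlem{getting strategies from qsjs}, building a candidate $\Sigma^G_\P$ round by round and verifying at each stage that its choices agree with those of $\Sigma^F_\P$. The conditions of \rdef{qsjs} for $G$ with witness $\P$ then fall out: clauses 1 and 2 are inherited from $G$ being a qsjs; clause 3 follows because every $(G, g)$-quasi iteration of $\P$ is according to $\Sigma^F_\P$, which is a full $(\l^+, \l^+)$-iteration strategy, so the required last models at successor and limit stages always exist; clause 4 follows from clause 3; and clause 5 for $G$ is inherited from the corresponding clause for $F$, since $g(\Q) \in \rge(\sigma)$ for all $g\in G$ \emph{a fortiori} gives $f(\Q) \in \rge(\sigma)$ for all $f\in F$. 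The equality $\Sigma^F_\P = \Sigma^G_\P$ is then immediate from the branch-matching observation.

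The hard part is the existence of the $G$-respecting branch $b$ used at each $\Gamma$-maximal step, together with the fact that every intermediate iterate $\M_\a$ is strongly $g'$-iterable for every $g'\in G$ as demanded by clause 3 of \rdef{semi quasi iteration}. To secure both, I would fix a separate witness $\P'$ of $G$ being a qsjs, simultaneously iterate $\P$ via $\Sigma^F_\P$ and $\P'$ via $\Sigma^G_{\P'}$ to a common correct iterate, and use branch condensation for both strategies to conclude that their tail strategies at that common iterate agree on all normal trees. Since $\Sigma^G_{\P'}$ respects every $g'\in G$ by its very construction, the tail of $\Sigma^F_\P$ does as well; this supplies both the desired $G$-respecting branches and the eventual-respecting clause of \rdef{semi quasi iteration} for all sufficiently late rounds of any $\Sigma^F_\P$-iteration of $\P$, completing the verification.
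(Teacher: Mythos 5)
Your first two paragraphs are essentially the argument the paper intends: the lemma is stated without proof, with the remark that the argument is very close to that of \rlem{getting strategies from qsjs}, and your branch-matching observation is the right engine. At a $\Gamma$-maximal tree the $\Sigma^F_\P$-branch is $\cup_{f\in F}b_f$, its segments $b_{\T,f}$ are cofinal because $\sup_{f\in F}\gg^\R_f=\d^\R$, and any branch whose embedding respects every $g'\in G\supseteq F$ in particular respects every $f\in F$ and hence coincides with it by \rlem{uniqueness of branches}; clauses 1, 2 and 5 of \rdef{qsjs} for $G$ transfer exactly as you say.

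The third paragraph is where the proposal breaks down. The comparison you invoke --- iterating $\P$ via $\Sigma^F_\P$ and a second witness $\P'$ via $\Sigma^G_{\P'}$ to a common correct iterate and concluding from branch condensation that the tail strategies agree --- is not available here: nothing in this section provides a comparison process for two abstract $\Gamma$-suitable premice under $ZF+DC_\l$, nor a Dodd--Jensen-style uniqueness of tails from branch condensation alone. Even granting it, it only controls \emph{sufficiently late} iterates (as you yourself write), i.e.\ a tail of $\Sigma^F_\P$, whereas the lemma asserts that $\P$ itself witnesses that $G$ is a qsjs and that the two strategies agree from $\P$ on; a $(G,g)$-quasi iteration of $\P$ begins at $\M_0=\P$, not at some late iterate. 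Moreover, the difficulty the comparison is meant to solve is partly misidentified: in a $(G,g)$-quasi iteration, the strong $g'$-iterability of the intermediate models and the $G$-respecting branches at maximal trees are part of the \emph{given} data (clauses 3 and 4 of \rdef{semi quasi iteration}), so they need not be produced. What actually requires an argument is that the \emph{last} model of such an iteration is suitable, equals $\cup_{g\in G}H_g$, and is strongly $g$-iterable; the first two follow from your branch-matching together with the limit-stage analysis in \rlem{getting strategies from qsjs}, but the third is not supplied by the comparison either. You should drop the comparison and instead argue directly that strong $g$-iterability passes to the last model, e.g.\ by extending any correctly guided finite stack on the last model to one on an earlier $\M_\a$ (prepending $\T_\a$, or composing with the quasi-iteration embeddings, which eventually respect every $g\in G$) and quoting the strong $g$-iterability of $\M_\a$ that the definition already guarantees.
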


\subsection{$(\omega, \Gamma)$-suitable premice}

In this paper, our primary tool for constructing iteration strategies with branch condensation will be \rlem{getting strategies from qsjs} which heavily relies on clause 4 and 5 of \rdef{qsjs}. Here we develop some notions that we will later use to show that various $F$'s area qsjs and in particular, satisfy clause 4 and 5 of \rdef{qsjs}.

We continue with the set up of the previous sections. Recall that we have fixed a cardinal $\l$ and $\Gamma\subseteq \powerset(\powerset(\l))$. The basic notion we will need is that of $(\omega, \Gamma)$-suitable premice. These are formed by stacking $\omega$ many $\Gamma$-suitable premice and hence, they all have $\omega$-Woodin cardinals. 

\begin{definition}\label{omega suitable} A premouse $\P$ is $(\omega, \Gamma)$-suitable if there is an increasing sequence of $\P$-cardinals $\la \d_i: i<\omega\ra$ such that letting $\d_\omega=\sup_{i<\omega}\d_i$,
\begin{enumerate}
\item $\P\models ``\d_\omega$ is the largest cardinal",
\item for each $i$, $\d_i$ is a Woodin cardinal in $\P$,
\item if $\P_i=\P|(\d_i^{+\omega})^\P$ then $\P_0$ is a $\Gamma$-suitable premouse and $\P_{i+1}$ is a $\Gamma$-suitable premouse over $\P_i$.
\end{enumerate}
\end{definition}

If $\P$ is $(\omega, \Gamma)$-suitable then we let $\d_i^\P=\d_i$. We say $\P$ is an \textit{anomalous} $(\omega, \Gamma)$-suitable premouse if $\rho_\omega(\P)<\d_\omega^\P$. 

Suppose now $\P$ is any premouse with exactly $\omega$-Woodin cardinals. Let $\la \d_i : i\in [-1, \omega)\ra$ be such that $\d_{-1}=\emptyset$ and $\la \d_i: i<\omega\ra$ enumerates the Woodin cardinals of $\P$ in increasing order.  Let $\d_\omega=\sup_{i<\omega}\d_i$. Suppose $\P\models ``\d_{\omega}$ is the largest cardinal". Let $\T$ be a normal tree on $\T$ constructed via $\mathcal{G}(\P, \l^+)$. Notice that there is a natural way of rearranging $\T$ so that it is constructed via a run of $\mathcal{G}(\P, \omega+1, \l^+)$. In this rearrangement of $\T$, if $\M_n$ is the model of the beginning of the $n$th round and $n\leq\omega$ then the iteration embedding $i:\P\rightarrow \M_n$ exists. We have that $\M_0=\P$. Furthermore, if $n<\omega$ and $\T_n$ is the tree played in the $n$th round then $\T_n$ is based on the window $(i(\d_{n-1}), i(\d_n))$. The tree played in the $\omega$th round is played on $\M_\omega$ and is above $i(\d_\omega)$. Notice that not every normal $\T$ will be constructed in exactly $\omega+1$ non-trivial rounds (where we say that $n$th round is trivial if $I$ doesn't play any extender from the window $(i(\d_{n-1}), i(\d_n))$). 

In what follows, whenever we have a premouse like $\P$ above (and these include all $(\omega, \Gamma)$-suitable premice), we will think of normal iteration trees on $\P$ as stacks produced via a run of $\mathcal{G}(\P, \omega+1, \l^+)$. Given such a $\P$ and a normal tree $\T$ on $\P$, we say $\la \T_n, \M_n: n<k\ra$ are the normal components of $\T$ where $k\leq \omega+1$ is the least such that for all $m\geq k$, $\T_m$ is undefined. We say $\M$ is \textit{the last model} of $\T$ if 
 \begin{center}
   $\M= 
     \begin{cases}
      \M_\a^{\T_n}&: k=n+1, \a+1=lh(\T_n)\\
      \M_\omega  &: k=\omega\ \text{and}\ \M_\omega\ \text{is the direct limit of}\ \M_n\text{'s}\\
      \text{undefined} &: otherwise.
     \end{cases}$
\end{center}

\begin{definition} Suppose $\P$ is $(\omega, \Gamma)$-suitable. We say $\Sigma$ is an $(\omega, \Gamma)$-fullness preserving strategy for $\P$ if $\Sigma$ is an $(\omega^2, \l^++1)$-iteration strategy such that whenever $\VT=\la \T_\a, \M_\a: \a< \b\ra$ is a stack on $\P$ according to $\Sigma$ the following holds:
\begin{enumerate}
\item $\b\leq \omega^2$,
\item $\M_0=\P$  and for $\a<\b$, $\M_\a$ is $(\omega, \Gamma)$-suitable,
\item for $\a<\b$, letting $\la \T^\a_i, \M^\a_i: i< k_\a\ra$ be the normal components of $\T_\a$ we have that 
\begin{enumerate}
\item $k_\a\leq \omega$,
 \item if $\a+1<\b$ then $\M_{\a+1}$ is the last model of $\T_\b$,
 \item for each $i<k_\a$ and limit $\xi<lh(\T^\a_i)$, $\Q(\T^\a_i\rest \xi)$ exists and 
 \begin{center}
 $\Q(\T^\a_i\rest \xi)\insegeq \W^\Gamma(\M(\T^\a_i\rest \xi))$,
 \end{center}
\end{enumerate} 
\item if $\b=\omega^2$ and $\M_{\omega^2}$ is the direct limit of $\M_\a$'s under the iteration embeddings given by $\VT$ then $\M_{\omega^2}$ is $(\omega, \Gamma)$-suitable.
\end{enumerate}
\end{definition}

Next we need to introduce \textit{simultaneous genericity iterations} which we will use to show that the strategies we construct have branch condensation. Simultaneous genericity iteration were used for this purpose in \cite{ATHM} as well. First, however, we need \textit{generic genericity iterations}. 

Suppose $N\in H_{\l^+}$ and $\M\in H_{\l^+}$ is a premouse with a Woodin cardinal $\d$. We say $\T$ is \textit{the generic $N$-genericity tree} on $\M$ if $\T$ is a run of $\mathcal{G}(\M, \l^+)$ in which $I$ plays as follows: at stage $\a$, $E_\a^\T$ is the least extender such that for some $p\in Coll(\omega, N)$, $p$ forces that if $x$ is the generic code of $N$ then $x$ violates some axiom generated by $E_\a^\T$. If $\Sigma$ is a $\l^++1$-strategy for $\M$ then it can be shown that generic $N$-genericity iterations terminate and produce an iteration $i: \M\rightarrow \Q$ such that whenever $g\subseteq Coll(\omega, N)$ is generic and $x$ is the generic code of $N$ then $x$ is generic over $\Q$ for the extender algebra at $i(\d)$. The proof of this fact is just like the proof of the same fact for the usual genericity iterations (see \cite{EA} or \cite{OIMT}).

\begin{definition}\label{triangle sequence} We say $\la \R_i, \Q_i, m_i, \sigma_i, \nu_i, \Sigma: i<\omega\ra$ is a $\Gamma$-sequence of triangles with direct limit $\R_\omega$ if
\begin{enumerate}
\item for $i\leq \omega$, $\R_i$ and $\Q_i$ are $(\omega, \Gamma)$-suitable premice,
\item $m_i:\R_i\rightarrow_{\Sigma_1} \R_{i+1}$, $\sigma_i: \Q_i \rightarrow_{\Sigma_1} \R_{i+1}$ and $\nu_i: \R_i\rightarrow_{\Sigma_1} \Q_i$ are such that $m_i=\sigma_i\circ \nu_i$,
\item $\R_\omega$ is the direct limit of $\R_i$'s under the embeddings $m_{i, j}=_{def}m_{j-1}\circ m_{j-2}\circ \cdot\cdot\cdot m_i$,
\item $m_\omega$, $\sigma_\omega$, and $\nu_\omega$ are undefined, and
\item $\Sigma$ is an $(\omega, \Gamma)$-fullness preserving strategy for $\R_\omega$.
\end{enumerate}
\end{definition}

Suppose $\la \R_i, \Q_i, m_i, \sigma_i, \nu_i, \Sigma: i\leq \omega\ra$ is a $\Gamma$-sequence of triangles. Then we let $m_{i, \omega}:\R_i\rightarrow \R_\omega$ be the direct limit embedding and $\sigma_{i, \omega}=m_{i+1, \omega}\circ \sigma_i$. We then let $\Sigma^i$ be $m_{i, \omega}$-pullback of $\Sigma$ and $\Lambda^i$ be $\sigma_{i, \omega}$-pullback of $\Sigma$. 

\begin{definition}[Simultaneous genericity iterations]\label{simultaneous genericity iterations} Suppose $\la \R_k, \Q_k, m_k, \sigma_k, \nu_k, \Sigma: k< \omega\ra$ is a $\Gamma$-sequence of triangles with direct limit $\R_\omega$ and $\vec{N}=\la N_k: k<\omega \ra\subseteq H_{\l^+}$. Suppose further that either $\l=\omega$ and $\Sigma$ is an $(\omega^2, \omega_1+1)$-strategy or that $\l>\omega$ and for each $i$, $N_k\in H_\l$. We say $\la \R^j_k, \Q^j_k, \VS^j_k, \VW^j_k, m^j_k, \sigma^j_k, \nu^j_k: j, k\leq\omega\ra$ is the simultaneous $\vec{N}$-genericity iteration of $\la \R_k, \Q_k: i<\omega\ra$ via $\Sigma$ if the following holds:  
\begin{enumerate}
\item $\la \R^0_k, \Q^0_k, m^0_k, \sigma^0_k, \nu^0_k: k<\omega\ra=\la \R_k, \Q_k, m_k, \sigma_k, \nu_k: k<\omega\ra$,
\item for all $j, k<\omega$, $\VS^j_k$ is a non-dropping stack of finite length on $\R^j_k$ based on the window $(\d_{j}^{\R^j_k}, \d_{j+1}^{\R^j_k})$, $\VS^j_k$ is according to $\Sigma^k_{\R^j_k, \oplus_{l<j}\VS^l_k}$, and $\R^{j+1}_k$ is the last model of $\VS^j_k$,
\item for all $j, k< \omega$, $\VW^j_k$ is a non-dropping stack of finite length on $\Q^j_k$ based on the window $(\d_{j}^{\Q^j_k}, \d_{j+1}^{\Q^j_k})$, $\VW^j_k$ is according to $\Lambda^k_{\Q^j_k, \oplus_{l<j}\VW^l_k}$, and $\Q^{j+1}_k$ is the last model of $\VW^j_k$,
\item for $j, k<\omega$, $\sigma^j_k:\Q^j_k\rightarrow \R^{j}_{k+1}$, $m^j_k:\R^j_k\rightarrow \R^{j}_{k+1}$, $\nu^j_k:\R^j_k\rightarrow \Q^j_k$, and
\begin{center}
$m^j_k=\sigma^j_k\circ \nu^j_k$,
\end{center}
\item for each $j$, $\VS^j_0$ is the tree of generic $N_j$-genericity iteration of $\R^j_0$ in which $II$ plays according to $\Sigma^0_{ \R^j_0, \oplus_{l<j}\VS^l_o}$, 
\item for each $k, j<\omega$, letting $\VW^*=\nu^j_k\VS^j_k$, $\VW^j_k=\VW^{*\frown}\W$ where, letting $\M$ be the last model of $\VW^*$, we have that $\W$ is the tree of generic $N_j$-genericity iteration of $\M$ which is based on the window $(\d_{j}^\M, \d_{j+1}^\M)$ and is according to $\Lambda^k_{\M, (\oplus_{l<j}\VW^l_k)^\frown \VW^*}$,
\item for each $k, j<\omega$, letting $\VS^*=\sigma^j_k\VW^j_k$, $\VS^j_k=\VS^{*\frown}\S$ where, letting $\N$ be the last model of $\VS^*$, we have that $\S$ is the tree of generic $N_j$-genericity iteration of $\N$ which is based on the window $(\d_{j}^\N, \d_{j+1}^\N)$ and is according to $\Sigma^{k+1}_{\N, (\oplus_{l<j}\VS^l_k)^\frown \VS^*}$,
\item keeping the notation of clause 6 and 7, for each $k, j<\omega$, letting $s^j_k:\R^{j+1}_k\rightarrow \M$ and $w^j_k:\Q^{j+1}_k\rightarrow \N$ be the maps coming from the copying constructions, we have that $\nu^{j+1}_k=i^{\W}\circ s^j_k$, $\sigma^{j+1}_k=i^\S\circ w^j_k$ and $m^{j+1}_k=\sigma^{j+1}_k\circ \nu^{j+1}_k$.
\end{enumerate}
\end{definition} 

Suppose $\la \R^j_k, \Q^j_k, \VS^j_k, \VW^j_k, m^j_k, \sigma^j_k, \nu^j_k: j, k<\omega\ra$ is as in \rdef{simultaneous genericity iterations}. Then we say $\la \R^\omega_k, \Q^\omega_k, \VS^\omega_k, \VW^\omega_k, m^\omega_k, \sigma^\omega_k, \nu^\omega_k: k<\omega\ra$ is the direct limit of $\la \R^j_k, \Q^j_k, \VS^j_k, \VW^j_k, m^j_k, \sigma^j_k, \nu^j_k: j, k<\omega\ra$ if 
\begin{enumerate}
\item for each $k\leq \omega$, $\R^\omega_k$ is the direct limit of $\R^j_k$'s under $i^{\VS^j_k}$'s,
\item for each $k< \omega$, $\Q^\omega_k$ is the direct limit of $\Q^j_k$'s under $i^{\VW^j_k}$'s,
\item for each $k<\omega$, $\sigma^\omega_k$, $\nu^\omega_k$ and $m^\omega_k$ come from direct limit constructions. \end{enumerate}
Notice that we have that for each $k<\omega$, $\sigma^\omega_k:\Q^\omega_k\rightarrow \R^{\omega}_{k+1}$, $m^\omega_k:\R^\omega_k\rightarrow \R^{\omega}_{k+1}$, $\nu^\omega_k:\R^\omega_k\rightarrow \Q^\omega_k$, and
\begin{center}
$m^\omega_k=\sigma^\omega_k\circ \nu^\omega_k$,
\end{center}
We also let $\R^\omega_\omega$ be the direct limit of $\R^\omega_k$'s under $m^\omega_k$'s. We say $\R^\omega_\omega$ is the direct limit of $\la \R^j_k, \Q^j_k, \VS^j_k, \VW^j_k, m^j_k, \sigma^j_k, \nu^j_k: j, k\leq \omega\ra$. Notice that, by the copying construction, $\R^\omega_\omega$ is a $\Sigma$-iterate of $\R_\omega$ and the length of the stack producing the iteration is $\omega^2$.

\subsection{Review of $\H$ analysis}

In this section, we review $\H$ analysis of models satisfying $AD^++V=L(\powerset(\mathbb{R}))+MC+\Theta=\theta_0$. Until the end of this subsection, we assume $V$ satisfies the above theory. We let $\Gamma=\powerset(\mathbb{R})$. For the duration of this subsection, we will drop $\Gamma$-from our notation. Thus, a suitable premouse is a $\Gamma$-suitable premouse and etc.

Suppose $\P$ is suitable and $A\subseteq \bR$ is $OD$. We say $\P$ \textit{weakly term captures} $A$ if letting $\d=\d^\P$, for each $n<\omega$ there is a term relation $\tau\in \P^{Coll(\omega, (\d^{+n})^\P}$ such that for comeager many $\P$-generics, $g\subseteq Coll(\omega, (\d^{+n})^\P)$, $\tau_g=\P[g]\cap A$. We say $\P$ \text{term captures} $A$ if the equality holds for all generics. The following lemma is essentially due to Woodin and the proof can be found in \cite{CMI}.

\begin{lemma} Suppose $\P$ is suitable and $A\subseteq \bR$ is $OD$. Then $\P$ weakly term captures $A$. Moreover, there is a suitable $\Q$ which term captures $A$.
\end{lemma}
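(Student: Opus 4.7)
The plan is to carry out the argument in two parts. For weak term capturing, I would use that under $AD^++V=L(\powerset(\bR))+\Theta=\theta_0$ every set of reals is $OD$ and admits an $\infty$-Borel representation: there is a set of ordinals $S$ and a formula $\phi$ such that for all reals $x$, $x\in A \iff L[S,x]\models \phi(x,S)$. Using the Woodin cardinal $\d=\d^\P$ of the suitable $\P$, I would define for each $n<\omega$ a term $\tau_n\in \P^{Coll(\omega,(\d^{+n})^\P)}$ via Woodin's canonical construction of term relations over a Woodin cardinal: $\tau_n$ names the set of pairs $(p,\dot x)$ such that $p$ forces $\dot x$ to be generic for the $\P$-extender algebra at $\d$ and to satisfy a $\P$-internal approximation of the $\infty$-Borel definition of $A$, built from the reflection of $S$ below $\d$ provided by suitability and by $\mathcal{O}^\P_\eta=\W^\Gamma(\P|\eta)$. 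Then $\infty$-Borel absoluteness between $V$ and $\P[g]$, valid for generics $g$ avoiding a meager set of conditions, yields $\tau_n[g]=A\cap \P[g]$ on a comeager set of $g$, which is the required weak term capturing.

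For the moreover clause, I would iterate $\P$ to eliminate the exceptional meager set. Beginning with $\P$ and the $\tau_n$'s, at each stage I would locate a condition $p$ and name $\dot x$ witnessing a possible disagreement of $\tau_n$ with $A$ on some generic $g$ through $p$, and apply an extender from the $\P$-sequence with critical point below $\d^\P$ to shift $p$ so that the disagreement is killed. This is the standard Woodin-cardinal extender-algebra argument from genericity iterations; the resulting iteration is $\Gamma$-correctly guided in the sense of the earlier definitions, and I would argue it terminates in a $\Gamma$-suitable iterate $\Q$. In $\Q$ the image of $\tau_n$ under the iteration map is then a term relation in $\Q^{Coll(\omega,(\d^{+n})^\Q)}$ capturing $A$ on \emph{all} generics.

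The main obstacle will be preserving suitability along the iteration in the moreover clause. Concretely, every initial segment of the iteration must be $\Gamma$-correctly guided, and the final $\Q$ must still satisfy $\mathcal{O}^\Q_\eta=\W^\Gamma(\Q|\eta)$ for all $\eta\neq \d^\Q$, so that no fake Woodin cardinal is created or destroyed. This is where the assumption $\Gamma=\powerset(\bR)$ together with $AD^+$ is used: since $\Gamma$ contains every set of reals, the iteration strategy implicit in the suitability of $\P$ is $\Gamma$-fullness preserving, and the fullness of iterates is preserved by standard comparison/absoluteness arguments. A secondary technical point is verifying that term relations transfer correctly along the iteration, which follows by elementarity once the image $i(\tau_n)$ is identified with the term relation constructed internally in $\Q$ from the same $\infty$-Borel code $S$; this identification is guaranteed by the $OD$-ness of $A$ and $S$ and the fact that $i$ moves the $\P$-internal definition of $\tau_n$ to the $\Q$-internal one.
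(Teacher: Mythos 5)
The paper does not prove this lemma; it attributes it to Woodin and cites \cite{CMI}, so the comparison has to be with the standard argument there. That argument is a Baire-category one: for each condition $p$ and each standard term $\sigma$ for a real, the set $\{g : \sigma_g\in A\}$ has the Baire property (by $AD$), so on a dense set of conditions membership of $\sigma_g$ in $A$ is decided comeagerly; the term $\tau^\P_{A,n}$ is then the set of such stable pairs $(p,\sigma)$. The crux is why $\tau^\P_{A,n}$ belongs to $\P$ at all: it is a countable object which is $OD$ from $\P|(\d^{+n})^\P$, hence by Mouse Capturing it lies in $\W^\Gamma(\P|(\d^{+n})^\P)$, which equals $\mathcal{O}^\P_{(\d^{+n})^\P}\subseteq\P$ by clause 4 of \rdef{suitable premouse}. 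Your $\infty$-Borel route does not engage with this point and has a genuine gap exactly there: an $\infty$-Borel code $S$ for an arbitrary $OD$ set $A$ is a set of ordinals whose complexity can be cofinal in the Wadge hierarchy, and suitability provides no ``reflection of $S$ below $\d$'' into the countable premouse $\P$; fullness only closes $\P$ under the $\W^\Gamma$ operator applied to its own elements. The correct proof never needs $A$ or its code inside $\P$ --- only the term, and that is what $MC$ delivers.

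The ``moreover'' clause is also not established by your sketch. Applying an extender with critical point below $\d^\P$ to ``shift'' a bad condition $p$ does not alter the forcing $Coll(\omega,(\d^{+n})^\P)$ or remove the meager set of bad generics; the extender-algebra genericity iteration is a device for making a given real generic for $\mathbb{B}^\P$ at $\d$, which is a different forcing, and there is no convergence argument showing the exceptional set eventually vanishes. The standard proof of the existence of a fully capturing $\Q$ instead goes through the self-justifying-system/condensation machinery (as in \rthm{existence of quasi-iterable premice} and its proof in \cite{CMI}): one places $A$ in a good pointclass admitting an sjs containing $A$, and a premouse whose iterability is guided by the associated term relations term-captures $A$ on all generics because the sjs condenses. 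You would also still owe the verification that suitability is preserved by whatever iteration you perform, which you flag but do not supply.
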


Given a suitable $\P$ and an $OD$ set of reals $A$, we let $\tau_{A, n}^\P$ be the standard name for a set of reals in $\P^{Coll(\omega, (\d^{+n})^\P}$ witnessing the fact that $\P$ weakly captures $A$. We then define $f_A\in F(\Gamma)$ by letting
\begin{center}
$f_A(\P)=\la \tau^\P_{A, n} : n<\omega\ra$.
\end{center}
Let $F_{od}=\{ f_A: A\subseteq \bR \wedge A\in OD\}$. 

 All the notions we have defined in \rsec{suitable premouse}, \rsec{quasi iterability section} and \rsec{f-guided strategies} can be redefined for ordinal definable sets $A\subseteq \mathbb{R}$ using $f_A$ as the relevant function. To save some ink, in what follows, we will say $A$-iterable instead of $f_A$-iterable and similarly for other notions. Also, we will use $A$ in our subscripts instead of $f_A$.

 The following lemma is one of the most fundamental lemmas used to compute $\H$ and it is originally due to Woodin. Again, the proof can be found in \cite{CMI}.

\begin{theorem}\label{existence of quasi-iterable premice} For each $f\in F_{od}$, there is $\P\in S(\Gamma)$ which is $(F_{od}, f)$-quasi iterable.
\end{theorem}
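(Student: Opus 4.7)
The plan is to construct $\P$ via a minimal-hull/direct-limit construction, following Woodin's template. Fix $f=f_A$ for an OD set $A\subseteq\bR$. First, I would invoke the preceding lemma to fix a suitable $\P_0$ that term-captures $A$, so that the standard terms $\tau^{\P_0}_{A,n}$ witness $f_A(\P_0)$ and similarly at every correct iterate. Using the extender algebra at $\d^{\P_0}$ together with a standard genericity iteration, any two correct iterates of $\P_0$ admit a common correct iterate; this produces a directed system $\mathcal{S}$ of suitable premice that term-capture $A$, linked by correct iteration embeddings.

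Second, I would extract $\P$ as the preimage in $\mathcal{S}$ of a ``minimal'' hull of the direct limit of $\mathcal{S}$ under $f$-related parameters. More precisely, $\P$ should be chosen so that $\pi_{\P,\Q,f}:H_f^{\P}\to H_f^{\Q}$ is well defined for every correct iterate $\Q$ in $\mathcal{S}$, and so that the range of $\pi_{\P,\infty,f}$ is cofinal in the direct limit. Strong $f$-iterability of such a $\P$ then follows from Lemma~\ref{uniqueness of f-iterability embeddings}: any two $f$-iterability branches for a correctly guided stack give the same embedding on $H_f^{\P}$, and independence from the stack follows because any two stacks with a common last model $\R$ induce the same composite map $H_f^{\P}\to H_f^{\R}$ after embedding into the ambient direct limit (which is wellfounded by the standard argument using $\Theta=\theta_0$).

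Third, to verify $(F_{od},f)$-quasi iterability, I would take an arbitrary quasi iteration $\la \T_\a,\M_\a:\a<\nu\ra$ of $\P$ and produce a last model as in Definition~\ref{last model of quay iteration}. At successor stages this is immediate from suitability and strong $f$-iterability of $\M_\a$. At limits, the embeddings $\pi^{g,\nu}_{\gg,\xi}$ from clause~4(b) of Definition~\ref{semi quasi iteration} are available on a tail for every $g\in F_{od}$, so the candidate last model $\M$ is built as $\M=\bigcup_{g\in F_{od}}H_g^{\M}$ with each $H_g^{\M}$ a direct limit of $H_g^{\M_\gg}$'s. Wellfoundedness of each component direct limit is a Kunen--Martin style argument carried out in $L(\powerset(\bR))$: an infinite descent would yield an OD surjection from $\bR$ onto an ordinal above $\theta_0$, contradicting $\Theta=\theta_0$ together with MC.

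The main obstacle is verifying fullness of the limit model $\M$, namely conditions (3) and (4) of Definition~\ref{suitable premouse} — that $\W^{\Gamma}(\M|\eta)\models\eta$ is not Woodin for every $\eta<\d^{\M}$, and that $\mathcal{O}^{\M}_\eta=\W^{\Gamma}(\M|\eta)$. The argument is by reflection: a failure of fullness at $\M$ would pull back, via the quasi-iterability embeddings, to a failure at some $\M_\gg$, contradicting $\M_\gg\in S(\Gamma)$. The delicate point is showing that $\W^{\Gamma}$ commutes with the relevant direct limit on cutpoints. This is handled by term-capturing: for each cutpoint $\eta<\d^{\M}$, every mouse $\N\tlt \W^{\Gamma}(\M|\eta)$ is named by a term captured in some $\M_\gg$ relative to $f_A$ for a suitable $A$, and the quasi-iteration embeddings transport these terms correctly. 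Once fullness is established, $\M$ is the required last model and $\P$ is $(F_{od},f)$-quasi iterable.
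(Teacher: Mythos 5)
First, a remark about the comparison you asked for implicitly: the paper does not prove this theorem at all --- it attributes the result to Woodin and refers the reader to \cite{CMI} for the proof (``the proof can be found in \cite{CMI}''). So there is no in-paper argument to measure your proposal against; it has to stand on its own. On its own, it has the right general silhouette (term capturing, a directed system of suitable premice, direct limits, reflection for fullness) but it is missing the central mechanism and contains a circularity.

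The circularity is in your second step. Strong $f$-iterability demands that $\pi_{\VT,\vec b}\rest H_f^{\P}$ be independent of the \emph{stack} $\VT$, not merely of the branch sequence $\vec b$; Lemma~\ref{uniqueness of f-iterability embeddings} only delivers branch-independence for a fixed stack. You propose to get stack-independence by embedding everything into ``the ambient direct limit,'' but the directed system $(\mathcal{F}_{\Gamma,F},\preceq_{\Gamma,F})$ and its direct limit are only well defined once the maps $\pi_{\P,\Q,f}$ are already known to be stack-independent --- that is exactly what strong $f$-iterability asserts. The second, and deeper, gap is that you never establish clause 2 of Definition~\ref{f-iterability}: the \emph{existence} of a cofinal wellfounded branch at a $\Gamma$-maximal tree whose direct limit is the suitable model $Lp_\omega^{\Gamma}(\M(\T))$, with the terms $\tau_{A,n}$ moved correctly. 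This cannot be ``immediate from suitability,'' and term capture does not propagate through a maximal tree by itself, since at a maximal tree there is no a priori iteration embedding into the new model. The standard (Woodin) argument supplies both missing pieces at once: one puts $A$ into a self-justifying system $\vec C$, fixes a coarse tuple $(\N^*_x,\M_x,\d_x,\Sigma_x)$ Suslin--co-Suslin capturing $\vec C$ (exactly as the paper does inside the proof of Lemma~\ref{extending strategies via suitability}), extracts $\P$ from the fully backgrounded construction of $\N^*_x|\d_x$, and uses the condensation of the sjs to show that the background-induced strategy chooses branches at maximal trees and moves the capturing terms to the locally definable standard terms of the target model. That condensation step is what makes the term assignment intrinsic to each iterate and hence stack-independent, and it is absent from your sketch. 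Your third step (wellfoundedness via Kunen--Martin and fullness via reflection) is a reasonable outline, but it inherits the unproven coherence of the term assignments from the earlier steps.
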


Let $\M_\infty=\M_{\infty, F_{od}}$.

\begin{theorem}[Woodin, \cite{CMI}]\label{hod theorem} $\d^{\M_\infty}=\Theta$, $\M_\infty\in \H$ and $\M_\infty|\Theta=(V_\Theta^\H, \vec{E}^{\M_\infty|\Theta}, \in)$.
\end{theorem}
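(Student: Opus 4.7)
The plan is to establish the three conclusions in sequence: first $\M_\infty\in\H$, then $\d^{\M_\infty}=\Theta$ via separate upper and lower bounds, and finally the identification $\M_\infty|\Theta = V_\Theta^\H$ as sets equipped with the extender sequence.

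For $\M_\infty\in\H$ together with the upper bound $\d^{\M_\infty}\leq\Theta$: under $AD^+ + V=L(\powerset(\bR)) + MC + \Theta=\theta_0$ every set of reals is OD, so the family $F_{od}=\{f_A : A\subseteq\bR\ \text{is OD}\}$ is OD, and $S(\Gamma)$ is OD (suitability together with iterability in $\Gamma$ is OD). By \rlem{uniqueness of f-iterability embeddings} the embeddings $\pi_{\P,\Q,\vec f}$ of the directed system are OD-computable from $\P,\Q,\vec f$, so the whole system $(\mathcal{F}_{\Gamma,F_{od}},\preceq_{\Gamma,F_{od}})$ and its direct limit $\M_\infty$ are OD, putting $\M_\infty\in\H$. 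Moreover, every ordinal $\xi<\d^{\M_\infty}$ has the form $\pi_{\P,\vec f,\infty}(\eta)$ for some $\eta<\gg_{\oplus\vec f}^\P$ with $(\P,\vec f)\in\mathcal{I}_{\Gamma,F_{od}}$ (since $\d^\P\notin H^\P_{\oplus\vec f}$ while everything strictly below $\gg_{\oplus\vec f}^\P$ is). Each such $\xi$ is OD and therefore $<\theta_0=\Theta$, giving $\d^{\M_\infty}\leq\Theta$.

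For the lower bound $\d^{\M_\infty}\geq\Theta$: fix any OD $A\subseteq\bR$. The term relations $\tau^\P_{A,n}$ defining $f_A$ determine $A\cap\P[g]$ for generic $g\subseteq\mathrm{Coll}(\omega,(\d^{+n})^\P)$, and as $n,\P,g$ vary every real is covered by some $\P[g]$. A Woodin-style argument then represents the prewellorder of $\bR$ of length $w(A)$ via ordinals below $\gg_{f_A}^\Q$ in a sufficiently iterated $\Q$, yielding $w(A)\leq\pi_{\Q,f_A,\infty}(\gg_{f_A}^\Q)<\d^{\M_\infty}$. Taking supremum over OD $A$ gives $\Theta=\theta_0\leq\d^{\M_\infty}$.

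Finally, to identify $\M_\infty|\Theta$ with $V_\Theta^\H$: one inclusion $\M_\infty|\Theta\subseteq V_\Theta^\H$ is immediate from $\M_\infty\in\H$ and $\d^{\M_\infty}=\Theta$. For the converse, given $X\in V_\Theta^\H$, pick an OD $A\subseteq\bR$ from which $X$ is OD-definable, a strongly $f_A$-iterable $\P\in S(\Gamma)$ in which a preimage $\bar X$ is coded inside $H_{f_A}^\P$, and realize $X=\pi_{\P,f_A,\infty}(\bar X)$; the agreement of extender sequences follows from the uniqueness of the fine-structural sequence given the universe. The main obstacle is precisely this converse: one must verify that $F_{od}$ is \emph{rich enough} to code every OD bounded subset of $\Theta$, not merely Wadge ranks of OD reals. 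This is the role of MC (Mouse Capturing), which ensures that the hulls $H_f^\P$ built from OD term relations exhaust, under the direct-limit embeddings, all of $V_\Theta^\H$; without MC the limit could miss OD-definable elements of $V_\Theta^\H$.
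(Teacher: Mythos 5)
The paper does not actually prove this theorem: it is quoted as a black box from Woodin's HOD analysis in \cite{CMI}, so there is no in-paper argument to measure you against. Judged against the standard proof, your decomposition (OD-ness of the direct limit system gives $\M_\infty\in\H$; two inequalities for $\d^{\M_\infty}=\Theta$; two inclusions for the identification with $V_\Theta^\H$) is the right skeleton, and you correctly identify MC as the hypothesis doing the work in the hard inclusion.

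There are, however, two genuine gaps. First, in the upper bound you write that each $\xi<\d^{\M_\infty}$ ``is OD and therefore $<\theta_0$.'' This is a non sequitur: every ordinal is OD, and $\theta_0$ is not the supremum of the OD ordinals. What is actually needed is that for each fixed $\vec f$ the ordinal $\sup\{\pi_{\P,\vec f,\infty}(\eta):\eta<\gg^{\P}_{\oplus\vec f}\}$ is the image of $\bR$ under an OD surjection --- a real codes the countable premouse $\P$ together with $\eta$, and the map $(\P,\eta)\mapsto\pi_{\P,\vec f,\infty}(\eta)$ is OD from $\vec f$, which is itself OD --- whence each such supremum is $<\theta_0$, and $\d^{\M_\infty}\le\Theta$ follows by taking the supremum over $\vec f$. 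Second, the converse inclusion $V_\Theta^\H\subseteq\M_\infty|\Theta$, which is the real content of the theorem, is asserted rather than proved. Saying one can ``pick a strongly $f_A$-iterable $\P$ in which a preimage $\bar X$ is coded inside $H^\P_{f_A}$'' begs the question of why HOD's bounded subsets of $\Theta$ should appear in these hulls at all. The actual mechanism is the term-capturing machinery: one codes $X$ by an OD set of reals $A$ (using $\Theta=\theta_0$ for a Solovay-style coding), and then uses MC together with genericity iterations to show that the term relations $\tau^{\Q}_{A,n}$ in sufficiently iterated $\Q$ compute $X$ inside $H^{\Q}_{f_A}$ in a manner respected by the direct-limit maps, so that $X$ can be read off $\M_\infty$. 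Your sketch names the right hypothesis but supplies none of this; likewise, the claim that the extender sequences agree ``by uniqueness of the fine-structural sequence given the universe'' presupposes exactly what must be established, namely that $V_\Theta^\H$ is the universe of a premouse in the first place.
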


Finally, if $a\in H_{\omega_1}$, then we could define $\M_\infty(a)$ by working with suitable premice over $a$. Everything we have said about suitable premice can also be said about suitable premice over $a$ and in particular, the equivalent of \rthm{hod theorem} can be proven using $\H_{a\cup\{a\}}$ instead of $\H$ and $\M_\infty(a)$ instead of $\M_\infty$. 

\section{The proof of \rthm{main theorem}}

The rest of this paper is devoted to the proof of \rthm{main theorem}. From now on $\k$ is as in the hypothesis of \rthm{main theorem}. Given $a\subseteq V_\kappa$, let
\begin{center}
$Lp(a)=\cup\{ \N : \N$ is a sound countably iterable mouse over $a$ such that $\mathcal{\rho}_\omega(\N)=a\}$. 
\end{center}
We define $\la Lp_\xi(a) : \xi<\k^+\ra$ by the following recursion:
\begin{enumerate}
\item for $\a<\k^+$, $Lp_{\a+1}=Lp_1(Lp_\a(a))$,
\item for $\l<\k^+$ limit, $Lp_\l(a)=\cup_{\a<\l}Lp_\a(a)$.
\end{enumerate}

 
We will use the following fundamental result throughout this paper. 

\begin{theorem}[Schimmerling-Zeman, \cite{SchZem}]\label{SchZem} For all $A\subseteq V_\k$, $Lp(A)\models \square_\k$.
\end{theorem}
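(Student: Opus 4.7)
The plan is to apply the Schimmerling--Zeman dichotomy recalled earlier in the introduction: in a fine-structural mouse $\M$, $\M\models \square_\k$ iff $\k$ is not subcompact in $\M$. Viewing $Lp(A)$ as the coherent premouse obtained by unioning its constituent mice, it suffices to show (i) that $\k$ cannot be subcompact in $Lp(A)$, and (ii) that the Schimmerling--Zeman construction of a canonical $\square_\k$-sequence can be carried out coherently across the fine-structural hierarchy of $Lp(A)$.

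The core step is a size computation. Since $\k$ is a strong limit cardinal, $|V_\k|=\k$, and so $|A|\le\k$. Each $\N\in Lp(A)$ is a sound countably iterable premouse over $A$ with $\rho_\omega(\N)=A$, so $\N$ is recovered as the $\Sigma_\omega$-Skolem hull of $A$ together with its standard parameter; hence $|\N|\le\k$ and $o(\N)<\k^+$. Taking the union, $o(Lp(A))\le\k^+$, and every extender appearing on the $Lp(A)$-sequence sits at an index strictly below $\k^+$. In Mitchell--Steel indexing, an extender witnessing the subcompactness of $\k$ has critical point $\k$, length $\k^+$, and is indexed at its length, so no such extender can lie on the $Lp(A)$-sequence. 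Consequently $\k$ is not subcompact in $Lp(A)$.

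Granting this, the $\square_\k$-sequence itself is produced by the Schimmerling--Zeman machinery applied level by level along the hierarchy of mice that compose $Lp(A)$: each $\N\in Lp(A)$ supplies its canonical $\square_\k$-segment via hulls in its fine structure, and the coherence and condensation properties of $Lp$-closures ensure that these segments glue together into a single global sequence on $Lp(A)\cap \k^+$.

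The main obstacle I expect is precisely this gluing step: extending the Schimmerling--Zeman construction, originally stated for a single mouse, to the stacked object $Lp(A)$ requires careful bookkeeping of the canonical hulls used in successive layers, and verification that the $C_\alpha$'s assigned at different layers agree on overlaps. This is where the detailed fine structure of the projecting sub-mice enters; the size argument that rules out subcompactness is then the clean external input that makes the whole scheme go through.
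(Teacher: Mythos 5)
First, a point of calibration: the paper does not prove this statement at all --- it is quoted from \cite{SchZem}, with the adaptation to $Lp$-stacks as carried out in \cite{PFA} --- so your sketch is really being measured against the Schimmerling--Zeman construction itself. Your overall skeleton (reduce to the subcompactness dichotomy, rule out subcompactness in $Lp(A)$ by a size computation, then run the canonical construction level by level through the stack and glue) is the intended route.

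There is, however, a concrete error in your non-subcompactness step. Subcompactness of $\k$ is not witnessed by an extender with critical point $\k$, length $\k^+$, indexed at its length: by definition $\k$ is subcompact iff for every $B\subseteq H_{\k^+}$ there are $\mu<\k$, $\bar B\subseteq H_{\mu^+}$, and an elementary $\pi:(H_{\mu^+},\in,\bar B)\rightarrow (H_{\k^+},\in,B)$ with critical point $\mu$ and $\pi(\mu)=\k$. The witnessing extenders therefore have critical point $\mu<\k$ and are long extenders measuring $\powerset(\mu^+)$, so the sentence you wrote does not rule out the right objects (and Schimmerling--Zeman work with Jensen indexing, not Mitchell--Steel, which is where ``indexed at its length'' comes from). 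The repair is in fact simpler than what you attempted and is already contained in your own size computation: since $\k$ is a strong limit and every $\N\inseg Lp(A)$ is sound with $\rho_\omega(\N)=A$ and $\card{A}\le\k$, the model $Lp(A)$ has no cardinals above $\k$, $H_{\k^+}^{Lp(A)}=Lp(A)$, and no level of $Lp(A)$ of height $>\k$ fails to project to $\k$; this is exactly the configuration in which the Schimmerling--Zeman obstruction (levels carrying subcompactness-type embeddings that do not project across) cannot arise. Finally, be aware that the ``gluing'' you defer to the last paragraph is where essentially all of the content lives: one must check that for each limit $\a<o(Lp(A))$ the collapsing level $\N_\a\insegeq Lp(A)$ exists, that condensation holds for these levels (this is where soundness and countable iterability are used), and that the fine-structurally defined $C_\a$'s cohere across layers. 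As written, your proposal correctly identifies the skeleton and the clean external input, but the step that actually proves the theorem is black-boxed and the one step you do argue in detail is argued incorrectly.
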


From now on we fix $A\subseteq \kappa$ such that $A$ codes $V_\kappa$.
Because $\neg\square_\kappa$, we must have that $o(Lp(A))<\kappa^+$.
Let $\xi=o(Lp(A))$. Because $\kappa$ is singular, we have that $\cf(\xi)<\k$. 

\begin{definition} We say $\mu<\kappa$ is a \textit{good} point if $\cf(\k), \cf(\xi)<\mu$, $\mu$ is regular and $\mu^{\omega}=\mu$.
\end{definition}

Clearly there are good points. Suppose then that $\mu$ is a good point and $g\subseteq Coll(\omega, \mu)$ is generic. Working in $V[g]$, we let
\begin{center}
$\Gamma_{\mu, g}=\{ B\subseteq \bR : L(B, \bR)\models AD^+\}$.
\end{center}
The following theorem shows that $\Gamma_{\mu, g}$ is not empty. Its proof is essentially Steel's proof that $L(\bR)\models AD$ (see \cite{PFA}). See \cite{ATHM} for the definition of $L^{\Sigma}(\bR)$. It is the minimal $\Sigma$-mouse over $\bR$ which contains all the ordinals and has no extenders on its sequence\footnote{Below we stop feeding $\Sigma$ after stage $\k^+$.}. 

\begin{theorem}\label{strategies are determined} Suppose $\mu$ is a good point and $g\subseteq Coll(\omega, \mu)$ is generic. Suppose in $V[g]$, $\M$ is a $\k^+$-iterable countable mouse over some set $X$ and that $\rho_\omega(\M)=X$. Let $\Sigma$ be the $\k^+$-iteration strategy of $\M$. Then $L^\Sigma(\bR)\models AD^+$. Hence, in $V[g]$, letting $\Lambda=\Sigma\rest H_{\omega_1}$, 
\begin{center}
$Code(\Lambda)\in \Gamma_{\mu, g}$.
\end{center}
\end{theorem}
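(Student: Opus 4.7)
The plan is to adapt Steel's proof from \cite{PFA} that $L(\bR)\models AD$, pushing it through the strategy-mouse operator $\Sigma$. Working in $V[g]$, the strategy $\Lambda=\Sigma\restriction H_{\omega_1}^{V[g]}$ is a set of reals, and $L^\Sigma(\bR)$ is built as the union of a hierarchy of $\Sigma$-premice over $\bR$. The goal is to run a core model induction through this hierarchy, establishing at each level that the corresponding initial segment $L_\alpha^\Sigma(\bR)$ satisfies $AD^+$, and then to conclude $L^\Sigma(\bR)\models AD^+$, from which $Code(\Lambda)\in \Gamma_{\mu,g}$ is immediate by definition of $\Gamma_{\mu,g}$.

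The essential preparatory step is a height bound: for every $B\subseteq V_\k$ in $V$, $o(Lp^\Sigma(B))<\k^+$. To see this, note that the Schimmerling--Zeman theorem (\rthm{SchZem}) applies to the $\Sigma$-relativized $Lp$-hierarchy exactly as it does to the pure $Lp$-hierarchy, because the strategy predicate is inert with respect to the fine-structural proof of $\square_\k$ in non-subcompact mice. Hence $Lp^\Sigma(B)\models \square_\k$, and if $o(Lp^\Sigma(B))=\k^+$ then a $\square_\k$-sequence could be read off in $V$, contradicting our hypothesis. By the choice of $\mu$ as a good point with $\cf(\k),\cf(o(Lp(A)))<\mu$ and $\mu^\omega=\mu$, this bound is preserved into $V[g]$, so in $V[g]$ each $Lp^\Sigma(B)$ for $B\in V_\k^{V[g]}$ has height less than $\k^+$.

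Using this bound, I would mimic Steel's argument. One performs the standard scales propagation analysis through $L^\Sigma(\bR)$: at the base level one has $AD^+$ in an appropriate $\H$-model (using the analysis from \rsec{suitable mice} and Woodin's \rthm{hod theorem}), at successor steps one propagates scales through Moschovakis-style periodicity, and at limits one appeals to the reflection properties of $Lp^\Sigma$. The essential use of $\neg\square_\k$ occurs precisely at the inductive stages where one must produce a new $\Sigma$-hybrid mouse capturing a pointclass that has just been shown determined; if no such mouse existed, $Lp^\Sigma$ over a suitable base would reach height $\k^+$, contradicting the bound of the previous paragraph. In particular, the existence of hybrid mice at the needed stages of the $\Sigma$-operator is what converts the failure of square into the inductive step.

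The main obstacle I anticipate is managing the interplay between the strategy predicate $\Sigma$ and the countable submodel arguments used in Steel's proof. Because $\Sigma$ is only a $\k^+$-strategy and the core model induction passes through countable models obtained by Löwenheim--Skolem in $V[g]$, one must verify that $\Sigma$ projects correctly to the hulls that arise, i.e., that the pullback strategies behave coherently. This is the standard hybrid-strategy-mouse issue, handled as in \cite{ATHM}, but here one must additionally check that the reflection used is compatible with the height bound on $Lp^\Sigma$. Once these technicalities are in place, the inductive conclusion $L^\Sigma(\bR)\models AD^+$ follows, and then $Code(\Lambda)\in \Gamma_{\mu,g}$ by unpacking the definition of $\Gamma_{\mu,g}$ with the inner model $L^\Sigma(\bR)\cap V_{\Theta^{L^\Sigma(\bR)}}$ witnessing $AD^+$ over $Code(\Lambda)$.
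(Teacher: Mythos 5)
Your proposal follows essentially the same route the paper intends: the paper gives no proof of this theorem at all, saying only that ``its proof is essentially Steel's proof that $L(\mathbb{R})\models AD$'' from \cite{PFA}, and your sketch is a reasonable unpacking of that citation -- relativize Steel's core model induction to the operator $\Sigma$, drive the induction with the scales analysis, and use $\neg\square_\kappa$ to bound the heights of the relevant $Lp$-stacks. Two elisions are worth flagging if this were to be written out. First, the actual engine of the inductive step is not stated: when the needed $\Sigma$-hybrid mouse fails to exist one must build the relativized core model $K^F$ and invoke \emph{weak covering} at the singular $\kappa$ to get $(\kappa^+)^{K^F(A)}=\kappa^+$; only then does the stack over $A$ reach height $\kappa^+$ and collide with your height bound. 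Saying ``$Lp^\Sigma$ over a suitable base would reach height $\kappa^+$'' names the contradiction but not the mechanism that produces it. Second, your preparatory step quietly asserts Schimmerling--Zeman for $\Sigma$-relativized premice (``the strategy predicate is inert''); the paper's Theorem~\ref{SchZem} is stated only for ordinary mice, and extending the $\square$ construction to hybrid strategy premice is a genuine technical commitment, not a formality. Finally, the base of the induction is the totality of the $\M_n^{\#,\Sigma}$ operators (projective-in-$\Sigma$ determinacy), not the $\H$ analysis of Section~1.3, which the paper uses elsewhere.
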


Our goal is to show that for some good $\mu$ and generic $g\subseteq Coll(\omega, \mu)$, there is $B\in \Gamma_{\mu, g}$ such that
\begin{center}
$L(B, \bR)\models AD^++\Theta=\theta_1$.
\end{center} 
We can then use \rthm{equiconsistency} and the homogeneity of the forcing to prove \rthm{main theorem}.
Let then (*) be the following statement: For any good $\mu$ and a generic $g\subseteq Coll(\omega, \mu)$, in $V[g]$, there is no inner model $M$ such that $Ord, \bR\subseteq M$ and
\begin{center}
$M\models AD^++\Theta=\theta_1$\ \ \ \ \ \ \ (*).
\end{center}
Towards a contradiction we assume that (*) holds. 

Borrowing some lemmas from the next subsection we can characterize sets in $\Gamma_{\mu, g}$ in terms of $\bR$-mice. 

\begin{theorem}\label{capturing by fully iterable mice} Suppose $\mu$ is good and $g\subseteq Coll(\omega, \mu)$ is generic. Let $B\in \Gamma_{\mu, g}$. Then, in $V[g]$, there is a sound $\bR$-mouse $\N$ such that $\rho_\omega(\N)=\bR$, $B\in \N$ and countable submodels of $\N$ are $\k^+$-iterable.
\end{theorem}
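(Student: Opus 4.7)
The plan is to prove this by induction on the Wadge rank of $B$ inside $L(B,\bR)$, building the $\bR$-mouse $\N$ one level at a time so that the sets of reals decoded from the initial segment $\N|\a$ exhaust the Wadge-initial segment of $\powerset(\bR)^{L(B,\bR)}$ of rank below $\a$. At limits one takes unions and verifies soundness; at successor stages the new predicate will be the code of a $\k^+$-iteration strategy with branch condensation for an appropriate suitable premouse whose strategy code captures the next set.

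For a typical successor step, suppose $\N_\a$ has been constructed and let $\Gamma^*\subseteq \Gamma_{\mu,g}$ be the pointclass it captures. Working inside the relevant Wadge-initial segment of $L(B,\bR)$, one mimics the $\M_\infty$ analysis outlined in the previous subsection: pick a qsjs $F\subseteq F(\Gamma^*)$ together with a $\Gamma^*$-suitable $\P$ witnessing it, and extract, via \rlem{getting strategies from qsjs} and \rlem{getting branch condensation}, a $\Gamma^*$-fullness preserving $(\l^+,\l^+)$-iteration strategy $\Sigma^F_\P$ with branch condensation. Define $\N_{\a+1}$ to be the sound hybrid $\bR$-premouse obtained by adjoining $Code(\Sigma^F_\P\rest H_{\omega_1})$ over $\N_\a$ and taking the fine-structural core until $\rho_\omega$ returns to $\bR$. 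The hypothesis $(*)$ enters here to guarantee that $\Theta^{L(B,\bR)}$ never reaches $\theta_1$, so the $\Theta=\theta_0$ machinery of the previous subsection is always the relevant one and the HOD-analysis does not require the more delicate framework needed past $\theta_1$.

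Iterability of countable substructures of $\N$ in $V[g]$ is verified by pulling strategies back through hull embeddings. \rthm{strategies are determined} supplies, for each predicate added, a $\k^+$-iterable countable background mouse whose strategy code lies in $\Gamma_{\mu,g}$, and the branch condensation of $\Sigma^F_\P$ ensures that the background strategy pulls back to the unique strategy needed to iterate a countable hull of $\N_{\a+1}$. The goodness assumption $\mu^\omega=\mu$ supplies just enough closure of $V[g]$ to stitch these countable iterations together coherently at limits of length below $\k^+$.

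The main obstacle is the successor step. One must check both that $Code(\Sigma^F_\P\rest H_{\omega_1})$ is genuinely new at stage $\a$, else the induction stalls below $w(B)$, and that iterated stacking keeps $\N$ sound with $\rho_\omega(\N_{\a+1})=\bR$. The first point is handled by $\Gamma^*$-fullness preservation, which forces the Wadge rank of $\Sigma^F_\P$ to lie strictly above $\Gamma^*$; the second by standard hybrid $\bR$-mouse condensation. Once the induction reaches the least ordinal at which every set Wadge-below $B$ is captured, one further successor step captures $B$ itself, producing the desired $\N$.
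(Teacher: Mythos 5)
There is a genuine gap, in two places. First, the object you construct is not the object the theorem asks for. You build a \emph{hybrid} $\bR$-premouse whose successor levels are strategy predicates $Code(\Sigma^F_\P\rest H_{\omega_1})$ obtained from the qsjs machinery. The theorem needs an ordinary sound $\bR$-mouse in the sense used to define $\S^-_{\mu,g}$, because \rthm{determinacy in the max model} immediately afterwards runs the scales analysis of \cite{ScalesK(R)} and \cite{Scalesendgap} on these mice; a strategy-hybrid premouse is not covered by that analysis. The paper avoids any such construction: since $(*)$ gives $L(B,\bR)\models\Theta=\theta_0$, and hence $L(B,\bR)\models MC$, the main theorem of \cite{V=K(R)} applies directly and hands you a countably iterable sound $\bR$-mouse $\N$ with $\rho_\omega(\N)=\bR$ and $B\in\N$ in one step. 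Your level-by-level induction also presupposes that the qsjs/$\M_\infty$ apparatus (in particular \rthm{existence of quasi-iterable premice}) is available inside each Wadge initial segment $\Gamma^*$ of $L(B,\bR)$; that apparatus is only developed in the paper for full models of $AD^++MC+\Theta=\theta_0$ and, later, only inside excellent hulls, so invoking it here at every successor stage is unjustified scaffolding for a fact you can simply cite.

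Second, and more seriously, your iterability argument runs the key implication backwards. \rthm{strategies are determined} takes a \emph{$\k^+$-iterable} countable mouse as input and concludes that its strategy lies in $\Gamma_{\mu,g}$; it cannot supply $\k^+$-iterability for anything. What the theorem at hand requires is the converse direction: starting from a countable substructure $\S$ of $\N$ that is merely $\omega_1$-iterable inside $L(B,\bR)$, one must lift its strategy to a $\k^+$-iteration strategy in $V[g]$. The tool for that is clause 2 of \rlem{extending strategies via suitability}: one finds a good pointclass $\Gamma\subset(\utilde{\Delta}^2_1)^{L(B,\bR)}$ and a $\Gamma$-suitable $\P$ carrying a $(\k^+,\k^+)$-iteration strategy whose background construction reaches $\S$ (clauses 2c and 2d), and the induced strategy witnesses $\k^+$-iterability of $\S$. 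Your appeal to branch condensation and ``pulling strategies back through hull embeddings'' does not bridge the gap between $\omega_1$-iterability in an inner model and $\k^+$-iterability in $V[g]$; without the suitability/background-construction step the argument does not close.
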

\begin{proof} Fix $g$ and $B$ as in the hypothesis. We work in $V[g]$. We have that $L(B, \bR)\models MC+\Theta=\theta_0$. It follows from the main theorem of \cite{V=K(R)} that in $L(B, \bR)$ there is a countably iterable sound $\bR$-mouse $\N$ such that $\rho_\omega(\N)=\bR$ and $B\in \N$. Fix such a mouse $\N$. It is then enough to show that, in $V[g]$, countable submodels of $\N$ are $\k^+$-iterable. To see this, let $\pi :\S\rightarrow \N$ be a countable submodel of $\M$. We have that $L(B, \bR)\models ``\S$ is $\omega_1$-iterable". It then follows from clause 2 of \rlem{extending strategies via suitability} (in particular, see clause 2c and 2d) that $\S$ is $\k^+$-iterable. 
\end{proof}

\begin{definition} Suppose $\mu$ is good and $g\subseteq Coll(\omega, \mu)$ is generic. Working in $V[g]$, we let $\S^-_{\mu, g}$ be the union of those sound $\bR$-mice $\N$ such that $\rho_\omega(\N)=\bR$, countable submodels of $\N$ are $\k^+$-iterable and there is a set of reals $B\in \Gamma_{\mu, g}$ such that $B$ codes $\N$. We let $\S_{\mu, g}=L(\S^-_{\mu, g})$.
\end{definition}

\begin{theorem}\label{determinacy in the max model} Suppose $\mu$ is good and $g\subseteq Coll(\omega, \mu)$ is generic. Then in $V[g]$, $\S_{\mu, g}\models AD^++\theta_0=\Theta$.
\end{theorem}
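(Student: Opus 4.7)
The plan is to view $\S_{\mu,g} = L(\S^-_{\mu,g})$ as the increasing union of the models $L(\N,\bR)$ for $\N \in \S^-_{\mu,g}$, transfer $AD^+$ from the individual pieces, and then run a standard Wadge-reduction argument to conclude $\theta_0 = \Theta$. The opening observation is that the $\bR$-mice in $\S^-_{\mu,g}$ are linearly ordered by end-extension: any two such $\N_1, \N_2$ are sound with $\rho_\omega = \bR$ and have $\k^+$-iterable countable hulls, so comparison together with soundness and the projection hypothesis forces one to be an initial segment of the other. Consequently any finite tuple from $\S^-_{\mu,g}$ lies in a single $\N \in \S^-_{\mu,g}$, and hence every $A \in \S_{\mu,g}$ belongs to $L(\N,\bR)$ for some $\N \in \S^-_{\mu,g}$.

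For $\S_{\mu,g} \models AD^+$, observe that by the definition of $\S^-_{\mu,g}$ each $\N \in \S^-_{\mu,g}$ is coded by some $B_\N \in \Gamma_{\mu,g}$, so $L(\N,\bR) \subseteq L(B_\N,\bR) \models AD^+$. Given $A \in \mathcal{P}(\bR)^{\S_{\mu,g}}$, fix an $\N$ with $A \in L(\N,\bR)$; an $\infty$-Borel code for $A$ with ordinal parameters exists in $L(B_\N,\bR)$, and winning strategies for all continuous pre-images of $A$ by maps $\pi : \lambda^\omega \to \bR$ (with $\lambda$ below the local $\Theta$) are likewise produced there. These witnesses are themselves coded by sets of reals and hence persist into $\S_{\mu,g}$. $DC_\bR$ inside $\S_{\mu,g}$ is immediate because any $\omega$-sequence of sets of reals from $\S_{\mu,g}$ is absorbed into a single $L(\N,\bR)$, and each such $L(\N,\bR)$ satisfies $DC_\bR$.

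For $\theta_0^{\S_{\mu,g}} = \Theta^{\S_{\mu,g}}$, I would show that every $A \in \mathcal{P}(\bR)^{\S_{\mu,g}}$ is $OD_r$ in $\S_{\mu,g}$ for some real $r$ and then apply the standard Wadge-reduction trick. Fix such $A$ and $\N \in \S^-_{\mu,g}$ with $A \in L(\N,\bR)$, writing $A = \{x \in \bR : L(\N,\bR) \models \phi(x,r,\N,\vec\alpha)\}$ for some formula $\phi$, real $r$ and ordinals $\vec\alpha$. By comparison and soundness $\N$ is the unique sound $\bR$-mouse of its height with $\rho_\omega = \bR$, and this uniqueness persists inside $\S_{\mu,g}$ (as $\S_{\mu,g}$ already sees enough iterability of countable hulls to run comparison), so $\N$ is $OD$ in $\S_{\mu,g}$ from its ordinal height. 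Hence the ``universal'' set
\[
B = \{ (y,x) \in \bR \times \bR : L(\N,\bR) \models \phi(x,y,\N,\vec\alpha) \},
\]
coded as a set of reals via a canonical bijection $\bR \times \bR \cong \bR$, is itself $OD$ in $\S_{\mu,g}$. The continuous map $x \mapsto (r,x)$ then witnesses $A \leq_W B$, so $w^{\S_{\mu,g}}(A) \leq w^{\S_{\mu,g}}(B) < \theta_0^{\S_{\mu,g}}$; as $A$ was arbitrary, $\Theta^{\S_{\mu,g}} = \theta_0^{\S_{\mu,g}}$.

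The hardest point will be the coherence check for the $AD^+$ verification: given $\lambda < \Theta^{\S_{\mu,g}}$ and a continuous $\pi : \lambda^\omega \to \bR$ in $\S_{\mu,g}$, one must exhibit a single $\N \in \S^-_{\mu,g}$ such that $\pi$, a surjection $\bR \to \lambda$ witnessing $\lambda < \Theta$, and $A$ itself all lie in $L(\N,\bR)$, so that the relevant determinacy instance inside $L(B_\N,\bR) \models AD^+$ produces the required winning strategy that we can pull back. By the first paragraph this reduces to the fact that any three such objects in $\S_{\mu,g}$ lie in a common $L(\N,\bR)$, which is an easy consequence of the linear ordering of $\S^-_{\mu,g}$ by end-extension together with comparison.
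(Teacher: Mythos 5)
The decisive gap is in your opening paragraph: $\S_{\mu,g}=L(\S^-_{\mu,g})$ is \emph{not} the increasing union of the models $L(\N,\bR)$ for $\N\in\S^-_{\mu,g}$. Your observations about the stack itself are fine --- the mice in $\S^-_{\mu,g}$ do line up under end-extension, and $\powerset(\bR)\cap\S^-_{\mu,g}=\Gamma_{\mu,g}$ (this is exactly how the paper starts, via \rthm{capturing by fully iterable mice}), so $\S^-_{\mu,g}\models AD^+$. But the constructibility closure $L(\S^-_{\mu,g})$ treats the completed stack $\S^-_{\mu,g}$ as an object and keeps building above it: already the first level beyond $\S^-_{\mu,g}$ contains new sets of reals (for instance the first-order truth set of $\S^-_{\mu,g}$ with real parameters), and by undefinability of truth these lie in no $L(\N,\bR)$ with $\N\in\S^-_{\mu,g}$ and are not covered by the piecewise argument. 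So the inference ``any finite tuple from $\S^-_{\mu,g}$ lies in a single $\N$, hence every $A\in\S_{\mu,g}$ lies in some $L(\N,\bR)$'' is a non sequitur, and with it the transfer of $AD^+$, the $DC_\bR$ absorption, the coherence check in your last paragraph, and the $OD_r$ analysis all collapse. Proving determinacy for the genuinely new sets of reals appearing in $L(\S^-_{\mu,g})$ above the stack is the actual content of the theorem; the paper does this by running the scales analysis of \cite{ScalesK(R)} and \cite{Scalesendgap} together with the core model induction of \cite{PFA} through the levels of $L(\S^-_{\mu,g})$, in the same way Steel climbs through $L(\bR)$.

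A secondary divergence: the paper does not compute $\theta_0=\Theta$ internally by a Wadge reduction; it derives it from the standing reductio hypothesis $(*)$ --- if $\theta_0<\Theta$ held in $\S^-_{\mu,g}$ or in $\S_{\mu,g}$, one would extract an inner model of $AD^++\Theta=\theta_1$, contradicting $(*)$. Your route via ``every set of reals is ordinal definable from a real'' is in principle workable once $AD^+$ in $\S_{\mu,g}$ is in hand (every set in $L(\S^-_{\mu,g})$ is definable from ordinals, reals, and the definable stack), but as written it again leans on the false decomposition, and in any case it presupposes the $AD^+$ step that is missing.
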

\begin{proof}
It follows from the previous theorem that $\powerset(\bR)^{\S^-_{\mu, g}}=\Gamma_{\mu, g}$. Hence, $\S^-_{\mu, g}\models AD^+$. Also, notice that $\S^-_{\mu, g}\models \theta_0=\Theta$. This is because otherwise there is $B\in \Gamma_{\mu, g}$ such that $w(B)=\theta_0^{\S_{\mu, g}}$. It then follows that $L(B, \bR)\models \theta_0<\Theta$ which contradicts (*). But now, using the scales analysis of \cite{ScalesK(R)} and \cite{Scalesendgap} and the core model induction of \cite{PFA}, we get that $\S_{\mu, g}\models AD^+$. It then follows from (*) that $\S_{\mu, g}\models AD^++\theta_0=\Theta$.
\end{proof}

 Suppose $\mu$ is good.  We let $\Theta^{\mu}=\Theta^{S_{\mu, g}}$ and if $a\in H_{\mu^+}$ then we let $\P_{\mu, a}=(\M_\infty(a))^{S_{\mu, g}}$ where $g\subseteq Coll(\omega, \mu)$ is some generic. Notice that $\P_{\mu, a}$ is independent of $g$ and $\P_{\mu, a}\in V$. We let $\P_\mu=\P_{\mu, \emptyset}$. Given a good hull $(M, \pi)$ at $\mu$ such that $a$ is in $M[g]$ and is countable there, we let $\P_{\mu, a}^M=\pi^{-1}(\P^M_{\mu, a})$ and $\S^M_{\mu, g}=(L(\pi^{-1}(\S_{\mu, g})))^{M[g]}$.

\subsection{Good points and good hulls}

Clearly there are good points. Let $\mu$ be a good point. Recall that $\xi=o(Lp(A))$. Let $\nu=\cf(\xi)$ and let $f:\nu\rightarrow \xi$ be an increasing cofinal function. Let $\zeta=\kappa^{+\omega}$. 

\begin{definition}
We say $(M, \pi)$ is a \textit{good hull at $\mu$} if $\pi: M\elesub V_{\zeta}$ is such that 
$\mu+1\subseteq M$, $\card{M}=\mu$, $M^\omega\subseteq M$ and $\{A, f\}\in ran(\pi)$.
\end{definition}
An easy Skolem hull argument shows that there are good hulls at $\mu$. If $(M, \pi)$ is a good hull at $\mu$ then we let $\kappa_{M, \pi}=\pi^{-1}(\kappa)$ and $A_{M, \pi}=\pi^{-1}(A)$. Often times, when it is clear what $\pi$ is, we will omit it from subscripts. The fact that $M$ is countably closed implies that $M$ is \textit{full} with respect to countably iterable mice. The proof of this lemma is essentially the covering argument.

\begin{lemma}\label{fullness of M at A} Suppose $\mu$ is good and $(M, \pi)$ is a good hull at $\mu$. Then $Lp(A_M)\in M$.
\end{lemma}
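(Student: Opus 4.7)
The plan is to show $Lp(A_M) = \pi^{-1}(Lp(A))$, which directly places $Lp(A_M)$ in $M$. By elementarity of $\pi$ and the hypothesis $\{A, f\} \subseteq ran(\pi)$, the structure $\pi^{-1}(Lp(A))$ sits in $M$ and coincides with $M$'s computation $Lp^M(A_M)$, of $M$-height $\xi_M := \pi^{-1}(\xi)$. Since $\nu = \cf(\xi) < \mu$ and $\mu + 1 \subseteq M$, $\pi$ fixes $\nu$ pointwise, so $f_M := \pi^{-1}(f): \nu \to \xi_M$ remains cofinal from the point of view of $V$, giving $\cf^V(\xi_M) = \nu$. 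The remaining task is to establish $Lp^V(A_M) = Lp^M(A_M)$.

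The inclusion $Lp^M(A_M) \subseteq Lp^V(A_M)$ follows from absoluteness of countable iterability. Given a sound mouse $\mathcal{N}$ with $\rho_\omega(\mathcal{N}) = A_M$ sitting inside $Lp^M(A_M)$, any countable hull $\bar{\mathcal{N}} \prec \mathcal{N}$ from $V$ lies in $M$ by $M^\omega \subseteq M$; its unique iteration strategy is determined by $Q$-structures via \rthm{strategyunique} and \rlem{qstructures}, and those $Q$-structures are themselves initial segments of $Lp^M(A_M)$, already available in $V$. Since $\omega_1^M = \omega_1^V$, $M$'s countable iterability of $\bar{\mathcal{N}}$ transfers to $V$.

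For the reverse inclusion, it suffices to show $o(Lp^V(A_M)) \leq \xi_M$, since this combined with the previous paragraph forces both $Lp$ computations to agree. Given a sound countably iterable $\mathcal{N}$ over $A_M$ in $V$ with $\rho_\omega(\mathcal{N}) = A_M$, the plan is to apply the copy construction to lift $\pi$ to an embedding $\pi^+ : \mathcal{N} \to \pi\mathcal{N}$, where $\pi\mathcal{N}$ is a sound countably iterable premouse over $A$. Then $\pi\mathcal{N} \insegeq Lp(A)$, so $o(\pi\mathcal{N}) \leq \xi$. Since $\pi^+$ extends $\pi$ and $\pi(\xi_M) = \xi$, the assumption $o(\mathcal{N}) > \xi_M$ would force $o(\pi\mathcal{N}) > \xi$, a contradiction. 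Hence $o(\mathcal{N}) \leq \xi_M$, yielding $o(Lp^V(A_M)) \leq \xi_M$ and completing the proof.

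The main obstacle is executing the copy construction when $\mathcal{N}$ itself may not lie in $M$, together with the preservation of countable iterability under the lift. The standard remedy is to carry out the copying at the level of countable substructures of $\mathcal{N}$, which $\omega$-closure of $M$ places inside $M$, and then use uniqueness of $Q$-structure-guided strategies (\rthm{strategyunique}, \rlem{qstructures}) to show the pieces cohere and direct-limit back to an embedding of $\mathcal{N}$ with the required bound on ordinal height. This is precisely the covering-style maneuver that the statement alludes to.
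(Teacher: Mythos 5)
Your argument is essentially the paper's own covering argument: a putative sound, countably iterable mouse $\N$ over $A_M$ with $\rho_\omega(\N)=A_M$ and $o(\N)\geq\xi_M$ is lifted through $\pi$ to a sound, countably iterable mouse over $A$, which must be an initial segment of $Lp(A)$ yet has ordinal height at least $\xi$ because $\pi\rest\xi_M$ is cofinal in $\xi$ (this is where $f\in ran(\pi)$ and $\cf(\xi)<\mu$ enter), a contradiction. The only place your write-up is murkier than necessary is the lift itself: the paper simply forms $Ult(\N,E)$ where $E$ is the $(\kappa_M,\kappa)$-extender derived from $\pi$, a construction that is available whether or not $\N\in M$ and whose countable iterability follows at once from the countable closure of $E$ (i.e., from $M^\omega\subseteq M$); this makes your proposed detour through countable substructures and a direct limit unnecessary, and it replaces the imprecise step ``$\pi^+$ extends $\pi$, hence $o(\pi\N)>\xi$'' by the correct observation that $o(Ult(\N,E))\geq\sup\{\pi(\alpha):\alpha<\xi_M\}=\xi$.
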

\begin{proof} We only outline the proof of this well-known fact. Suppose not. Let $\xi_M=\pi^{-1}(\xi)$. Let $\M\insegeq Lp(A_M)$ be the least such that $\rho_\omega(\M)=A_M$ and $\M\not \in M$. Let $E$ be the $(\k_M, \k)$-extender from $\pi$ and let $\M^*=Ult(\M, E)$. Then $\M^*$ is countably iterable (because $E$ is countably closed) and is sound $A$-mouse such that $\rho_\omega(\M^*)=A$. Hence, $\M^*\insegeq Lp(A)$. But because $\pi\rest \xi_M$ is cofinal in $\xi$, we get that $Lp(A)\inseg \M^*$, contradiction.
\end{proof}

The following lemma is our main tool for extending iteration strategies to $\k^+$-iteration strategies. It is essentially due to Steel (see Lemma 1.25 of \cite{PFA}) and we leave the proof to the readers. 

\begin{lemma}\label{extending kappa strategies} Suppose $\mu$ is good and $g\subseteq Coll(\omega, \mu)$ is generic. Then the following holds.
\begin{enumerate}
\item Suppose $\M$ is a sound premouse over some set $X\in V_\k[g]$ such that $\rho_\omega(\M)=X$ and in $V[g]$, $\M$ is $\k$-iterable. Then $\M$ is $\kappa^+$-iterable.
\item Suppose $B\in \Gamma_{\mu, g}$ and $\Gamma\subset (\utilde{\Delta}^2_1)^{L(B, \bR)}$ is a good pointclass. Suppose $(\P, \Lambda)\in V[g]$ is such that in $V[g]$, $\P$ is countable and $\Gamma$-suitable, and $\Lambda$ is a $\Gamma$-fullness preserving $(\omega_1, \omega_1)$-iteration strategy for $\P$ with branch condensation. Then $\Lambda$ can be extended to a $(\k^+, \k^+)$-iteration strategy which acts on non-dropping trees and has the branch condensation. 
\end{enumerate}
\end{lemma}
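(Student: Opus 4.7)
The plan is to extend strategies via the $\Q$-structure plus reflection technique of Steel from \cite{PFA}. In both parts, uniqueness of the extension is essentially free (by \rthm{strategyunique} in Part~1 and by branch condensation in Part~2); the substantive work is to choose a cofinal well-founded branch for each tree of limit length $<\k^+$ and verify the choice is consistent.

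For Part~1, I would define the candidate strategy $\Sigma$ by setting $\Sigma(\T)$ to be the unique cofinal branch $b$ with $\Q(b,\T)\insegeq Lp(\M(\T))$; that at most one such $b$ exists follows from \rlem{qstructures} combined with \rlem{uniqueness of branches}. Existence is the content of the argument. Given $\T$ of limit length $\theta<\k^+$, if $\theta<\k$ the branch is supplied by $\k$-iterability directly. For $\theta\geq\k$ I would argue in two steps: first, that the $\Q$-structure $\Q(\T)$ lives in $Lp(\M(\T))$, which follows from the standing hypothesis $o(Lp(A))<\k^+$ together with \rthm{SchZem}---a missing $\Q$-structure would force a $\square$-sequence at $\d(\T)$ inside the relevant $Lp$, contradicting fullness; second, that $\Q(\T)$ is realized as $\Q(b,\T)$ for some genuine branch $b$, which is obtained by reflecting into a good hull $(H,\pi)$ in which countable substructures of $\M,\T,\Q(\T)$ appear, applying $\k$-iterability to get a branch there, and lifting back by elementarity.

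For Part~2, the same reflection scheme applies but with $\Lambda$ replacing the $\Q$-structure rule. Given a non-dropping tree $\T$ on $\P$ of length $<\k^+$, I would take a good hull $(H,\pi)$ at $\mu$ with $(\P,\Lambda)$ in the range of $\pi$, apply $\pi^{-1}(\Lambda)$ to $\pi^{-1}(\T)$ in $H$ to obtain a branch, and lift via the copying construction. Branch condensation of $\Lambda$ is invoked both to show that the lifted branch is independent of the chosen hull and to verify that the extended $(\k^+,\k^+)$-strategy itself satisfies branch condensation: the commutative triangles demanded by \rdef{branch condensation} for the extension reduce, via the copying map, to instances of branch condensation for $\Lambda$. $\Gamma$-fullness preservation propagates to the extension because iterates produced by it are limits of iterates computed inside hulls where $\Lambda$ acts, and the latter are $\Gamma$-fullness preserving by hypothesis. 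The restriction to non-dropping trees is natural since a drop reduces matters to a strategy on a proper initial segment of $\P$, for which $\Gamma$-fullness preservation is not asserted.

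The main obstacle in both parts is hull-independence of the chosen branch: a priori two good hulls could yield two candidate branches $b_1,b_2$ for the same $\T$. In Part~1 this is resolved outright by \rthm{strategyunique}, which gives uniqueness of $(k,\omega_1+1)$-strategies for sound mice projecting to $\omega$, so the candidate produced in either hull must coincide after lifting. In Part~2 the reconciliation rests squarely on branch condensation: the two copying diagrams into the ambient $\T$ produce maps $\pi_i\colon \M^\T_{b_i}\to\M^\T_b$ for the would-be true branch $b$ witnessing $i^\T_b=\pi_i\circ i^\T_{b_i}$, so branch condensation forces $b_1=b_2$. The standing hypothesis that $\mu$ is good---in particular $\mu^\omega=\mu$---is precisely what guarantees existence of good hulls of size $\mu$ closed under countable sequences, which is what lets the reflection argument catch all the needed countable data.
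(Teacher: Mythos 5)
A preliminary caveat: the paper does not prove this lemma at all --- it attributes it to Steel (Lemma 1.25 of the PFA paper) and explicitly leaves the proof to the reader --- so there is no in-paper argument to match yours against. Your sketch does have the intended architecture ($\Q$-structure guidance, reflection into closed hulls, branch condensation for coherence in Part 2), but two load-bearing steps are missing or misstated. The first concerns branch existence at limit stages. For $\T$ of limit length $\lambda\in[\k,\k^+)$ the relevant cofinality is $\nu=\cf(\d(\T))=\cf(\lambda)$, an arbitrary regular cardinal $<\k$, and countable closure of the hull does not suffice. The actual mechanism (which the paper itself displays in the Claim inside \rlem{correctness of good hulls all the way}) is: take a hull $N$ with $N^{\nu}\subseteq N$ and $N\cap\lambda$ cofinal in $\lambda$, collapse $\T$ to $\bar{\T}$, check inductively that $\bar{\T}$ is still correctly guided (this is where $Lp$-fullness of the hull, hence \rlem{fullness of M at A} and ultimately $o(Lp(A))<\k^+$, enters), obtain $\bar{b}$ from $\k$-iterability, and then pull $\bar{b}$ \emph{into} the hull by fixing a cofinal $Y\subseteq rng(i^{\bar{\T}}_{\xi,\bar{b}})$ of order type $\nu$ (which lies in $N$ by $\nu$-closure) and invoking \rlem{uniqueness of branches}; only then does $b=\sigma(\bar{b})$ make sense. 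Your "lifting back by elementarity" skips this step, and your closing claim that $\omega$-closure "catches all the needed countable data" is false when $\nu>\omega$. Relatedly, your assertion that a missing $\Q$-structure "would force a $\square$-sequence at $\d(\T)$" misplaces where $\neg\square_\k$ is used: for a sound $\M$ projecting to $X$, $\Q(b,\T)$ exists automatically once $b$ does, and the failure of square enters only through $o(Lp(A))<\k^+$ and the resulting $Lp$-fullness of hulls, not through any square sequence at $\d(\T)$.

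The second gap is in Part 2, where there is a mismatch of scales: $\pi^{-1}(\Lambda)$ is only an $(\omega_1,\omega_1)$-iteration strategy, whereas $\pi^{-1}(\T)$ has size $\mu$ inside the hull, so you cannot apply $\pi^{-1}(\Lambda)$ to it. One must first bootstrap $\Lambda$ to trees of size $<\k$ --- for instance by generically collapsing their size to $\omega$ and using branch condensation to see that the branch chosen is independent of the generic and already lies in $V[g]$ (compare the generic-comparison argument for clause 2d of \rlem{extending strategies via suitability}) --- and only then run the hull argument for trees of size $\k$. Your uses of branch condensation for hull-independence and for verifying that the extension again has branch condensation are the right ideas, but without the intermediate extension and without the $\nu$-closed hulls the construction does not get off the ground.
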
 

Let $\Sigma$ be the extension of $\Lambda$ mentioned in clause 2 of \rlem{extending kappa strategies}. Then notice that it follows from branch condensation that if $\VT$ is a stack according to $\Sigma$ and $\pi: N\rightarrow V_{\zeta}[g]$ is such that $N$ is countable in $V[g]$ and $\VT\in rng(\pi)$ then $\pi^{-1}(\VT)$ is according to $\Lambda$. Moreover, whenever $\Q\in I(\P, \Sigma)$, $\eta$ is a cutpoint of $\Q$ and $\M$ is a mouse over $\Q|\eta$ such that, in $V[g]$, its countable hulls have iteration strategy in $\Gamma$, then $\M\insegeq \Q$.

The following lemma will be used to show that good hulls are correct about $\k^+$-iterable mice. 

\begin{lemma}\label{extending strategies via suitability} Suppose $\mu$ is good and $g\subseteq Coll(\omega, \mu)$ is generic. Working in $V[g]$, suppose $B\in \Gamma_{\mu, g}$ and let $\Gamma^*=\powerset(\bR)\cap L(B, \bR)$.  Suppose that for some countable set $X$, $\M\inseg \W^{\Gamma^*}(X)$ is such that $\rho_\omega(\M)=\omega$. Then, in $V[g]$, there are 
\begin{enumerate}
\item a good pointclass\footnote{Recall a good pointclass is a pointclass which is closed under existential real quantification, is $\omega$-parametrized and has the scale property.} $\Gamma\subset (\utilde{\Delta}^2_1)^{L(B, \bR)}$,
\item  a $\Gamma$-suitable $\P$ over $X$ which has an $(\omega_1, \omega_1)$-iteration strategy $\Lambda\in L(B, \bR)$ with branch condensation and a $(\k^+, \k^+)$-iteration strategy $\Sigma$ with branch condensation which acts on non-dropping trees
such that 
\begin{enumerate}
\item $L(B, \bR)\models ``\Lambda$ is $\Gamma$-fulness preserving",
\item $\Lambda$ and $\Sigma$ agree on $dom(\Sigma)\cap H_{\omega_1}$,
\item there is some $\a$ such that if $\N_\a$ is the $\a$th model of $(L[\vec{E}][X])^\P$-construction then $Core(\N_\a)=\M$,
\item for any $\nu<\k$ and for any $h\subseteq Coll(\omega, \nu)$, $\Sigma$ can be extended to a $(\k^+, \k^+)$-iteration strategy in $V[g*h]$  and
\item if $X\in V$ then $\P\in V$ and $\Sigma\rest V\in V$.
\end{enumerate} 
\end{enumerate}
\end{lemma}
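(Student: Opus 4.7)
The plan is to reduce this to two well-known pieces of machinery: (i) generation of suitable premice with fullness-preserving branch-condensed strategies inside $L(B,\bR)$, due to Woodin (cf.\ \cite{ATHM}); and (ii) the already-stated \rlem{extending kappa strategies}, which upgrades an $(\omega_1,\omega_1)$-strategy to a $(\k^+,\k^+)$-strategy in $V[g]$.

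First I would work in $L(B,\bR)$. The mouse $\M\inseg \W^{\Gamma^*}(X)$ has a countable $\omega_1$-iteration strategy whose Wadge rank is some ordinal $\gg < \Theta^{L(B,\bR)}$. Since the Solovay hierarchy of $L(B,\bR)$ is sufficiently rich and good pointclasses with the scale property are cofinal in $(\utilde{\Delta}^2_1)^{L(B,\bR)}$, I can pick a good pointclass $\Gamma \subset (\utilde{\Delta}^2_1)^{L(B,\bR)}$ that Wadge-dominates the strategy of $\M$ and is closed enough to code $\W^{\Gamma^*}(X)$ up through $\M$. This handles clause (1) of the conclusion.

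Next, invoking the generation-of-suitable-mice theorem relativized to $X$ inside $L(B,\bR)$, I obtain a $\Gamma$-suitable premouse $\P$ over $X$ carrying an $(\omega_1,\omega_1)$-strategy $\Lambda \in L(B,\bR)$ that is $\Gamma$-fullness preserving and has branch condensation; this gives (2a) and the $L(B,\bR)$ side of (2b). The key additional requirement is (2c): that the fully backgrounded $L[\vec{E}][X]$-construction performed inside $\P$ below $\d^\P$ reaches $\M$ as the core of some model $\N_\a$. This follows from the universality and uniqueness of such constructions, since $\M$ is countably iterable in $L(B,\bR)$ and the background extenders in $\P$ suffice to certify each level: if $\M$ were not reached, one could compare the construction with $\M$ to derive a contradiction with the $\Gamma$-suitability of $\P$ (in particular with $\mathcal{O}^\P_\eta = \W^\Gamma(\P|\eta)$ for the relevant $\eta$).

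Then I pass to $V[g]$ and apply clause 2 of \rlem{extending kappa strategies} to $(\P,\Lambda)$ to produce a $(\k^+,\k^+)$-iteration strategy $\Sigma$ on non-dropping trees with branch condensation, which by construction agrees with $\Lambda$ on $H_{\omega_1}^{V[g]}$, giving (2b). For (2d), if $h \subseteq Coll(\omega,\nu)$ is $V[g]$-generic with $\nu < \k$, then $\mu' := \max(\mu,\nu)$ is still $<\k$ and $V[g*h]$ is a small generic extension in which the hypotheses of \rlem{extending kappa strategies}(2) remain true for the same $B$ and $\Lambda$, so $\Sigma$ extends further in $V[g*h]$. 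For (2e), when $X\in V$, homogeneity of $Coll(\omega,\mu)$ implies that the statement ``there exists a $\Gamma$-suitable $\P$ over $X$ with the listed strategy properties'' is forced by the empty condition and its witness is invariant under automorphisms of the collapse; choosing the $<$-least such $\P$ in a fixed wellorder gives $\P\in V$, and then $\Sigma\rest V$ is definable from $V$-parameters (the trees in $\Sigma$'s domain that lie in $V$ are decided by the ground model), hence lies in $V$.

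The main obstacle is the second step: establishing (2c) — that the given mouse $\M$ is actually reached as a core of the background construction inside the suitable $\P$. The rest is bookkeeping on top of \rlem{extending kappa strategies} and homogeneity of the Levy collapse, but extracting a $\P$ whose internal fully backgrounded construction sees $\M$ requires carefully arranging $\Gamma$ so that $\W^\Gamma(\P|\eta)$ agrees with $\W^{\Gamma^*}(X)$ past $\M$, and then running the standard comparison/universality argument.
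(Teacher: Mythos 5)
Your outline for clauses (1) and (2a)--(2c) tracks the paper's proof: the paper also picks a good pointclass $\Gamma$ coding a strategy for $\M$, builds $\P$ as ($Lp^\Gamma_\omega$ of) the fully backgrounded $L[\vec{E}][X]$-construction inside a $\Gamma_1$-capturing mouse $\N^*_x$, and gets (2c) from universality. But there is a genuine gap at (2d), which is where most of the paper's work lies. You assert that in $V[g*h]$ ``the hypotheses of \rlem{extending kappa strategies}(2) remain true for the same $B$ and $\Lambda$.'' They do not: $B$ is a set of $V[g]$-reals, not of $V[g*h]$-reals, and $\Lambda$ is only an $(\omega_1,\omega_1)$-strategy computed in $V[g]$, so it does not decide the new countable trees on $\P$ that appear after forcing with $Coll(\omega,\nu)$. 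One cannot simply re-invoke the lifting lemma; one must first show that the strategy already constructed has a canonical, locally definable extension. The paper does this by passing to a countable hull $M$, showing that the operator $a\mapsto\W^\Gamma(a)$ is term-captured along $\Lambda$-genericity iterates $\P_\eta$ (so that $D_{\nu,k}\in M[k]$), and then characterizing $\Sigma(\VT)$ as the unique branch $b$ admitting a realization $\sigma:\M^{\VT}_b\rightarrow \S_\nu$ with $j_\eta=\sigma\circ i^{\VT}_b$ into a fixed generic-comparison iterate $\S_\nu$; this shows $\Sigma\rest V_{\k_M}^{M[g]}\in M[g]$ and hence, by elementarity and absoluteness, that $\Sigma$ survives further small collapses. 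Nothing in your proposal supplies this definability argument.

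The treatment of (2e) is also too quick. Homogeneity of $Coll(\omega,\mu)$ makes $V$-definable objects lie in $V$, but a particular $\Gamma$-suitable $\P$ is a countable object of $V[g]$ and need not be definable; ``choosing the $<$-least witness in a fixed wellorder'' begs the question of which wellorder and of whether any witness lies in $V$ at all. The paper's device is to form the direct limit $\R=\M_\infty(\P,\Lambda)$, which \emph{is} ordinal definable from $V$-parameters and hence in $V$ by homogeneity, and then to pull $\R$ back through a good hull to obtain a $\Lambda$-iterate $\S\in V$ of $\P$; the quadruple $(\S,\Gamma,\Lambda_\S,\Sigma_\S)$ is then the actual witness, and $\Sigma_\S\rest V\in V$ is again proved by the generic-comparison/term-capturing argument, using that $a\mapsto\W^\Gamma(a)$ restricted to $V$ is definable in $V$. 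These two steps --- the local definability of the strategy via generic comparisons, and the $\M_\infty$ trick for (2e) --- are the substantive content of the lemma and are missing from your proposal.
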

\begin{proof}
Because $\M$ is iterable in $L(B, \bR)$, we can find a good pointclass $\Gamma\subset (\utilde{\Delta}^2_1)^{L(B, \bR)}$ such that $\M$ has an iteration strategy coded into $\Gamma$. Let $\vec{C}\in L(B, \bR)$ be a $\Gamma$-sjs\footnote{Recall that a $\Gamma$-sjs is a countable subset $S$ of $\Gamma$ such each $A\in S$ has a $\Gamma$-scale all of whose norms are in $S$, see \cite{CMI} or \cite{PFA} for more on sjs.}  and let $\Gamma_1\subset (\utilde{\Delta}^2_1)^{L(B, \bR)}$ be a good pointclass such that $\vec{C}\in \utilde{\Delta}_{\Gamma_1}$. Let $F:\bR\rightarrow L(B, \bR)$ be a function such that for cone of $x$, $F(x)=(\N^*_x, \M_x, \d_x, \Sigma_x)$ Suslin, co-Suslin captures $\Gamma_1$ (as in Theorem 1.2.9 of \cite{ATHM}). Let $x\in \bR$ be such that $X$ is coded by $x$, $F(x)$ is defined and $\vec{C}$ is Suslin, co-Suslin captured by $(\N^*_x, \M_x, \d_x, \Sigma_x)$. It follows that $\W^\Gamma(\N^*_x|\d_x)\in \N^*_x$. Because sjs's condense to themselves, by a Skolem hull argument we get that there are club many $\eta$ such that $\W^\Gamma(\N^*_x|\eta)\models ``\eta$ is Woodin". Let then $\eta$ be the least such that  $\W^\Gamma(\N^*_x|\eta)\models ``\eta$ is Woodin". It then follows that if $\P^-=(L[\vec{E}][X])^{\N^*_x|\eta}$ and $\P=Lp_\omega^\Gamma(\P^-)$ then $\P$ is $\Gamma$-suitable over $X$. 

Appealing to universality (see Lemma 1.1.26 of \cite{ATHM}), we get that $\M\inseg \P$ and that it is reached by $(L[\vec{E}][X])^\P$-construction of $\P$. Working in $V[g]$, let $\Lambda\in L(B, \bR)$ be the $(\omega_1, \omega_1)$-iteration strategy of $\P$ induced by $\Sigma_x$ (recall that background constructions inherit a strategy from the background as in \cite{FSIT}). It then follows that $L(B, \bR)\models ``\Lambda$ is $\Gamma$-fullness preserving" (one can use an argument like the one in Lemma 3.2.3 of \cite{ATHM}).  Using Steel's lifting techniques from \cite{PFA} which was summarized in clause 2 of \rlem{extending kappa strategies}, we can lift $\Lambda$ to a $(\kappa^+, \k^+)$-iteration strategy $\Sigma$ such that $\Sigma$ acts on trees with no drops.  We now have that $(\P, \Gamma, \Lambda, \Sigma)$ satisfies 1 and 2a-2c. 

To get 2d, we can use generic comparisons. First, we show that $\Sigma$ can be extended to a $(\k, \k)$-iteration strategy in $V[g*h]$ where for some $\nu<\k$, $h\subseteq Coll(\omega, \nu)$ is $V[g]$-generic. Working in $V[g]$, let $\pi: M\rightarrow V_{\zeta}[g]$ be a countable hull such that $(\P, \Sigma)\in rng(\pi)$. Let $\k_M=\pi^{-1}(\k)$ and let $\Sigma^M=\pi^{-1}(\Sigma)$. It is enough to show that 2d holds in $M$ for $\k_M$ and $\Sigma^M$. We have that $\Sigma^M=\Sigma\rest V_{\k_M}^M$. Let $\nu<\k_M$ and let $k\subseteq Coll(\omega, \nu)$ be $M$-generic. Let
\begin{center}
$D_{\nu, k}=\{ (a, \W^\Gamma(a)): a\in V_{\k_M}^{M[k]}\}$. 
\end{center}
Let $C=\{ (x, y) : x, y\in \mathbb{R}\wedge ``y$ codes an $\M\inseg\W^\Gamma(x)$ such that $\rho_\omega(\M)=x"\}$. Without loss of generality, we can assume that $\Lambda$ respects $C$. Notice that if $i:\P\rightarrow \Q$ comes from an iteration according to $\Lambda$ and $x\in \bR$ is generic over $\Q$ for $\mathbb{B}^\Q$ then 
\begin{center}
$\W^\Gamma(x)=\cup\{ \M : \M\in \Q[x]$ is coded by $y\in \bR^{\Q[x]}$ and $\Q[x]\models y\in \tau_C^\Q\}$ \ \ \ \ \ \ \ (1).
\end{center} 
For each $\eta<\k_M$ let $\P_\eta\in M$ be the $\Sigma^M$-iterate of $\P$ which is obtained via the $H_\eta^M$-generic genericity iteration. Let $i_\eta:\P\rightarrow \P_\eta$ be the iteration embedding according to $\Sigma^M$.

It then follows from (1) that $D_{\nu, k}\in M[k]$. This is because $(a, \N)\in D_{\nu, k}$ iff whenever $\eta\in (\nu, \k_M)$, $l\subseteq Coll(\omega, \eta)$ is $M$-generic, $x$ codes $a$ and
\begin{center}
 $\S=\cup\{ \M : \M\in \P_\eta[x]$ is coded by $y\in \bR^{\P_\eta[x]}$ and $\P_\eta[x]\models y\in i_\eta(\tau_C^{\P})\}$
\end{center}
then $\N$ is obtained from $\S$ via $S$-constructions (see \cite{ATHM} or \cite{SelfIter}). 

Because $D_{\nu, k}\in M[k]$ for all $\nu, k$, we can do generic comparisons in $M$. Suppose $\nu< \k_M$ is an $M$-cardinal and $k\subseteq Coll(\omega, \nu)$ is $M$-generic. Then we let $\S_\eta\in M$ be the $\Sigma^M$-iterate of $\P$ which is obtained by generically comparing all $\Gamma$-suitable $\Q$'s which are in $H_{\omega_1}^{M[k]}$ (see \cite{CMI} or \cite{PFA}). We have that $\S_\eta\in M$ and whenever $l\subseteq Coll(\omega, \nu)$ is $M$-generic then $\S_\eta$ is the result of generically comparing all $\Gamma$-suitable $\Q$'s which are in $H_{\omega_1}^{M[l]}$. Let $j_\eta:\P\rightarrow \S_\eta$ be the iteration map.

Fix now some $\nu<\k_M$ and let $h\subseteq Coll(\omega, \nu)$ be $M$-generic. Let $\VT\in H_{\omega_1}^{M[h]}$ be a stack on $\P$ which is according to $\Sigma$ such that the last component of $\VT$ is of limit length. We let $\Q$ be the last model of $\VT$. Then $\Q\in M[h]$ is $\Gamma$-suitable. We then have that $\Sigma(\VT)=b$ iff  $b$ is the unique branch of $\VT$ such that $\M^\T_b=\Q$ and there is $\sigma:\Q\rightarrow \S_\nu$ such that
\begin{center}
$j_\eta=\sigma\circ i^\VT_b$.
\end{center}
Let $\phi[\VT, \Q, b, \S_\nu]$ be the formula on the right side of the equivalence. It follows from absoluteness that $\Sigma(\VT)=b$ iff $M\models \phi[\VT, \Q, b, \S_\nu]$. Hence, $\Sigma\rest V_{\k_M}^{M[g]}\in M[g]$. Using the elementarity of $\pi$ we get that  $\Sigma$ can be extended to a $(\k, \k)$-strategy in $V[g*h]$ where for some $\nu<\k$, $h\subseteq Coll(\omega, \nu)$ is $V[g]$-generic. $\Sigma$ can then be extended to a $(\k^+, \k^+)$-strategy by using clause 2 of \rlem{extending kappa strategies}. We then get that $(\P, \Gamma, \Lambda, \Sigma)$ also satisfies 2d. It remains to show that 2e can also be satisfied.  

Notice that if $X\in V$ then $\M\in V$. Let then $\R=\M_\infty(\P, \Lambda)$. By homogeneity of the collapse, we have that $\R\in V$. Let now $(M, \pi)$ be a good hull at $\mu$ such that $(\R, \Sigma_\R)\in rng(\pi)$\footnote{It is shown in \cite{ATHM} that if $\P$ is $\Gamma$-suitable, $\Sigma$ is a $\Gamma$-fullness preserving iteration strategy for $\P$ with branch condensation and $\R\in I(\P, \Sigma)$ is an iterate of $\P$ via some $\VT$ then $\Sigma_{\R, \VT}$ is independent of $\VT$. This is the reason for omitting the stack from the subscript in $\Sigma_{\R}$. We will do this throughout the paper.} and $\P$ is countable in $M[g]$. Let $\S=\pi^{-1}(\R)$. Then $\S$ is a $\Lambda$-iterate of $\P$ and $\S\in V$. It follows that $(\S, \Gamma, \Lambda_\S, \Sigma_\S)$ satisfies 1 and 2a-2d. We claim that $(\S, \Sigma_\S)$ satisfies 2e.  The proof is just like the proof of 2d above using the fact that, by the homogeneity of the collapse, if $G$ is the function $a\rightarrow \W^\Gamma(a)$ then $G\rest V$ is definable in $V$. We leave the details to the reader.
\end{proof}

Suppose $\mu$ is good and $g\subseteq Coll(\omega, \mu)$ is generic. Working in $V[g]$, for $a\in V_\k[g]$ we let
\begin{center}
$\W_{\mu, g}(a)=\W^{(\powerset(\powerset(\k)))^{V[g]}}(a)$.
\end{center}

\begin{lemma}\label{equivalencies 1} Suppose $\mu$ is a good point and $g\subseteq Coll(\omega, \mu)$ is generic. Then the following holds.
\begin{enumerate}
\item For any $a\in H_{\omega_1}^{V[g]}$, $\W_{\mu, g}(a)= \W^{\Gamma_{\mu, g}}(a)$.
\item  Suppose $(M, \pi)$ is a good hull at $\mu$ and $a\in V_{\k_M}^{M[g]}$. Then
\begin{center}
$\W_{\mu, g}(a)\in M$.
\end{center} 
\item Suppose $\nu>\mu$ is also good and let $h\subseteq Coll(\omega, \nu)$ be $V[g]$-generic. Suppose $a$ is countable in $V[g]$. Then 
\begin{center}
$\W_{\mu, g}(a)=\W_{\nu, h}(a)$.
\end{center} 
\end{enumerate}
\end{lemma}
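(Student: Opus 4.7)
The plan is to prove the three clauses in turn, reducing~(2) and~(3) to~(1) via the internal mouse-theoretic characterization of $\W_{\mu, g}(a)$ that~(1) supplies.

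For clause~(1), I will establish both inclusions by applying \rlem{extending strategies via suitability}. For $\W^{\Gamma_{\mu, g}}(a)\subseteq\W_{\mu, g}(a)$, given $\N\in\W^{\Gamma_{\mu, g}}(a)$, I would pick $B\in\Gamma_{\mu, g}$ coding the $\omega_1$-strategy of the countable hulls of $\N$ and apply \rlem{extending strategies via suitability} with this $B$ and $X=a$: this yields a $\Gamma$-suitable $\P$ over $a$ whose $(L[\vec E][a])^\P$-construction reaches $\N$ (clause~2c), together with a $(\k^+,\k^+)$-iteration strategy $\Sigma$ for $\P$, from which a $\k^+$-strategy for $\N$ is extracted by the usual resurrection argument. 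The reverse inclusion is immediate from \rthm{strategies are determined}: if $\N$ is $\k^+$-iterable in $V[g]$ via $\Sigma$, then $L^\Sigma(\bR)\models AD^+$, so $Code(\Sigma\rest H_{\omega_1})\in\Gamma_{\mu, g}$.

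For clause~(2), I will use~(1) to rewrite $\W_{\mu, g}(a)=\W^{\Gamma_{\mu, g}}(a)$. By \rlem{fullness of M at A} the good hull $M$ contains $Lp(A_M)$, and by countable closure $M$ is correct about countable iterability of small mice. The idea is to run the proof of \rlem{extending strategies via suitability} inside $M$ (or equivalently to pull back through $\pi$ the suitable certificates produced by the lemma in $V$), showing that every mouse in $\W^{\Gamma_{\mu, g}}(a)$ is reached by an $(L[\vec E][a])^\P$-construction with $\P\in M$. Clause~2e of \rlem{extending strategies via suitability} then guarantees that the associated $\k^+$-iteration strategies descend into $M$, so the entire closure $\W_{\mu,g}(a)$ is computable inside $M$.

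For clause~(3), I will combine~(1) with the uniqueness of strategies (\rthm{strategyunique}) and the homogeneity of the Levy collapse. For $\W_{\mu, g}(a)\subseteq\W_{\nu, h}(a)$: a $\k^+$-strategy for $\N$ in $V[g]$ restricts to a $\k$-strategy in $V[g*h]$, and clause~1 of \rlem{extending kappa strategies} upgrades this to a $\k^+$-strategy in $V[g*h]$, placing $\N$ in $\W_{\nu, h}(a)$. For $\W_{\nu, h}(a)\subseteq\W_{\mu, g}(a)$: since $\N$ is countable in $V[g]$ and the $\k^+$-strategy for $\N$ in $V[g*h]$ is unique by \rthm{strategyunique}, the homogeneity of $Coll(\omega,\nu)^{V[g]}$ forces any name for this strategy to take a value definable in $V[g]$, so the strategy lies in $V[g]$. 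The principal obstacle I anticipate is clause~(2): genuinely manufacturing the suitable certificates \emph{inside} $M$ (rather than merely inside $V$) relies both on the $\omega$-closure of $M$ and on the generic-comparison/$S$-construction bookkeeping already present in the proof of \rlem{extending strategies via suitability}.
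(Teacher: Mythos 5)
Your clause (1) matches the paper: one direction is \rthm{strategies are determined}, the other is clause 2c of \rlem{extending strategies via suitability} applied to a $B\in\Gamma_{\mu,g}$ witnessing $\omega_1$-iterability of $\M$. The problems are in clauses (2) and (3).

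For clause (2) you are working far too hard, and in a direction that is hard to complete: the suitable certificates produced by \rlem{extending strategies via suitability} live in $V[g]$ (they are built from reals coding $X$ and from the capturing tuples $(\N^*_x,\M_x,\d_x,\Sigma_x)$), so ``running the proof inside $M$'' or ``pulling back through $\pi$'' is not meaningful for them, and clause 2e only places $\Sigma\rest V$ into $V$, not into $M$. The paper's argument is a one-liner: by clause (1) and mouse capturing in $\S_{\mu,g}$, for $a\in V_{\k_M}^M$ the stack $\W_{\mu,g}(a)$ is an \emph{element} of $\W_{\mu,g}(A_M)\insegeq Lp(A_M)$, and $Lp(A_M)\in M$ by the covering argument of \rlem{fullness of M at A}. (The stronger statement you are implicitly aiming at --- that $M[g]$ \emph{computes} $\W_{\mu,g}(a)$ correctly --- is \rthm{correctness of good hulls}, proved later by exactly the generic-comparison bookkeeping you gesture at.)

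The genuine gap is in the forward inclusion of clause (3). A $\k^+$-iteration strategy for $\M$ in $V[g]$ does \emph{not} ``restrict to a $\k$-strategy in $V[g*h]$'': collapsing $\nu$ introduces new iteration trees of length $<\k$ on $\M$ (built from the new reals, or indexed by ordinals that were uncountable in $V[g]$) on which the old strategy is simply undefined. Extending the strategy to these new trees is precisely the nontrivial content of clause 2d of \rlem{extending strategies via suitability}, which the paper proves by generic comparisons inside a countable hull; clause 1 of \rlem{extending kappa strategies} only upgrades $\k$- to $\k^+$-iterability \emph{within a fixed generic extension} and cannot substitute for it. Your reverse inclusion via uniqueness plus homogeneity is a plausible alternative to the paper's appeal to clause 2e, but note that \rthm{strategyunique} as stated gives uniqueness only of $(k,\omega_1+1)$-strategies for mice projecting to $\omega$; to run the homogeneity argument for $\k^+$-strategies for arbitrary $a$-mice you would still need the strategy to be $\Q$-structure guided and hence ordinal definable from $\M$ in $V[g*h]$, which again requires the surrounding machinery rather than following from the cited theorem.
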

\begin{proof} 
\begin{enumerate}
\item  It follows from \rthm{strategies are determined} that $\W_{\mu, g}(a)\insegeq \W^{\Gamma_{\mu, g}}(a)$. Suppose then $\M\inseg \W^{\Gamma_{\mu, g}}(a)$ is such that $\rho_\omega(\M)=a$. Let $B\in \Gamma_{\mu, g}$ be such that $L(B, \bR)\models ``\M$ is $\omega_1$-iterable". It then follows from 2c of \rlem{extending strategies via suitability} that $\M$ is $\kappa^+$-iterable. Hence, $\M\inseg \W_{\mu, g}(a)$.
\item We work in $V[g]$. First it follows from \rlem{fullness of M at A} that $\W_{\mu, g}(A_M)\in M$. It follows from clause 1 that  for every $a\in V_{\k_M}^M$, $\W_{\mu, g}(a)\in \W_{\mu, g}(A_M)$. Hence, clause 2 follows.
\item It follows from part 1 that it is enough to show that $\W^{\Gamma_{\mu, g}}(a)=\W^{\Gamma_{\nu, h}}(a)$. Suppose then that $\M\inseg \W^{\Gamma_{\mu, g}}(a)$ is such that $\rho_\omega(\M)=a$. It follows from 2d of \rlem{extending strategies via suitability} that $\M$ is $\k^+$-iterable in $V[h]$. Hence, $\M\inseg \W^{\Gamma_{\nu, h}}(a)$. 

Conversely, suppose $\M\inseg \W^{\Gamma_{\nu, h}}(a)$ is such that $\rho_\omega(\M)=a$. It then follows from 2e of \rlem{extending strategies via suitability} that $\M$ is $\k^+$-iterable in $V[g]$ implying that $\M\inseg \W^{\Gamma_{\mu, g}}(a)$. 
\end{enumerate}
\end{proof}

Next we would like to show that good hulls capture $\k^+$-iterable mice, i.e., if $(M, \pi)$ is good and $a\in H_{\k_M}^M$ then 
$\W_{\mu, g}(a)=(\W_{\mu, g}(a))^{M[g]}$. For this we will need the following lemma which will come handy later on as well. 

\begin{lemma}\label{hod is short tree iterable} Suppose $\mu$ is good and let $g\subseteq Coll(\omega, \mu)$ be generic. Then in $V[g]$, $\P_{\mu}$ is $\powerset(\powerset(\k))$-short tree iterable.
\end{lemma}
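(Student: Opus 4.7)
The plan is to verify $\powerset(\powerset(\k))^{V[g]}$-short tree iterability of $\P_\mu$ by a countable reflection argument, transferring iterability from inside the determinacy model $\S_{\mu,g}$ to $V[g]$. Recall that $\P_\mu = \M_\infty^{\S_{\mu,g}}$, and by \rthm{hod theorem} applied inside $\S_{\mu,g}$ (which satisfies $AD^+ + MC + \Theta = \theta_0$ by \rthm{determinacy in the max model}), $\P_\mu$ is $\Gamma_{\mu,g}$-suitable, with unique Woodin cardinal $\Theta^{\S_{\mu,g}}$.

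Let $\T \in V[g]$ be a $\powerset(\powerset(\k))^{V[g]}$-short normal tree on $\P_\mu$, and let $\Q \insegeq \W^{\powerset(\powerset(\k))^{V[g]}}(\M(\T))$ be the unique $\Q$-structure witnessing non-Woodinness of $\delta(\T)$. I would take, in $V[g]$, a countable transitive elementary $\pi : N \to V_\zeta[g]$ with $\T, \Q, \P_\mu, \M(\T) \in rng(\pi)$, yielding preimages $\bar\T, \bar\Q, \bar\P$. By elementarity of $\pi$ together with \rlem{equivalencies 1} (which identifies $\W_{\mu,g}$ with $\W^{\Gamma_{\mu,g}}$ on countable inputs), $\bar\P$ is $\Gamma_{\mu,g}$-suitable in $\S_{\mu,g}$, $\bar\T$ is a $\Gamma_{\mu,g}$-short tree on $\bar\P$ there, and $\bar\Q$ is the corresponding $\Gamma_{\mu,g}$-$\Q$-structure.

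Next, inside $\S_{\mu,g}$, the direct-limit theory of suitable premice provides a $\Gamma_{\mu,g}$-fullness-preserving branch-condensation iteration strategy for $\bar\P$ (via \rlem{getting strategies from qsjs} and \rlem{getting branch condensation} applied to the appropriate qsjs-witness). Applied to the short tree $\bar\T$, this strategy yields a unique cofinal branch $\bar b$ with $\Q(\bar b, \bar\T) = \bar\Q$; since $\S_{\mu,g} \subseteq V[g]$, this $\bar b$ lives in $V[g]$. Because $\bar\T$ and $\bar\Q$ are countable, the statement ``there exists a cofinal branch of $\bar\T$ whose branch model has $\bar\Q$ as an initial segment'' is $\Sigma^1_1$ in suitable codes for $\bar\T$ and $\bar\Q$, hence true in $N$ by Mostowski absoluteness. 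By elementarity of $\pi$ the corresponding statement holds of $\T, \Q$ in $V_\zeta[g]$, producing the desired cofinal branch $b$ of $\T$ in $V[g]$.

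The principal obstacle is verifying that $\bar\P$ is genuinely $\Gamma_{\mu,g}$-suitable inside $\S_{\mu,g}$ --- not merely in the hull $N$ --- which requires that the fullness condition $\mathcal{O}_\eta^{\bar\P} = \W^{\Gamma_{\mu,g}}(\bar\P|\eta)$ descends correctly from $\P_\mu$; this is exactly what \rlem{equivalencies 1} supplies at countable inputs. A secondary technical point is the careful inheritance of a $\Gamma_{\mu,g}$-fullness-preserving strategy for $\bar\P$ from $\M_\infty^{\S_{\mu,g}}$ via the $\pi$-pullback, which is standard given the qsjs machinery developed earlier in the paper.
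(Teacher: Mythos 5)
There is a genuine gap at the heart of your argument: the justification you give for why the collapsed premouse $\bar\P$ can be iterated inside $\S_{\mu,g}$. You appeal to ``the direct-limit theory of suitable premice'' providing a $\Gamma_{\mu,g}$-fullness-preserving, branch-condensation iteration strategy for $\bar\P$ via \rlem{getting strategies from qsjs} and \rlem{getting branch condensation}. But at this point in the development no such strategy is available: those lemmas require a qsjs witnessed by a suitable premouse, and producing such a witness for (hulls of) $\P_\mu$ is precisely the hard work carried out much later (the excellent-hull machinery, \rlem{getting a qsjs}, \rlem{strongly respecting strategy} and its corollary), work for which the present lemma is itself an ingredient. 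Invoking a full fullness-preserving strategy here is therefore circular. It is also unnecessary: short-tree iterability only requires choosing branches identified by $\Q$-structures, and the tool that is actually available inside $\S_{\mu,g}$ is Woodin's \rthm{existence of quasi-iterable premice}, which supplies an $F_{od}$-quasi-iterable $\Gamma_{\mu,g}$-suitable $\Q$. The paper's proof then observes that the countable hull of $\P_\mu=\M_\infty^{\S_{\mu,g}}$ is an $F_{od}$-quasi-iterate of this $\Q$ (a condensation property of the direct-limit system, which needs the hull to be taken compatibly with that system --- the paper uses a good hull $(M,\pi)$ at $\mu$ with $\Q\in M$, rather than an arbitrary countable hull of $V_\zeta[g]$), and quasi-iterability already includes the correct choice of branches for $\Gamma_{\mu,g}$-short trees.

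The remainder of your outline is essentially right and matches the paper: one must check that the $\Q$-structure $\Q(\T)$ computed in $V[g]$ from $\W_{\mu,g}(\M(\T))$ is also the $\Q$-structure that $\S_{\mu,g}$ recognizes, which follows from elementarity of the hull map together with clause 1 of \rlem{equivalencies 1} (showing $\Q(\T)$ is $\omega_1$-iterable in $\S_{\mu,g}$); combining this with short-tree iterability of the hull of $\P_\mu$ in $\S_{\mu,g}$ produces the branch, which then lies in the hull by absoluteness and transfers to $V[g]$ by elementarity. So the skeleton of your reflection argument is sound; the missing idea is that the iterability of the hull of $\P_\mu$ inside the determinacy model must come from quasi-iterability (via \rthm{existence of quasi-iterable premice} and the fact that hulls of $\M_\infty$ are quasi-iterates), not from a full iteration strategy that has not yet been constructed.
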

\begin{proof}
Working in $\S_{\mu, g}$, let $\Q$ be a $\Gamma_{\mu, g}$-suitable premouse which is $F_{od}$-quasi-iterable. Let $(M, \pi)$ be a good hull at $\mu$ such that $\Q\in M$ and $M[g]\models ``\Q$ is countable". By elementarity of $\pi$, it is enough to show that 
\begin{center}
$M[g]\models ``\P_\mu^M$ is $\powerset(\powerset(\k_M))$-short tree iterable". 
\end{center}
We verify this only for trees. The proof for stacks is very similar and only notationally more involved. To show this for trees then it is enough to show that whenever $\T\in (H_{(\k^+_M)^M})^{M[g]}$ is a tree on $\P_\mu$ such that $M[g]\models ``\T$ is $\powerset(\powerset(\k_M))$-short" then there is $b\in M[g]$ such that $b$ is a cofinal well-founded branch of $\T$ such that $\Q(b, \T)$ exists and $\Q(b, \T)\trianglelefteq (\W_{\mu, g}(\M(\T)))^M$. Fix then such a tree $\T$ and let $\N=\Q(\T)$. Notice that $M$ is countable in $\S_{\mu, g}$ and in $\S_{\mu, g}$, $\P_{\mu}^M$ is an $F_{od}$-quasi-iterate of $\Q$. It then follows that
\begin{center}
$\S_{\mu, g}\models ``\P_{\mu}^M$ is $\Gamma_{\mu, g}$-short tree iterable"\ \ \ \ \ \ \ (1).
\end{center}  
It follows from elementarity of $\pi$ that $\pi(\N)$ is $\k^+$-iterable in $V[g]$ and hence, $\N$ is $\k^+$-iterable in $V[g]$. Therefore, using clause 1 of \rlem{equivalencies 1}, we get that 
\begin{center}
$S_{\mu, g}\models ``\N$ is $\omega_1$-iterable"\ \ \ \ \ \ (2).
\end{center}
Combining (1) and (2), we get a cofinal branch $b$ of $\T$ such that $\Q(b, \T)$ exists and $\N=\Q(b, \T)$. By absoluteness $b\in M[g]$.
\end{proof}

\begin{theorem}[Correctness of good hulls]\label{correctness of good hulls} Suppose $\mu$ is good and $g\subseteq Coll(\omega, \mu)$ is generic. Suppose $a\in V_{\k_M}^{M[g]}$. Then
\begin{center}
$\W_{\mu, g}(a)=\W_{\mu, g}(a)^{M[g]}$
\end{center} 
\end{theorem}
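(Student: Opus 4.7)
My plan is to establish the two inclusions of the equality separately. A useful preliminary observation is that since $|M|^V = \mu$ and $\mu$ is collapsed to $\omega$ by $g$, the entire extension $M[g]$ is countable in $V[g]$; in particular $a$ and every $\N\in M[g]$ lie in $H_{\omega_1}^{V[g]}$. Moreover, by clause 2 of \rlem{equivalencies 1} we already know $\W_{\mu,g}(a)\in M$, so the two sides of the desired equality are comparable as sets of mice living in $M[g]$, and clause 1 of \rlem{equivalencies 1} lets us replace $\k^+$-iterability by iteration strategies coded in $\Gamma_{\mu,g}$.

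For the inclusion $\W_{\mu, g}(a)^{M[g]} \subseteq \W_{\mu, g}(a)$, I would take $\N \inseg \W_{\mu, g}(a)^{M[g]}$ with $\rho_\omega(\N) = a$ and its $\k_M^+$-iteration strategy $\Sigma^M\in M[g]$. Applying \rthm{strategies are determined} inside $M[g]$ (legitimate via the elementarity of $\pi$ applied to the parameters defining its statement), the restriction of $\Sigma^M$ to countable trees is coded by a set $B\in \Gamma_{\mu,g}^{M[g]}$. I would then apply \rlem{extending strategies via suitability} inside $M[g]$ to $(\N,B)$ to obtain a $\Gamma$-suitable $\P^M\in M$ containing $\N$ as a level of its background $L[\vec{E}][a]$-construction. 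Elementarity of $\pi$ produces $\P=\pi(\P^M)\in V$ which is $\Gamma$-suitable in $V$, and clause 2e of the same lemma then gives a $(\k^+, \k^+)$-iteration strategy for $\P$ in $V[g]$. This strategy, pulled back along the correspondence between $\N$ and $\P$, gives a $\k^+$-iteration strategy for $\N$ in $V[g]$, so $\N \inseg \W_{\mu, g}(a)$.

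For the reverse inclusion $\W_{\mu, g}(a) \subseteq \W_{\mu, g}(a)^{M[g]}$, I would take $\N \inseg \W_{\mu, g}(a)$, so $\N\in M\subseteq M[g]$ by clause 2 of \rlem{equivalencies 1}. By \rlem{extending strategies via suitability} in $V[g]$, the strategy of $\N$ comes from a $\Gamma$-suitable $\P\in V$ containing $\N$, and is definable in $V[g]$ by the generic-comparison recipe against $\P_\mu$, whose short-tree iterability is supplied by \rlem{hod is short tree iterable}. The forcing $Coll(\omega,\mu)$ is fixed by $\pi$ (as $\mu<crit(\pi)$), so $\pi$ lifts to $\hat\pi : M[g] \to V_\zeta[g]$. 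Transferring the whole setup through $\hat\pi$ produces a suitable $\P^M\in M$ and $\P_\mu^M$ short-tree iterable inside $M[g]$. Running the generic-comparison recipe inside $M[g]$ therefore gives a $\k_M^+$-iteration strategy for $\N$ witnessed in $M[g]$, so $\N \inseg \W_{\mu,g}(a)^{M[g]}$.

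The hard part will be verifying that the generic-comparison characterization of $\Sigma$ from \rlem{extending strategies via suitability} genuinely relativizes between $V[g]$ and $M[g]$: one must check that the master iterates $\S_\nu$ constructed on each side agree under $\hat\pi$, and that the branch characterization ``$\Sigma(\VT)=b$ iff there exists $\sigma:\Q\to \S_\nu$ with $j_\eta=\sigma\circ i^{\VT}_b$'' evaluates consistently in both models. The combination of \rlem{hod is short tree iterable} (applied externally in $V[g]$ and internally in $M[g]$) together with the elementarity of the good hull is precisely what is needed to push this through.
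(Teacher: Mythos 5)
Your two-inclusion decomposition is right, and the inclusion $\W_{\mu,g}(a)^{M[g]}\insegeq\W_{\mu,g}(a)$ does hold, but your route to it through \rthm{strategies are determined} and \rlem{extending strategies via suitability} is both unnecessary and, as stated, only available when $\N$ is countable in $M[g]$ (both results are stated for countable mice, while the theorem allows arbitrary $a\in V_{\k_M}^{M[g]}$). The paper disposes of this direction in one line: pull back the $\k^+$-iteration strategy of $\pi(\N)$ along $\pi\restriction\N$.

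The hard direction has a genuine gap at exactly the point you flag as ``the hard part.'' You produce, in $V[g]$, a $\Gamma$-suitable $\P$ and master iterates $\S_\nu$ capturing the strategy of $\N$, and then propose to ``transfer the whole setup through $\hat\pi$'' into $M[g]$; but these objects are built in $V[g]$ with no control over whether they lie in $rng(\hat\pi)$, so $\hat\pi^{-1}$ cannot be applied to them, and since the hull $(M,\pi)$ is given in advance you cannot re-choose it to contain them. The paper's argument internalizes the strategy by a different mechanism, whose two key ingredients are absent from your proposal. First, the function $D=\{(x,\W_{\mu,g}(x)):x\in V_{\k_M}^{M}\}$ is an element of $M$: it is $OD_{A_M}$ in $\S_{\mu,g}$, hence by mouse capturing together with clause 1 of \rlem{equivalencies 1} it lies in $\W_{\mu,g}(A_M)$, which is in $M$ by \rlem{fullness of M at A}. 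This is what allows $M$ itself to carry out the generic genericity iterations of $\P_\mu^M$ (deciding shortness and locating $\Q$-structures at limit stages) and produce iterates $\Q_\eta$ over which $H_\eta^M$ is generic. Second, the strategy $\Lambda$ of $\M$ is term-captured locally: whenever $\Q$ is $\Gamma_{\mu,g}$-suitable and $\M$ is generic over $\Q$, then $\Lambda\restriction H_{\omega_2}^{\Q[\M]}\in\Q[\M]$ and is recognized there as the unique $\omega_2$-strategy of $\M$; gluing these fragments over the $\Q_\eta$ for $\eta<\k_M$ yields $\Lambda\restriction V_{\k_M}^{M[g]}\in M[g]$. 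Your generic-comparison recipe characterizes the strategy of the auxiliary $\P$ rather than of $\M$, and in any case lives on the wrong side of $\hat\pi$; without the two ingredients above the internalization to $M[g]$ does not go through.
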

\begin{proof} 
Notice that because of $\pi$, $(\W_{\mu, g}(a))^M\insegeq \W_{\mu, g}(a)$. It is then enough to show that if $\M\inseg \W_{\mu, g}(a)$ is such that $\rho_{\omega}(\M)=a$ then $M\models ``\M$ is $\k_M^+$-iterable". Because of \rlem{extending kappa strategies}, it is enough to show that $M\models ``\M$ is $\k_M$-iterable". It follows from clause 2 of \rlem{equivalencies 1} that $\M\in M$. 

Next, we have that the set
\begin{center}
$D=\{ (x, \W_{\mu, g}(x)): x\in V_{\k_M}^M\}$
\end{center}
is $OD_{A_M}$ and hence, by $MC$\footnote{$MC$ stands for mouse capturing. It was shown in \cite{ATHM} that it holds in the minimal model of $AD_{\mathbb{R}}+``\Theta$ is regular". For more on $MC$ see \cite{ATHM} or \cite{DMATM}.} in $\S_{\mu, g}$ and by clause 1 of \rlem{equivalencies 1}, $D\in \W_{\mu, g}(A_M)$. Since $\W_{\mu, g}(A_M)\in M$, $D\in M$.  Fix then some $\eta<\k_M$ such that $a\in H_\eta^M$. Let $\Q_\eta$ be the result of a generic genericity iteration of $\P_{\mu}^M$ making $H_\eta$ generically generic. Notice that because $D\in M$ and $\P_\mu^M$ is $\Gamma_{\mu, g}$-short tree iterable we have that the entire iteration can be done in $M$. This iteration produces a $\Gamma_{\mu, g}$-correctly guided tree $\T$ with last model $\Q_\eta$ (where ``last model" is in the sense of \rdef{last model of finite stack}). 

Working in $S_{\mu, g}$, let $\Lambda$ be the $\omega_1$-iteration strategy of $\M$. We have that whenever $\Q$ is $\Gamma_{\mu, g}$-suitable and $\M$ is generic over $\Q$ then $\Lambda\rest H_{\omega_2}^{\Q[\M]}\in \Q[\M]$. It then follows that $\Q[\M]\models ``\M$ is $\omega_2$-iterable". We let $\Lambda^\Q=\Lambda\rest H_{\omega_2}^{\Q[\M]}$. Notice that $\Q[\M]$ satisfies that $``\Lambda^\Q$ is the unique $\omega_2$-iteration strategy of $\M$". 

We can now show that $\Lambda\rest V_{\k_M}^{M[g]}\in M[g]$ as follows. Given an iteration tree $\T\in V_{\k_M}^{M[g]}$ on $\M$ which is according to $\Lambda$ and has a limit length, we have that $\Lambda(\T)=b$ iff the following holds in $M[g]$: for all $\eta<\k_M$ such that $\T\in H_\eta^{M[g]}$, $\T$ is according to $\Lambda^{\Q_\eta}$ and $b=\Lambda^{\Q_\eta}(\T)$.  Hence, $\Lambda\rest V_{\k_M}^{M[g]}\in M[g]$.
\end{proof}

The following are important corollaries. 

\begin{corollary}\label{fullness of hod} Suppose $\mu$ is good and let $g\subseteq Coll(\omega, \mu)$ be generic. Let $\eta< o(\P_{\mu})$. Then 
\begin{center}
$\W_{\mu, g}(\P_\mu|\eta)=\mathcal{O}^{\P_\mu}_\eta$.
\end{center} 
\end{corollary}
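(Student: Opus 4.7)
The plan is to verify both inclusions simultaneously by reflecting into a good hull at $\mu$, where the left-hand side becomes tractable via the countable-case machinery of \rlem{equivalencies 1}, and then transferring the resulting equality back by elementarity. The two key external ingredients will be \rthm{correctness of good hulls} (to match $\W_{\mu,g}$ inside and outside the hull) and the $\Gamma_{\mu,g}$-suitability of $\P_\mu$ inside $\S_{\mu,g}$ (which forces $\mathcal{O}^{\P_\mu}_\eta$ to coincide with $\W^{\Gamma_{\mu,g}}(\P_\mu|\eta)$).

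First I would fix a good hull $(M,\pi)$ at $\mu$ with $\{\P_\mu,\eta\}\subseteq \mathrm{rng}(\pi)$, and set $\P_\mu^M=\pi^{-1}(\P_\mu)$, $\eta^M=\pi^{-1}(\eta)$. Then $\P_\mu^M|\eta^M$ lies in $V_{\k_M}^{M[g]}$ and is hereditarily countable in $V[g]$. By \rlem{equivalencies 1}(1), $\W_{\mu,g}(\P_\mu^M|\eta^M)$ equals $\W^{\Gamma_{\mu,g}}(\P_\mu^M|\eta^M)$; by \rthm{correctness of good hulls}, this common value also coincides with the $\W_{\mu,g}$-operator as evaluated inside $M[g]$.

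Next, working in $\S_{\mu,g}$ and invoking the $\H$-analysis, \rthm{existence of quasi-iterable premice} and \rthm{hod theorem} exhibit $\P_\mu=\M_\infty^{\S_{\mu,g}}$ as the direct limit of $F_{od}$-quasi-iterates of a $\Gamma_{\mu,g}$-suitable premouse, with $\d^{\P_\mu}=\Theta^{\S_{\mu,g}}$. Since the operator $a\mapsto \W^{\Gamma_{\mu,g}}(a)$ is $OD$ in $\S_{\mu,g}$ and $F_{od}$ contains the associated term-capture data for its defining $OD$ set, the $F_{od}$-quasi-iteration embeddings respect it, so the defining clause $\mathcal{O}^\Q_{\eta'}=\W^{\Gamma_{\mu,g}}(\Q|\eta')$ of suitability propagates along the directed system to $\P_\mu$ itself, yielding $\mathcal{O}^{\P_\mu}_\tau=\W^{\Gamma_{\mu,g}}(\P_\mu|\tau)$ for every $\tau<o(\P_\mu)$. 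By elementarity of $\pi$ (extended generically to $M[g]\to V_\zeta[g]$), the same equality pulls back to $\P_\mu^M$ at $\eta^M$ in $\S_{\mu,g}$.

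Chaining the two computations gives $\W_{\mu,g}(\P_\mu^M|\eta^M)=\mathcal{O}^{\P_\mu^M}_{\eta^M}$ inside $M[g]$, and applying the elementarity of $\pi$ once more delivers the desired $\W_{\mu,g}(\P_\mu|\eta)=\mathcal{O}^{\P_\mu}_\eta$. The main technical obstacle is justifying the propagation of the $\W^{\Gamma_{\mu,g}}$-operator through the direct-limit construction of $\P_\mu$: this is the step that most heavily uses $F_{od}$-quasi-iterability and the fact that in the $\theta_0=\Theta$ regime the relevant mouse operator is term-captured by $OD$ parameters. Everything else is routine application of the machinery already in place.
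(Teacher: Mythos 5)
Your overall skeleton — reflect to a good hull, identify $\W_{\mu,g}$ with $\W^{\Gamma_{\mu,g}}$ at the countable level via \rlem{equivalencies 1}(1) and \rthm{correctness of good hulls}, use suitability to pin down $\mathcal{O}^{\P_\mu^M}_{\eta^M}$, and push the resulting equality up by elementarity — is exactly the paper's strategy. But the middle step, where you obtain $\mathcal{O}^{\P_\mu^M}_{\eta^M}=\W^{\Gamma_{\mu,g}}(\P_\mu^M|\eta^M)$ by first "propagating" the suitability clause along the directed system to the uncountable direct limit $\P_\mu$ and then pulling back to $\P_\mu^M$, has a genuine gap. First, the intermediate assertion $\mathcal{O}^{\P_\mu}_\tau=\W^{\Gamma_{\mu,g}}(\P_\mu|\tau)$ is not well-typed: for $\Gamma_{\mu,g}\subseteq\powerset(\bR)$ the operator $\W^{\Gamma_{\mu,g}}$ is defined only on countable sets, whereas $\P_\mu|\tau$ has size up to $\Theta^{\S_{\mu,g}}$; any sensible reinterpretation (via $\k^+$-iterable mice in $V[g]$) makes the assertion essentially the corollary itself. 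Second, and more substantively, the hard inclusion — that every appropriately iterable mouse over $\P_\mu|\tau$ projecting to $\tau$ already sits inside $\P_\mu$ — does not follow from the fact that the quasi-iteration embeddings respect the term-captured mouse set. That maximality/fullness of the direct limit is precisely the content that requires an argument, and in this paper that argument *is* the reflection to a good hull; you cannot use it as an input to the reflection.

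The fix is short and is what the paper does: $\P_\mu^M=\pi^{-1}(\P_\mu)$ is itself $\Gamma_{\mu,g}$-suitable, because in $\S_{\mu,g}$ the hull $M$ is countable and $\P_\mu^M$ is an $F_{od}$-quasi-iterate of an $F_{od}$-quasi-iterable suitable premouse (this is recorded in the proof of \rlem{hod is short tree iterable}), and quasi-iterates are suitable by \rdef{last model of quay iteration}. Clause 4 of \rdef{suitable premouse} then gives $\mathcal{O}^{\P_\mu^M}_{\eta^M}=\W^{\Gamma_{\mu,g}}(\P_\mu^M|\eta^M)$ outright, with no detour through $\P_\mu$. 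With that in hand, your two inclusions go through: for $\M\insegeq\mathcal{O}^{\P_\mu}_\eta$ with $\rho_\omega(\M)=\eta$, the collapsed $\pi^{-1}(\M)$ lies in $\W^{\Gamma_{\mu,g}}(\P_\mu^M|\eta^M)$ and is therefore $\k_M^+$-iterable in $M$ by \rthm{correctness of good hulls}; conversely, for $\M\insegeq\W_{\mu,g}(\P_\mu|\eta)$, its collapse is $\k^+$-iterable, hence $\omega_1$-iterable in $\S_{\mu,g}$ by \rthm{strategies are determined}, hence an initial segment of $\P_\mu^M$ by suitability, and elementarity of $\pi$ finishes. As written, your proposal does not establish the key suitability fact and instead replaces it with a circular appeal to the fullness of $\P_\mu$.
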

\begin{proof} Suppose first that $\M\insegeq \mathcal{O}^{\P_\mu}_\eta$ is such that $\rho_\omega(\M)=\eta$. To show that $\M\insegeq \W_{\mu, g}(\P_\mu|\eta)$ it is enough to show that $\M$ is $\k^+$-iterable. To show this, it is enough to show the following claim. \\

\textit{Claim.} Suppose $\mu$ is good and let $g\subseteq Coll(\omega, \mu)$ be generic. Let $\eta< o(\P_{\mu})$ and let $\M\inseg \mathcal{O}^{\P_\mu}_\eta$ be such that $\rho_\omega(\M)=\eta$. Then, in $V[g]$, $\M$ is $\kappa^+$-iterable. \\
\begin{proof} Let $(M, \pi)$ be a good hull. It is enough to show the claim in $M$. Let then $\eta< o(\P^M_{\mu})$ and let $\M\inseg \mathcal{O}^{\P^M_\mu}_\eta$ be such that $\rho_\omega(\M)=\eta$. We need to see that $\M$ is $\k_M^+$-iterable in $M$. Notice that since $\P_{\mu}^M$ is $\Gamma_{\mu, g}$-suitable, it follows from clause 1 of \rlem{equivalencies 1} that $\M\insegeq\W^{\Gamma_{\mu, g}}(\P_\mu^M|\eta)$. It then follows from \rlem{correctness of good hulls} that $\M$ is $\k_M^+$-iterable in $M$.
\end{proof}

Let now $\M\insegeq \W_{\mu, g}(\P_\mu|\eta)$ be such that $\rho(\M)=\eta$. We want to show that $\M\insegeq \P_\mu$. Let $(M, \pi)$ be a good hull such that $\M\in rng(\pi)$. Let $\N=\pi^{-1}(\M)$. Notice that $\N$ is $\k^+$-iterable and hence, by \rlem{strategies are determined}, $\S_{\mu, g}\models ``\N$ is $\omega_1$-iterable". Because $\P_{\mu}^M$ is $\Gamma_{\mu, g}$-suitable, we have that $\N\insegeq \P_{\mu}^M$. Using elementarity of $\pi$ we get that $\M\insegeq \P_\mu$.
\end{proof}

\begin{corollary}\label{stability of lower parts} Suppose $\mu<\nu$ are two good points. Suppose $g\subseteq Coll(\omega, \mu)$ is $V$-generic and $h\subseteq Coll(\omega, \nu)$ is $V[g]$-generic. Then for $a\in V_\k[g]$
\begin{center}
$\W_{\mu, g}(a)=\W_{\nu, g*h}(a)$.
\end{center}
\end{corollary}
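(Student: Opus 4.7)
My plan is to establish both inclusions of $\W_{\mu,g}(a)=\W_{\nu,g*h}(a)$ by suitable applications of \rlem{extending strategies via suitability}, paralleling the proof of \rlem{equivalencies 1}(3) but without the assumption that $a$ is countable in $V[g]$. The key structural point I will exploit is that $V[g*h]=V[g][h]$, where $h$ is a $V[g]$-generic for the weakly homogeneous poset $Coll(\omega,\nu)^{V[g]}=Coll(\omega,\nu)$; this enables the homogeneity-based descent of iteration strategies from $V[g*h]$ back down to $V[g]$.

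For the inclusion $\W_{\mu,g}(a)\insegeq\W_{\nu,g*h}(a)$, I take $\M\insegeq\W_{\mu,g}(a)$ with $\rho_\omega(\M)=a$, work in $V[g]$, and invoke \rlem{extending strategies via suitability} to produce a $\Gamma$-suitable premouse $\P$ over $a$ with a $(\k^+,\k^+)$-iteration strategy $\Sigma\in V[g]$ such that $\M$ is reached by the $(L[\vec{E}][a])^\P$-construction. Since $\nu<\k$, clause 2(d) of that lemma extends $\Sigma$ to a $(\k^+,\k^+)$-strategy in $V[g*h]$, which induces a $\k^+$-iteration strategy for $\M$ in $V[g*h]$.

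For the reverse inclusion, I take $\M\insegeq\W_{\nu,g*h}(a)$ with $\rho_\omega(\M)=a$ and first handle the case where $a$ is countable in $V[g*h]$ (which holds whenever $|a|^{V[g]}\le\nu$). I apply \rlem{extending strategies via suitability} in $V[g*h]$ to obtain a $\Gamma$-suitable $\P$ over $a$ with strategies $\Lambda,\Sigma$ such that $\M$ appears in the $(L[\vec{E}][a])^\P$-construction. The proof of clause 2(e) of that lemma uses only weak homogeneity of the Levy collapse; applied with $V[g]$ in the role of the ground and $a\in V[g]$, it yields a $\Lambda$-iterate $\S\in V[g]$ of $\P$ with $\Sigma_\S\rest V[g]\in V[g]$, and $\M$ still appears in the $L[\vec{E}][a]$-construction of $\S$ by universality. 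Hence $\M$ has a $\k^+$-iteration strategy in $V[g]$, giving $\M\insegeq\W_{\mu,g}(a)$. For the general case $|a|^{V[g]}>\nu$, I will pick a good point $\l>|a|^{V[g]}$ and a $V[g*h]$-generic $k\subseteq Coll(\omega,\l)$ so that $a$ becomes countable in $V[g*h*k]$; then, by absorption of the Levy collapses, two applications of the countable case---once with the pair $(\mu,\l)$ on the $V[g]$-side and once with $(\nu,\l)$ on the $V[g*h]$-side---combined with the forward direction give $\W_{\mu,g}(a)=\W_{\l,g*h*k}(a)=\W_{\nu,g*h}(a)$.

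The main obstacle is justifying the generalization of clause 2(e) of \rlem{extending strategies via suitability} from $V$ to $V[g]$ as the ground model; this should be a routine adaptation since the argument depends only on weak homogeneity of $Coll(\omega,\nu)$, a property that holds over any inner model containing the poset, including $V[g]$.
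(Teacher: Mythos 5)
Your handling of the countable case is correct and coincides with what the paper does: the descent $\W_{\nu,g*h}(a)\insegeq\W_{\mu,g}(a)$ is clause 2e of \rlem{extending strategies via suitability} applied with $V[g]$ as the ground model, and the lifting via clause 2d is exactly the argument used for clause 3 of \rlem{equivalencies 1}. The gap is in the general case $a\in V_\k[g]$, which is the actual content of the corollary beyond \rlem{equivalencies 1}. After adjoining $k\subseteq Coll(\omega,\l)$ for a good $\l>|a|^{V[g]}$, your collapse-absorption reduction delivers only the two \emph{descent} inclusions $\W_{\l,g*h*k}(a)\insegeq\W_{\mu,g}(a)$ and $\W_{\l,g*h*k}(a)\insegeq\W_{\nu,g*h}(a)$, because in both instances $a$ is countable only in the top model $V[g*h*k]$. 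To upgrade these to the equalities $\W_{\mu,g}(a)=\W_{\l,g*h*k}(a)=\W_{\nu,g*h}(a)$ you still need the lifting direction for a set that is uncountable in the lower model, and the ``forward direction'' you invoke does not supply it: \rlem{extending strategies via suitability} is stated and proved only for countable $X$ (its proof chooses a real $x$ coding $X$ and builds the suitable $\P$ inside $\N^*_x$), so it cannot produce the suitable premouse over an uncountable $a$ whose strategy clause 2d would then extend to $V[g*h]$. Since iterability is not upward absolute to generic extensions, this step cannot be taken for granted.

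The paper closes precisely this gap with a good hull: fix a good hull $(M,\pi)$ at $\mu$ with $\M$, $\nu$ and a name for $a$ in $rng(\pi)$, and set $\N=\pi^{-1}(\M)$. Because $|M|=\mu$ and $g$ collapses $\mu$, the set $\pi^{-1}(a)$ is countable in $V[g]$, so the machinery of $\S_{\mu,g}$ and of \rthm{correctness of good hulls} (short-tree iterability of $\P^M_\mu$, generic genericity iterations, and the resulting definability of the strategy of $\N$ inside the hull's generic extensions) applies and shows $\N\insegeq(\W_{\pi^{-1}(\nu),g*k}(\pi^{-1}(a)))^{M[g][k]}$ for $M[g]$-generic $k\subseteq Coll(\omega,\pi^{-1}(\nu))$; elementarity of $\pi$ then gives $\M\insegeq\W_{\nu,g*h}(a)$. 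Some such hull argument, reducing the uncountable $a$ to a set that is genuinely countable in $V[g]$ while preserving the target conclusion under $\pi$, is needed to complete your proof of the lifting direction.
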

\begin{proof} 
Using clause 2e of \rlem{extending strategies via suitability} applied in $V[g]$, we have that $\W_{\nu, g*h}(a)\insegeq\W_{\mu, g}(a)$. Fix then $\M\inseg \W_{\mu, g}(a)$ such that $\rho_\omega(\M)=a$ and let $(M, \pi)$ be a good hull at $\mu$ such that $\{\eta, \M\}\in rng(\pi)$. Let $\l=\pi^{-1}(\nu)$ and $\N=\pi^{-1}(\M)$. It is enough to show that whenever $k\subseteq Coll(\omega, \l)$ is $M[g]$-generic, $\N\insegeq (\W_{\l, g*k}(\pi^{-1}(a)))^{M[g][k]}$. Moreover, by genericity, it is enough to show this fact for $k\in V[g]$. Fix then such a $k$. The rest of the proof is then just a word-by-word generalization of the proof of \rthm{correctness of good hulls} with  $V=M[g][k]$. We leave it to the reader. 
\end{proof}

\begin{corollary}\label{stability of lower parts in determinacy model} Suppose $\mu<\nu$ are two good points. Suppose $g\subseteq Coll(\omega, \mu)$ is $V$-generic and $h\subseteq Coll(\omega, \nu)$ is $V[g]$-generic. Then for $a\in V_\k[g]$
\begin{center}
$\W^{\Gamma_{\mu, g}}_{\mu, g}(a)=\W^{\Gamma_{\nu, g*h}}_{\nu, g*h}(a)$.
\end{center}
\end{corollary}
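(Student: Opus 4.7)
The plan is to combine the preceding corollary (stability of lower parts) with clause 1 of \rlem{equivalencies 1}, applied in $V[g]$ and $V[g*h]$ respectively. I will argue $\W^{\Gamma_{\mu,g}}(a)\insegeq\W^{\Gamma_{\nu,g*h}}(a)$ and then observe that the reverse inclusion is entirely symmetric. To begin, I fix a sound mouse $\M\inseg\W^{\Gamma_{\mu,g}}(a)$ with $\rho_\omega(\M)=a$; each countable substructure $\bar\M$ of $\M$ has an $\omega_1$-iteration strategy coded into $\Gamma_{\mu,g}$, equivalently a strategy living in some $L(B,\bR)$ with $B\in\Gamma_{\mu,g}$. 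Clause 2c of \rlem{extending strategies via suitability} lifts each such strategy to a $\k^+$-iteration strategy in $V[g]$, so $\M\inseg\W_{\mu,g}(a)$.

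Next, I invoke the previous corollary to get $\W_{\mu,g}(a)=\W_{\nu,g*h}(a)$, whence $\M\inseg\W_{\nu,g*h}(a)$; equivalently, every countable substructure of $\M$ is $\k^+$-iterable in $V[g*h]$. Applying \rthm{strategies are determined} inside $V[g*h]$ to the resulting $\k^+$-strategy $\Sigma$ of each such $\bar\M$ yields $L^{\Sigma}(\bR)^{V[g*h]}\models AD^+$, so that $Code(\Sigma\rest H_{\omega_1})\in\Gamma_{\nu,g*h}$. Hence $\M\inseg\W^{\Gamma_{\nu,g*h}}(a)$. For the reverse inclusion one starts from $\M\inseg\W^{\Gamma_{\nu,g*h}}(a)$, uses clause 2c of \rlem{extending strategies via suitability} in $V[g*h]$ to get $\k^+$-iterability there, descends to $\k^+$-iterability in $V[g]$ via the previous corollary, and invokes \rthm{strategies are determined} in $V[g]$ to conclude membership in $\W^{\Gamma_{\mu,g}}(a)$.

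The main subtlety I expect is the case when $a$ is uncountable in $V[g]$, where $\W^{\Gamma_{\mu,g}}(a)$ must be read as the stack of mice over $a$ whose countable substructures are $\Gamma_{\mu,g}$-iterable; the transition between $V[g]$ and $V[g*h]$ then has to preserve this countable iterability uniformly in the substructure chosen. This is exactly what clauses 2d and 2e of \rlem{extending strategies via suitability} encode, so once those clauses are invoked there should be no further work beyond the routine reduction to countable preimages via good hulls that was already used in the proof of the previous corollary.
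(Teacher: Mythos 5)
Your argument is correct and is essentially the paper's own proof: the paper simply chains clause 1 of \rlem{equivalencies 1} (applied at $\mu,g$ and at $\nu,g*h$) with \rcor{stability of lower parts}, and your two-directional argument via \rlem{extending strategies via suitability} and \rthm{strategies are determined} is just that same chain with the proof of clause 1 of \rlem{equivalencies 1} unwound. No further work is needed.
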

\begin{proof}
This follows immediately from clause 1 of \rlem{equivalencies 1} and \rcor{stability of lower parts}.
\end{proof}

Let $\nu_0$ be the least good point and let $h_0\subseteq Coll(\omega, \nu_0)$. We let $W=V[h_0]$ and for $a\in H_{\k^+}^{V[h_0]}$ we let
\begin{center}
$\W(a)=\W^{(\powerset(\powerset(\k^+)))^W}(a)$.
\end{center}
In what follows, we will often deal with $(\powerset(\powerset(\k^+)))^W$-suitable premice. To make the notation convenient, we drop $(\powerset(\powerset(\k^+)))^W$ from our notation. Thus, in what follows, a suitable premouse is $(\powerset(\powerset(\k^+)))^W$-suitable, short tree is $(\powerset(\powerset(\k^+)))^W$-short and etc.  

\subsection{Excellent points and excellent hulls}

\begin{definition}
 We say $\mu>\nu_0$ is \textit{excellent} if $\mu$ is good and $\mu^{\nu_0}=\mu$.
\end{definition}

\begin{definition}
Suppose $\mu$ is excellent and $(M, \pi)$ is a good hull at $\mu$. We say $M$ is an excellent hull if $M^{\nu_0}\subseteq M$.
\end{definition}

Clearly if $\mu$ is excellent then there is an excellent hull at $\mu$. Also, it is clear that there are at least $\omega$ many excellent points. The next two lemmas go back to Ketchersid's work done in \cite{Ketchersid}. 

\begin{lemma}\label{excellent points} Suppose $\mu$ is excellent. Then  $\cf^V(\Theta^{\mu})\leq \nu_0$.
\end{lemma}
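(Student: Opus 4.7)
The plan is to first apply Woodin's $\H$-computation (\rthm{hod theorem}) inside $\S_{\mu,g}$ to identify $\Theta^\mu$ with the unique Woodin cardinal $\d^{\P_\mu}$ of $\P_\mu = \M_\infty^{\S_{\mu,g}}$. Since $\P_\mu \in V$ by homogeneity of $Coll(\omega,\mu)$, the problem reduces to showing $\cf^V(\d^{\P_\mu}) \leq \nu_0$. The remainder splits into two steps: first, produce an $\omega$-sequence cofinal in $\d^{\P_\mu}$ inside $W = V[h_0]$; second, transfer this to a $V$-cofinal set of cardinality $\leq \nu_0$ by a name-counting argument.

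For the first step, I would work in $W = V[h_0]$, in which $\nu_0$ is countable. In $\S_{\mu,g}$, the ordinals $\pi_{\P_\mu,\vec f,\infty}(\gamma^{\P_\mu}_{\oplus\vec f})$ for $\vec f \in F_{od}^{<\omega}$ are cofinal in $\d^{\P_\mu}$. By the stability of the $\W$-operator (\rcor{stability of lower parts in determinacy model}) combined with correctness of good hulls (\rthm{correctness of good hulls}), and using that excellence $\mu^{\nu_0} = \mu$ provides good hulls $(M,\pi)$ at $\mu$ with $M^{\nu_0} \subseteq M$, the direct limit system of $\P_\mu$ can be recovered inside $W$ from such a hull. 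Since in $W$ the relevant data from such $(M,\pi)$ becomes countable, one may select in $W$ an $\omega$-sequence $\langle \vec f_n : n<\omega\rangle$ of $F_{od}$-tuples whose associated ordinals $\pi_{\P_\mu,\vec f_n,\infty}(\gamma^{\P_\mu}_{\oplus\vec f_n})$ are cofinal in $\d^{\P_\mu}$, yielding $\cf^W(\d^{\P_\mu}) = \omega$.

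Finally, a standard name-counting argument converts this to the desired $V$-bound. Letting $\dot s \in V$ be a $Coll(\omega,\nu_0)$-name for the $W$-sequence, set
\[
A := \{\beta < \d^{\P_\mu} : \exists p \in Coll(\omega,\nu_0),\, \exists n<\omega,\, p \Vdash \dot s(n) = \check\beta\}.
\]
Then $|A|^V \leq |Coll(\omega,\nu_0)|^V \cdot \aleph_0 = \nu_0$, and $A$ is cofinal in $\d^{\P_\mu}$: given $\beta < \d^{\P_\mu}$, since the $W$-sequence is cofinal there is $n$ with $s(n) > \beta$, so some $p \in h_0$ forces $\dot s(n) = \check\gamma$ for some $\gamma > \beta$, placing $\gamma \in A$. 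Hence $\cf^V(\d^{\P_\mu}) = \cf^V(\Theta^\mu) \leq \nu_0$, as required. The main obstacle is the construction of the $\omega$-sequence in $W$: one must show that the $F_{od}$-data visible from $W$, via the stability and correctness lemmas applied to excellent hulls at $\mu$, provides enough $\vec f_n$'s to approach $\d^{\P_\mu}$ cofinally in only $\omega$ steps, which is the heart of the Ketchersid-style argument.
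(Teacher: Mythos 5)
The reduction at both ends of your argument is fine: identifying $\Theta^\mu$ with $\d^{\P_\mu}$ via \rthm{hod theorem}, noting $\P_\mu\in V$ by homogeneity, and the observation that $\cf^V(\Theta^\mu)\leq\nu_0$ is equivalent to $\cf^W(\Theta^\mu)=\omega$ (your name-counting step, or just $\nu_0^+$-c.c.\ of $Coll(\omega,\nu_0)$) are all correct. But the middle step --- producing, from data of size $\nu_0$, a set cofinal in $\d^{\P_\mu}$ --- is the entire content of the lemma, and you have not supplied it; you explicitly defer it as ``the main obstacle.'' There is no reason visible from the direct-limit presentation alone that a small subfamily of $F_{od}^{<\omega}$ should have associated ordinals cofinal in $\d^{\M_\infty}$: inside $\S_{\mu,g}$ one needs OD sets of cofinally many Wadge ranks below $\Theta$, and from the outside the existence of a $\nu_0$-sized cofinal family is precisely the covering-type statement to be proved. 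There is also a concrete set-up error: you invoke excellent hulls $(M,\pi)$ at $\mu$ with $M^{\nu_0}\subseteq M$; these have cardinality $\mu>\nu_0$ and do \emph{not} become countable in $W$, so they cannot yield an $\omega$-sequence there.

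The paper closes exactly this gap by contradiction, using a good hull $(M,\pi)$ at $\nu_0$ (so $\card{M}=\nu_0$, countably closed) with $\mu\in rng(\pi)$. If $\cf^V(\Theta^\mu)>\nu_0$, then $\nu=\sup(\pi``\pi^{-1}(\Theta^\mu))<\Theta^\mu$. Letting $\Q=\pi^{-1}(\P_\mu)$ and $E$ the extender derived from $\pi\rest\Q$, one forms $\N=Ult(\Q,E)$ with $j(\d^\Q)=\nu$, shows $\N\insegeq\P_\mu$ using countable closure of $E$ together with the fullness of $\P_\mu$ (\rcor{fullness of hod}), and then derives a contradiction: $\nu<\Theta^\mu$ is not Woodin in $\P_\mu$, so taking $\S$ the largest initial segment of $\P_\mu$ in which $\nu$ is Woodin and using $MC$ to lift $j$ to $j^+:L[\S,\Q]\rightarrow L[j^+(\S),\N]$, one recovers $\S$ inside $L[j^+(\S),\N]$, contradicting that this model thinks $\nu=j(\d^\Q)$ is Woodin. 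Some such realization/ultrapower argument (or an equivalent) is indispensable here; without it your proof does not establish the lemma.
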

\begin{proof} Suppose not and let $\pi : M\rightarrow V_\zeta$ be good at $\nu_0$ such that $\mu\in ran(\pi)$. Let $\l=\pi^{-1}(\Theta^{\mu})$. Because we are assuming $\cf^V(\Theta^{\mu})>\nu_0$ and because $\card{\l}=\nu_0$, we have that
\begin{center}
 $\nu=_{def}\sup(\pi\rest \l)<\Theta^\mu$.
\end{center} 
Let $\Q=\pi^{-1}(\P_{\mu})$. Let $E$ be the $(\cp(\pi), \nu)$-extender derived from $\pi\rest \Q : \Q\rightarrow \P_{\mu}$. We let $\N=Ult(\Q, E_{\pi}\rest \l)$ and let $j:\Q\rightarrow \N$ be the ultrapower embedding. Because $\d^\Q$ is regular in $\Q$, we have that $j\rest \d^\Q$ is cofinal in $j(\d^\Q)$ and hence, $j(\d^\Q)=\nu$. Also, we have that $\P_{\mu}|\nu=\N|\nu$.\\

\textit{Claim.} $\N\insegeq \P_\mu$.\\
\begin{proof}
Notice that if $(\M, k)\in V$ is such that $k:\M \rightarrow \N$ and $\M$ is countable in $V$ then in $W$ there is $\sigma:\M\rightarrow \Q$. Hence, in $W$, $\M$ is $\k^+$-iterable above $\sigma^{-1}(\d^\Q)$. It then follows that in $W$, $\N$ s countably iterable above $\nu$. On the other hand, since $\nu<\Theta^\mu$, we have that $\P_\mu\models ``\nu$ isn't Woodin". Let then $\N^*\insegeq \P_\mu$ be the least initial segment such that $\mathcal{J}_1(\N^*)\models ``\nu$ isn't Woodin". Then by \rcor{fullness of hod}, it follows that $\N^*$ is countably iterable. Therefore, because $\N\models ``\nu$ is Woodin", we have that $\N\insegeq \N^*\insegeq \P_\mu$.
\end{proof}

Let $\S$ be the largest initial segment of $\P_{\mu}$ such that $\S\models ``\nu$ is Woodin". Because $\S\in \H^{S_{\mu, h_0*h}}$ for any $W$-generic $h\subseteq Coll(\omega, \mu)$, it follows from the fact that MC holds in $\S_{\mu, h_0*h}$ that
\begin{center}
$\powerset(\d^\Q)\cap L[\S, \Q]=(\powerset(\d^\Q))^\Q$.
\end{center}
It then also follows that we can lift $j$ to 
\begin{center}
$j^+:L[\S, \Q]\rightarrow L[j^+(\S), \N]$.
\end{center}
Because $E$ is countably closed, $j^+(\S)$ is countably iterable in $V$. This means that $\S\in L[j^+(\S), \N]$ as it can be identified in $L[j^+(\S), \N]$ as the unique sound $\N$-premouse $\M$ such that $\rho_\omega(\M)=\nu$, $\mathcal{J}_1(\M)\models ``\nu$ is not Woodin" and in $L[j^+(\S), \N]^{Coll(\omega, \N)}$, there is $\sigma: \M\rightarrow j^+(\S)$. However, $L[j^+(\S), \N]\models ``j(\d^\Q)=\nu$ is Woodin", contradiction!
\end{proof}

The proof of the lemma also gives the following the proof of which we leave to our readers.

\begin{lemma}\label{cofinality of theta} Suppose $\mu$ is good. Then $\cf^V(\Theta^\mu)\leq \mu$.
\end{lemma}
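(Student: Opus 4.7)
The plan is to mimic the proof of \rlem{excellent points} virtually verbatim, replacing the excellent hull at $\nu_0$ by a good hull at $\mu$, and correspondingly relaxing the conclusion from $\cf^V(\Theta^\mu) \leq \nu_0$ to $\cf^V(\Theta^\mu) \leq \mu$. The earlier argument uses of the hull's closure only $\omega$-closure---which is needed to make the derived extender countably complete and so to obtain countable iterability of $\N$ and of $j^+(\S)$ in $V$---and this is automatic for any good hull; the $\nu_0$-closure of excellent hulls served only to push the cardinality bound down from $\mu$ to $\nu_0$.

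Suppose toward a contradiction that $\cf^V(\Theta^\mu) > \mu$, and fix a good hull $(M,\pi)$ at $\mu$ with $\P_\mu \in ran(\pi)$. Set $\l = \pi^{-1}(\Theta^\mu)$ and $\Q = \pi^{-1}(\P_\mu)$. Since $\card{\l} \leq \card{M} = \mu$ and $\cf^V(\Theta^\mu) > \mu$, we have $\nu := \sup(\pi \rest \l) < \Theta^\mu$. Let $E$ be the $(\cp(\pi), \nu)$-extender derived from $\pi \rest \Q$, let $\N = Ult(\Q, E)$, and let $j : \Q \to \N$ be the ultrapower embedding. Just as in \rlem{excellent points}, regularity of $\d^\Q$ in $\Q$ forces $j \rest \d^\Q$ to be cofinal in $j(\d^\Q)$, so $j(\d^\Q) = \nu$ and $\P_\mu | \nu = \N | \nu$.

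The first step is to show $\N \insegeq \P_\mu$. Because $M^\omega \subseteq M$, any pair $(\M,k) \in V$ with $\M$ countable and $k : \M \to \N$ realises into $\Q$ via some $\sigma : \M \to \Q$; hence $\N$ is countably iterable above $\nu$ in $W$. Since $\nu < \Theta^\mu$, the least $\N^* \insegeq \P_\mu$ with $\mathcal{J}_1(\N^*)$ satisfying that $\nu$ is not Woodin exists and is countably iterable by \rcor{fullness of hod}, giving $\N \insegeq \N^* \insegeq \P_\mu$. For the second, contradiction-producing step, let $\S \insegeq \P_\mu$ be the largest initial segment with $\S$ satisfying that $\nu$ is Woodin. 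MC inside $\S_{\mu, h_0 * h}$ for a generic $h$ yields $\powerset(\d^\Q) \cap L[\S, \Q] = (\powerset(\d^\Q))^\Q$, so $j$ lifts to $j^+ : L[\S, \Q] \to L[j^+(\S), \N]$; countable closure of $M$ makes $j^+(\S)$ countably iterable in $V$, so $\S$ is identifiable inside $L[j^+(\S), \N]$ exactly as in \rlem{excellent points}, contradicting the fact that $L[j^+(\S), \N]$ satisfies that $j(\d^\Q) = \nu$ is Woodin. The one subtlety is verifying that the earlier proof's uses of hull closure required only $\omega$-closure, which a brief inspection confirms; no new obstacle arises.
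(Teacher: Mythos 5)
Your proposal is correct and is exactly what the paper intends: the paper states that ``the proof of the lemma also gives the following'' and leaves it to the reader, and your adaptation---running the argument of \rlem{excellent points} with a good hull at $\mu$ in place of an excellent hull at $\nu_0$, noting that only the $\omega$-closure of the hull (for countable completeness of $E$ and countable iterability of $\N$ and $j^+(\S)$) and its cardinality (which drives the cofinality bound) are used---is the intended rerun.
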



%
%

Let $T_{\mu}\in V$ be the tree on $\omega\times (\d^2_1)^{\S_{\mu, g}}$ such that whenever $g\subseteq Coll(\omega, \mu)$ is generic, 
\begin{center}
$S_{\mu, g}\models`` p[T_\mu]$ is the universal $\Sigma^2_1$-set"
\end{center}
where $p[T]$ is the projection of $T$. Next we show that realizable premice are suitable. 


\begin{lemma}\label{realizability gives fullness lemma}
Suppose $\mu$ is an excellent point, $(M, \pi)$ is an excellent hull at $\mu$ and $g\subseteq Coll(\omega, \mu)$ is $W$-generic. Let $H\in (V_{\k_M})^{M[h_0][g]}$ be such that $\W(H)\models`` H=H_\nu$ for some cardinal $\nu$". Suppose further that $N\in V_{\k_M}^{M[h_0][g]}$ is such that there are embeddings $\sigma: \W(H)\rightarrow N$ and $\tau: N\rightarrow \pi(\W(H))$ such that $\pi\rest \W(H)=\tau\circ \sigma$. Then 
\begin{center}
$N=\W(\sigma(H))$.
\end{center}
\end{lemma}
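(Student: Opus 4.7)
The plan is to prove both inclusions $N \insegeq \W(\sigma(H))$ and $\W(\sigma(H)) \insegeq N$ separately. Two preliminary observations: by \rthm{correctness of good hulls} applied with $a = \sigma(H) \in V_{\kappa_M}^{M[h_0][g]}$, $\W(\sigma(H)) = \W(\sigma(H))^{M[h_0][g]}$, so the lower-part closure is absolute between $V[h_0][g]$ and $M[h_0][g]$; and by the $M$-genericity of $h_0$ and $g$, $\pi$ extends elementarily to $\pi\colon M[h_0][g] \to V_\zeta[h_0][g]$, so $\pi(\W(H)) = \W(\pi(H))$, which is $\kappa^+$-iterable in $V[h_0][g]$.

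For $N \insegeq \W(\sigma(H))$, take any sound $\M \insegeq N$ with $\rho_\omega(\M) = \sigma(H)$. Elementarity of $\tau$ gives $\tau(\M) \insegeq \W(\pi(H))$ with $\rho_\omega(\tau(\M)) = \tau(\sigma(H)) = \pi(H)$, so $\tau(\M)$ is $\kappa^+$-iterable. Copying iteration trees through $\tau$ in the standard way yields a $\kappa^+$-iteration strategy for $\M$, and hence $\M \insegeq \W(\sigma(H))$. Taking unions over all such $\M$ gives $N \insegeq \W(\sigma(H))$.

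For $\W(\sigma(H)) \insegeq N$, I argue by contradiction. If this fails, then by the first inclusion and the cofinality inside $\W(\sigma(H))$ of the set of $\beta$ with $\rho_\omega(\W(\sigma(H))|\beta) = \sigma(H)$, there is a sound $\kappa^+$-iterable $\M$ with $N \inseg \M \insegeq \W(\sigma(H))$ and $\rho_\omega(\M) = \sigma(H)$. Let $\sigma'\colon \W(H) \to \M$ be the composition of $\sigma$ with the inclusion of $N$ into $\M$. The heart of the proof is the construction of an extension $\tau^*\colon \M \to \W(\pi(H))$ satisfying $\tau^* \restriction N = \tau$ and $\tau^* \circ \sigma' = \pi \restriction \W(H)$. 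Since $\M$ is sound, generated by $\sigma(H) \cup \{p_\M\}$ for its standard parameter $p_\M$, constructing $\tau^*$ reduces to choosing an image for $p_\M$ inside $\W(\pi(H))$ compatible with $\tau$ on $N$. A suitable image is located by a Dodd-Jensen style comparison of $\M$ with an initial segment of $\W(\pi(H))$ above $\pi(H)$, using the $\kappa^+$-iterability of both together with the universality of the lower-part closure. Once $\tau^*$ exists, the triple $(\M, \sigma', \tau^*)$ witnesses the hypotheses of the lemma; applying the first inclusion to this triple, and combining it with the observation that the commutativity $\tau^* \circ \sigma' = \tau \circ \sigma$ forces the range of $\tau^*$ on the hull generating $\M$ to lie inside the range of $\tau$, one concludes that $p_\M$ itself lies in $N$, hence $\M = N$, contradicting $N \inseg \M$ properly.

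The main obstacle is the construction of $\tau^*$, in particular verifying that the Dodd-Jensen comparison places $\tau^*(p_\M)$ in the range of $\tau$ rather than further out in $\W(\pi(H))$. This is where the excellence of the hull $M$, in particular its $\nu_0$-closure, enters: it ensures enough absoluteness between $M[h_0][g]$ and $V[h_0][g]$ for the comparison to behave well, and for the image of the standard parameter to be controlled by the existing factorization through $N$. Careful bookkeeping of projecta and standard parameters along the comparison will occupy the bulk of the detailed argument.
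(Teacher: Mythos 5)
Your first inclusion, $N\insegeq \W(\sigma(H))$, is fine and is exactly what the paper does (pull iterability back through $\tau$). The second inclusion is where the proposal breaks down, and the gap is not one of bookkeeping: the mechanism you propose cannot produce a contradiction. First, the map $\tau^*\colon \M\rightarrow \W(\pi(H))$ you want to build need not exist, and a Dodd--Jensen comparison cannot produce it: $\M$ is a mouse over $\sigma(H)$ while $\W(\pi(H))$ is a stack over $\pi(H)$, so the two cannot be compared directly; more importantly, there is no iterability-theoretic inconsistency to exploit, since $\M$ genuinely is an initial segment of the true stack $\W(\sigma(H))$ --- the issue is whether $N$ already exhausts that stack, which is a fullness/definability question that comparison cannot see. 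Second, even granting such a $\tau^*$ with $\tau^*\rest N=\tau$ and $\tau^*\circ\sigma'=\pi\rest\W(H)$, the commutativity constrains $\tau^*$ only on $rng(\sigma')\subseteq N$ and says nothing about $\tau^*(p_\M)$, whose ordinal components lie above $o(N)$; so the conclusion that $p_\M\in N$ does not follow. Note also that your argument never uses the hypothesis $\W(H)\models ``H=H_\nu$ for some cardinal $\nu$'', which should be a warning sign.

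The missing idea is the one the paper's proof turns on: Suslin capturing of the universal $\Sigma^2_1$ set by the trees $T_\eta$. One fixes a good $\eta>\mu$ with $T\in M$ such that $\pi(T)=T_\eta$, forms $L[T,\W(H)]$ (here the hypothesis $\W(H)\models ``H=H_\nu"$ is used), and observes that the fullness of $\W(H)$ --- no iterable mouse over $(H,\W(H))$ projecting to the relevant set properly end-extends $\W(H)$ --- is a first-order statement $\phi$ of $L[T,\W(H)]$ about its $Coll(\omega,\W(H))$-generic extensions, phrased via the section $T_n$ of $T$ projecting to the mouse--initial-segment relation. Lifting $\sigma$ and $\tau$ to $\sigma^+\colon L[T,\W(H)]\rightarrow L[\sigma^+(T),N]$ and $\tau^+\colon L[\sigma^+(T),N]\rightarrow L[T_\eta,\pi(\W(H))]$ with $\pi\rest L[T,\W(H)]=\tau^+\circ\sigma^+$, one gets that $\phi$ holds in $L[\sigma^+(T),N]$ by elementarity of $\sigma^+$; on the other hand, a sound $\kappa^+$-iterable $\M$ with $N\inseg\M\insegeq\W(\sigma(H))$ and $\rho_\omega(\M)=\sigma(H)$ yields, over a generic $k\subseteq Coll(\omega,\sigma(H))$, reals $x$ coding $(\sigma(H),N)$ and $y$ coding $\M$ with $(x,y)\in p[T_n]$, and by absoluteness a witness $w\in L[\sigma^+(T),N][k]$ with $(x,w)\in p[\sigma^+(T_n)]$, contradicting $\phi$ there. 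That condensation-plus-absoluteness step, not comparison of standard parameters, is what rules out $\M$.
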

\begin{proof} Let $\eta>\mu$ be a good point such that for some $\l$, $\pi(\l)=\eta$ and $H, N\in V_{\l}^{M[h_0][g]}$. Let $T\in M$ be such that $\pi(T)=T_\eta$. Using \rcor{stability of lower parts in determinacy model}, we have that $L[T, \W(H)]\models ``H=H_\nu$ for some cardinal $\nu$". This implies that we can extend $\sigma$ to act on $L[T, \W(H)]$.
We let $\sigma^+:L[T, \W(H)]\rightarrow L[\sigma^+(T), N]$ be this extension of $\sigma$. It then follows that we can find $\tau^+:L[\sigma^+(T), N]\rightarrow L[\T_\eta, \pi(\W(H))]$ extending $\tau$ and such that 
\begin{center}
$\pi\rest L[T, \W(H)] = \tau^+\circ \sigma^+$. 
\end{center}

Fix now an $n\in \omega$ such that whenever $h\subseteq Coll(\omega, \eta)$ is $W$-generic then in $\S_{\eta, h}$, $(T_\eta)_n$ projects to the set $\{ (x, y)\in \bR^2: x$ codes a set $a$ and an $a$-mouse $\M_x$ and $y$ codes an $a$-mouse $\M_y$ such that $\rho_\omega(\M_y)=a$ and $\M_x\inseg \M_y\}$. Notice that the following holds in $L[T, \W(H)]$: for any generic $k\subseteq Coll(\omega, \W(H))$, in $L[T, \W(H)][k]$, for any $x\in \bR$ coding $(H, \W(H))$ and for any real $y$, $(x, y)\not \in p[T_n]$. We let this sentence be $\phi$ (in the language of $L[T, \W(H)]$). 

We have that $\phi$ holds in $L[\sigma^+(T), N]$. Since $\tau: N\rightarrow \pi(\W(H))$, we also have that $N\insegeq \W(\sigma(H))$. Moreover, since $\powerset(\W(H))\cap L[T, \W(H)]\subseteq \W(\W(H))$, we get that, by the same argument as above, that $\powerset(N)\cap L[\sigma^+(T), N]\subseteq\W(N)$. It then follows that $\powerset(N)\cap L[\sigma^+(T), N]$ is countable in $W[g]$.

Suppose now that $N\not =\W(\sigma(H))$. Because we already know that $N\insegeq \W(\sigma(H))$, it is enough to show that  $\W(\sigma(H))\insegeq N$. Suppose not. There is then a $\k^+$-iterable sound $\M\insegeq \W(\sigma(H))$ such that $\rho_\omega(\M)=\sigma(H)$ and $N\inseg \M$. Fix $k\subseteq Coll(\omega, \sigma(H))$ such that $k\in W[g]$ and $k$ is both $M[h_0][g]$ and $L[\sigma^+(T), N]$ generic. Let $x\in (\bR)^{L[\sigma^+(T), N][k]}$ code $(\sigma(H), N)$ in a canonical fashion. We have that $x\in M[h_0][g][k]$. Let $y\in \bR^{M[g][k]}$ code $\M$. We have that $(x, y)\in p[T_n]$. It then follows that $(x, y)\in p[\sigma^+(T_n)]$. Thus, by absoluteness, we can find $w\in L[\sigma^+(T), N][k]$ such that $(x, w)\in p[\sigma^+(T_n)]$. This, however, contradicts $\phi$. 
\end{proof}

The following is an immediate corollary of \rlem{realizability gives fullness lemma}. We will use it to produce strategies which are fullness preserving. 

\begin{corollary}\label{realizability gives fullness} Suppose $\mu$ is an excellent point, $(M, \pi)$ is a good hull at $\mu$ and $g\subseteq Coll(\omega, \mu)$ is $W$-generic. Suppose $a\in H_{\omega_1}^{M[h_0]}$ and suppose further that $\Q\in H_{\kappa_M}^{M[h_0][g]}$ is an $a$-premouse such that there are $\Sigma_1$-elementary embeddings $\sigma:\P^M_{\mu, a} \rightarrow \Q$ and $\tau:\Q\rightarrow \P_{\mu, a}$ such that
\begin{center}
$\pi\rest \P^M_{\mu, a}= \tau\circ \sigma$.
\end{center}
Then $\Q$ is suitable.
\end{corollary}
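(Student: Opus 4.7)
The plan is to deduce suitability of $\Q$ by applying \rlem{realizability gives fullness lemma} to cofinally many initial segments of $\P^M_{\mu, a}$. First I would check the structural clauses (1) and (2) of \rdef{suitable premouse}. Since $\P_{\mu, a}$ is suitable, so is $\P^M_{\mu, a}$ in $M[h_0]$ by elementarity of $\pi$. The $\Sigma_1$-elementarity of $\sigma$ forces $\delta^\Q := \sigma(\delta^{\P^M_{\mu, a}})$ to be a Woodin cardinal of $\Q$, and the factorization $\tau \circ \sigma = \pi \rest \P^M_{\mu, a}$ together with the $\Sigma_1$-elementarity of $\tau$ ensures $o(\Q) = \sup_n (\delta^{\Q,+n})^\Q$, preventing $\Q$ from carrying any ordinals beyond that supremum.

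The main work is clause (4): for every $\eta < o(\Q)$, $\mathcal{O}^\Q_\eta = \W(\Q|\eta)$. Fix such an $\eta$ and choose a $\P^M_{\mu, a}$-cardinal $\bar\eta$ with $\sigma(\bar\eta) \geq \eta$. Let $H = \P^M_{\mu, a}|\bar\eta$. Because $\P^M_{\mu, a}$ is suitable inside $M[h_0]$, we have $\W(H) = \mathcal{O}^{\P^M_{\mu, a}}_{\bar\eta}$ and $\W(H) \models `` H = H_\nu$ for some cardinal $\nu$''. The restrictions $\sigma \rest \W(H)$ and $\tau \rest \sigma(\W(H))$ are $\Sigma_1$-elementary with composition $\pi \rest \W(H)$, so \rlem{realizability gives fullness lemma} applies and yields $\sigma(\W(H)) = \W(\sigma(H))$. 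Since $\sigma$ is $\Sigma_1$-elementary and $\mathcal{O}$ is $\Sigma_1$-definable, one reads off $\sigma(\mathcal{O}^{\P^M_{\mu, a}}_{\bar\eta}) = \mathcal{O}^\Q_{\sigma(\bar\eta)}$, giving $\mathcal{O}^\Q_{\sigma(\bar\eta)} = \W(\Q|\sigma(\bar\eta))$. Running this cofinally in $\bar\eta$ delivers clause (4) for all $\eta < o(\Q)$; clause (3) is then automatic from clauses (1) and (4).

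The delicate point in this plan is the identification $\sigma(\mathcal{O}^{\P^M_{\mu, a}}_{\bar\eta}) = \mathcal{O}^\Q_{\sigma(\bar\eta)}$, i.e., ruling out that $\Q$ carries additional sound mice projecting to $\sigma(\bar\eta)$ that do not come from $\P^M_{\mu, a}$ via $\sigma$. This is precisely what \rlem{realizability gives fullness lemma} accomplishes: any such extra mouse, pushed forward by $\tau$, would produce a sound mouse in $\P_{\mu, a}$ strictly extending $\pi(\W(H))$, contradicting the fullness of $\P_{\mu, a}$ via the capturing argument with the tree $T_\mu$ used in the proof of that lemma.
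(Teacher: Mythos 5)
Your proposal matches the paper's intent: the paper offers no proof beyond declaring the corollary immediate from \rlem{realizability gives fullness lemma}, and applying that lemma level-by-level to $H=\P^M_{\mu,a}|\bar\eta$ for cofinally many $\P^M_{\mu,a}$-cardinals $\bar\eta$ (with $N=\sigma(\W(H))$) is exactly the intended unpacking, including your closing remark about why no extra sound mice can appear over $\Q|\sigma(\bar\eta)$. The one imprecision worth flagging is that Woodinness of $\delta^{\Q}$ is not literally an upward $\Sigma_1$ consequence of $\sigma$ (Woodinness is $\Pi_2$ over $\Q$); it should instead be extracted from the fullness you establish together with the realization $\tau$ into the suitable $\P_{\mu,a}$ (any level of $\Q$ defining a failure of Woodinness of $\delta^{\Q}$ would push forward to one over $\P_{\mu,a}$), but this is at the same level of detail the paper itself suppresses.
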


\subsection{A quasi-sjs}

Our ultimate goal is to isolate a quasi-self justifying system. Excellent hulls give possible candidates of such quasi-self justifying systems. Suppose $\mu$ is excellent and $g\subseteq Coll(\omega, \mu)$ is $W$-generic. Given an excellent $\mu$, we let $F^\mu=F_{od}^{\S_{\mu, h_0^*g}}$ (we drop the generics to save space). Let $(M, \pi)$ be an excellent hull at $\mu$. We then let $F^{M, \mu}=\pi^{-1}(F^\mu)$ and $F^{M, \pi}=\pi"F^{M, \mu}$.  We will eventually show that $F^{M, \pi}$ is a quasi-sjs. The proof will be spread over several lemmas. Our first lemma is the following.

 \begin{lemma}\label{fullness} Suppose $\mu$ is excellent, $g\subseteq Coll(\omega, \mu)$ is $W$-generic, $(M, \pi)$ is an excellent hull at $\mu$ and $a\in (H_{\omega_1})^{M[g]}$. Then 
 \begin{center}
 $\P^M_{\mu, a}=\cup_{f\in F^{M, \pi}}H_f^{\P_{\mu, a}}$. 
 \end{center}
 \end{lemma}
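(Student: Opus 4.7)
The plan is to exhibit $\P^M_{\mu, a}$, under the identification built into the definition $\P^M_{\mu, a} = \pi^{-1}(\P_{\mu, a})$, as the range of a natural $\Sigma_1$-elementary embedding $j : \P^M_{\mu, a} \to \P_{\mu, a}$, and then to verify that this range coincides with $\cup_{f \in F^{M, \pi}} H_f^{\P_{\mu, a}}$.

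To set up $j$, first I would extend $\pi$ to an elementary map $\pi^+ : M[h_0 * g] \to V_\zeta[h_0 * g]$. Since $\mu+1 \subseteq M$, $\pi^+$ fixes $\mu$ and therefore sends $\S^M_{\mu, g}$ to $\S_{\mu, g}$. Because $\P_{\mu, a}$ is uniformly $\Sigma_1$-definable from $\mu$ and $a$ inside $\S_{\mu, h_0 * g}$ (via the $\H$-analysis recalled at the end of Section 1, together with \rcor{stability of lower parts in determinacy model}), elementarity yields $\pi^+(\P^M_{\mu, a}) = \P_{\mu, a}$. Setting $j := \pi^+ \restriction \P^M_{\mu, a}$, the lemma reduces to showing $\mathrm{rng}(j) = \cup_{f \in F^{M, \pi}} H_f^{\P_{\mu, a}}$.

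For the $(\supseteq)$ inclusion, I would fix $f = \pi(\bar f)$ with $\bar f \in F^{M, \mu}$ and argue by elementarity of $j$. We get $j(\bar f^n(\P^M_{\mu, a})) = f^n(\P_{\mu, a})$ for each $n$, and since $\gamma^\P_f$ is $\Sigma_1$-defined from $\{f^n(\P) : n < \omega\}$, also $j[\gamma^{\P^M_{\mu, a}}_{\bar f}] = \gamma^{\P_{\mu, a}}_f$ as sets of ordinals below $\delta^{\P_{\mu, a}}$. Every element of $H_f^{\P_{\mu, a}}$ has the form $t^{\P_{\mu, a}}(\vec\alpha, f^{n_1}(\P_{\mu, a}), \ldots, f^{n_k}(\P_{\mu, a}))$ for a $\Sigma_1$-term $t$ and ordinals $\vec\alpha \in \gamma^{\P_{\mu, a}}_f$; by elementarity it equals $j$ applied to the analogous term evaluated in $\P^M_{\mu, a}$, and so lies in $\mathrm{rng}(j)$.

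For the $(\subseteq)$ inclusion, the $\H$-computation carried out inside $M[h_0 * g]$ presents $\P^M_{\mu, a}$ as the direct limit $\M_\infty(a)^{\S^M_{\mu, h_0 * g}}$; by the general construction of such a direct limit together with closure of $F^{M, \mu}$ under finite $\oplus$ (pulled back by elementarity from the analogous closure of $F^\mu = F_{od}^{\S_{\mu, h_0 * g}}$), one obtains $\P^M_{\mu, a} = \cup_{\bar f \in F^{M, \mu}} H_{\bar f}^{\P^M_{\mu, a}}$. Pushing this through $j$ and using the identification $j[H_{\bar f}^{\P^M_{\mu, a}}] = H_{\pi(\bar f)}^{\P_{\mu, a}}$ established in the previous paragraph gives $\mathrm{rng}(j) \subseteq \cup_{f \in F^{M, \pi}} H_f^{\P_{\mu, a}}$. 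The only genuine content is the elementarity of $j$ as a map between the two hod mice, which reduces by absoluteness of the $\H$-computation under $\pi^+$ to facts already established in the preceding subsections; the nontrivial use of $(F, G)$-quasi iterability and branch condensation will enter in subsequent lemmas that upgrade $F^{M, \pi}$ from a mere covering system to a quasi-self-justifying system.
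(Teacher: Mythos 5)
Your proof is correct and takes the same route the paper intends: the paper dismisses this lemma in one line as ``an immediate consequence of the definition of $\M_\infty$,'' and your argument is precisely that observation spelled out --- lift $\pi$ to the generic extension, note $\P^M_{\mu,a}$ is the transitive collapse of $\P_{\mu,a}\cap\mathrm{rng}(\pi^+)$, and use that $\M_\infty(a)$ is by construction the union of the hulls $H_f$ together with elementarity to see that the hulls for $f\in F^{M,\pi}$ exactly exhaust the range. No gap; the background facts you cite (the $\H$-analysis presentation of $\P_{\mu,a}$ as a direct limit of the $H_f$'s) are exactly what the paper also takes as given.
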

 \begin{proof} This is an immediate consequence of the definition of $\M_\infty$. 
  \end{proof}
 
\begin{lemma}\label{quasi iterability of hod} Suppose $\mu$ is excellent, $g\subseteq Coll(\omega, \mu)$ is $W$-generic, $(M, \pi)$ is an excellent hull at $\mu$ and $a\in (H_{\omega_1})^{M[g]}$. Then in $\S_{\mu, h_0*g}$, $\P^M_{\mu, a}$ is $(F^\mu, F^{M, \pi})$-quasi iterable.
\end{lemma}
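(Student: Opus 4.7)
The plan is to prove by induction on $\nu$ that every $(F^\mu, F^{M,\pi})$-quasi iteration $\VT = \la \T_\a, \M_\a : \a<\nu\ra$ of $\P^M_{\mu, a}$ in $\S_{\mu, h_0*g}$ admits a coherent family of realizations $\rho_\a : \M_\a \rightarrow \P_{\mu, a}$ with $\rho_0 = \pi\rest \P^M_{\mu, a}$, satisfying $\rho_\a\circ \pi_{\P^M_{\mu,a},\M_\a,f} = \rho_0\rest H_f^{\P^M_{\mu, a}}$ for every $f\in F^\mu$ for which the iterate embedding is defined. Granting this, \rcor{realizability gives fullness} applied to the pair $(\pi_{\P^M_{\mu,a},\M_\a,f},\rho_\a)$ yields that every $\M_\a$ is suitable; applied to the putative last model together with its realization, it yields a suitable last model in the sense of \rdef{last model of quay iteration}. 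Strong $f$-iterability of $\M_\a$ for $f\in F^{M,\pi}$ then follows by pulling back the strong $f$-iterability of $\P_{\mu, a}$ through $\rho_\a$, and the identity $\M_\a = \bigcup_{f\in F^\mu}H_f^{\M_\a}$ pulls back similarly from \rlem{fullness}.

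The base case is $\rho_0 = \pi\rest \P^M_{\mu, a}$. At a limit $\nu$, the direct limit of the hulls $H_f^{\M_\gamma}$ prescribed by clause 2 of \rdef{quasi iteration} exists for each $f\in F^\mu$, since each such $f$ is eventually respected along $\VT$ (clause 4 of \rdef{semi quasi iteration}); the maps $\rho_\gamma$ cohere through these hulls and induce the desired $\rho_\nu$. At a successor $\nu = \b+1$, the only non-routine subcase is when $\T_\b$ is $F^\mu$-maximal. By induction $\M_\b$ is strongly $f$-iterable for every $f\in F^{M,\pi}$ and $\sup_{f\in F^{M,\pi}}\gg_f^{\M_\b} = \d^{\M_\b}$ (since $F^{M,\pi}$ witnesses, inside $M[h_0*g]$, that $\P^M_{\mu, a} = \bigcup_{f\in F^{M,\pi}} H_f^{\P^M_{\mu, a}}$, and this transfers along the iteration). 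Hence $b = \bigcup_{f\in F^{M,\pi}} b_{\T_\b, f}$ is a cofinal branch of $\T_\b$, and, arguing exactly as in \rlem{one step lemma}, there is a $\Sigma_1$-elementary map $\tau$ from the hull $Y_b := \bigcup_{f\in F^{M,\pi}} i^{\T_\b}_b(H_f^{\M_\b})$ into $\P_{\mu, a}$ determined by $\rho_\b$. Using \rlem{uniqueness of branches} one checks that $\tau\rest \d(\T_\b) = \mathrm{id}$, so that $Y_b$ is cofinal in $\M^{\T_\b}_b$ and $\tau$ extends to the desired $\rho_{\b+1}$.

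The main obstacle is precisely this maximal-branch step: one must verify both that $b$ is genuinely a cofinal branch of $\T_\b$ and that the resulting model is a suitable premouse of the correct shape. Both are handled by the combination of \rlem{fullness}, \rlem{uniqueness of branches}, and \rcor{realizability gives fullness}, parallelling the proof of \rlem{one step lemma}; the only real difference is that here fullness of the branch model is witnessed by the realization $\rho_{\b+1}$ rather than by clause 5 of the definition of a qsjs.
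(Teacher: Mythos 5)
Your argument has a genuine gap, and it is exactly the gap that the paper's restriction ``for stacks in $V_{\k_M}^{M[h_0][g]}$'' in \rlem{getting a qsjs} is designed to avoid. The quasi iterations quantified over in \rlem{quasi iterability of hod} live in $\S_{\mu, h_0*g}$, i.e., they are arbitrary countable objects of $W[g]$; their models $\M_\a$ need not be elements of $H_{\k_M}^{M[h_0][g]}$. But \rcor{realizability gives fullness} (and the underlying \rlem{realizability gives fullness lemma}) explicitly require the realized premouse to lie in $H_{\kappa_M}^{M[h_0][g]}$: the proof pulls the tree $T_\eta$ back through $\pi$ and runs a genericity/absoluteness argument over $L[\sigma^+(T), N]$ inside $M$'s universe, which is unavailable for an arbitrary countable $N\in W[g]$. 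So your key step --- ``\rcor{realizability gives fullness} applied to the pair $(\pi_{\P^M_{\mu,a},\M_\a,f},\rho_\a)$ yields that every $\M_\a$ is suitable'' --- is not justified for the quasi iterations the lemma is actually about. What your induction proves is essentially \rlem{getting a qsjs} (the qsjs property restricted to stacks inside the hull), not the present lemma. There is also a secondary circularity worth flagging: producing the realization $\rho_{\b+1}$ via $\pi_{\cdot,\infty,f}$ presupposes that the branch model is already known to be suitable and strongly iterable, which is what you are trying to establish.

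The paper's proof takes a completely different and much shorter route: it invokes Woodin's \rthm{existence of quasi-iterable premice} \emph{inside} $\S_{\mu, h_0*g}$ (a model of $AD^++MC+\Theta=\theta_0$), uses elementarity of $\pi$ to find, for each $f\in F^{M,\pi}$, a $(F^\mu,f)$-quasi iterable $\Q\in M[h_0][g]$, observes that $\P^M_{\mu,a}$ is a $(F^\mu,f)$-quasi iterate of $\Q$ (so inherits quasi iterability), and concludes by letting $f$ range over $F^{M,\pi}$. All the heavy lifting --- the branch existence and suitability of branch models for arbitrary quasi iterations in the determinacy model --- is absorbed into \rthm{existence of quasi-iterable premice}, which your proposal never uses. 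If you want to keep a realizability-style argument, you must either restrict to stacks in $M$'s hull (recovering \rlem{getting a qsjs}) or replace \rcor{realizability gives fullness} by the determinacy-model fullness facts that go into Woodin's theorem.
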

\begin{proof} Using \rthm{existence of quasi-iterable premice} and elementarily of $\pi$, we get that for every $f\in F^{M, \pi}$ there is $\Q\in M[h_0][g]$ such that $\S_{\mu, h_0*g}\models  ``\Q$ is $(F^\mu, f)$-quasi iterable". It then follows that $\P^M_{\mu, a}$ is $(F^\mu, f)$-quasi iterate of $\Q$. Since $f$ was arbitrary, we get that in $\S_{\mu, h_0*g}$, $\P^M_{\mu, a}$ is $(F^\mu, f)$-quasi iterable for every $f\in F^{M, \pi}$. It then follows that $\S_{\mu, h_0*g}\models ``\P_{\mu, a}$ is $(F^\mu, F^{M, \pi})$-quasi iterable". 
\end{proof}

If we restrict ourselves to stacks that are in $V_{\k_M}^M$ then we actually get that $F^{M, \pi}$ is a qsjs as witnessed by $\P_{\mu, a}$.

\begin{lemma}[$F^{M, \pi}$ is a qsjs]\label{getting a qsjs}
Suppose $\mu$ is excellent, $g\subseteq Coll(\omega, \mu)$ is $W$-generic, $(M, \pi)$ is an excellent hull at $\mu$ and $a\in (H_{\omega_1})^{M[g]}$. Then in $\S_{\mu, h_0*g}$, $F^{M, \pi}$ is a qsjs as witnessed by $\P_{\mu, a}^M$ for stacks in $V_{\k_M}^{M[h_0][g]}$. 
\end{lemma}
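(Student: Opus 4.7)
The plan is to verify each of the five clauses of \rdef{qsjs} for $F^{M,\pi}$ witnessed by $\P^M_{\mu,a}$, restricting attention to stacks lying in $V_{\k_M}^{M[h_0][g]}$. Clauses 1 and 2 (closedness and closure under $\oplus$) will follow from the analogous properties of $F^\mu = F_{od}^{\S_{\mu,h_0*g}}$, which are standard for $F_{od}$ in $AD^+$ models. More precisely, by elementarity of $\pi$ applied inside $M[h_0*g]$, $F^{M,\mu}$ is closed and $\oplus$-closed inside $\S^M_{\mu,h_0*g}$; pushing forward by $\pi$, $F^{M,\pi}=\pi"F^{M,\mu}$ inherits these properties in $\S_{\mu,h_0*g}$.

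Clause 3 (quasi iterability) is the main substantive step. \rlem{quasi iterability of hod} already gives that $\P^M_{\mu,a}$ is $(F^\mu, F^{M,\pi})$-quasi iterable in $\S_{\mu,h_0*g}$. I will upgrade this to $(F^{M,\pi}, F^{M,\pi})$-quasi iterability for stacks in $V_{\k_M}^{M[h_0*g]}$. Since $F^{M,\pi}\subseteq F^\mu$, the limit-preservation clause of the underlying semi quasi iteration for $F^{M,\pi}$ is immediate from that for $F^\mu$, and strong $g$-iterability of intermediate models for $g\in F^{M,\pi}$ is a restriction of what we already have. The remaining requirement is the generation condition $\M_\alpha = \cup_{g\in F^{M,\pi}}H_g^{\M_\alpha}$ from \rdef{quasi iteration}. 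I will establish this by induction on $\alpha$: the base case $\alpha=0$ is exactly \rlem{fullness}, and at successor and limit stages the iteration embeddings preserve every $g\in F^{M,\pi}\subseteq F^\mu$ (clause 4 of \rdef{semi quasi iteration}), sending the generators $\{g^n(\M_\alpha):n<\omega\}$ of $H_g^{\M_\alpha}$ to the corresponding generators of $H_g^{\M_{\alpha+1}}$; combined with the base case transported through the iteration via elementarity of $\pi$ applied to the stack (which is captured inside $M$), this yields the desired generation.

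Clause 4 follows from clause 3. For clause 5, given $\Q$ an $F^{M,\pi}$-quasi iterate of $\P^M_{\mu,a}$ and $\sigma:\R\rightarrow_{\Sigma_1}\Q$ with $f(\Q)\in rng(\sigma)$ for every $f\in F^{M,\pi}$, $\Sigma_1$-elementarity of $\sigma$ forces $\gg_f^\Q\cup\{f^n(\Q):n<\omega\}\subseteq rng(\sigma)$, hence $rng(\sigma)\supseteq H_f^\Q$ for each $f\in F^{M,\pi}$. The generation condition from clause 3 then gives $rng(\sigma)=\Q$, so $\sigma$ is an isomorphism of premice, $\R\cong\Q$, and $\R\in S(\Gamma)$. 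An alternative route, more in the spirit of the paper, is to factor $\pi\rest\P^M_{\mu,a}$ through $\R$ by composing the quasi-iteration map $\P^M_{\mu,a}\to\Q$ with $\sigma^{-1}$ on its range, and then invoke \rcor{realizability gives fullness}.

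The main obstacle I anticipate is the generation step in clause 3: a priori a $(F^\mu,f)$-quasi iteration produced by \rlem{quasi iterability of hod} generates the intermediate models with respect to $F^\mu$, not the smaller $F^{M,\pi}$. The restriction to stacks in $V_{\k_M}^{M[h_0*g]}$ is essential here, since such stacks are captured by $M$, so one can transport \rlem{fullness} from $\S^M_{\mu,h_0*g}$ along the iteration via $\pi$ and thereby read off generation by $F^{M,\pi}$-hulls at every stage. This last point leans on the excellence of $(M,\pi)$, and in particular on the countable closure $M^{\nu_0}\subseteq M$, which is what allows direct limits of the required length to be formed inside $M$.
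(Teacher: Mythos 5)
There is a genuine gap at the crux of your clause 3. The generation condition for the models arising in a quasi iteration --- in particular for the candidate last model $\Q$ --- cannot be obtained by ``pushing the generators $\{g^n(\M_\a):n<\omega\}$ forward along the iteration embeddings.'' Iteration maps are neither surjective nor cofinal into the next model, so even if $i(g^n(\M_\a))=g^n(\M_{\a+1})$ for all $g$ and $n$, nothing forces $\cup_{g}H^{\M_{\a+1}}_g$ to exhaust $\M_{\a+1}$: the new hulls $H^{\M_{\a+1}}_g$ are computed with $\M_{\a+1}$'s own Skolem functions and ordinals $\gg_g^{\M_{\a+1}}$ that need not lie in $rng(i)$, and when $\T_\a$ is maximal the ``last model'' is $Lp^\Gamma_\omega(\M(\T_\a))$, which is not the image of anything --- it is a fullness closure. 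The paper's proof handles exactly this point by a different device: let $\S$ be the transitive collapse of $\cup_{f\in F^{M,\pi}}H^\Q_f$ with inverse $\sigma:\S\rightarrow\Q$, observe that $\pi_{\Q,\infty,F^{M,\pi}}\circ\sigma$ maps $\S$ into $\pi(\P)$ and that \rlem{fullness} supplies $k:\P\rightarrow\S$ with $\pi\rest\P=\pi_{\Q,\infty,F^{M,\pi}}\circ\sigma\circ k$, and then invoke \rlem{realizability gives fullness lemma} to conclude $\S$ is suitable, whence $\S=\Q$. In other words, the realizability factorization that you relegate to an ``alternative route'' for clause 5 is in fact the primary engine of the whole lemma and must be applied at the successor and limit stages of clause 3 (with the limit-stage map $l$ built from the direct limit of the $\pi_{\M_\a,\infty,f}$'s). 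Your induction as stated does not close.

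Two smaller omissions: you do not address why the candidate last model $\Q$ actually belongs to $M[h_0*g]$ --- the paper needs a separate argument here, using a cofinal sequence $\la B_i:i<\omega\ra\subseteq F^{M,\pi}$ with $\sup_i w(B_i)=\Theta^\mu$ and the associated sets $D_i$ to compute $\Q$ inside $M[h_0*g]$ level by level; and your first route for clause 5 (concluding $rng(\sigma)=\Q$, hence $\R\cong\Q$) is fine once generation of $\Q$ is actually established, but it inherits the gap above since it presupposes the very generation statement your clause 3 fails to prove. Your clauses 1, 2 and the reduction of clause 4 to clause 3 match the paper.
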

\begin{proof}
We do the proof for $a=\emptyset$. Clearly $F^{M, \pi}$ and $\P=\P_\mu^M$ satisfy the first two clauses of \rdef{qsjs}. We check clause 3 for quasi iterations that are in $V_{\k_M}^{M[h_0][g]}$. Let $\VT=\la \T_\a, \M_\a: \a<\nu\ra\in V_{\k_M}^{M[h_0][g]}$ be an $(F^{M, \pi}, F^{M, \pi})$-quasi iteration of $\P$. We need to show that it has a last model. Let $\la \pi^{g, \a}_{\gg, \xi} : \gg<\xi<\a\leq \nu\ra$ be the embeddings of $\VT$. We need to show the $\VT$ has a last model and for this we need to consider two cases. Suppose first $\nu=\a+1$. Let $\Q$ be the last model of $\T_\a$. We need to show that $\Q=\cup_{f\in F^{M, \pi}}H^\Q_f$. \\

\textit{Claim.} $\Q=\cup_{f\in F^{M, \pi}}H^\Q_f$.\\
\begin{proof} Since $\Q$ is a $(F^{M, \pi}, F^{M, \pi})$-quasi iterate of $\P$, it is also $(F^\mu, F^{M,\pi})$-quasi iterate of $\P$. Hence, in $\S_{\mu, h_0*g}$, $\Q$ is $F^{M, \pi}$-iterable. Let  $\S$ be the transitive collapse of $\cup_{f\in F^{M, \pi}}H^\Q_f$ and let $\sigma: \S\rightarrow \Q$ be the inverse of the transitive collapse. Let $l=\pi_{\Q, \infty, F^{M, \pi}}$. Then $l\circ \sigma: \S\rightarrow \pi(\P)$. Because $\P=\cup_{f\in F^{M, \pi}}H^\P_f$, we have $k:\P\rightarrow \S$ such that
\begin{center}
$\pi\rest P= l\circ \sigma \circ k$.
\end{center}
Because $\S\in M$ (this follows from the fact that $\S\insegeq \Q$), it follows from \rlem{realizability gives fullness lemma} that $\S$ is suitable and hence, $\S=\Q$. It then follows that $\Q=\cup_{f\in F^{M, \pi}}H^\Q_f$.
\end{proof}

Next we assume $\nu$ is limit. In this case we have to verify that if for $f\in F^{M, \pi}$, $H_f$ is the direct limit of $H_f^{\M_\gg}$'s under $\pi^{f, \nu}_{\gg, \xi}$'s then $\Q=\cup_{f\in F^{M, \pi}}H_f$ is suitable. The proof is similar to the proof of the claim. We define $l:\Q\rightarrow \pi(\P)$ as follows. Given $x\in \Q$ let $\a, f$ be such that $\a\in [\b_{f, \nu}, \nu)$ and there is $\bar{x}\in H^{\M_\a}_f$ such that $\pi^{f, \nu}_{\a, \nu}(\bar{x})=x$. Then let $l(x)=\pi_{\M_\a, \infty, f}(\bar{x})$. We then have $k:\P\rightarrow \Q$ coming from the direct limit construction and such that $\pi\rest \P=l\circ k$. Therefore, using  \rlem{realizability gives fullness lemma}, we get that $\Q$ is suitable.

Actually, a little argument is needed to show that $\Q\in M$. To see this, first let $\vec{B}=\la B_i: i<\omega\ra \subseteq F^{M, \pi}$ be such that $\sup_{i<\omega}w(B_i)=\Theta^\mu$. Let $D_i=\{ (x, y) : x\in \bR^{M[h_0*g]}, x$ codes a continuous function $f$ and $y\in f^{-1}" B_i\}$. Then for any $\eta<\k_M$ and any $M[h_0*g]$-generic $k\subseteq Coll(\omega, \eta)$ such that $k\in W[g]$, $\la D_i\cap M[h_0*g*k] : i<\omega\ra\in M[h_0*g*k]$. It is then easy to see that this is enough to compute $\Q$ in $M[h_0*g]$.

As part of the proof of clause 3 we have also shown clause 4. We leave the details to the reader. To show clause 5, let $\Q\in V_{\k_M}^{M[h_0][g]}$ be a $(F^{M, \pi}, F^{M, \pi})$-quasi iterate of $\P$ and $\sigma:\R\rightarrow_{\Sigma_1} \Q$ be such that $f(\Q)\in rng(\sigma)$ for all $f\in F^{M, \pi}$ and $\R\in M$. We need to see that $\R$ is suitable. Again, we have $rng(\pi_{\P, \Q, F^{M, \pi}})\subseteq rng(\sigma)$ and hence, there is an embedding $k:\P\rightarrow_{\Sigma_1} \R$ such that $\pi_{\P, \Q, F^{M, \pi}}=\sigma\circ k$. Let then $l=\pi_{\Q, \infty, F^{M, \pi}}\circ \sigma$. It follows that $\pi\rest \P=l\circ k$ and hence, $\R$ is suitable. 
\end{proof}

Next, we show that we can substitute $F^{M, \pi}$ by a subset of it which is countable in $W$. We make this move as we believe it makes the arguments that follow more transparent.  We could just as well carry on with $F^{M, \pi}$. 

\begin{definition}[Pre-sjs] Suppose $\mu$ is excellent and $g\subseteq Coll(\omega, \mu)$ is $W$-generic. We say $\vec{B}=\la B_i: i<\omega\ra$ is a pre-sjs at $\mu$ if for every $a\in H_{\omega_1}^{W[g]}$.
\begin{enumerate}
\item for every $i<\omega$, $B_i\in F^\mu$,
\item for every $n<\omega$ there is $k<\omega$ such that $B_k=\oplus_{i<n}B_i$,
\item $\P_{\mu, a}=\cup_{i<\omega}H^{\P_{\mu, a}}_{B_i}$.
\end{enumerate}
\end{definition}

\begin{lemma}\label{getting pre-sjs} Suppose $\mu$ is excellent and $g\subseteq Coll(\omega, \mu)$ is $W$-generic. Then there is a pre-sjs at $\mu$.  
\end{lemma}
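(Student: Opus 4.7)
The plan is to work inside $\S_{\mu,h_0*g}$, which satisfies $AD^++V=L(\powerset(\bR))+MC+\Theta=\theta_0$, and produce a countable, $\oplus$-closed family $\mathcal{F}\subseteq F^\mu$ whose underlying Wadge ranks are cofinal in $\Theta^\mu$; any enumeration of such an $\mathcal{F}$ will be a pre-sjs. The starting point is the observation that $\cf^{W[g]}(\Theta^\mu)=\omega$: since $\mu$ is excellent, \rlem{excellent points} gives $\cf^V(\Theta^\mu)\leq\nu_0$, and because $\nu_0$ was collapsed by $h_0$, it is already countable in $W$. Working inside $\S_{\mu,h_0*g}$, I would fix an $\omega$-sequence of $OD$ sets of reals $\langle A_n:n<\omega\rangle$ whose Wadge ranks are cofinal in $\Theta^\mu$; to secure the uniform term-capture needed below, I would strengthen the choice so that $\langle A_n\rangle$ is a self-justifying system, obtained from the norms of a $\Sigma^2_1$-scale on a universal $\Sigma^2_1$ set (which exists under $AD^++\Theta=\theta_0$).

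Next, I would close $\{f_{A_n}:n<\omega\}$ under $\oplus$, using the natural convention that identifies $\oplus_{i<k}f_{A_i}$ with $f_{A_0\oplus\cdots\oplus A_{k-1}}$ via a fixed coding of tuples of term sequences as single term sequences for the Turing sum (which is again $OD$). The resulting family $\mathcal{F}$ is a countable subset of $F^\mu$ that is closed under $\oplus$. Enumerate $\mathcal{F}=\langle B_i:i<\omega\rangle$ in $W[g]$. Clauses $1$ and $2$ of the pre-sjs definition are then immediate: each $B_i\in F^\mu$ by construction, and for any $n$, $\oplus_{i<n}B_i\in\mathcal{F}$ by closure, hence equals some $B_k$. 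For clause $3$, fix $a\in H_{\omega_1}^{W[g]}$. By \rthm{hod theorem} applied inside $\S_{\mu,h_0*g}$, $\P_{\mu,a}=\M_\infty(a)=\bigcup_{f\in F^\mu}H_f^{\P_{\mu,a}}$, since every element of $\M_\infty(a)$ is the image under some direct-limit map $\pi_{\Q,\infty,f_A}$ of an element of $H_{f_A}^\Q$, and this image lies in $H_{f_A}^{\P_{\mu,a}}$. Given $x\in\P_{\mu,a}$ with $x\in H_{f_A}^{\P_{\mu,a}}$ for some $OD$ set $A$, the self-justifying property provides $n$ such that in every suitable premouse $\Q$ the terms $\tau^\Q_{A,k}$ ($k<\omega$) are uniformly $\Sigma_1$-definable from $\tau^\Q_{A_n,\bullet}$, so $H_{f_A}^{\P_{\mu,a}}\subseteq H_{f_{A_n}}^{\P_{\mu,a}}$. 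Since $f_{A_n}=B_i$ for some $i$, we obtain $x\in H_{B_i}^{\P_{\mu,a}}$, as required.

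The main obstacle is the first step: constructing the self-justifying system with the uniform-capture property, namely a countable $\oplus$-closed family of $OD$ sets, cofinal in Wadge, so that every $OD$ set is uniformly $\Sigma_1$-definable in every suitable premouse from some member of the family. This is precisely what the scales-to-$\H$ analysis at the minimal level of the Solovay hierarchy provides, using the Moschovakis--Steel condensation of $\Sigma^2_1$-scales under $AD^++MC+\Theta=\theta_0$ together with the term-capture construction reviewed in \rsec{suitable mice}. Once this uniform capture is in hand, the rest of the argument is routine bookkeeping as sketched above.
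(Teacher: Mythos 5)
Your overall skeleton (pick a countable $\oplus$-closed family, verify the three clauses) matches the paper's, but the two steps you lean on to get clause 3 both fail, and they are exactly where the content of the lemma lies.

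First, the witnessing family you propose cannot exist as described. In $\S_{\mu, h_0*g}$ we have $AD^++V=L(\powerset(\bR))+MC+\Theta=\theta_0$, and there $\d^2_1<\theta_0$ (the Suslin cardinals are bounded strictly below $\Theta$, as in $L(\bR)$). So the sjs obtained from the norms of a $\Sigma^2_1$-scale on a universal $\Sigma^2_1$ set is Wadge-bounded well below $\Theta^\mu$ and cannot have Wadge ranks cofinal in $\Theta^\mu$; indeed no countable self-justifying system can be Wadge-cofinal under these hypotheses, since every member of an sjs is Suslin via scales in the system. Relatedly, your ``uniform capture'' claim --- that for every $OD$ set $A$ there is $n$ with $H_{f_A}^{\P_{\mu,a}}\subseteq H_{f_{A_n}}^{\P_{\mu,a}}$ --- is not a consequence of the sjs property and is false for a Wadge-bounded family: the ordinals $\gg_{f_A}^{\P_{\mu,a}}$ grow with the complexity of $A$ and are cofinal in $\d^{\P_{\mu,a}}$ only as $A$ ranges over all $OD$ sets, so a family bounded near $\d^2_1$ leaves $\sup_n\gg_{f_{A_n}}^{\P_{\mu,a}}<\d^{\P_{\mu,a}}$ and the union of the hulls falls short. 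Note that the paper's notion of pre-sjs does not involve scales at all; the correct move (which the paper makes inside an excellent hull $(M,\pi)$, using $\cf(\d^{\P^M_\mu})=\omega$ in $M[h_0]$) is to choose the $B_i$ directly so that $\sup_i\gg_{B_i}^{\P^M_{\mu,a}}=\d^{\P^M_{\mu,a}}$ for all $a$, and then close under $\oplus$.

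Second, even with a correctly chosen family, clause 3 is not ``routine bookkeeping'': knowing the hulls are cofinal in $\d^{\P_{\mu,a}}$ does not yet give $\P_{\mu,a}=\cup_i H_{B_i}^{\P_{\mu,a}}$, because $o(\P_{\mu,a})=\sup_n(\d^{+n})^{\P_{\mu,a}}$ and one must see that $\cup_i H_{B_i}^{\P_{\mu,a}}$ is cofinal in each $(\d^{+n})^{\P_{\mu,a}}$. The paper's proof handles exactly this point: it takes the correctly guided maximal tree $\T\in\P^M_{\mu,a}$ making $\P^M_{\mu,a}|(\d^{+n-1})$ generic, notes $\d(\T)=(\d^{+n})^{\P^M_{\mu,a}}$, uses quasi-iterability to get a branch $b$ with $i^\T_b(\d)=\d(\T)$ moving all the terms $\tau_{B_k,n}$ correctly, and concludes that the inverse collapse $\sigma$ of $\cup_i H_{B_i}^{\P^M_{\mu,a}}$ is cofinal in each $(\d^{+n})$, hence the identity. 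Your proposal has no substitute for this argument, so the lemma is not proved as written.
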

\begin{proof} It is enough to prove that for some excellent $(M, \pi)$, $M\models ``$there is a pre-sjs at $\mu$". Fix then some excellent $(M, \pi)$. We have that $M[h_0]\models \cf(\d^{\P^M_{\mu}})=\omega$. Fix then some increasing sequence $\la \gg_i: i<\omega\ra\in M$ cofinal in $\d^{\P^M_{\mu}}$. Let $\d=\d^{\P^M_\mu}$. We can then find $\vec{B}^*=\la B^*_i: i<\omega\ra\subseteq F^{M, \pi}$ such that for every $a\in H_{\omega_1}^{M[h_0*g]}$, $\sup_{i<\omega}\gg_{B_i}^{\P_{\mu, a}^M}=\d^{\P_{\mu, a}^M}$. Next, close $\vec{B}^*$ under $\oplus$ operator to obtain $\vec{B}=\la B_i: i<\omega\ra$. We claim that $M\models``\pi^{-1}(\vec{B})$ is a pre-sjs". 

To see this, it is enough to show that for every $a$, $\P_{\mu, a}^M=\cup_{i<\omega}H_{B_i}^{\P^M_{\mu, a}}$. Let then $\T$ on $\P_{\mu, a}^M$ be the correctly guided maximal tree for making $\P_{\mu, a}^M|(\d^{+n-1})^{\P^M_{\mu, a}}$ generic where we let $\d^{+-1}=\d$. We have $\T\in \P_{\mu, a}^M$ and $\d(\T)=(\d^{+n})^{\P^M_{\mu, a}}$. Also, because $\P_{\mu, a}^M$ is $(F^{M,\pi}, F^{M, \pi})$-quasi iterable, we have a branch $b$ of $\T$ such that $\M^\T_b$ is suitable and $i^\T_b(\d)=\d(\T)$. Moreover, letting $\Q=\M^\T_b$, for every $k, n<\omega$, $i^\T_b(\tau_{B_k, n}^{\P_{\mu, a}^M})=\tau_{B_k, n}^{\Q}$. Since for each $i<\omega$ there is $k<\omega$ such that $\pi_{\P^M_{\mu, a}, Q, B_i}\in H_{B_k}^{\P_{\mu, a}^M}$, we have that if $\S$ is the transitive collapse of $\cup_{i<\omega}H^{\P^M_{\mu, a}}_{B_i}$ and $\sigma:\S\rightarrow \P^M_{\mu, a}$ is the inverse of the collapse then $\sigma\rest (\d^{+n})^\S$ is cofinal implying that $\sigma=id$.
\end{proof}

Suppose $\mu$ is excellent, $(M, \pi)$ is an excellent hull at $\mu$, $g\subseteq Coll(\omega, \mu)$ is $V$-generic and $\vec{B}\in V[g]$ is a pre-sjs at $\mu$. We let $B_i^M=_{def}B_i\cap M[g]$ and $\vec{B}^M=_{def}\la B_i^M:i<\omega\ra$. We say $(M, \pi)$ \textit{captures} $\vec{B}$ if  $\vec{B}^M\in M$ and $\pi(\vec{B}^M)=\vec{B}$. We let $F_{\vec{B}}=\{ f_{B_i}: i<\omega\}$. It follows from \rlem{getting a qsjs} and the proof of \rlem{getting pre-sjs} that

\begin{lemma}\label{getting qsjs} Suppose $\mu$ is excellent, $g\subseteq Coll(\omega, \mu)$ is $W$-generic, $(M, \pi)$ is an excellent hull at $\mu$ and $a\in (H_{\omega_1})^{M[g]}$. Then in $\S_{\mu, h_0*g}$, $F_{\vec{B}}$ is a qsjs as witnessed by $\P_{\mu, a}^M$ for stacks in $V_{\k_M}^{M[h_0][g]}$. 
\end{lemma}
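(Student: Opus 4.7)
The plan is to mirror the proof of \rlem{getting a qsjs} clause by clause, substituting the countable pre-sjs $F_{\vec{B}}$ for $F^{M,\pi}$. The only place in that argument where the specific choice of $F^{M,\pi}$ mattered was the appeal to the fullness identity $\P=\cup_{f\in F^{M,\pi}} H^\P_f$ coming from \rlem{fullness}; the pre-sjs hypothesis (clause 3 of the pre-sjs definition) supplies the analogous $\P_{\mu,a}=\cup_{i<\omega}H^{\P_{\mu,a}}_{B_i}$, so the structure of the earlier proof transfers.

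Clauses 1 and 2 of \rdef{qsjs} are handled immediately inside $\S_{\mu, h_0*g}$: clause 2 is clause 2 of the pre-sjs definition, and for closedness one notes $F_{\vec{B}}\subseteq F^\mu$ and invokes closedness of $F^\mu$ via \rthm{existence of quasi-iterable premice}, taking the relevant $(F^\mu, \oplus\vec{f})$-quasi iterate and checking that the pre-sjs identity on it furnishes $(F_{\vec{B}}, \oplus\vec{f})$-iterability. Clauses 3 and 4 are the heart of the argument, and I would establish clause 4 first. Given any $F_{\vec{B}}$-quasi iterate $\Q$ of $\P^M_{\mu, a}$, I would repeat the extender-algebra genericity-iteration argument of \rlem{getting pre-sjs} with $\Q$ in place of $\P^M_\mu$: since the iteration embedding $\pi_{\P, \Q, F_{\vec{B}}}$ is $\Sigma_1$-elementary and transports each $\tau_{B_i,n}^\P$ to $\tau_{B_i,n}^\Q$, the sup of the $\gg^\Q_{B_i}$ equals $\d^\Q$, so the transitive collapse of $\cup_i H^\Q_{B_i}$ must coincide with $\Q$. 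Clause 3 then follows by combining \rlem{quasi iterability of hod} (which gives $(F^\mu, f)$-quasi iterability of $\P^M_{\mu, a}$ for each $f\in F_{\vec{B}}$) with clause 4, which upgrades the candidate last model into one satisfying the hull identity of clause 1 of \rdef{quasi iteration} for $F_{\vec{B}}$.

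Clause 5 then goes through exactly as in \rlem{getting a qsjs}: given $\sigma: \R\rightarrow_{\Sigma_1}\Q$ with $f_{B_i}(\Q)\in rng(\sigma)$ for all $i$, each $H^\Q_{B_i}$ is the $\Sigma_1$-Skolem hull of $\gg^\Q_{B_i}\cup\{f^n_{B_i}(\Q): n<\omega\}$, so $rng(\sigma)\supseteq\Q$ and in particular $rng(\pi_{\P, \Q, F_{\vec{B}}})\subseteq rng(\sigma)$; pulling back produces $k: \P\rightarrow_{\Sigma_1}\R$ with $\pi_{\P, \Q, F_{\vec{B}}}=\sigma\circ k$, and setting $l=\pi_{\Q, \infty, F_{\vec{B}}}\circ\sigma$ yields $\pi\rest\P=l\circ k$, whereupon \rlem{realizability gives fullness lemma} delivers the suitability of $\R$.

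The main obstacle I foresee is transferring the pre-sjs identity from $\P^M_{\mu, a}$ to arbitrary quasi iterates $\Q$, since the pre-sjs definition directly asserts it only for premice of the form $\P_{\mu, b}$. The remedy, already sketched above, is that the genericity-iteration argument of \rlem{getting pre-sjs} uses only strong $F_{\vec{B}}$-iterability together with $\sup_i\gg^\P_{B_i}=\d^\P$; both properties are preserved under $F_{\vec{B}}$-quasi iteration by elementarity of the quasi iteration embeddings and by the fact that these embeddings transport the $\tau_{B_i, n}$'s correctly.
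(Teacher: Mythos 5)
Your proposal is correct and follows essentially the same route as the paper, which derives this lemma by exactly the combination you describe: rerunning the proof of \rlem{getting a qsjs} with $F_{\vec{B}}$ in place of $F^{M,\pi}$, using the pre-sjs identity $\P_{\mu,a}=\cup_{i<\omega}H^{\P_{\mu,a}}_{B_i}$ together with the genericity-iteration argument from the proof of \rlem{getting pre-sjs} to propagate that identity to quasi-iterates. The paper leaves these details implicit ("it follows from \rlem{getting a qsjs} and the proof of \rlem{getting pre-sjs}"), and your write-up fills them in faithfully.
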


We continue with the set up of the lemma. Given any $B\in \powerset(\bR^{W[g]})\cap OD^{\S_{\mu, h_0*g}}$, we let
\begin{center}
$f^{M,g}_B=\{ (\Q, f_B(\Q)): \Q\in V_{\k_M}^{M[g]}$ and $\Q$ is $\Gamma_{\mu, g}$-suitable$\}$.
\end{center}
We define $f^{M}_B$ similarly. Given a pre-sjs $\vec{B}$ at $\mu$ such that $M$ captures $\vec{B}$, we let 
\begin{center}
$f^{M, g}_{\vec{B}}=\la f^{M, g}_{B_i}: i<\omega\ra$ and $f^M_{\vec{B}}=\la f^M_{B_i}: i<\omega\ra$.
\end{center}

\begin{lemma}\label{f is in M} Suppose $\mu$ is excellent, $(M, \pi)$ is an excellent hull at $\mu$, $g\subseteq Coll(\omega, \mu)$ is $W$-generic and $\vec{B}\in V[g]$ is a pre-sjs at $\mu$ captured by $(M, \pi)$. Then $f^{M, g}_{\vec{B}}\in M[h_0*g]$.
\end{lemma}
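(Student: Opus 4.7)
The plan is to show that $f^{M, g}_{\vec{B}}$ is definable inside $M[h_0 * g]$ from the parameter $\vec{B}^M \in M$, which is available because $(M, \pi)$ captures $\vec{B}$. The argument will proceed in three steps: availability of the domain, characterization of the term relations from $\vec{B}^M$, and a uniqueness argument allowing $M[h_0*g]$ to identify the standard names.

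For the first step, I would observe that $M[g]$ is definable in $M[h_0 * g]$ by ground-model definability, and hence so is $V_{\kappa_M}^{M[g]}$. To recognize $\Gamma_{\mu, g}$-suitable premice inside this class, I would invoke \rlem{equivalencies 1}(1) and \rthm{correctness of good hulls} to get, for $a \in V_{\kappa_M}^{M[g]}$,
\[ \W^{\Gamma_{\mu,g}}(a) \;=\; \W_{\mu,g}(a) \;=\; \big(\W_{\mu,g}(a)\big)^{M[g]}, \]
whose right-hand side is computed internally in $M[g]$ from its own $\powerset(\powerset(\kappa_M))$. This makes each clause of suitability (that $\mathcal{O}_\eta^\Q$ equals the $\W$-operator and that the latter satisfies ``$\eta$ is not Woodin'') an $M[g]$-local condition, so the predicate is definable in $M[g]$ and hence in $M[h_0*g]$.

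For the second step, I would fix a suitable $\Q \in V_{\kappa_M}^{M[g]}$ and $i,n < \omega$ and characterize $\tau^\Q_{B_i, n}$ from $\vec{B}^M$. The key point is that since $B_i^M = B_i \cap M[g]$, for any $\Q$-generic $h$ with $\Q[h] \subseteq M[g]$ we have $B_i \cap \Q[h] = B_i^M \cap \Q[h]$, and hence $\tau^\Q_{B_i, n}[h] = B_i^M \cap \Q[h]$. Thus $\tau^\Q_{B_i, n}$ is the unique standard $\Q$-name in $\Q^{Coll(\omega,(\d^{+n})^\Q)}$ whose evaluations on such $h$ match the $B_i^M$-recipe. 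This gives a definition of the map $\Q \mapsto \langle \tau^\Q_{B_i,n} : i,n<\omega\rangle$ inside $M[h_0*g]$ using $\vec{B}^M \in M$ as a parameter.

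The hard part will be ensuring that enough generics $h$ are available inside $M[h_0*g]$ to pin the standard name down uniquely, given that a suitable $\Q$ need not be countable in $M[g]$; it is only countable in $V[g]$, since $|V_{\kappa_M}^M|^V \leq |M|^V = \mu$. I would address this by exploiting the homogeneity of $Coll(\omega,\mu)$ together with the extra collapse $h_0$: factoring $g$ through subforcings of $Coll(\omega,\mu)$ yields comeagerly many $\Q$-generic filters $h \in M[g] \subseteq M[h_0*g]$ whose extensions $\Q[h]$ stay inside $M[g]$, and these suffice to recover $\tau^\Q_{B_i,n}$ from the $B_i^M$-recipe. Combining the three steps, the graph $\Q \mapsto \langle \tau^\Q_{B_i,n}:i,n<\omega\rangle$ is definable in $M[h_0*g]$ with parameter $\vec{B}^M$, and Replacement in $M[h_0*g]$ then gives $f^{M,g}_{\vec{B}} \in M[h_0*g]$.
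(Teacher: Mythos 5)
Your step (1) is sound: combining \rlem{equivalencies 1} with \rthm{correctness of good hulls}, $M[g]$ computes $\W^{\Gamma_{\mu,g}}(a)$ correctly for $a\in V_{\k_M}^{M[g]}$, so the domain of $f^{M,g}_{\vec B}$ is definable there. The gap is in step (3), and it is fatal as stated. The domain of $f^{M,g}_{B_i}$ contains $\Gamma_{\mu,g}$-suitable premice $\Q$ that are uncountable in $M[h_0*g]$; the basic example is $\P_\mu^M$ itself, whose Woodin cardinal is $\pi^{-1}(\Theta^\mu)\geq (\mu^+)^M=\omega_1^{M[h_0*g]}$. For such $\Q$ there are \emph{no} $\Q$-generic filters for $Coll(\omega,(\d^{+n})^\Q)$ in $M[h_0*g]$ at all, let alone comeager many: any filter meeting the dense sets $\{p:\a\in rng(p)\}\in\Q$ for $\a<(\d^{+n})^\Q$ codes a surjection of $\omega$ onto $(\d^{+n})^\Q$, which $M[h_0*g]$ cannot contain. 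Factoring $Coll(\omega,\mu)$ cannot help, since $\mu$ is already countable in $M[g]$ and the obstruction lives at $\omega_1^{M[g]}$ and above. So the ``$B_i^M$-recipe'' never determines $\tau^\Q_{B_i,n}$ for exactly the premice that matter. (A secondary issue: even where generics abound, distinct names can have identical evaluations on all generics, so pinning down ``the'' standard name requires more than matching evaluations.)

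The paper's proof avoids generics entirely. Since each $B_i$ comes from $F^\mu=F_{od}^{\S_{\mu,h_0*g}}$, the function $f_{B_i}$ is $OD$ in $\S_{\mu,h_0*g}$, so $f^{M,g}_{B_i}$ is $OD$ there from the parameter $V_{\k_M}^{M[h_0][g]}$; mouse capturing in $\S_{\mu,h_0*g}$ together with $\W(A_M)\in M$ (the same move as in the proof of \rthm{correctness of good hulls}) then puts each individual $f^{M,g}_{B_i}$ into $M[h_0][g]$. Note that the capturing hypothesis on $\vec B$, which your argument leans on, plays no real role here. What remains is exactly the assembly of the $\omega$-sequence $\la f^{M,g}_{B_i}:i<\omega\ra$, which your Replacement step would have delivered had the uniform definition existed; the paper instead chooses names $\dot f_i\in M[h_0]^{Coll(\omega,\mu)}$ for the $f^{M,g}_{B_i}$ and uses that $M[h_0]$ is $\omega$-closed in $W$ --- this is where excellence of the hull is actually used --- to conclude $\la \dot f_i: i<\omega\ra\in M[h_0]$ and hence $f^{M,g}_{\vec B}\in M[h_0*g]$.
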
 
\begin{proof}
We have that in $\S_{\mu, h_0*g}$, for each $i$, $f^{M,g}_{B_i}$ is $OD_{V_{\k_M}^{M[h_0][g]}}$ and hence, $f^{M, g}_{B_i}\in M[h_0][g]$ (recall that $\W(A_M)\in M$). Let then for each $i<\omega$, $\dot{f}_i\in M[h_0]^{Coll(\omega, \mu)}$ be the term that is always realized as $f^{M, g}_{B_i}$. Because $M[h_0]$ is $\omega$-closed in $W$, we have that $\la \dot{f}_i: i<\omega\ra\in M[h_0]$. Hence, $f^{M, g}_{\vec{B}}\in M$.
\end{proof}

Suppose now $\mu$ is excellent, $g\subseteq Coll(\omega, \mu)$ is $W$-generic, $\vec{B}$ is a pre-sjs at $\mu$, and $(M, \pi)$ is an excellent hull at $\mu$ capturing $\vec{B}$. Suppose further that $a\in (H_{\omega_1})^{M[g]}$. \rlem{getting a qsjs} and \rlem{getting strategies from qsjs} imply that we can define $\Sigma^{F_{\vec{B}}}$ for $\P_{\mu, a}$. In our case, however, we can only restrict to iterations that are in $V_{\k_M}^{M[h_0][g]}$. We then let $\Sigma^{M, g}_{\mu, \vec{B}, a}$ be this iteration strategy. 

\begin{lemma} $\Sigma^{M, g}_{\mu, \vec{B}, a}$ is a $(\k_M, \k_M)$-iteration strategy which acts on iterations that are in $M[h_0][g]$ and is fullness preserving. Moreover, if $\VT\in V_{\k_M}^{M[h_0][g]}$ is via $\Sigma^{M, g}_{\mu, \vec{B}, a}$ and the last normal component of $\VT$ has limit length then $\Sigma^{M, g}_{\mu, \vec{B}, a}(\VT)\in M[h_0][g]$.
\end{lemma}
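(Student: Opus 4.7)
The plan is to apply the qsjs machinery of $\rsec{f-guided strategies}$, relativized to stacks lying in $V_{\kappa_M}^{M[h_0][g]}$. By $\rlem{getting qsjs}$, $F_{\vec{B}}$ is a qsjs in $\S_{\mu, h_0 \ast g}$ witnessed by $\P_{\mu, a}^M$ for such stacks. I would repeat the construction from $\rlem{one step lemma}$ and $\rlem{getting strategies from qsjs}$ verbatim, but restricting the domain to stacks in $V_{\kappa_M}^{M[h_0][g]}$; this directly produces $\Sigma^{M, g}_{\mu, \vec{B}, a}$ as a $(\kappa_M, \kappa_M)$-iteration strategy on such stacks. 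Fullness preservation is automatic from that construction: any iterate $\Q$ along the strategy comes with $\Sigma_1$-elementary embeddings $k: \P_{\mu, a}^M \to \Q$ and $l: \Q \to \pi(\P_{\mu, a}^M)$ satisfying $\pi \rest \P_{\mu, a}^M = l \circ k$, so $\rcor{realizability gives fullness}$ yields that $\Q$ is suitable.

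The substantive part is showing the branch values lie in $M[h_0][g]$. Fix $\VT \in V_{\kappa_M}^{M[h_0][g]}$ played via $\Sigma^{M, g}_{\mu, \vec{B}, a}$ whose last normal component $\T$ has limit length. If $\T$ is $\Gamma_{\mu, g}$-short, then $\Sigma^{M, g}_{\mu, \vec{B}, a}(\VT)$ is the unique cofinal $b$ with $\Q(b, \T) \trianglelefteq \W(\M(\T))$; by $\rthm{correctness of good hulls}$, $\W(\M(\T))$ is computed correctly inside $M[h_0][g]$, so $b \in M[h_0][g]$. For $\Gamma_{\mu, g}$-maximal $\T$, the branch is $b = \bigcup_{i < \omega} b_{\T, f_{B_i}}$ in the notation of $\rdef{bf}$. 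By $\rlem{f is in M}$, $f^{M, g}_{\vec{B}} \in M[h_0][g]$, so one builds the last model $\Q$ of $\T$ inside $M[h_0][g]$ as the $F_{\vec{B}}$-quasi direct limit, following the term-relation argument in the proof of $\rlem{getting a qsjs}$. For each $i$, $\rlem{uniqueness of f-iterability embeddings}$ characterizes $b_{\T, f_{B_i}}$ as the unique initial segment of a cofinal branch whose iteration map matches the quasi-iterability embedding on $H^{\M}_{f_{B_i}}$ up to the least $\xi$ with $\cp(E^\T_\xi) > \gg^\Q_{f_{B_i}}$. This condition is first-order in the parameter $f^{M, g}_{\vec{B}} \in M[h_0][g]$, and a witness is delivered by the ambient value of the strategy, so $\Sigma_1$-absoluteness places each $b_{\T, f_{B_i}}$, and hence $b$ itself, in $M[h_0][g]$.

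The hard part is this maximal case: the branch is specified by a quasi-iteration condition whose statement a priori quantifies over $f_{\vec{B}}$-values of arbitrary suitable mice in the ambient universe. The bridge is $\rlem{f is in M}$, which places the restriction of $f_{\vec{B}}$ to the suitable mice in $V_{\kappa_M}^{M[h_0][g]}$ inside $M[h_0][g]$, making the defining property of each $b_{\T, f_{B_i}}$ internal to $M[h_0][g]$ and thus absolute. Once that is in place, fullness preservation and the branch computability follow uniformly from the general qsjs framework together with $\rcor{realizability gives fullness}$.
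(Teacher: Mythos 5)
Your proposal is correct and follows the same route as the paper, whose own proof is just two sentences: the first part is cited as an immediate consequence of \rlem{getting strategies from qsjs} (via \rlem{getting qsjs}), and the second part as following from $f^{M,g}_{\vec{B}}\in M[h_0][g]$, i.e.\ \rlem{f is in M}. Your write-up simply fills in the details the paper leaves implicit (the short/maximal case split and the absoluteness of the branch definition), and does so accurately.
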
 
\begin{proof} The first part is an immediate consequence of \rlem{getting strategies from qsjs}. The second part follows from the fact that $f^{M, g}_{\vec{B}}\in M[h_0][g]$.
\end{proof}

Notice that $\Sigma^{M, g}_{\mu, \vec{B}, a}\rest V_{\k_M}^{M[h_0]}\in M[h_0]$ given that $a\in M[h_0]$. We let $\Sigma^{M}_{\mu, \vec{B}, a}=\Sigma^{M, g}_{\mu, \vec{B}, a}\rest V_{\k_M}^{M[h_0]}$. When $a=\emptyset$, we omit it from subscripts. In the next subsection we will show that for each $i<\omega$ there is some $\Sigma^{M, g}_{\mu, \vec{B}, a}$-iterate $\Q$ of $\P_{\mu, a}^M$ such that if $\Lambda$ is the strategy of $\Q$ induced by $\Sigma^{M, g}_{\mu, \vec{B}, a}$ then $\Lambda$ strongly respect $B_i$. It then easily follows from \rlem{getting branch condensation} that $\Lambda$ has branch condensation.

\subsection{An $\omega$-suitable $\P$}

Recall that we have that $\cf^W(\k)=\omega$. Let then $\la \mu_i: i<\omega\ra$ be a sequence of excellent points such that for each $i$, $\mu_{i+1}^{\mu^+_i}=\mu_{i+1}$ and $\sup_{i<\omega}\mu_i=\k$. For the rest of the paper, we fix a $W$-generic $g_0\subseteq Coll(\omega, \mu_0)$ and a pre-sjs $\vec{B}$ at $\mu_0$. 
%

Fix now an excellent $(M^*, \pi^*)$ at $\mu_0$ capturing $\vec{B}$. For each $X\in V_{\k_{M^*}}^{M^*[h_0][g_0]}$ we let $\T_X$ be the tree on $\P^{M^*}_{\mu}$ according to $\Sigma^{M^*}_{\mu_0, \vec{B}}$ that makes $X$ generic. Let $\Q_X$ be the last model of $\T_X$ and let $\pi_X:\P^{M^*}_{\mu}:\rightarrow \Q_X$ be the iteration embedding according to $\Sigma^{M^*}_{\mu_0, \vec{B}}$. We then let $\tau_{B_i, X}=\pi_X(\tau_{B_i, 0}^{\P_{\mu}^{M^*}})$.
We let 
\begin{center}
$\tau_i=\{ (X, \Q_X, \tau_{B_i, X}) : X\in V_{\k_{M^*}}^{M^*[h_0][g]}\}$ 
\end{center}
and let $\vec{\tau}=\la \tau_i: i<\omega\ra$. The proof of \rlem{f is in M} gives that $\vec{\tau}\in M^*[h_0][g]$. 

To continue, we introduce the following notation. Suppose $\mu\leq \nu$ are two excellent points and $(M, \pi)$ and $(N, \sigma)$ are two excellent hulls at $\mu$ and $\nu$ respectively. We write $(M, \pi)<(N, \sigma)$ if $(M, \pi)\in rng(\sigma)$. In this case, we let $\pi_{M, N}=\sigma^{-1}(\pi): M\rightarrow N$. We have that $\pi_{M, N}\in N$.

 Our first goal here is to show that we can extend $\vec{B}$ to a pre-sjs at any other excellent point $>\mu_0$.
 Given an excellent $\mu>\mu_0$, we let $B_{\mu, i}\in W[g_0]^{Coll(\omega, \mu)}$ be the name for the set of reals given by $\pi^*(\tau_i)$ as follows: given a standard name for a real $\dot{x}\in W[g_0]^{Coll(\omega, \mu)}$ and $p\in Coll(\omega, \mu)$, 
\begin{center}
$(p, \dot{x})\in B_{\mu, i}\iff W[g_0]\models``$ if $\eta=\mu^{++}$ and $(\Q, \tau)$ is such that $(V_\eta, \Q, \tau)\in \pi^*(\tau_i)$ then $p\forces^{\Q[V_\eta]}_{Coll(\omega, \mu)}\dot{x}\in \tau$".
\end{center}
If $g\subseteq Coll(\omega, \mu)$ is $W[g_0]$ generic then we let $B_{\mu, i, g}$ be the realization of $B_{\mu, i}$. We let $\vec{B}_\mu=\la B_{\mu, i}: i<\omega\ra$ and $\vec{B}_{\mu, g}=\la B_{\mu, i, g} : i<\omega\ra$.

\begin{lemma}\label{Bs are determined} Suppose $\mu>\mu_0$ and $g\subseteq Coll(\omega, \mu)$ is $W[g_0]$-generic. Then $B_{\mu, i, g}\in \S_{\mu, h_0*g_0*g}$.
\end{lemma}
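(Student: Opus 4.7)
The plan is to identify $B_{\mu, i, g}$ with the canonical set of reals weakly term-captured, inside $\S_{\mu, h_0 * g_0 * g}$, by a suitable iterate of $\P_{\mu_0}$ that sits inside $W[g_0][g]$.

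First I would unpack $\pi^*(\tau_i)$. Since $h_0 \in M^*$ by excellence of $\mu_0$, and $\pi^*$ extends naturally to the $g_0$-extension, elementarity gives that $\pi^*(\tau_i)$ in $W[g_0]$ is the set of triples $(X, \Q_X, \tau_{B_i, X})$ as $X$ ranges over $V_\k^{W[g_0]}$, where $\Q_X$ is the $\Sigma_{\mu_0, \vec{B}}$-iterate of $\P_{\mu_0}$ obtained by the generic $X$-genericity iteration, and $\tau_{B_i, X}$ is the image of $\tau_{B_i, 0}^{\P_{\mu_0}}$ under that iteration embedding. Here $\Sigma_{\mu_0, \vec{B}}$ is the extension of $\Sigma^{M^*}_{\mu_0, \vec{B}}$ to $W$ guaranteed by \rlem{extending strategies via suitability} and \rlem{extending kappa strategies}.

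Next, fix $\eta = \mu^{++}$, write $\Q = \Q_{V_\eta}$ and $\tau = \tau_{B_i, V_\eta}$. Since $V_\eta$ is generic over $\Q$ and every real of $W[g_0][g]$ has rank below $\mu^+$, we have $\bR^{W[g_0][g]} \subseteq \Q[V_\eta]$ and $g \in \Q[V_\eta]$. Unpacking the definition, $x \in B_{\mu, i, g}$ iff for some $Coll(\omega, \mu)$-name $\dot{x} \in W[g_0]$ and $p \in g$ with $\dot{x}_g = x$ we have $p \forces^{\Q[V_\eta]}_{Coll(\omega, \mu)} \dot{x} \in \tau$. The main claim is that $\Q$ is suitable in $\S_{\mu, h_0 * g_0 * g}$ and $\tau$ weakly term-captures there a unique set $B^* \subseteq \bR$; comparing definitions then yields $B^* = B_{\mu, i, g}$, which proves the lemma.

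Suitability of $\Q$ in $\S_{\mu, h_0*g_0*g}$ reduces to $\Gamma_{\mu, h_0*g_0*g}$-fullness preservation for the extended strategy at $\Q$, which I would verify via \rlem{realizability gives fullness lemma}: choose an excellent hull $(N, \sigma)$ at $\mu$ with $\Q$ and $\P_\mu$ in $\rge(\sigma)$, and factor the iteration embedding $\P_{\mu_0} \to \Q$, together with the canonical embedding into $\P_\mu$, through $\sigma$ to produce the required realization. The stability corollary \rcor{stability of lower parts in determinacy model} then identifies the $\W$-operators at $\mu_0$ and $\mu$ on countable inputs, so the weak term-capturing of $B_i$ by $\tau$ at the $\mu_0$-level transfers to weak term-capturing, in $\S_{\mu, h_0*g_0*g}$, of the corresponding OD set $B^*$. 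The principal obstacle is precisely this fullness preservation step at the larger point $\mu$: a priori $\Gamma_{\mu, h_0*g_0*g}$ is strictly richer than the image of $\Gamma_{\mu_0, h_0*g_0}$, so new pointclasses could expose failures of suitability invisible at $\mu_0$, and it is exactly the realizability machinery together with the excellence hypothesis (providing the $\omega$-closed hulls $M^*$ and $N$) that bridges this gap.
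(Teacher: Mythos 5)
There is a genuine gap at the heart of your argument: the claim that ``$\tau$ weakly term-captures in $\S_{\mu, h_0*g_0*g}$ a unique set $B^*$'' is not something you can extract from the machinery you cite, and it is essentially the statement being proved. Weak term capturing is defined relative to a set of reals that is \emph{already given} as an $OD$ element of the determinacy model: for each such $A$ one produces $\tau^{\P}_{A,n}$. There is no converse principle saying that an arbitrary term relation in a ($\Gamma_{\mu,h_0*g_0*g}$-suitable) premouse defines, via the forcing relation, a set of reals that even belongs to $\S_{\mu, h_0*g_0*g}$, let alone is $OD$ there. The term $\tau_{B_i, V_\eta}$ is the image of a term that captures $B_i$ over the reals of $W[g_0]$; once you pass to the strictly larger pointclass $\Gamma_{\mu, h_0*g_0*g}$ and the larger set of reals $\bR^{W[g_0*g]}$, the set your forcing computation defines is a genuinely new set, and knowing that $\Q$ is suitable at the new level does nothing to certify that this new set is captured by (or even a member of) the new determinacy model. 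So your ``comparing definitions then yields $B^*=B_{\mu,i,g}$'' step has nothing to compare against. (There are also smaller slips: $\bR^{W[g_0*g]}\not\subseteq \Q[V_\eta]$ --- those reals are $Coll(\omega,\mu)$-generic \emph{over} $\Q[V_\eta]$ --- and realizing $\Q$ into $\P_\mu$, as opposed to $\P_{\mu_0}$, is not given by any map you have in hand.)

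The paper's proof sidesteps all of this by reflection rather than term capturing. One takes a good hull $(M,\pi)$ at $\mu_0$ with $\mu\in rng(\pi)$ and observes the key computation: if $\l=\pi^{-1}(\mu)$ and $h\subseteq Coll(\omega,\l)$ is $M[h_0*g_0]$-generic with $h\in W[g_0]$, then the realization $B_h$ of $\pi^{-1}(B_{\mu,i})$ is just $B_i\cap \bR^{M[h_0*g_0*h]}$. That is, inside the hull the ``new'' set is merely the restriction of the old $OD$ set $B_i$ to a countable set of reals. Since $B_i$ is $OD$ in $\S_{\mu_0, h_0*g_0}$, mouse capturing there yields a sound $\bR^{M[h_0*g_0*h]}$-mouse $\M$ containing $B_h$ with $\rho_\omega(\M)=\bR^{M[h_0*g_0*h]}$ which is $\omega_1$-iterable in $\S_{\mu_0,h_0*g_0}$; then \rlem{equivalencies 1} and \rthm{correctness of good hulls} put $\M$ inside $M[h_0*g_0*h]$ with its iterability, so $\M\insegeq (\S^-_{\l,h_0*g_0*h})^{M[h_0*g_0*h]}$ and hence $L(B_h,\bR)\models AD^+$ there; elementarity of $\pi$ finishes. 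The missing idea in your proposal is precisely this reflection to a countable hull, which converts the problem about a new set at level $\mu$ into a problem about an old $OD$ set at level $\mu_0$, where mouse capturing is available.
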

\begin{proof} It is enough to show that $L(B_{\mu, i, g}, \bR^{W[g_0*g]})\models AD^+$. Let $(M, \pi)$ be a good hull at $\mu_0$ such that $(M^*, \pi^*)<(M, \pi)$ and $\mu \in rng(\pi)$. Let $B=\pi^{-1}(B_{\mu, i})$. It is enough to show that if $\l=\pi^{-1}(\mu)$ and $h\subseteq Coll(\omega, \l)$ is $M[h_0*g_0]$-generic such that $h\in W[g_0]$ then $M[h_0*g_0*h]\models L(B_{h}, \bR)\models AD^+$ where $B_h$ is the realization of $B$ in $M[h_0*g_0*h]$. Fix then such an $h$. Notice that we have that
\begin{center}
$B_{h}=B_i\cap \bR^{M[h_0*g_0*h]}$.
\end{center}
Since $B_i$ is $OD$ in $\S_{\mu_0, h_0*g_0}$, we have that there is a sound $\bR^{M[h_0*g_0*h]}$-mouse $\M$ such that $B_h\in \M$, $\rho_\omega(\M)=\bR^{M[h_0*g_0*h]}$ and $\S_{\mu_0, h_0*g_0}\models ``\M$ is $\omega_1$-iterable". It then follows from \rlem{equivalencies 1} and \rlem{correctness of good hulls} that $\M\in M[h_0*g_0*h]$ and $M[h_0*g_0*h]\models ``\M$ is $\k_M^+$-iterable". Therefore, $\M\insegeq (\S^-_{\l, h_0*g_0*h})^{M[h_0*g_0*h]}$ implying that $M[h_0*g_0*h]\models L(B_{h}, \bR)\models AD^+$.
\end{proof}

\begin{lemma} Suppose $\mu>\mu_0$ and $g\subseteq Coll(\omega, \mu)$ is $W[g_0]$-generic. Then $\vec{B}_{\mu, i, g}$ is a pre-sjs at $\mu$.  
\end{lemma}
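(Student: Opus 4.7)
The plan is to verify the three defining clauses of a pre-sjs for $\vec{B}_{\mu, g}$ at $\mu$. Clause (1), that each $B_{\mu, i, g}\in F^\mu$, is immediate from \rlem{Bs are determined} together with the observation that $B_{\mu, i, g}$ is defined by an $OD$ recipe in $\S_{\mu, h_0*g_0*g}$ from $\pi^*(\tau_i)$. Clause (2), closure under $\oplus$, follows from the analogous closure of $\vec{B}$ at $\mu_0$: the $\tau_i$'s respect $\oplus$ by construction (they propagate canonical $\P_{\mu_0}^{M^*}$-terms along $\Sigma^{M^*}_{\mu_0, \vec{B}}$, and the pre-sjs property of $\vec{B}$ supplies the $\oplus$-closure at the source), so $\pi^*$ transports the closure to $\vec{B}_\mu$.

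The main content is clause (3): $\P_{\mu, a} = \cup_{i<\omega} H^{\P_{\mu, a}}_{B_{\mu, i, g}}$. My plan is to choose an excellent hull $(N, \sigma)$ at $\mu$ with $(M^*, \pi^*) < (N, \sigma)$, capturing $\vec{B}_\mu$ and containing $a$, so that by elementarity of $\sigma$ it is enough to establish the corresponding statement for $\P_{\mu, a}^N$ inside $N$. By \rlem{quasi iterability of hod} applied at $\mu$ (with $\vec{B}_\mu$ in place of $\vec{B}$) and \rlem{getting qsjs}, $F_{\vec{B}_\mu^N}$ is a qsjs witnessed by $\P_{\mu, a}^N$ inside $\S_{\mu, h_0*g_0*g}$, so the strategy $\Sigma^N_{\mu, \vec{B}_\mu, a}$ is available on stacks in $V_{\kappa_N}^{N[h_0*g_0*g]}$. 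The key fact is that for every suitable iterate $\Q$ of $\P_{\mu, a}^N$ via this strategy with iteration map $j$ we have $j(\tau^{\P_{\mu, a}^N}_{B_{\mu, i, g}, n}) = \tau^\Q_{B_{\mu, i, g}, n}$, because the recipe defining $B_{\mu, i, g}$ from $\pi^*(\tau_i)$ outputs the correct term relation on every suitable premouse below level $\mu^{++}$. With this in hand the argument of \rlem{getting pre-sjs} transfers directly: for each $n$, run the $\Sigma^N_{\mu, \vec{B}_\mu, a}$-correctly-guided maximal tree $\T$ on $\P_{\mu, a}^N$ making $\P_{\mu, a}^N|(\delta^{+n-1})^{\P_{\mu, a}^N}$ generic; use quasi-iterability to obtain a branch $b$ with $\M^\T_b$ suitable, $i^\T_b(\delta^{\P_{\mu, a}^N})=\delta(\T)$, and $i^\T_b$ preserving each $\tau^{\P_{\mu, a}^N}_{B_{\mu, i, g}, k}$; conclude that if $k:\S\to \P_{\mu, a}^N$ is the uncollapse of the transitive collapse of $\cup_i H^{\P_{\mu, a}^N}_{B_{\mu, i, g}}$, then $k\rest (\delta^{+n})^\S$ is cofinal for every $n$, forcing $k=\mathrm{id}$.

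The main obstacle is the two transfer steps on which the argument for clause (3) rests: cofinality $\sup_i \gamma^{\P_{\mu, a}^N}_{B_{\mu, i, g}} = \delta^{\P_{\mu, a}^N}$, and the correct lifting of term relations through $\Sigma^N_{\mu, \vec{B}_\mu, a}$. The cofinality is obtained by transferring the pre-sjs property of $\vec{B}$ at $\mu_0$, which gives $\sup_i \gamma^{\P_{\mu_0, a'}^{M^*}}_{B_i} = \delta^{\P_{\mu_0, a'}^{M^*}}$ for the relevant $a'$; applying $\pi^*$ and the defining clause of $B_{\mu, i}$ from $\pi^*(\tau_i)$ transfers this cofinality to level $\mu$. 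The correct lifting of term relations uses the same identification together with the qsjs-compatibility of the iteration strategy supplied by \rlem{getting qsjs}. Taken together, these are exactly the transfer principles that turn \rlem{getting pre-sjs} at $\mu_0$ into the desired pre-sjs statement at $\mu$.
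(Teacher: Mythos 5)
Your overall reduction --- check clauses 1 and 2 of the definition of pre-sjs directly and reduce clause 3 to the cofinality statement $\sup_{i<\omega}\gg^{\P_{\mu,a}}_{B_{\mu,i,g}}=\Theta^\mu$ via the argument of \rlem{getting pre-sjs} --- is the same as the paper's. But there are two genuine problems with how you then establish the cofinality. First, your appeal to \rlem{getting qsjs} with $\vec{B}_\mu$ in place of $\vec{B}$ is circular: that lemma, the definition of $F_{\vec{B}}$, and the strategy $\Sigma^{N}_{\mu,\vec{B}_\mu,a}$ are all only available for a sequence already known to be a pre-sjs at $\mu$, which is exactly what you are trying to prove. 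You cannot set up the qsjs machinery at level $\mu$ for $\vec{B}_\mu$ before clause 3 is in hand.

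Second, and more importantly, the actual transfer of cofinality from level $\mu_0$ to level $\mu$ is asserted rather than proved. ``Applying $\pi^*$'' cannot do it: $\pi^*$ sends $\P^{M^*}_{\mu_0}$ to $\P_{\mu_0}$, not to $\P_\mu$, and there is no elementary map from the level-$\mu_0$ direct limit to the level-$\mu$ one along which to push the statement. The essential missing idea is the paper's device of choosing an excellent hull $(M,\pi)$ at $\mu_0$ (not at $\mu$) with $(M^*,\pi^*)<(M,\pi)$ and $\mu\in rng(\pi)$, so that $\l=\pi^{-1}(\mu)<\k_M$ and one can observe that (i) $(B_{\mu,i,g})^{M[h_0*g_0*h]}=B_i\cap M[h_0*g_0*h]$, so inside $M$ the new term relations are literally the old ones, and (ii) $\pi^{-1}(\P_\mu)$ is an $F_{\vec{B}}$-quasi-iterate of $\P^{M}_{\mu_0}$ in $M$. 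Since quasi-iteration embeddings move the ordinals $\gg_{B_i}$ cofinally (this is what \rdef{quasi iteration} together with the pre-sjs property of $\vec{B}$ at $\mu_0$ buys), the cofinality passes to $\pi^{-1}(\P_\mu)$ and then, by elementarity of $\pi$, to $\P_\mu$. Your ``key fact'' that the iteration maps carry $\tau^{\P}_{B_{\mu,i,g},n}$ to $\tau^{\Q}_{B_{\mu,i,g},n}$ is likewise a consequence of this identification, not something that follows formally from the recipe defining $B_{\mu,i}$ from $\pi^*(\tau_i)$. Without working in a hull at $\mu_0$ that sees $\mu$ as a small ordinal, the bridge between the two levels is missing.
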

\begin{proof} The proof is like the proof of \rlem{Bs are determined}. It follows from the proof of \rlem{getting pre-sjs} that it is enough to show that for any $a\in H_{\omega_1}^{W[g_0*g]}$, $\sup_{i<\omega} \gg^{\P_{\mu, a}}_{B_{\mu, i, g}}=\Theta^\mu$. 
We first show this for $a=\emptyset$. Let $(M, \pi)$ be an excellent hull at $\mu_0$ such that $(M^*, \pi^*)<(M, \pi)$ and $\mu \in rng(\pi)$. Let $\l=\pi^{-1}(\mu)$ and let $h\subseteq Coll(\omega, \l)$ be $M[h_0*g_0]$-generic such that $h\in W[g_0]$. It is enough to show that in $M[h_0*g_0*h]$, 
\begin{center}
$\sup_{i<\omega} \gg^{\P}_{B_{\mu, i, g}}=\d^{\P}$ 
\end{center} 
where $\P=\pi^{-1}(\P_{\mu})$. This follows from the fact that $(B_{\mu, i, g})^{M[h_0*g_0*h]}=B_i\cap M[h_0*g_0*h]$ and in $M$, $\P$ is a $F_{\vec{B}}$-quasi-iterate of $\P^M_{\mu_0}$.

To show the claim for every $a$ use \rlem{getting a qsjs} and \rlem{getting pre-sjs}. 
\end{proof}

We define $\la \P_i: i<\omega\ra$ as follows:
\begin{enumerate}
\item $\P_0=\P_{\mu_0}$,
\item $\P_{i+1}=\P_{\mu_{i+1}, \P_i}$.
\end{enumerate}
Let $\d_i=\d^{\P_i}$. 

\begin{lemma} $\P_{i+1}\models ``\d_i$ is Woodin" and no level of $\P_{i+1}$ projects across $\d_i$.
\end{lemma}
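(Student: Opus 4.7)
\emph{Proof plan.} The statement has two parts and I would reduce both to a single structural observation: $\P_{i+1}=(\M_\infty(\P_i))^{\S_{\mu_{i+1},h_0*g}}$ is (for any $W$-generic $g\subseteq Coll(\omega,\mu_{i+1})$) a direct limit of $\Gamma_{\mu_{i+1},h_0*g}$-suitable premice \emph{over the base} $\P_i$, in the sense of the over-$a$ variant mentioned at the end of the $\H$-analysis subsection. Concretely, a suitable premouse $\Q$ over $\P_i$ has $\P_i$ as an initial segment up to $(\d_i^{+\omega})^{\P_i}$, has $(\d_i^{+\omega})^{\P_i}$ as a strong cutpoint of $\Q$, and above $(\d_i^{+\omega})^{\P_i}$ is assembled out of the $\W^\Gamma(\P_i|\eta)$-towers over $\P_i$. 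Because the direct limit embeddings of the $\mathcal{F}_{\Gamma,F_{od}}$-system arise from iteration maps that fix $\P_i$ pointwise (the base lies below all the $H^\Q_f$'s), these structural features pass to the limit; hence $\P_i\insegeq \P_{i+1}$, with $(\d_i^{+\omega})^{\P_i}$ a strong cutpoint of $\P_{i+1}$.

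\emph{No level of $\P_{i+1}$ projects across $\d_i$.} Any $\N\inseg \P_{i+1}$ with $o(\N)\leq (\d_i^{+\omega})^{\P_i}$ is an initial segment of $\P_i$, and hence $\rho_\omega(\N)\geq \d_i$ by the non-projection clause in the suitability of $\P_i$ itself. Any $\N\inseg \P_{i+1}$ with $o(\N)>(\d_i^{+\omega})^{\P_i}$ lies in the $\W^\Gamma$-tower built over $\P_i$, so by the over-$a$ analogue of clause~4 of Definition~1.11, $\rho_\omega(\N)\geq (\d_i^{+\omega})^{\P_i}>\d_i$.

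\emph{Woodinness of $\d_i$ in $\P_{i+1}$.} Given any $A\in \P_{i+1}\cap \powerset(\d_i)$, the previous paragraph forces $A\in \P_i$. Since $\P_i\models ``\d_i$ is Woodin'', there is an extender $E$ on the $\P_i$-sequence witnessing Woodinness for $A$ at $\d_i$; as $\P_i\insegeq \P_{i+1}$, $E$ is on the $\P_{i+1}$-sequence as well. Moreover, no new extender on the $\P_{i+1}$-sequence can spoil this witness, since $(\d_i^{+\omega})^{\P_i}$ is a strong cutpoint of $\P_{i+1}$ and in particular no extender indexed above $(\d_i^{+\omega})^{\P_i}$ has critical point $\leq \d_i$.

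\emph{Main obstacle.} The substantive point is the structural claim $\P_i\insegeq \P_{i+1}$ with $(\d_i^{+\omega})^{\P_i}$ a strong cutpoint of $\P_{i+1}$. This requires carefully setting up the over-$a$ variant of Definition~1.11 and verifying that the corresponding analogues of the fullness and $\H$-computation theorems apply with $a=\P_i$; the author has flagged this extension at the end of the $\H$-analysis subsection, so the task reduces to checking that those analogues go through verbatim with $\P_i$ in the role of the base. Once that bookkeeping is in place, the two clauses of the lemma are immediate consequences of the $\W^\Gamma$-fullness of the over-$\P_i$ hierarchy together with the suitability of $\P_i$.
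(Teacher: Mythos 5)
Your proof is correct and follows essentially the same route as the paper's: the paper likewise reduces both clauses to showing that no level of $\P_{i+1}$ projects across $\d_i$, and rules such a level $\M$ out by observing (via the fullness corollary $\W_{\mu, g}(\P_\mu|\eta)=\mathcal{O}^{\P_\mu}_\eta$, relativized to mice over $\P_i$) that $\M$, regarded as a mouse over $\P_i$, would be a $\k^+$-iterable sound mouse projecting to $\leq\d_i$ and hence an initial segment of $\P_i$ --- a contradiction. The only point your write-up elides is that the fullness of $\P_i$ and of $\P_{i+1}$ a priori live in different pointclasses ($\Gamma_{\mu_i}$ versus $\Gamma_{\mu_{i+1}}$); the paper bridges this by passing through $\k^+$-iterability in $W$ as the common currency, using the stability of the lower-part operators across good points.
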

\begin{proof}
It is enough to show that no level of $\P_{i+1}$ projects across $\d_i$. Towards a contradiction, suppose $\M\insegeq\P_{i+1}$ is such that $\rho(\M)\leq \d_i$. It follows from \rcor{fullness of hod} that $\M$, regarded as a mouse over $\P_i$, is $\k^+$-iterable in $W$ and hence, $\M\insegeq \P_i$, contradiction. 
\end{proof}

Let $\d_\omega=\sup_{i<\omega}\d_i$ and $\P^-=\cup_{i<\omega}\P_i$ and let 
 \begin{center}
   $\P = 
     \begin{cases}
      \W(\P^-)  &:\ \text{if no level of}\ \W(\P^-) \ \text{projects across}\ \d_\omega \\
      \N &:\ \text{where}\ \N\inseg \W(\P^-)\ \text{is the least such that} \ \rho(\N)<\d_\omega\\
     \end{cases}$
\end{center}
Notice that $\k=\d_\omega$. In subsequent sections we will show that $\P=\W(\P^-)$. Before we move on, we fix some notation.

Suppose $(M, \pi)$ is an excellent hull at $\mu_k$ for some $k\leq \omega$. Let $\mu_{-1}=\nu_0$. We say $(M, \pi)$ is \textit{perfect} if $(M^*, \pi^*)<(M, \pi)$, $M^{\mu_{k-1}^+}\subseteq M$ and $\la \mu_i: i<\omega\ra\in rng(\pi)$. Notice that we have that $\P\in rng(\pi)$. We then let $\la \mu_i^M: i\in \omega\ra=\pi^{-1}(\la \mu_i: i\in \omega\ra)$, $\P_i^M=\pi^{-1}(\P_i)$ and $\P^M=\pi^{-1}(\P)$. 

Notice that if for some $k<\omega$, $g\subseteq Coll(\omega, \mu_k)$ is $W[g_0]$ generic and $(M, \pi)$ is a perfect hull at $\mu_k$ then $\Sigma^{M, g_0*g}_{\mu_k, a, \vec{B}_{\mu_k, g}}$ is defined for every $a\in (H_{\omega_1})^{M[h_0*g_0*g]}$. We then let $\Sigma^{M, g}_{\mu_k, a}=\Sigma^{M, g_0*g}_{\mu_k, a, \vec{B}_{\mu_k, g}}$ where we let $\P_{-1}=\emptyset$. Notice that we have that $\Sigma^{M, g}_{\mu_k, a}\rest M[h_0*g_0]\in M[h_0*g_0]$. We then let $\Sigma^M_{\mu_k, a}=\Sigma^{M, g}_{\mu_k, a}\rest  M[h_0*g_0]$. It is again true that $\Sigma^M_{\mu_k, a}\in M[h_0*g_0]$.

\subsection{Iteration strategy for $\P$}

Suppose $(R, \tau)> (M^*, \pi^*)$ is a perfect hull at some $\mu_p$ for $p\leq \omega$. Here we will describe a $(\k_R, \k_R)$-strategy for $\P^{R}$ which acts on stacks all of whose normal components are in $V_{\k_R}^R$ and are above $\d_{p-1}^{\P^R}$ (recall that $\d_{-1}=0$). To do this, we introduce the auxiliary game $\mathcal{G}^a(\P^R, \omega^3, \k_R)$ in which player $II$, after a notification that $I$ is about to start a new round, will play an embedding $\pi_\a: \M_\a\rightarrow \P$ such that $\tau\rest \P=\pi_\a\circ i_{0, \a}$ where $i_{0, \a}$ is the iteration embedding. More precisely, $\la \T_\a, \M_\a, \pi_\a: \a<\eta\ra\subseteq V_{\k_R}^R$ is a run of $\mathcal{G}^a(\P^R, \omega^3, \k_R)$ in which neither player has lost if 
\begin{enumerate}
\item for $\a<\eta$, $\M_\a$ is the model at the beginning of the $\a$th round and $\M_0=\P^R$,
\item for $\a<\eta$, $\M_\a$ is (anomalous) $\omega$-suitable premouse,
\item for $\a<\eta$, $\T_\a$ is a tree with no fatal drops based on some window $(\d_k^{\M_\a}, \d_{k+1}^{M_\a})$ for $k\geq p-1$, 
\item for $\a<\eta$, $\T_\a$ is correctly guided and if $\T_\a$ has a last branch and $\T_\a^-$ is short then $\Q(\T_\a^-)\insegeq \M^{\T_\a}_{lh(\T_\a)-1}$,
\item $\pi_0=\pi\rest \P^R$ and for $\a<\b<\eta$, $\pi_\a:\M_\a\rightarrow \P$,
\item if for $\a<\b< \eta$, $i_{\a, \b}:\M_\a\rightarrow \M_\b$ is the iteration embedding then for $\a<\b<\eta$, $\pi_{\a}=\pi_\b\circ i_{\a, \b}$.
\end{enumerate}
The game has at most $\omega^3$ many rounds. As usual player $I$ plays the extenders and starts new rounds. $II$ plays branches. $I$ cannot start a new round if the current model isn't suitable. If $I$ decides to start a new round with $\M_\a$ as its starting model then $II$ has to play the embedding $\pi_\a$ that satisfies clauses 5 and 6. The game stops if one of the models produced is ill founded or if $II$ cannot play $\pi_\a$. $II$ wins if all the models appearing in the iteration are wellfounded, she can always play the embeddings $\pi_\a$, the game runs $\omega^3$ many rounds or if one of the rounds runs $\k_R$-steps. 

\begin{lemma} Player $II$ has a wining strategy in $\mathcal{G}^a(\P^R, \omega^3, \k_R)$.
\end{lemma}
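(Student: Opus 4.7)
The plan is to construct Player II's strategy $\Psi$ by round-by-round induction, maintaining the invariant that at the beginning of round $\alpha$ we have produced an embedding $\pi_\alpha : \M_\alpha \to \P$ satisfying $\pi_\alpha \circ i_{0,\alpha} = \pi_0$, where $\pi_0 = \tau \rest \P^R$. When Player I opens round $\alpha$ by declaring a window $(\d_k^{\M_\alpha}, \d_{k+1}^{\M_\alpha})$ with $k \geq p-1$ and playing a correctly guided tree $\T_\alpha$ in that window, the $(k{+}1)$-st layer $\N_{k+1}^\alpha$ of $\M_\alpha$ is suitable over $X_\alpha = \M_\alpha|\d_k^{\M_\alpha}$, and it is (inductively) a quasi-iterate of the $(k{+}1)$-st layer of $\P^R$ via the qsjs-driven strategy assembled in \rlem{getting qsjs} and \rlem{getting strategies from qsjs}. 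Player II responds to $\T_\alpha$ by playing the branches selected by the tail strategy $\Sigma^R_{\mu_{k+1}, X_\alpha}$.

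First I would handle the successor stage. Once $\T_\alpha$ terminates with a new top layer $\N'$, branch condensation of $\Sigma^R_{\mu_{k+1}, X_\alpha}$ (\rlem{getting branch condensation}) together with $\pi_\alpha \rest \N_{k+1}^\alpha$ furnishes a canonical $\Sigma_1$-elementary realization $\sigma : \N' \to \pi_\alpha(\N_{k+1}^\alpha)$ with $\sigma \circ i^{\T_\alpha} = \pi_\alpha \rest \N_{k+1}^\alpha$. Splicing $\sigma$ with $\pi_\alpha$ on the portion of $\M_\alpha$ outside the window (which $\T_\alpha$ does not touch) defines $\pi_{\alpha+1} : \M_{\alpha+1} \to \P$, preserving the invariant. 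At limit rounds $\lambda \leq \omega^3$, I form $\M_\lambda$ as the direct limit of the $\M_\alpha$'s under the iteration maps $i_{\alpha, \beta}$, and take $\pi_\lambda$ to be the induced direct limit of the $\pi_\alpha$'s; wellfoundedness of $\M_\lambda$ is then immediate from the existence of $\pi_\lambda : \M_\lambda \to \P$.

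The hard part will be verifying that the limit model $\M_\lambda$ is still (anomalous) $\omega$-suitable, i.e., that each of its layers is a $(\powerset(\powerset(\k^+)))^W$-suitable premouse over the previous one. Here I would invoke \rcor{realizability gives fullness}: the map $\pi_\lambda$, together with the natural embedding $\pi_\lambda(\M_\lambda) \to \P$, factors each layer of $\M_\lambda$ through $\P$ by $\Sigma_1$-elementary maps whose composition agrees with $\tau$ on the corresponding layer of $\P^R$, forcing fullness of the $\W$-operator at each layer. In the anomalous case where some layer of $\M_\lambda$ projects across its Woodin cardinal, the rules of $\mathcal{G}^a(\P^R, \omega^3, \k_R)$ prevent Player I from opening a new round and Player II automatically wins. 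Since each tail strategy $\Sigma^R_{\mu_{k+1}, \cdot}$ is a $(\k_R, \k_R)$-strategy by \rlem{getting strategies from qsjs} and $\omega^3 < \k_R$, the induction succeeds through all $\omega^3$ rounds, producing the desired winning strategy.
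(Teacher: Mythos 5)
Your overall architecture (round-by-round induction maintaining the realization invariant $\pi_\alpha\circ i_{0,\alpha}=\pi_0$, realizability plus \rcor{realizability gives fullness} to get suitability of the models, direct limits at limit rounds) matches the paper's. But there is a genuine gap at the single point that carries the weight of the lemma. You have II respond to $\T_\alpha$ with "the branches selected by the tail strategy $\Sigma^R_{\mu_{k+1},X_\alpha}$." No such strategy is available inside $R$: the hull $(R,\tau)$ sits at $\mu_p$, and the strategies $\Sigma^{M}_{\mu_{k+1},\P_k^M}$ are only defined for perfect hulls $(M,\pi)$ taken at $\mu_{k+1}$ with $(R,\tau)<(M,\pi)$. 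So to use them one must copy $\T_\alpha$ via $\pi_{R,M}$ to a tree on $\P^M$, apply $\Sigma^M_{\mu_{k+1},\P_k^M}$ there, and pull the branch back — and this is exactly what the paper does, after first proving the subclaim that the tree built so far is according to that pullback. More importantly, every move of II must be an element of $V_{\k_R}^R$ (the run of $\mathcal{G}^a(\P^R,\omega^3,\k_R)$ is required to be a subset of $V_{\k_R}^R$), and the branch $b$ produced in the larger hull $M$ is a priori only an element of $M$, not of $R$. Your proposal never addresses why $b\in R$, and this is the crux: the paper's argument uses that $R[h_0*g_0]$ is $\omega$-closed in $W[g_0]$ and that $\cf^{W[g_0]}(\d_{k+1}^{\P^R})=\omega$, so that a cofinal $\omega$-sequence from $rng(i^\T_b)\cap\d(\T)$ lies in $R$; by \rlem{uniqueness of branches} $b$ is the unique cofinal branch whose range contains that sequence, and then $b\in R$ by absoluteness. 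Without this step II's prescribed moves are not legal, and the strategy you describe is not a strategy in the stated game.

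A secondary point: you appeal to branch condensation of these strategies (via \rlem{getting branch condensation}) to produce the realization $\sigma:\N'\to\pi_\alpha(\N^\alpha_{k+1})$. That is not how the realizations arise here; they come directly from the qsjs construction of $\Sigma^M_{\mu_{k+1},\P_k^M}$ (the branch of a maximal tree is by construction one whose model admits a $\Sigma_1$ map to $\P$ commuting with the iteration map). Branch condensation of (a tail of) these strategies is only established later in the paper, and its proof in turn uses the strategy $\Lambda^R$ built in this very lemma, so routing the present argument through branch condensation is at best misleading about the order of dependencies.
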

\begin{proof} We define the strategy $\Lambda$ for $II$ by induction. First fix some bijection $f:\k_R\rightarrow V_{\k_R}^R$ such that $f\in R$. In the first round, $I$ plays a tree on $\P^R=\M_0$. Suppose $I$ starts playing the first round on $(\d_k^{\M_0}, \d_{k+1}^{\M_0})$ for $k\geq p-1$ and suppose $\T$ is a tree constructed according to $\Lambda$ and $\T$ has limit length. Below we describe how $II$ should play her move. 
\begin{enumerate}
\item If $\T$ is short then $II$ plays the unique cofinal branch $b\in R$ of $\T$ such that $\Q(b, \T)\insegeq \W(\M(\T))$ exists,
\item if $\T$ is maximal then $II$ plays $b$ such that
\begin{enumerate}
\item $b\in R$,
\item $\M^\T_b$ is $\omega$-suitable,
\item there is $\sigma:\M^\T_b\rightarrow \P$ such that $\tau\rest \P^R=\sigma\circ i^\T_b$.
\item $b$ is the $f$-least branch satisfying 2a-2c.
\end{enumerate}
\end{enumerate}
We need to see that $II$ can satisfy these conditions along with satisfying the rules of the iteration game.  It is enough to show that there is some $b\in R$ satisfying all the clauses above except 2d. We can then choose $f$-least such $b$.\\

\textit{Claim.} There is $b\in R$ satisfying 1 and 2a-2c.\\
\begin{proof}
Fix some perfect hull $(M, \pi)$ at $\mu_{k+1}$ such that $(R, \tau)<(M, \pi)$.\\

\textit{Subclaim.} $\T$ is according to $\pi_{R, M}$-pullback of $\Sigma^M_{\mu_{k+1}, \P_k^M}$.\\
\begin{proof}
Fix limit $\a<lh(\T)$. By induction, we assume that $\T\rest \a$ is according to $\pi_{R, M}$-pullback of $\Sigma^M$. Notice that $\pi_{R, M}\T\rest \a\in M$. To prove the subclaim, we need to see that if $b$ is the branch of $\T\rest\a$ then $b$ is also the branch of $\pi_{R, M}\T\rest \a$. Notice that we must have that $\T\rest \a$ isn't maximal and hence, $\Q(\T\rest \a)$ exists and $\Q(\T\rest \a)\insegeq \M^\T_\a$. Let $c=\Sigma^M_{\mu_{k+1}, \P_k^M}(\pi_{R, M}\T\rest \a)$. It follows that
\begin{center}
$\Q(c, \pi_{R, M}\T\rest \a)$ exists iff $\Q(c, \T\rest \a)$ exists 
\end{center} 
implying that if $b\not =c$ then $\Q(c, \T\rest \a)$ doesn't exist. We then have that $\pi_{R, M}\T\rest \a$ is a maximal tree. Let $\U=\pi_{R, M}\T\rest \a$. Because $\U\in M$, it follows from the construction of $\Sigma^M_{\mu_{k+1}, \P_k^M}$ that there is a map $\sigma:\M^\U_c\rightarrow \P$ such that $\pi\rest \P^M=\sigma\circ i^\U_c$. Let $k:\M^{\T\rest \a}_c\rightarrow \M^\U_c$ be the copy map. It then follows that $\tau\rest \P^R=\sigma\circ k\circ i^{\T\rest \a}_c$ which implies, using \rlem{realizability gives fullness}, that $\M^{\T\rest \a}_c$ is $\omega$-suitable. Hence, $\Q(\T\rest \a)$ cannot exist, contradiction! 
\end{proof}

We then let $b=\Sigma^M_{\mu_{k+1}, \P_k^M}(\pi_{R, M}\T)$. It follows from the proof of the subclaim that $b$ satisfies clause 1. Suppose then $\T$ is maximal. It follows from the copying construction that $\pi_{R, M}\T$ is also maximal. It also follows from the construction of $\Sigma^M_{\mu_{k+1}, \P_k^M}$ that there is $\sigma:\M^{\pi_{R, M}\T}_b\rightarrow \P$ such that $\pi\rest \P^M=\sigma\circ i^{\pi_{R, M}\T}_b$. Letting $l:\M^\T_b\rightarrow \M^{\pi_{R, M}\T}_b$ come from the copying construction, we have that $\tau\rest \P^R=\sigma\circ l \circ i^\T_b$. Hence, $\M^\T_b$ is $\omega$-suitable. Moreover, $b$ satisfies clause 2b-2c. To finish, we need to show that $b\in R[h_0*g_0]$. Recall that $R[h_0*g_0]$ is $\omega$-closed in $W[g_0]$ and that in $W[g_0]$, $\cf(\d_{k+1}^{\P^R})=\omega$. Let then $\la \gg_i: i<\omega\ra\subseteq rng(i^\T_b)$ be cofinal in $\d(\T)$. Then we have that $\la \gg_i: i<\omega\ra\in R$. We also have that $b$ is the unique branch $c$ of $\T$ such that $\la \gg_i: i<\omega\ra\subseteq i^\T_c$. It then follows from absoluteness that $b\in R$.
\end{proof}  

We then define $\Lambda(\T)=b$ which satisfies clause 1 and 2 above. Suppose now that we have defined $\Lambda$ for stacks that have $<\a$-many rounds. We want to extend $\Lambda$ to act on stacks that have $\a$-many rounds. The first step is to specify $II$'s move at the beginning of round $\a$. To do this, we examine three cases. Suppose first $\a$ is limit. Let $\la \T_\xi, \M_\xi, \pi_{\xi, \gg}: \xi<\gg<\a\ra$ be a run of $\mathcal{G}^a(\P^R, \omega^3, \k_R)$ which is according to $\Lambda$. Then $I$ must start the $\a$th round on $\M_\a$ which is the direct limit of $\M_\xi$'s under the iteration embeddings $i_{\xi, \gg}:\M_\xi\rightarrow \M_\gg$. Let $i_{\xi, \a}:\M_\xi\rightarrow \M_\a$ be the direct limit embedding. We then let $\pi_\a:\M_\a\rightarrow \P$ come from the direct limit construction: given $x\in \M_\a$, let $\xi$ be such that for some $\bar{x}\in \M_\xi$, $x=i_{\xi, \a}(\bar{x})$ and let
\begin{center}
$\pi_{\a}(x)=_{def}\pi_\xi(\bar{x})$.
\end{center} 
We then let $II$ play $\pi_\a$. It follows that $\pi_\a$ is as desired and that $\M_\a$ is $\omega$-suitable.

Next suppose $\a=\b+1$ and let $\la \T_\xi, \M_\xi, \pi_{\xi, \gg}: \xi<\gg\leq\b\ra$ be a run of $\mathcal{G}^a(\P^R, \omega^3, \k_R)$ which is according to $\Lambda$ and suppose $I$ wants to start the $\a$th round of the game. Then $I$ has to start the round on the last model $\M_\a$ of $\T_\b$. It follows from our inductive hypothesis that $\M_\a$ is $\omega$-suitable. It remains to choose $\pi_\a$. Suppose first that $\T_\b^-$ is defined and is maximal. Let $b$ be the last branch of $\T$. As part of our inductive hypothesis we have that there is $\pi:\M^{\T_\b^-}_b\rightarrow \P$ such that $\pi_\b=\pi\circ i^{\T^-}_b$. In this case, we let $\pi_\a=\pi$. Next suppose that either $\T_\b^-$ isn't defined or that it is defined but it is short. Let $k<\omega$ be such that $\T_\b$ is a tree played on the window $(\d_k^{\M_\b}, \d_{k+1}^{\M_\b})$. Also, let $(M,\pi)>(R, \tau)$ be a perfect hull at $\mu_{k+1}$. It follows from the proof of the subclaim that $\T_\b$ is according to $\pi_{R, M}$-pullback of $\Sigma^M_{\mu_{k+1}, \P_k^M}$. Let then $l:\M_\a\rightarrow \N$ come from the copying construction where $\N$ is the last model of $\pi_{R, M}\T_\b$. By the construction of $\Sigma^M_{\mu_{k+1}, \P_k^M}$ we have $\sigma:\N\rightarrow \P$ such that $\pi\rest \P^M=\sigma\circ i^{\pi_{R, M}\T_\b}$. It then follows that $\pi_\b=\sigma\circ l \circ i^{\T_\b}$. Let then $\pi_\a=\sigma\circ l$. 

Next, to describe $II$'s moves in the $\a$th round we just follow the steps describing $II$'s move in the first round. More precisely, given a tree $\T\in R$ on $\M_\a$ which is according to $\Lambda$ and has limit length, we let $II$ play as follows:
\begin{enumerate}
\item If $\T$ is short then $II$ plays the unique cofinal branch $b\in R$ of $\T$ such that $\Q(b, \T)\insegeq \W(\M(\T))$ exists,
\item if $\T$ is maximal then $II$ plays $b$ such that
\begin{enumerate}
\item $b\in R$,
\item $\M^\T_b$ is $\omega$-suitable,
\item there is $\sigma:\M^\T_b\rightarrow \P$ such that $\pi_\a=\sigma\circ i^\T_b$.
\item $b$ is the $f$-least branch satisfying 2a-2c.
\end{enumerate}
\end{enumerate}
That there is such a branch $b\in R$ follows from the proof of the claim above. This finishes our description of $\Lambda$.
\end{proof}

We let $\Lambda^{R}$ be the winning strategy of $\P^R$ in the iteration game $\mathcal{G}^a(\P^R, \omega^3, \k_R)$ constructed above. Next, we show that perfect hulls of $\P$ are iterable. More precisely, suppose $(M^*, \pi^*)< (R, \tau)$ is a perfect hull at $\mu_p$ for some $p<\omega$. Let $\S\in R$ be an $\omega$-suitable $\Lambda^R$ iterate of $\P^R$ and let $k: \S\rightarrow \P$ be the realization according to $\Lambda$. Let $(M, \pi)>(R, \tau)$ be a perfect hull at $\mu_{p+1}$ such that $k\in rng(\pi)$. Notice that $H_{\mu_p^{++}}\in M$. We let $\Sigma^{R, M, \S}$ be the $k^*$-pullback of $\Lambda^M$ where $k^*=\pi^{-1}(k)$. Because $H_{\mu_p^{++}}\in M$, we have that $\Lambda^{R, M, \S}$ is a $(0, \omega^3, \mu_p^{++})$-strategy for $\S$ acting on stacks above $\d_{p}^\S$. 

\begin{lemma} $\P=\W(\P^-_\omega)$. 
\end{lemma}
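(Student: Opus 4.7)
The plan is to argue by contradiction, using the iteration strategy $\Lambda^R$ constructed above and the suitability of $\P_{p+2}$. Suppose $\P\neq \W(\P^-_\omega)$. Then by the definition of $\P$, we have $\P=\N$ for the least $\N\inseg \W(\P^-_\omega)$ with $\rho_\omega(\N)<\d_\omega=\k$, so $\P$ is an anomalous $\omega$-suitable premouse in the sense of \rdef{omega suitable}. Let $p<\omega$ be least with $\rho_\omega(\P)\leq \d_p$.

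First I would fix a perfect hull $(R,\tau)$ at $\mu_{p+1}$ with $(M^*,\pi^*)<(R,\tau)$. By elementarity $\P^R=\tau^{-1}(\P)$ is anomalous $\omega$-suitable with $\rho_\omega(\P^R)\leq \d_p^{\P^R}$ and each $\P_i^R=\tau^{-1}(\P_i)\inseg \P^R$. Taking a further perfect hull $(M,\pi)>(R,\tau)$ at $\mu_{p+1}$ and applying the preceding lemma to the trivial $\Lambda^R$-iterate $\S=\P^R$ with realization $\tau\rest \P^R$, I obtain a $(0,\omega^3,\mu_p^{++})$-iteration strategy $\Sigma^{R,M,\P^R}$ for $\P^R$ acting on stacks above $\d_p^{\P^R}$.

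Since $\rho_\omega(\P^R)\leq \d_p^{\P^R}\leq o(\P_{p+1}^R)$, there is a least initial segment $\P^{R*}\insegeq \P^R$ with $o(\P^{R*})>o(\P_{p+1}^R)$ and $\rho_\omega(\P^{R*})\leq o(\P_{p+1}^R)$. Then $\P^{R*}$ is a sound premouse over $\P_{p+1}^R$ strictly extending $\P_{p+1}^R$. By combining $\Sigma^{R,M,\P^R}$ above $\d_p^{\P^R}$ with the realizability of countable hulls of $\P^{R*}$ into $\P^R$ via $\tau$, one obtains that countable hulls of $\P^{R*}$ are $\k^+$-iterable in $W$. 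By \rthm{strategies are determined} and its relativization to $\P_{p+1}^R$-mice, this iteration strategy lies in $\Gamma_{\mu_{p+2},h_0*g_0*g}$ for a suitable $W[g_0]$-generic $g\subseteq Coll(\omega,\mu_{p+2})$. Then mouse capturing in $\S_{\mu_{p+2},h_0*g_0*g}$, combined with the analogue of \rthm{hod theorem} identifying $\P_{p+2}=\P_{\mu_{p+2},\P_{p+1}}$ as the direct limit of suitable premice over $\P_{p+1}$ in that model, places the $\tau$-image of $\P^{R*}$ as an initial segment of $\P_{p+2}$; pulling back, $\P^{R*}\insegeq \P_{p+2}^R$. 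But no level of $\P_{p+2}^R$ projects across $\d_{p+1}^R$ (by the analogue for $\P_{i+1}$ over $\P_i$ already established), so no level of $\P_{p+2}^R$ strictly extending $\P_{p+1}^R$ has projectum $\leq \d_p^{\P^R}<\d_{p+1}^R$. This contradicts $\rho_\omega(\P^{R*})\leq o(\P_{p+1}^R)$ together with $\P^{R*}$ strictly extending $\P_{p+1}^R$, completing the argument.

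The main obstacle is the third step: one must carefully lift the $(0,\omega^3,\mu_p^{++})$-iterability of $\P^R$ above $\d_p^{\P^R}$ to genuine $\k^+$-iterability in $W$ for countable hulls of $\P^{R*}$, and then match the pointclass of this iteration strategy against $\Gamma_{\mu_{p+2},h_0*g_0*g}$ using \rlem{equivalencies 1} together with \rcor{stability of lower parts in determinacy model}. Once this pointclass matching is achieved, invoking mouse capturing in $\S_{\mu_{p+2},h_0*g_0*g}$ to place $\P^{R*}$ inside $\P_{p+2}^R$ completes the contradiction.
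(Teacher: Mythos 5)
Your reduction breaks down at the final step. The object $\P^{R*}$ you define --- the least initial segment of $\P^R$ with $o(\P^{R*})>o(\P^R_{p+1})$ and $\rho_\omega(\P^{R*})\leq o(\P^R_{p+1})$ --- is nothing more than the first level of $\W(\P^R_{p+1})$ sitting on top of $\P^R_{p+1}$, i.e.\ an ordinary sound mouse over $\P^R_{p+1}$. By clause 4 of \rdef{suitable premouse} together with \rcor{fullness of hod} relativized to $\P_{p+1}$, we have $\mathcal{O}^{\P_{p+2}}_{o(\P_{p+1})}=\W(\P_{p+1})$, so every such mouse \emph{is} an initial segment of $\P_{p+2}$; your conclusion $\P^{R*}\insegeq \P^R_{p+2}$ is therefore true but yields no contradiction. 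The contradiction you invoke needs $\rho_\omega(\P^{R*})<\d_{p+1}$ (so that $\P^{R*}$ would be a level of $\P^R_{p+2}$ projecting across $\d_{p+1}$), but $\rho_\omega(\P^{R*})\leq o(\P^R_{p+1})$ does not give this, since $o(\P^R_{p+1})=\sup_n(\d_{p+1}^{+n})>\d_{p+1}$. If you instead redefine $\P^{R*}$ to be the least level above $o(\P^R_{p+1})$ with $\rho_\omega\leq\d_p$, then by the minimality of $\P$ in $\W(\P^-)$ and the preceding lemma (no level of $\P_{i+1}$ projects across $\d_i$) the only candidate is $\P^R$ itself, which properly contains $\P^R_{p+2}$ and is not a sound mouse over any $\P^R_{p+2}|\eta$; mouse capturing cannot place it inside $\P^R_{p+2}$, so the argument still collapses.

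The paper's proof never attempts to locate a piece of $\P$ inside $\P_{k+2}$. Taking $k$ least with $\rho_\omega(\P)<\d_k$ and a perfect hull $(R,\tau)$ at $\mu_{k+1}$, it forms the hull $\N=Hull^{\P^R}_m(\d_{k+1}\cup\{p\})$ for $m$ least with $\rho_m(\P^R)<\d_\omega^{\P^R}$; the whole point of the anomaly is that $\N$ is a sound \emph{projecting} structure, so the realization strategies $\Lambda^{R,M,\S}$ certify that $\Sigma^R_{\mu_{k+1},\P^R_k}$ has branch condensation. Pushing forward by $\tau$ and applying \rlem{extending kappa strategies} gives a fullness-preserving $(\k^+,\k^+)$-strategy $\Sigma$ for $\P_{k+1}$ with branch condensation, and then \rthm{strategies are determined} places $\Sigma^*=\Sigma\rest H_{\omega_1}^{W[g_0*h]}$ into $\S_{\mu_{k+2},h_0*g_0*h}$, whence $L(\Sigma^*,\bR)\models AD^++\theta_0<\Theta$ --- contradicting the standing hypothesis (*). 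So the contradiction is global (against (*)), not local (against the internal structure of $\P_{p+2}$). Your middle steps, which use realizability to get iterability of $\P^R$ above $\d_p$, are in the right spirit, but they must be converted into a strategy whose mere membership in $\Gamma_{\mu,g}$ violates (*), rather than into a mouse-capturing placement.
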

\begin{proof}  Suppose not and let $k$ be least such that $\rho_\omega(\P)<\d_k$. Suppose $(R, \tau)$ is a perfect hull at $\mu_{k+1}$. Let $\Q=\P^{R}$ and let $p$ be the standard parameter of $\Q$. Let $\N=Hull_m^\Q(\d_{k+1}, \{p\})$ where  $m$ is the least such that $\rho_m(\Q)<\d_\omega^\Q$. We claim that $\Sigma^R_{\mu_{k+1}, \P^R_k}$ has branch condensation. To see this, it is enough to show that if $\VT$ is a stack on $\N$ according to $\Sigma^R$ such that $i^\VT$-exists and $\S$ is the last model of $\VT$ then $\S$ is $(m, \mu_{k+1}^{++})$-iterable above $\d^\S_k$. But choosing $M$ as in the paragraph proceeding the lemma, we see that $\Lambda^{R, M, \S}$ is such a strategy for $\S$.

It then follows that $\Sigma=_{def}\tau(\Sigma^R_{\mu_{k+1}, \P^R_k})$ is a fullness preserving $(\k, \k)$-strategy for $\P_{k+1}$ acting on trees above $\d_k$. Hence,  $\Sigma$ can be extended to a $(\k^+, \k^+)$-iteration strategy. Let then $h\subseteq Coll(\omega, \mu_{k+2})$ be $W$-generic. It then follows from \rthm{strategies are determined} that in $W[g_0*h]$, $L^{\Sigma}(\bR)\models AD^+$ implying that letting $\Sigma^*=_{def}\Sigma\rest H_{\omega_1}^{W[g_0*h]}$, $\Sigma^*\in S_{\mu_{k+2}, h_0*g_0*h}$. Hence, in $W[g_0*h]$, $L(\Sigma^*, \bR)\models AD^++\theta_0<\Theta$, contradiction!
\end{proof}

\subsection{A derived model at $\k$}

Our goal now is to show that if $G\subseteq Coll(\omega, \k)$ is $W[g_0]$-generic and $g_i=G\cap Coll(\omega, \mu_i)$ then $S_{\mu_i, h_0*g_0*g_i}$'s can be dovetailed into one model of determinacy. In the next subsection, we will use this fact to construct a strategy for some $\P_\mu$ which has branch condensation. Our first lemma is a strengthening of \rlem{fullness of M at A} and \rlem{correctness of good hulls}.

\begin{lemma}\label{correctness of good hulls all the way} Suppose $(M, \pi)$ is an excellent hull at some excellent $\mu$. Let $g\subseteq Coll(\omega, \mu)$ be $W$-generic and let $B\subseteq \k_M$ be a set in $M[h_0*g]$. Then $(\W(B))^{M[h_0*g]}=\W(B)$.
\end{lemma}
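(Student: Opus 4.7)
The plan is to generalize the argument of \rthm{correctness of good hulls}, which handles $a\in V_{\k_M}^{M[g]}$, to the case where $B \subseteq \k_M$ has rank $\k_M$ itself. The key idea is to stratify $B$ as an increasing union $B=\bigcup_{\a<\k_M}(B\cap\a)$ of bounded subsets of $\k_M$, each lying in $V_{\k_M}^{M[h_0*g]}$, to which \rthm{correctness of good hulls} applies directly. The two inclusions will then be lifted to the level of $B$ via a $\Q$-structure argument combined with the generic comparison technique from the proof of \rthm{correctness of good hulls}.

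For the inclusion $(\W(B))^{M[h_0*g]}\subseteq \W(B)$, I would take a sound $\M\insegeq(\W(B))^{M[h_0*g]}$ with $\rho_\omega(\M)=B$ and its $\k_M^+$-iteration strategy $\Lambda\in M[h_0*g]$, and argue that $\Lambda$ extends to a $\k^+$-iteration strategy in $W[g_0*g]$. By clause 1 of \rlem{extending kappa strategies} it suffices to obtain $\k$-iterability. The strategy $\Lambda$ is $\Q$-structure guided: for a limit-length tree $\T$, $\Lambda(\T)$ is the unique cofinal branch whose $\Q$-structure sits inside $\W^{M[h_0*g]}(\M(\T))$. Since each $\M(\T)$ is a bounded subset of $\k_M$, \rthm{correctness of good hulls} gives $\W^{M[h_0*g]}(\M(\T))=\W(\M(\T))$, so $\Lambda$ picks out the correct branch in $W[g_0*g]$ as well, yielding the desired $\k$-iterability.

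For the reverse inclusion $\W(B)\subseteq (\W(B))^{M[h_0*g]}$, I would fix a sound $\M\insegeq \W(B)$ with $\rho_\omega(\M)=B$ and establish both that $\M\in M[h_0*g]$ and that its strategy restricted to $M[h_0*g]$ lies in $M[h_0*g]$. Membership would follow from a generic comparison argument analogous to the proof of \rthm{correctness of good hulls}: using that $\P_\mu^M$ is short-tree iterable in $M[h_0*g]$ (\rlem{hod is short tree iterable}), one forms generic iterates $\Q_\eta\in M[h_0*g]$ of $\P_\mu^M$ making $V_\eta^{M[h_0*g]}$ generic, and reads off $\W(B\cap\eta)$ uniformly in $\eta<\k_M$ from the term relations in $\Q_\eta$ that capture the $\Gamma_{\mu,g}$-mouse predicate. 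Each proper initial segment $\M|\zeta$ is then (after $S$-reconstruction) a mouse over some $B\cap \a$ and hence in $M[h_0*g]$ by \rthm{correctness of good hulls}; the $\omega$-closure of $M[h_0]$ in $W$ lets one reassemble $\M$ inside $M[h_0*g]$. The $\k_M^+$-iteration strategy of $\M$ inside $M[h_0*g]$ is defined through the same $\Q$-structures, now recognized as objects of $M[h_0*g]$.

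The main obstacle will be the uniformity of the operator $a\mapsto \W(a)$ on bounded subsets of $\k_M$ inside $M[h_0*g]$: the pieces $\W(B\cap\a)$ must be assembled into $\W(B)$ by a single definition in $M[h_0*g]$, not merely glued together externally. This parallels the role of the set $D\in\W(A_M)$ in the proof of \rthm{correctness of good hulls} and should carry over because the term-capturing predicate for mice over $\Gamma_{\mu,g}$-suitable iterates of $\P_\mu^M$ remains $\Sigma_1$-definable uniformly across the relevant generic extensions of $M[h_0]$, with $\la \Q_\eta:\eta<\k_M\ra$ providing a coherent system of term relations on which the definition is based.
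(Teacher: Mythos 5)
Your plan of stratifying $B$ as $\bigcup_{\a<\k_M}(B\cap\a)$ and reducing to \rthm{correctness of good hulls} on the bounded pieces has a genuine gap at its center: a sound mouse $\M$ over $B$ with $\rho_\omega(\M)=B$ is not assembled from mice over the sets $B\cap\a$. Every level $\M|\zeta$ with $\zeta>\k_M$ is a premouse over all of $B$, and no $S$-construction converts it into a mouse over a bounded piece; so the step ``each proper initial segment $\M|\zeta$ is (after $S$-reconstruction) a mouse over some $B\cap\a$ and hence in $M[h_0*g]$'' fails, and with it the reassembly of $\M$ inside $M[h_0*g]$. Likewise, in your argument for the forward inclusion, $\M(\T)$ for a tree on a $B$-mouse has rank at least $\k_M$, so \rthm{correctness of good hulls} does not apply to it (the paper treats that inclusion as immediate in any case).

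The paper's proof instead makes the whole of $B$ extender-algebra generic over a single iterate of $\Q=\P_\mu^M$: one takes the correctly guided genericity-iteration tree $\T$ on $\Q$ for a name of $B$, whose last model $\S$ satisfies $\W(B)\in\S[B]$ by fullness. The real work --- and the idea missing from your proposal --- is showing that this \emph{long} tree and its branches lie in $M$. For a limit stage $\a$, one passes to $\T^*=\pi(\T\rest\a)$, sets $\l=\cf^V(\d(\T^*))<\k$, chooses a further excellent hull $(N,\sigma)>(M,\pi)$ at a larger excellent point with $N^\l\subseteq N$, and uses \rlem{uniqueness of branches} (a branch is determined by a cofinal subset of the range of its iteration map, here of order type $\l$) to see that the branch chosen by $\pi(\Sigma^M_{\mu,\vec{B}})$ belongs to $N$; reflecting gives a wellfounded cofinal branch of $\T\rest\a$ in $M$, and then either it is the correct branch or there are two, forcing $\cf(\d(\T\rest\a))=\omega$ so that the $\omega$-closure of $M$ yields the correct one. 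Your ``coherent system of term relations'' over the bounded iterates $\Q_\eta$ addresses uniformity of $a\mapsto\W(a)$ on bounded sets, but it cannot substitute for this cofinality/covering argument, which is what actually lets $M[h_0*g]$ see $\W(B)$ for an unbounded $B\subseteq\k_M$.
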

\begin{proof} We clearly have that $(\W(B))^{M[h_0*g]}\insegeq \W(B)$. We need to show that $\W(B)\insegeq (\W(B))^{M[h_0*g]}$. Suppose not. Let then $\M\insegeq \W(B)$ be the least such that $\rho_\omega(\M)=B$ and $(\W(B))^{M[h_0*g]}\inseg \M$. To derive contradiction, we need to show that $\M\in M[h_0*g]$ and $M[h_0*g]\models ``\M$ is $\k^+$-iterable". Let $\Q=\P^M_\mu$. Recall that $\Q$ is short tree iterable in $S_{\mu, h_0*g}$. 

Let $C\in M$ be the name for $B$. We claim that if $\T$ is the correctly guided tree on $\Q$ for making $C$ genericaly generic then $\T\in M$. To see this, it is enough to show that if $\a<lh(\T)$ is limit then $\Q(\T\rest \a)\in M$. It will then follow that $M\models ``\Q(\T\rest \a)$ is countably iterable" and hence, $M$ can correctly identify the branch of $\T\rest \a$. Suppose then $\Q(\T\rest \a)\not \in M$. Let $\S=(Lp(\T\rest \a))^M$. \\

\textit{Claim.} There is a cofinal wellfounded branch $c$ of $\T\rest \a$ such that $c\in M$.\\
\begin{proof} Let $\T^*=\pi(\T\rest \a)$ and $\S^*=\pi(\S)$. Notice that $\cf^V(\d(\T^*))<\k$. Let $\l=\cf^V(\d(\T^*))$. Fix an excellent $\eta>\mu$ such that $\eta^\l=\eta$. Let $(N, \sigma)>(M, \pi)$ be an excellent hull at $\eta$ such that $N^\l\subseteq N$. Let $\U=\sigma^{-1}(\T^*)$. We have that $\U$ is correctly guided and hence, $\U$ is according to $\pi(\Sigma^M_{\mu, \vec{B}})$. Let then $b=\pi(\Sigma^M_{\mu, \vec{B}})(\U)$. Let $\xi\in b$ be least such that $i^\U_{\xi, b}$ is defined. Because $\cf^V(\d(\U))=\l$, we can fix a cofinal $Y\subseteq rng(i^\U_{\xi, b})$ of order type $\l$. We have that $Y\in N$ and $b$ is the unique branch $c$ such that for some $\gg\in c$, $Y\subseteq rng(i^{\U}_c)$. It then follows that $b\in N$. By elementarity, there is a wellfounded branch $c$ of $\T\rest \a$ such that $c\in M$. 
\end{proof}

Let then $c$ be as in the claim. If $c=\pi(\Sigma^M_{\mu, \vec{B}})(\T\rest \a)$ then we are done as then $\Q(c, \T\rest \a)$ exists. Suppose then 
$c\not =\pi(\Sigma^M_{\mu, \vec{B}})(\T\rest \a)$. It then follows that $\T\rest \a$ has two well-founded branches implying that $\cf(\d(\T\rest \a))=\omega$. Let $b=\pi(\Sigma^M_{\mu, \vec{B}})(\T\rest \a)$ and let $\xi\in b$ be the least such that $i^{\T\rest \a}_{\xi, b}$-exists. Again, by choosing an $\omega$-sequence cofinal in $\d(\T\rest \a)$ and repeating the proof of the claim using the fact that $M$ is $\omega$-closed, we get that $b\in M$. It then follows that, after all, $\Q(\T\rest \a)\in M$, contradiction!

Let now $\T\in M$ be the correctly guided tree on $\Q$ for making $C$-genericaly generic. Let now $b=\pi(\Sigma^M_{\mu, \vec{B}})(\T)$. Repeating the above arguments we can show that $b\in M$. Let then $\S=\M^\T_b$. We have that $B$ is generic over $\S$ for the extender algebra of $\S$. Moreover, because $\S$ is full, we have that $\W(B)\in \S[B]$. Hence, $\M\in M$.

To show that $\M$ is $\k^+$-iterable in $M[h_0*g]$ we repeat the above argument and the proof of \rlem{correctness of good hulls}. Let $\Lambda$ be the iteration strategy of $\M$. Given a tree $\U\in M$ on $\M$ according to $\Lambda$ and of limit length, by the above proof we can find a $\pi(\Sigma^M_{\mu, \vec{B}})$-iterate $\S$ of $\Q$ such that $(\M, \U)$ are generic over $\S$. It then follows that $\Lambda(\U)\in \S[\M, \U]$ implying that if $b=\Lambda(\U)$ then $b\in M[h_0*g]$. $b$ can then be identified in $M[h_0*g]$ as the unique branch of $\U$ such that the phalanx $\Phi(\U^\frown \M^\U_b)$ is countably iterable. 
\end{proof}

\begin{lemma}\label{realizability gives fullness all the way} Suppose $(M, \pi)$ is a perfect hull at $\mu_m$ for some $m$ and let $\Q=\P^M$. Suppose $\sigma:\Q\rightarrow_{\Sigma_1} \S$ is such that $\S\in (H_{\k_M^+})^M$ and there is $k:\S\rightarrow \P$ such that $\pi\rest \Q=k\circ \sigma$. Then $\S=\W(\S|\d_\omega^\S)$.
\end{lemma}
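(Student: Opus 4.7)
The plan is to reduce this to \rlem{realizability gives fullness lemma} by relocating the configuration $(\Q,\S,\sigma,k)$ into a larger perfect hull. A direct application of \rlem{realizability gives fullness lemma} would demand $\S\in V_{\k_M}^{M[h_0*g_0]}$, but since $\d_\omega^\S=\k_M$ and $\S\in H_{\k_M^+}^M$, the rank of $\S$ already reaches $\k_M$ and that hypothesis is narrowly violated. Moving to a bigger hull will let me supply a good point strictly above the rank of $\S$, which is what the proof of \rlem{realizability gives fullness lemma} actually needs.

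First I will fix a perfect hull $(M',\pi')$ at $\mu_{m+1}$ with $(M,\pi)<(M',\pi')$ and $\{\S,\sigma,k\}\subseteq rng(\pi')$; this is possible because $|\S|^V<\mu_{m+1}$. Set $\bar\S=\pi'^{-1}(\S)$, $\bar\sigma=\pi'^{-1}(\sigma)$, $\bar k=\pi'^{-1}(k)$, $\bar M=\pi'^{-1}(M)$, $\bar\pi=\pi'^{-1}(\pi)$, and $\bar\Q=\pi'^{-1}(\Q)=\P^{\bar M}$. By elementarity of $\pi'$, inside $M'$ the pair $(\bar M,\bar\pi)$ is a perfect hull at $\mu_m$ and the entire configuration pulls back: $\bar\sigma:\bar\Q\rightarrow_{\Sigma_1}\bar\S$, $\bar k:\bar\S\rightarrow \P^{M'}$, and $\bar\pi\rest\bar\Q=\bar k\circ\bar\sigma$. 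Using the preceding lemma $\P=\W(\P^-_\omega)$, the elementarity of $\pi\circ\pi'$, and \rlem{correctness of good hulls all the way} applied inside $M'$, one checks that $\W^{M'[h_0*g_0]}(\bar\Q|\d_\omega^{\bar\Q})=\bar\Q$, so that with $H:=\bar\Q|\d_\omega^{\bar\Q}$ we have $\W^{M'}(H)\models H=H_{\d_\omega^{\bar\Q}}$.

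Then I will apply \rlem{realizability gives fullness lemma} inside $M'[h_0*g_0]$ with ambient hull $(\bar M,\bar\pi)$ at $\mu_m$, with $H$ as above, $N=\bar\S$, and embeddings $\bar\sigma$, $\bar k$. The good point called for in the proof of \rlem{realizability gives fullness lemma} will be taken to be $\mu_{m+1}$: it is excellent in $M'$'s view (being fixed by $\pi'$), it lies in $rng(\bar\pi)$ by perfectness of $\bar M$, and its preimage $\bar\l:=\bar\pi^{-1}(\mu_{m+1})$ strictly exceeds $\k_{\bar M}$ in $\bar M$, so both $H$ and $\bar\S$ have rank below $\bar\l$. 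The conclusion then gives $\bar\S=\W^{M'[h_0*g_0]}(\bar\sigma(H))=\W^{M'[h_0*g_0]}(\bar\S|\d_\omega^{\bar\S})$. Invoking \rlem{correctness of good hulls all the way} inside $M'$ identifies this with $\W(\bar\S|\d_\omega^{\bar\S})$, and then applying $\pi'$ together with a final use of \rlem{correctness of good hulls all the way} to transport the $\W$-hull delivers $\S=\W(\S|\d_\omega^\S)$.

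The main obstacle will be the bookkeeping around ranks and the precise choice of good point: I must verify that after pulling the configuration back to $M'$ the good point $\mu_{m+1}$ strictly exceeds the ranks of $H$ and $\bar\S$ in the $\bar\pi$-preimage inside $\bar M$, which is exactly what selecting $(M',\pi')$ at level $\mu_{m+1}$ achieves. A related subtlety is that the hypothesis ``$H,N\in V_{\k_M}^{M[h_0][g]}$'' in \rlem{realizability gives fullness lemma} should really be read in the more liberal form ``$H,N\in V_{\pi^{-1}(\eta)}^{M[h_0][g]}$ for some good $\eta>\mu$ in $rng(\pi)$'', which is what the proof of that lemma actually uses and which is the form permitting the application in the adapted setting.
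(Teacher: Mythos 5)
There is a genuine gap at the pivotal step of your reduction. Everything hinges on the claim that, after relocating the configuration into a perfect hull $(M',\pi')$ at $\mu_{m+1}$, the good point $\mu_{m+1}$ has $\bar\pi$-preimage $\bar\l=\bar\pi^{-1}(\mu_{m+1})$ strictly above $\k_{\bar M}$. This is false: $\bar\pi$ is order preserving on ordinals with $\bar\pi(\k_{\bar M})=\k_{M'}$, and every point that is good in $M'$'s sense --- in particular $\mu_{m+1}=\pi'^{-1}(\mu_{m+1})$ --- lies \emph{below} $\k_{M'}$. Hence $\bar\l=\pi^{-1}(\mu_{m+1})<\k_{\bar M}$, while $H=\bar\Q|\d_\omega^{\bar\Q}$ and $\bar\S$ have rank at least $\k_{\bar M}$ (as $\d_\omega^{\bar\S}\geq \d_\omega^{\bar\Q}=\k_{\bar M}$). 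Your ``liberal reading'' of the hypothesis of \rlem{realizability gives fullness lemma} is a fair account of what that proof uses, but it cannot be met here: \emph{any} good $\eta\in rng(\bar\pi)$ has $\bar\pi^{-1}(\eta)<\k_{\bar M}$, so $H$ and $\bar\S$ never lie in $V_{\bar\pi^{-1}(\eta)}$. The obstruction is invariant under conjugating by $\pi'$ --- the configuration genuinely sits at the very top of the hull, which is precisely why this lemma is not an instance of the earlier one and needs a new mechanism.

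The paper's mechanism is to condense the tree itself rather than to seek a good point above the ranks of $H$ and $\S$. One takes $T=T_{\mu_{m+1}}$, a set $B\in\powerset(\k_M)\cap M$ coding $\Q$ and $\S$, and an elementary $k:N\rightarrow L_\xi[T,A_M,B]$ with $\card{N}=\k_M^+$ and critical point above $\k_M^+$, setting $S=k^{-1}(T)$; one shows $N\in M$ using \rlem{correctness of good hulls all the way} and checks $\powerset(\d_\omega^\Q)\cap L_\nu[S,\Q]=(\powerset(\d_\omega^\Q))^\Q$ for $\nu=k^{-1}(\k^+)$. The proof of \rlem{realizability gives fullness lemma} is then rerun over $L_\nu[S,\S]$ with the condensed tree $S$ in place of the (unavailable) preimage of $T$: if some $\M\insegeq\W(\S|\d_\omega^\S)$ properly extended $\S$ with $\rho_\omega(\M)=\d_\omega^\S$, a real coding $\M$ found in $N[x]$ would witness $(x,y)\in p[S_n]$, contradicting the sentence $\phi$. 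This condensation of $T_{\mu_{m+1}}$ into a structure inside $M$ is the idea missing from your proposal, and without it the relocation to $M'$ buys nothing.
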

\begin{proof} Let $T=T_{\mu_{m+1}}$. Let $B\in \powerset(\k_M)\cap M$ code $\Q, \S$. Let $\eta=\k_M$. We then have that $L[T, A_M, B]\models H_{\eta^{++}}=\W(\W(A_M, B))$. Let $\xi=\k^{+}$ and let $k: N\rightarrow L_\xi[T, A_M, B]$ be an elementary such that $L[T, A_M, B]\models ``\card{N}=\eta^+ \wedge \cp(k)>\eta^+"$ and let $S\in N$ be such that $k(S)=T$. Let $\nu=k^{-1}(\k^+)$. Notice that we have that $\powerset(\d_\omega^\Q)\cap L_\nu[S, \Q]=(\powerset(\d_\omega^\Q))^\Q$. It follows from \rlem{correctness of good hulls all the way} that $N\in M$

Suppose now that $\S\not=\W(\S|\d_\omega^\S)$. Let $\M\insegeq \W(\S|\d_\omega^\S)$ be least such that $\S\inseg \M$ and $\rho_{\omega}(\M)=\d_\omega^\S$. Notice that if $g\subseteq Coll(\omega, \S)$ is $W$-generic and $x\in L_\nu[S, \S][g]$ codes $\S$ then there is $y$ coding $\M$ such that $(x, y)\in p[S_n]$ where $n$ is as in \rlem{realizability gives fullness lemma}. This is because there is such a $y\in N[x]$. We can then repeat the proof of \rlem{realizability gives fullness lemma} to conclude that in fact $\M\insegeq \S$. 
\end{proof}

\begin{lemma}\label{derived model} Suppose $G\subseteq Coll(\omega, \k)$ is $W[g_0]$-generic and let $g_i=G\cap Coll(\omega, \mu_i)$. Let $\bR^*=\cup_{i<\omega}(\bR^{W[g_0*g_i]})$ and let $C_k=\cup_{i<\omega}B_{\mu_i, k, g_i}$. In $W(\bR^*)$, let $\Phi=\{ D\subseteq \bR^*: D$ is Wadge reducible to some $C_k\}$. Then $L(\Phi, \bR^*)\models AD^+$.
\end{lemma}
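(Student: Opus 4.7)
The plan is to identify $L(\Phi,\bR^*)$ with an initial segment of the derived model at $\k$ of a suitable iterate of $\P$, and then invoke Woodin's derived model theorem there. Recall that $\P$ has $\omega$-many Woodin cardinals $\d_0<\d_1<\cdots$ with $\sup_{i<\omega}\d_i=\k$, and that for every perfect hull $(R,\tau)$ we have built a winning strategy $\Lambda^R$ for the realizability game on $\P^R$. The argument has three steps.

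First I would unpack the definition of $B_{\mu_i,k,g_i}$. By construction, $\vec{B}_{\mu_i}$ is read off from $\pi^*(\tau_k)$, so $B_{\mu_i,k,g_i}$ is literally the $g_i$-realization $(\tau_{B_k,n_i}^{\S_i})_{g_i}$ of a term relation living on the $\Sigma^{M^*}_{\mu_0,\vec{B}}$-iterate $\S_i$ of $\P_{\mu_0}^{M^*}$ that absorbs $V_{\mu_i^{++}}^{W[g_0]}$ as an extender-algebra generic at $\d^{\S_i}$. Coherence of $\langle \S_i:i<\omega\rangle$ via the tail strategies $\Sigma^{M^*}_{\mu_j,\P_{j-1}^{M^*}}$ for $j\le i$ shows that these local terms are mutually consistent, so the $C_k$'s assemble into the expected layered sequence of term realizations along the tower.

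Second, fix a perfect hull $(R,\tau)$ at some $\mu_p$ with $p$ large enough that $\langle \tau_k:k<\omega\rangle\in\mathrm{rng}(\tau)$, and carry out a $\Lambda^R$-guided round-by-round genericity iteration of $\P^R$: in round $i$ iterate the window $(\d_{i-1}^{\P^R},\d_i^{\P^R})$ to make $g_i$ generic over the current model for the extender algebra at the image of $\d_i^{\P^R}$. Let $\P^*$ be the resulting $\omega$-suitable iterate, $j:\P^R\to\P^*$ the iteration map, and $\sigma:\P^*\to\P$ the realization embedding supplied by $\Lambda^R$, so that $\tau\rest \P^R=\sigma\circ j$. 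Then $G$ is $\P^*$-generic for $Coll(\omega,{<}\k)$, and pushing the local terms forward by $j$ (identifying them correctly against $\sigma$ and $\pi^*$ via Step 1) produces, for each $k$ and $i$, a term relation $\tau_{k,i}^*\in\P^*|j(\d_i^{\P^R})$ whose $g_i$-realization is $B_{\mu_i,k,g_i}\cap \bR^{W[g_0*g_i]}$. Hence $C_k=\bigcup_{i<\omega}(\tau_{k,i}^*)_{g_i}$. At this point $(\P^*,G)$ is in the hypotheses of Woodin's derived model theorem for mice with $\omega$ Woodin cardinals: $\P^*$ is $\omega$-suitable, its Woodin cardinals have supremum $\k$, $G$ is $\P^*$-generic for the Levy collapse up to $\k$, and $\P^*$ inherits from $\Lambda^R$ enough iterability (lifted along $\sigma$) to witness $AD^+$ in its derived model. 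Thus in $W(\bR^*)$, the derived model $D(\P^*,G)$ satisfies $AD^+$, contains every $C_k$ via the terms $\tau_{k,i}^*$, and therefore contains $\Phi$ in its pointclass. Consequently $L(\Phi,\bR^*)\models AD^+$.

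The hardest step is the second one: producing a single iterate $\P^*$ that simultaneously absorbs the entire tower $G$ and for which the pushforward terms $\tau_{k,i}^*$ \emph{literally} realize to the $C_k$'s, rather than to some merely Wadge-equivalent cousin. This demands carefully interleaving the realizability embeddings coming from $\Lambda^R$ with the embeddings $\pi^*$ used to define $\vec{B}_{\mu_i}$, and invoking \rlem{realizability gives fullness all the way} to confirm that the iterates obtained along the way remain $\omega$-suitable and that the pushforward terms remain the intended captured objects. A secondary but genuine obstacle is that $\Lambda^R$ only guides stacks inside $V_{\k_R}^R$, whereas the genericity iteration we want lives in $W[G]$; the remedy, as in the construction of the $(\k^+,\k^+)$-strategies in \rlem{extending strategies via suitability}, is to carry out the iteration inside $R$ and transfer the derived-model conclusion through $\tau$ and the realizability maps to $W(\bR^*)$.
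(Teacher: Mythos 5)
Your proposal follows essentially the same route as the paper: pass to a perfect hull, run a realizability-guided genericity iteration of $\P$ window by window, use \rlem{realizability gives fullness all the way} to see the iterate $\Q$ is full, observe that the pushed-forward term relations $\tau_k\in\Q$ literally capture the $C_k$'s, and conclude by the derived model theorem. The one detail to adjust is that the paper makes the sets $N_i=(H_{(\mu_i^M)^+})^M$ generically generic rather than the filters $g_i$ themselves, which is what guarantees that \emph{all} reals of $W[g_0*g_i]$ (not just $g_i$) are absorbed, so that $\bR^*$ really is the set of symmetric reals of the iterate.
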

\begin{proof} It is enough to proof that the claim holds in some perfect $(M, \pi)$.  Fix then such a perfect $(M, \pi)$ at some $\mu_m$. Let $(N, \sigma)>(M, \pi)$ be a perfect hull at $\mu_{m+1}$ and let $\Lambda=\Lambda^{M, N, \P^M}$. Let $N_i=(H_{(\mu_i^M)^+})^M$. Let $\Q$ be the $\Lambda$-iterate of $\P^M$ which is obtained via $\la N_i: i<\omega\ra$-generic genericity iteration of $\P^M$. We have that if $i:\P^M\rightarrow \Q$ is the iteration embedding and $l: \Q\rightarrow \P$ is the last move of $II$ then $\pi\rest \P^M=l\circ i$ implying that $\Q=\W(\Q|\d_\omega^\Q)$ (this follows from \rlem{realizability gives fullness all the way}). Fix now some $G\subseteq Coll(\omega, \k_M)$ generic over $M[h_0*g_0]$ and let $g_i$'s be defined similarly as above. We define $C_k^M$ and $\Phi^M$ similarly working in $M[h_0*g_0*G]$. Let $\tau_k=\oplus_{i<\omega}\tau_{B_{\mu_m, k, G}}^{\Q|((\d_i^\Q)^{+\omega})^\Q}$. We have that $\tau_k\in \Q$ and therefore,  letting $\bR^*=\cup_{i<\omega}(\bR^{M[g_i]})$, we have that $B_{\mu_m, k, G}\cap \Q(\bR^*)=C_k^M$. It then follows from the derived model theorem that $L(\Phi^M, \bR^*)\models AD^+$.
\end{proof}

Given $G\subseteq Coll(\omega, \k)$, we let $\S_G=L(\Phi, \bR^*)$. Similarly, if $(M, \pi)$ is a perfect hull at some $\mu_p$ and $G\subseteq Coll(\omega, \k_M)$ is $M$-generic then we let $\S_G^M=(L(\Phi, \bR^*))^{M[G]}$.

\subsection{Strongly $\vec{B}$-guided strategy}

\begin{lemma}\label{strongly respecting strategy} Suppose $(R, \tau)$ is an excellent hull at $\mu_0$ and let $\R=\P^R$. Suppose $g\subseteq Coll(\omega, \mu_0)$ is generic and $p<\omega$. Then some tail of $\Sigma^{R}_{\mu_0, \vec{B}}$ strongly respects $B_p$. 
\end{lemma}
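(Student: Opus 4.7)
I will argue by contradiction, assuming that no tail of $\Sigma=\Sigma^R_{\mu_0,\vec{B}}$ strongly respects $f=f_{B_p}$. Recall that $\Sigma$ itself \emph{respects} $f$: this is immediate from the definition of $\Sigma^F_\P$ in \rlem{getting strategies from qsjs}, since $f\in F_{\vec{B}}$ and $F_{\vec{B}}$ is a qsjs by \rlem{getting qsjs}. So the only obstruction to $\Sigma_{\M}$ strongly respecting $f$ at a $\Sigma$-iterate $\M$ can be the existence of a factorization $j=\sigma\circ\nu$ of an iteration map $j:\M\to\M'$ (via $\Sigma_\M$) through some suitable $\Q$ with $\sigma:\Q\to\M'$ and $\nu:\M\to\Q$ such that $\nu(f(\M))\neq f(\Q)$; that $\Q$ is suitable whenever a realization exists comes from \rcor{realizability gives fullness} applied to the composite $\pi\rest\P^R_{\mu_0}=(\tau\circ\sigma)\circ\nu$.

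Assuming no tail strongly respects $f$, I will build a $\Gamma$-sequence of triangles $\la\R_n,\Q_n,m_n,\sigma_n,\nu_n,\Sigma_\omega:n<\omega\ra$ in the sense of \rdef{triangle sequence}. Put $\R_0=\R$; given $\R_n$, the assumption yields an iteration $m_n:\R_n\to\R_{n+1}$ by $\Sigma_{\R_n}$ and a factorization through a suitable $\Q_n$ with $\nu_n:\R_n\to_{\Sigma_1}\Q_n$, $\sigma_n:\Q_n\to_{\Sigma_1}\R_{n+1}$, $m_n=\sigma_n\circ\nu_n$ and $\nu_n(f(\R_n))\neq f(\Q_n)$. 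Since $\Sigma$ respects $f$, we always have $m_n(f(\R_n))=f(\R_{n+1})$. Let $\R_\omega$ be the direct limit of the $\R_n$'s under the $m_{n,m}$'s; by $\Gamma$-fullness preservation of $\Sigma$ (\rlem{getting strategies from qsjs}), $\R_\omega$ is suitable and carries the tail strategy $\Sigma_{\R_\omega}$, which serves as the $\Sigma$ of \rdef{triangle sequence} (after stacking enough copies to view this as an $(\omega,\Gamma)$-suitable setup in the sense of \rdef{omega suitable}).

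Next I apply the simultaneous $\vec N$-genericity iteration of \rdef{simultaneous genericity iterations}, taking $\vec N=\la N_k:k<\omega\ra$ so as to make, at the direct limits, all the relevant $H_{\mu_k}^R$ and names for $B_p$ mutually generic. This produces $\la\R^j_k,\Q^j_k,\VS^j_k,\VW^j_k,m^j_k,\sigma^j_k,\nu^j_k:j,k\le\omega\ra$ and, at the final $\omega^2$-stack, a $\Sigma$-iterate $\R^\omega_\omega$ of $\R_\omega$. The derived model construction of \rlem{derived model} (applied inside a suitable perfect hull after invoking \rlem{extending kappa strategies} to extend $\Sigma_{\R^\omega_\omega}$ to a $\k^+$-strategy) then exhibits $B_p$ as a set in the symmetric-collapse derived model $\S_G$ of $\R^\omega_\omega$, computed via the canonical term relations $\tau_{B_p,n}^{\R^\omega_k}$ and $\tau_{B_p,n}^{\Q^\omega_k}$. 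Because these term relations canonically code the unique set $B_p\in\S_G$, and because $\sigma^\omega_k$, $\nu^\omega_k$ are $\Sigma_1$-elementary embeddings between suitable premice that are realized into $\pi(\P^R_{\mu_0})$, \rlem{realizability gives fullness lemma} together with the uniqueness of canonical $B_p$-capturing terms forces $\nu^\omega_k(f(\R^\omega_k))=f(\Q^\omega_k)$ for every $k$. Tracing this equality back along the copy maps $s^j_k,w^j_k$ of clause 8 of \rdef{simultaneous genericity iterations}, I obtain $\nu_n(f(\R_n))=f(\Q_n)$ for all sufficiently large $n$, contradicting the construction.

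The main obstacle will be the third paragraph: verifying that the derived-model-theoretic identification of $B_p$ is genuinely available at $\R^\omega_\omega$, and that the resulting equalities can be propagated all the way back to the finite-stage triangles $(\R_n,\Q_n)$. This requires careful bookkeeping of the composition of the copy maps in the simultaneous genericity iteration, and an application of the uniqueness of branches \rlem{uniqueness of branches} (in the guise of clause 5 of \rdef{qsjs} combined with \rlem{realizability gives fullness lemma}) to rule out that the failures $\nu_n(f(\R_n))\neq f(\Q_n)$ merely shift into the generic iteration and survive to the limit. Modulo these bookkeeping issues, the argument is parallel to the qsjs-to-branch-condensation arguments of \rlem{getting branch condensation}.
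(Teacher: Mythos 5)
Your skeleton matches the paper's proof: argue by contradiction, extract an infinite sequence of triangles $\la \R_n,\Q_n,m_n,\sigma_n,\nu_n\ra$ witnessing the failure on every tail, pass to the direct limit $\R_\omega$, run a simultaneous $\la N_i : i<\omega\ra$-generic genericity iteration of the whole system (with $N_i=(H_{(\mu_i^R)^+})^R$), and get a contradiction from the derived models of the limit models $\R^\omega_k$, $\Q^\omega_k$. The gap sits exactly where you flag ``the main obstacle'': you never supply the mechanism that forces $\nu^\omega_k(\tau_{B_p}^{\R^\omega_k})=\tau_{B_p}^{\Q^\omega_k}$. Appealing to ``uniqueness of canonical $B_p$-capturing terms'' together with \rlem{realizability gives fullness lemma} does not do this: $\tau_{B_p}^{\R^\omega_k}$ is not an object $\R^\omega_k$ can single out internally without reference to $B_p$, so $\Sigma_1$-elementarity of $\nu^\omega_k$ only says its image is \emph{some} term relation of $\Q^\omega_k$, and the realizability lemma only yields fullness/suitability of the intermediate models, not correctness of their term relations. (Also, the equality is not available ``for every $k$'' as you assert, only on a tail.)

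The missing idea --- and the reason the genericity iteration must be run \emph{simultaneously} against a fixed sequence of sets --- is that the limit models $\R^\omega_k$ and $\Q^\omega_k$ then all share the same symmetric reals $\sigma$, so their derived models $D_k$, $C_k$ all contain the single model $\S^R_h$, and $B_p\cap\sigma\in \S^R_h$ is definable there from an ordinal parameter $s$ together with $\xi=\Theta^{\S^R_h}$. Since the tail direct-limit maps $m^\omega_n$ fix $(s,\xi)$ for all large $n$ and $m^\omega_n=\sigma^\omega_n\circ\nu^\omega_n$, both factors fix $(s,\xi)$ on that tail; hence each of $\nu^\omega_n,\sigma^\omega_n$ sends ``the canonical term for the set definable from $(s,\xi)$ in the derived model'' to itself, and that term is $\tau_{B_p}$ in every model of the system. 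Pulling this back along the copy maps of \rdef{simultaneous genericity iterations} gives $\sigma_n(\tau_{B_p}^{\Q_n})=\tau_{B_p}^{\R_{n+1}}$ for all large $n$, which is the contradiction. Without this definability-from-stabilized-ordinal-parameters step your third paragraph does not close; the references to \rlem{uniqueness of branches} and clause 5 of \rdef{qsjs} are not what is needed here (they govern branch uniqueness and suitability of hulls, not agreement of term relations).
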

\begin{proof} Suppose not. Let $\Psi=\Sigma^{R}_{\mu_0, \vec{B}}$. Fix $\la \R_j, \Q_j, \sigma_j, \nu_j, \VT_j : j<\omega\ra\in M$ such that
\begin{enumerate}
\item $\R_0=\R$,
\item $\VT_j$ is a stack according to $\Psi$ on $\R_j$, $\R_{j+1}$ is its last model and $i^{\VT_j}$-exists,
\item $\sigma_j: \Q_j\rightarrow \R_{j+1}$ and $\nu_j: \R_j\rightarrow \Q_j$ are elementary
\item $i^{\VT_j}=\sigma_j\circ \nu_j$ and
\item $\sigma_j^{-1}(\tau^{\R_{j+1}}_{B_p})\not =\tau_{B_p}^{\Q_j}$. 
\end{enumerate}
We let $\R_\omega$ be the direct limit of $\R_j$'s under $\pi^{\VT_j}$'s. We have that there is $l:\R_\omega\rightarrow \P_\omega$ . Fix some perfect hull $(M, \pi)>(R, \tau)$ at $\mu_1$ and let $\Sigma=\Lambda^{R, M, \R_\omega}$. Let $\pi_{i, \omega}:\R_j\rightarrow \R_\omega$ be the direct limit embedding and let $\sigma_{j, \omega}=\pi_{j+1, \omega}\circ \sigma_j$. 

Next let $N_i=(H_{(\mu^R_i)^+})^R$ and let $h\subseteq Coll(\omega, \k_R)$ be $R$-generic such that $h\in W[g]$. Let $\sigma=\cup_{i<\omega}(\bR^{N_i[h\cap Coll(\omega, \mu^R_i)]})$. Let $\la \R^j_k, \Q^j_k, \VS^j_k, \VW^j_k, \sigma^j_k, \nu^j_k, m^j_k: j, k<\omega\ra$ be the iteration coming from simultaneous $\la N_i: i<\omega\ra$-generic genericity iteration of $\la \R_i, \Q_i: i<\omega\ra$ via $\Sigma$. Also, we let $\la \R^\omega_k, \Q^\omega_k, \VS^\omega_k, \VW^\omega_k, m^\omega_k, \sigma^\omega_k, \nu^\omega_k: k<\omega\ra$ be the direct limit of $\la \R^j_k, \Q^j_k, \VS^j_k, \VW^j_k, m^j_k, \sigma^j_k, \nu^j_k: j, k<\omega\ra$. 

It is not difficult to see that $\sigma$ is indeed the set of reals of some symmetric generic extension of each $\R^\omega_k$ and $\Q^\omega_k$. Moreover, letting $D_k$ and $C_k$ be the derived models of respectively $\R^\omega_k(\sigma)$ and $\Q^\omega_k(\sigma)$ we have that for every $k$,
\begin{center}
$\S^R_h\subseteq D_k\cap C_k$.
\end{center}
Notice that $B_p\cap \sigma \in \S^R_h$. Let then $s\in Ord^{<\omega}$ be such that $B_p\cap \sigma$ is definable from $s$ in $\S^R_h$. Let $\xi=\Theta^{\S^R_h}$. Notice then for each $k$, in $D_k$, $B_p\cap \sigma$ is the set definable from $s$ in $L(P_\xi(\bR))$. Let then $k$ be such that for all $n>k$,
\begin{center}
$m^\omega_n(s)=s$ and $m^\omega_n(\xi)=\xi$.
\end{center}
We then have that for every $n<\omega$,
\begin{center}
$\sigma^\omega_n(s, \xi)=\nu^\omega_n(s, \xi)=(s, \xi)$.
\end{center}
But this implies that for $n>k$, $\sigma^\omega_n(\tau_{B_p}^{\Q^\omega_n})=\tau_{B_p}^{\R^\omega_{n+1}}$. This then implies that for every $n>k$, $\sigma_n(\tau^{\Q_n}_{B_p})=\tau^{\R_{n+1}}_{B_p}$, contradiction. 
\end{proof}

Applying \rlem{strongly respecting strategy} repetitively and using the proof of \rlem{getting branch condensation}, we get that

\begin{corollary} Suppose $(R, \tau)$ is an excellent hull at $\mu_0$ and let $\R=\P^R_{\mu_0}$. Suppose $g\subseteq Coll(\omega, \mu_0)$ is generic. Then some tail of $\Sigma^{R}_{\mu_0, \vec{B}}$ has branch condensation. 
\end{corollary}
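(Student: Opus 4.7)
The plan is to apply \rlem{strongly respecting strategy} inductively along $\omega$ to construct a tail of $\Sigma^R_{\mu_0, \vec{B}}$ that strongly respects every $B_p \in \vec{B}$ simultaneously, and then to replay the argument of \rlem{getting branch condensation} on that tail. Set $\Psi_0 = \Sigma^R_{\mu_0, \vec{B}}$ and $\R_0 = \R$. By clause $2$ of the pre-sjs definition, for each $n<\omega$ fix $k_n<\omega$ with $B_{k_n} = \oplus_{i<n} B_i$. Inductively, given an $F_{\vec B}$-quasi iterate $\R_n$ of $\R$ such that $\Psi_n =_{def} (\Psi_0)_{\R_n}$ strongly respects $B_i$ for every $i<n$, apply \rlem{strongly respecting strategy} with $\R_n$ playing the role of $\R$ and $B_{k_{n+1}}$ the role of $B_p$; this produces an $F_{\vec B}$-quasi iterate $\R_{n+1}$ of $\R_n$ such that $\Psi_{n+1} = (\Psi_0)_{\R_{n+1}}$ strongly respects $\oplus_{i<n+1}B_i$. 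Because the $\oplus$ operator is evaluated componentwise, strong respect of $\oplus_{i<n+1}B_i$ entails strong respect of each $B_i$ for $i\leq n$, and the induction continues.

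Let $\R_\omega$ be the direct limit of $\la \R_n : n<\omega \ra$ along the iteration embeddings. The whole construction is an $F_{\vec B}$-quasi iteration, so the limit-stage analysis from the proofs of \rlem{getting a qsjs} and \rlem{getting qsjs} applies: one assembles a canonical realization $l:\R_\omega \to \P_{\mu_0}$ from the finite-stage realizations and invokes \rlem{realizability gives fullness lemma} to conclude that $\R_\omega$ is suitable. Set $\Psi_\omega = (\Psi_0)_{\R_\omega}$. Since strong respect is inherited by any further tail and $\Psi_\omega$ is a tail of $\Psi_{n+1}$ for every $n$, $\Psi_\omega$ strongly respects each $B_i\in\vec{B}$. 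In particular $\Psi_\omega$ is $\Gamma_{\mu_0, g}$-fullness preserving.

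It remains to run the argument of \rlem{getting branch condensation}. Given $(\R', \VT, \Q, \VU, c, \pi)$ as in \rdef{branch condensation} with $\Sigma = \Psi_\omega$, the equation $i^{\VT} = \pi \circ i^{\VU}_c$ combined with strong respect of every $B_i$ forces $i^{\VU}_c(f_{B_i}(\R_\omega)) = f_{B_i}(\Q)$ for all $i$, and clause $5$ of \rdef{qsjs} then yields $\Q \in S(\Gamma_{\mu_0, g})$. Hence the last normal component of $\VU$ is played according to the $F_{\vec B}$-guided branch rule $\Lambda^{F_{\vec B}}$ that defines $\Psi_\omega$, so $c = \Psi_\omega(\VU)$; branch condensation is established.

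The main obstacle I anticipate is the bookkeeping at the limit stage: verifying that $\R_\omega$ genuinely lies in $R$ (so that $\Psi_\omega$ is indeed a legitimate tail of $\Psi_0$ in the sense defined in the previous subsection) and that the finite-stage realizations $\R_n \to \P_{\mu_0}$ cohere into a realization of $\R_\omega$ to which \rlem{realizability gives fullness lemma} can be applied. This is, however, exactly the same structural bookkeeping already carried out in \rlem{getting a qsjs} and \rlem{getting strategies from qsjs}, so it should transfer without essentially new ideas.
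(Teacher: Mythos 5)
Your proposal is correct and is essentially the argument the paper intends: the paper simply says ``applying \rlem{strongly respecting strategy} repetitively and using the proof of \rlem{getting branch condensation}'', and your $\omega$-induction with the $\oplus$-closure of $\vec{B}$, the direct limit $\R_\omega$ realized into $\P_{\mu_0}$ via \rlem{realizability gives fullness lemma}, and the replay of the qsjs branch-condensation argument is exactly the intended unpacking of that sentence.
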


\subsection{Finishing the proof of \rthm{main theorem}}
Fix then $(R, \tau)$ as in the corollary above. It then follows that some tail of $\Sigma=_{def}\tau(\Sigma^R_{\mu_0, \vec{B}})$ has branch condensation. Let $\Q$ be the $\Sigma$-iterate of $\P_{\mu_0}$ such that letting $\Lambda$ be the corresponding tail of $\Sigma$, $\Lambda$ has a branch condensation. We then have that $\Lambda$ is fullness preserving, it has branch condensation and it is a $(\k, \k)$-iteration strategy. Appealing to \rlem{extending kappa strategies}, we can assume $\Lambda$ is a $(\k^+, \k^+)$-strategy. Let $\mu$ be an excellent point such that $\Q\in H_\mu$. It then follows from \rlem{strategies are determined} that if $g\subseteq Coll(\omega, \mu)$ is $W$-generic then in $W[g]$, $L^\Lambda(\bR)\models AD^+$. It then also follows that in $W[g]$, $L^\Lambda(\bR)\models AD^++\theta_0<\Theta$, contradiction!

\section{Summary}

As we mentioned before, the argument presented in the previous section does not use the full force of $\neg\square_\k$. This fact is only used in establishing that various $Lp$-closures of $V_\k$ have height $<\k^+$ and also in the proof of \rthm{strategies are determined} and \rthm{determinacy in the max model} (which were proved in \cite{PFA}). Below we state the exact hypothesis we need to carry out the proof of the conclusion of \rthm{main theorem}. To state the theorem, we need to introduce some terminology. 

\begin{definition}[Core model induction operator]\label{lightface operator} Suppose $\k$ is an uncountable cardinal. We say $F$ is a $\k$ core mode induction operator or just $\k$-cmi operator if one of the following holds:
\begin{enumerate}
\item For some countable $a$, $F$ is a $\k$ iteration strategy for some sound $a$-mouse $\M$ such that $\rho(\M)=a$. 
\item For some good pointclass $\Gamma$ such that $\Gamma\models MC$\footnote{i.e., for every $a\in HC$, $C_\Gamma(a)=Lp^\Gamma(a)$.} and some $\Gamma$-suitable $\M$, $F$ is a $\Gamma$-fullness preserving $(\k, \k)$-iteration strategy for $\M$ such that $F$ has branch condensation.
\item $F$ is a mouse operator total on $H_{\k}$.
\item  For some $G$ which satisfies 1, 2 or 3 above and some $n<\omega$, $F$ is $x\rightarrow \M^{\#, G}_n(x)$ operator defined on $H_\k$ or $F$ is the $(\k, \k)$-iteration strategy of $\M_n^{\#, G}$.
\end{enumerate}
\end{definition}

Suppose $\k$ is an uncountable cardinal and $F$ is a $\k$-cmi operator. For $a\in H_\k$, we let
\begin{enumerate}
\item $Lp^{F}(a)=\cup \{ \N : \N$ is a countably iterable\footnote{i.e., the countable submodels are $\omega_1+1$-iterable} sound $F$-mouse over $a$ such that $\rho(\N)=a\}$,
\item $\W^{\k, F}(a)=\cup \{ \N: \N$ is a $\k$-iterable sound $F$-mouse over $a$ such that $\rho(a)=a\}$,
\item $\K^{\k, F}(a)=\cup\{ \N: \N$ is a countably $\k$-iterable\footnote{i.e., the countable submodels are $\k$-iterable} sound $F$-mouse over $a$ such that $\rho(a)=a\}$.
\end{enumerate}
If $g$ is any $<\k$-generic and $F\in V[g]$ is a $\k$-cmi operator then we let $Lp^{F, g}$, $\W^{\k, F, g}$ and $\K^{\k, F, g}$  be defined similarly in $V[g]$. The following connects the three stacks defined above.

\begin{proposition}\label{facts on stacks} Suppose $\k$ is an uncountable cardinal and $F$ is a $\k$-cmi. Then for every $a\in H_\k$, $\W^{\k, F}(a)\insegeq \K^{\k, F}(a)\insegeq Lp^F(a)$. Suppose further that $F$ is $\k$-uB, $\eta<\k$ and $g\subseteq Coll(\omega, \eta)$ is $V$-generic. Then $\W^{\k, F, g}(a)\insegeq \W^{\k, F}(a)$, $\K^{\k, F, g}(a)\insegeq \K^{\k, F}(a)$ and $Lp^{F, g}(a)\insegeq Lp^{F}(a)$.
\end{proposition}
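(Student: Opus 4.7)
The proof splits into two parts. For the inclusions $\W^{\k, F}(a) \insegeq \K^{\k, F}(a) \insegeq Lp^F(a)$, the first step is to observe that by the copying construction, any $\k$-iteration strategy for a sound $F$-mouse $\N$ projecting to $a$ induces a $\k$-iteration strategy on every countable substructure of $\N$: one lifts trees on the substructure up to trees on $\N$, runs the strategy, and pulls back branches via the copy maps. Hence $\k$-iterable implies countably $\k$-iterable, giving $\W^{\k, F}(a) \insegeq \K^{\k, F}(a)$. For the second inclusion, since $\k$ is uncountable, a $\k$-iteration strategy on a countable premouse restricts to (or canonically yields, via the Q-structure identification from \rlem{qstructures}) an $(\omega_1+1)$-iteration strategy; hence countably $\k$-iterable implies countably iterable. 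The resulting three classes of sound $F$-mice projecting to $a$ pairwise compare and line up as initial segments, so the claimed $\insegeq$-chain of unions follows from the comparison lemma.

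For the second part, assume $F$ is $\k$-uB and fix $\eta < \k$ and a $V$-generic $g \subseteq Coll(\omega, \eta)$. The $\k$-uB-ness of $F$ supplies trees $T, U \in V$ witnessing that $F$ and its complement are projections of absolutely complemented trees, so that $p[T]$ and $p[U]$ continue to complement in $V[g]$; write $F^*$ for the canonical extension of $F$ to $V[g]$ computed via $T$. The key lemma is that every sound $F^*$-mouse $\N \in V[g]$ over $a$ with $\rho_\omega(\N) = a$ already lies in $V$ and is an $F$-mouse, and that any iteration strategy on $\N$ existing in $V[g]$ is canonically definable from $F$ and $\N$. The descent is proved by induction on the ordinal height of $\N$: at each stage, the next level and the next extender on the sequence are determined by $F^*$ applied to parameters in $H_\k^V$, which by $\k$-uB-ness agrees with $F$ in $V$; soundness and projection to $a$ then force the entire $\Sigma_1$-theory of $\N$ over $a$ to be computable in $V$ from $F$, $T$, and $U$, so $\N \in V$. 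The strategy transfer follows because strategies for sound mice projecting to their base are determined by Q-structures (\rthm{strategyunique}), and Q-structures in $V[g]$ for trees in $V$ are themselves in $V$ by absoluteness together with the $\k$-uB representation.

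Applying the descent and the strategy transfer to each form of iterability gives the three inclusions: $\W^{\k, F, g}(a) \insegeq \W^{\k, F}(a)$ directly from the $\k$-iteration strategies; $\K^{\k, F, g}(a) \insegeq \K^{\k, F}(a)$ by applying the descent to countable substructures (taken in $V[g]$, which contain the countable substructures in $V$) and transferring their strategies back to $V$; and $Lp^{F, g}(a) \insegeq Lp^F(a)$ by the same argument using that $(\omega_1^{V[g]}+1)$-iterability on countable $\N \in V$ yields $(\omega_1^V+1)$-iterability in $V$ via uniqueness of the strategy. The main obstacle is the descent claim: showing that $F^*$-mice in $V[g]$ projecting to sets in $H_\k^V$ already lie in $V$. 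This is where the hypothesis that $F$ is $\k$-uB (rather than merely a $\k$-cmi operator) is used essentially, since without a $\k$-uB representation, a $V[g]$-construction could produce genuinely new mice not present in $V$, and the strategy transfer from $V[g]$ to $V$ would fail.
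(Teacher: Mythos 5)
The paper states this proposition without proof, so I can only judge your argument on its own terms. Your first chain of inclusions is fine: $\k$-iterability passes to countable substructures by copying, and (for $\k>\omega_1$, which is the relevant case) a $\k$-strategy on a countable premouse restricts to an $(\omega_1+1)$-strategy; comparison then lines the three stacks up. That part is the routine half and you have it.

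The second half has a genuine gap at exactly the step you yourself identify as the main obstacle, the ``descent'' claim. Your justification --- that ``the next level and the next extender on the sequence are determined by $F^*$ applied to parameters in $H_\k^V$'' --- is not right: an individual sound $F$-mouse $\N$ over $a$ is not built level-by-level by applying the operator $F$; its extender sequence is part of its own data, and what is canonically determined is only the \emph{maximal} stack ($\W^{\k,F,g}(a)$, etc.), via comparison of all candidates. The correct route is to show the whole stack computed in $V[g]$ lies in $V$, and for that ``definable in $V[g]$ from parameters in $V$'' is not enough by itself: you must invoke the homogeneity of $Coll(\omega,\eta)$ (or, equivalently, that the definition is generic-independent, so the stack lies in $V[g]\cap V[g']$ for mutually generic $g,g'$). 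You never appeal to homogeneity, and without it the conclusion $\N\in V$ does not follow from your premises. Since $\N$ is sound and projects to $a$, $\N\insegeq Lp^{F,g}(a)$ by comparison, so once the stack is in $V$ each level is. Separately, your strategy-transfer step (``Q-structures in $V[g]$ for trees in $V$ are themselves in $V$ by absoluteness'') is circular as written, since those Q-structures are themselves $F$-mice (or mice relative to lower operators) and their membership in $V$ is an instance of the very descent being proved; this has to be organized as an induction along the hierarchy of cmi operators in Definition \ref{lightface operator}, with branch-uniqueness (Lemma \ref{uniqueness of branches}) and absoluteness of wellfoundedness giving that the chosen branches of $V$-trees lie in $V$.
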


\begin{definition}[Covering by Lp, lpc] Suppose $\k$ is an uncountable cardinal. We then say that covering by lower part holds at $\k$ and write $lpc(\k)$ if for every $A\subseteq \k$, $\cf(o(Lp(A)))\geq \k$.
\end{definition}

\begin{definition}[The Derived Model Hypothesis, dmh] Suppose $\k$ is a strong limit cardinal. We say that the derived model hypothesis holds at $\k$ and write $dmh(\k)$ if for every countably closed cardinal $\mu<\k$ and for every $V$-generic $g\subseteq Coll(\omega, \mu)$ letting $\S_{\mu, g}=L(\K^{\k, g}(\bR^{V[g]}))$ we have 
\begin{enumerate}
\item $\S\models AD^+$ and
\item for every $\k$-cmi operator $F$, $\M_\omega^{\#, F}$-exists and is $\k$-iterable. 
\end{enumerate} 
\end{definition}

Below is the summary of what we proved in this paper. 

\begin{theorem}[Summary]\label{summary} Suppose $\k$ is a singular strong limit cardinal such that $\neg lpc(\k)$ and $dmh(\k)$ hold. Then for some countably closed $\mu<\k$, whenever $g\subseteq Coll(\omega, \mu)$ is $V$-generic, the following holds: 
\begin{enumerate}
\item For any $a\in H_{\omega_1}^{V[g]}$, letting $\Gamma=(\powerset(\powerset(\omega)))^{\S_{\mu, g}}$, 
\begin{center}
$\W^{\k, g}(a)= \W^{\Gamma}(a)$.
\end{center}
\item Suppose $\nu\in (\mu, \k)$ is also countably closed and let $h\subseteq Coll(\omega, \nu)$ be $V[g]$-generic. Suppose $a$ is countable in $V[g]$. Then 
\begin{center}
$\W^{\k, g}(a)=\W^{\k, g*h}(a)$.
\end{center} 
\item There is $(\P, \Sigma)\in V$ such that for any $\eta<\k$ and any $V$-generic $g\subseteq Coll(\omega, \eta)$
\begin{enumerate}
\item $V[g]\models ``\P$ is $\powerset(\powerset(\k))$-suitable",
\item $\Sigma$ is a $(\k^+, \k^+)$-iteration strategy for $\P$ such that in $V[g]$, there is a $(\k^+, \k^+)$-iteration strategy $\Sigma^+$ for $\P$ with the property that $\Sigma^+\rest V=\Sigma$ and $V[g]\models ``\Sigma^+$ has a branch condensation and is $\powerset(\powerset(\k))$-fullness preserving".
\end{enumerate}
\end{enumerate}
\end{theorem}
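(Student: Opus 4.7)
The plan is to retrace the proof of \rthm{main theorem}, observing that $\neg\square_\k$ entered only through two channels: via \rthm{SchZem} to produce $A\subseteq \k$ coding $V_\k$ with $o(Lp(A))<\k^+$, which is precisely $\neg lpc(\k)$ applied to such an $A$; and via \rthm{strategies are determined} together with \rthm{determinacy in the max model} to furnish the $AD^+$ models $\S_{\mu,g}$, which is exactly what is packaged into $dmh(\k)$. Fix then $A\subseteq \k$ coding $V_\k$ with $\xi=o(Lp(A))$ and $\cf(\xi)<\k$ (using that $\k$ is singular together with $\neg lpc(\k)$), and declare $\mu<\k$ a \emph{good point} if $\mu$ is countably closed and $\cf(\k),\cf(\xi)<\mu$. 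By clause 1 of $dmh(\k)$, $\S_{\mu,g}\models AD^+$ for any $V$-generic $g\subseteq Coll(\omega,\mu)$, and $\Gamma=(\powerset(\powerset(\omega)))^{\S_{\mu,g}}$ plays the role of $\Gamma_{\mu,g}$ from the body of the paper. Any good $\mu$ will then serve for the conclusion.

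For (1), I would reprove \rlem{equivalencies 1}(1). The inclusion $\W^{\k,g}(a)\subseteq \W^\Gamma(a)$ is immediate from clause 1 of $dmh(\k)$: given $\M\inseg \W^{\k,g}(a)$, the $\k$-strategy of $\M$ codes into $\S_{\mu,g}$, so $\M\inseg \W^\Gamma(a)$. Conversely, given $\M\inseg \W^\Gamma(a)$, apply the Steel lifting of \rlem{extending kappa strategies}(2) to extend the $\omega_1$-strategy of $\M$ in $\Gamma$ to a $\k^+$-strategy, using the $\k$-iterable $\M_\omega^{\#,F}$-operators furnished by clause 2 of $dmh(\k)$ for the $\k$-cmi operator $F$ naming that strategy. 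For (2), I would follow \rcor{stability of lower parts}. Fullness of good hulls (\rlem{fullness of M at A}) needs only the bound $o(Lp(A))<\k^+$ together with countable closure of $M$; correctness of good hulls (\rthm{correctness of good hulls}) uses generic genericity iterations of $\P_\mu$ together with the ordinal definability of the map $x\mapsto \W^{\k,g}(x)$ inside $\S_{\mu,g}$ (a consequence of $MC$ there, supplied by $dmh(\k)$); and the realizability lemma (\rlem{realizability gives fullness lemma}) uses the tree $T_\mu\in V$ projecting to the universal $\Sigma^2_1$ set of $\S_{\mu,g}$, whose existence again follows from $dmh(\k)$. Running these arguments verbatim yields the equality of (2).

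For (3), I would construct $(\P,\Sigma)$ as in \rsec{f-guided strategies}. Fix a good $\mu_0$, an excellent hull $(R,\tau)$ at $\mu_0$ capturing a pre-sjs $\vec B$ at $\mu_0$ (such $\vec B$ exists by \rlem{getting pre-sjs}). The family $F_{\vec B}$ is a qsjs witnessed by $\P^R_{\mu_0}$ (\rlem{getting qsjs}), and \rlem{getting strategies from qsjs} yields the quasi-iteration strategy $\Sigma^R_{\mu_0,\vec B}$; set $\Sigma_0=\tau(\Sigma^R_{\mu_0,\vec B})$. By \rlem{strongly respecting strategy} and the corollary following it, some tail of $\Sigma_0$ has branch condensation: let $\Q$ be the corresponding $\Sigma_0$-iterate and $\Lambda$ the induced tail strategy. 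Then $(\P,\Sigma)=(\Q,\Lambda\rest V)$ is the desired pair; that the pair lies in $V$ is an instance of clause 2e of \rlem{extending strategies via suitability}, proved by homogeneity of the collapse, while the extension $\Sigma^+$ in $V[g]$ with branch condensation and $\powerset(\powerset(\k))$-fullness preservation is furnished by another application of \rlem{extending kappa strategies}(2).

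The main obstacle is confirming that every operator invoked in the construction remains available under just $dmh(\k)$. The simultaneous genericity iteration of \rdef{simultaneous genericity iterations}, on which the branch condensation argument rests, uses derived models at $\k$ of iterates of a $\Gamma$-sequence of triangles; correctness of these derived models requires that the $\M_\omega^{\#,F}$-closure operators for each $\k$-cmi operator $F$ entering the construction be total on $H_\k$ and $\k$-iterable, which is precisely clause 2 of $dmh(\k)$. Verifying at each step that the specific $F$ used is a $\k$-cmi operator in the sense of \rdef{lightface operator} is routine bookkeeping but must be carried out systematically, in particular for the pullback strategies $\pi_{R,M}^{-1}\circ \Lambda^M$ mediating the copying arguments within perfect hulls at the successive points $\mu_i$.
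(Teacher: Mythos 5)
Your proposal is correct and takes exactly the route the paper does: the paper's own proof of \rthm{summary} consists of the single observation that the argument of the preceding sections goes through verbatim once the conclusions of \rthm{strategies are determined} and \rthm{determinacy in the max model} (now packaged into $dmh(\k)$) and the bound on $o(Lp(A))$ (now $\neg lpc(\k)$) are assumed outright. Your clause-by-clause tracing of which lemmas carry each part of the conclusion is simply a more detailed rendering of that same argument.
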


The proof of \rthm{summary} is exactly the argument presented in the previous sections. The facts, namely the conclusions of \rthm{strategies are determined} and \rthm{determinacy in the max model},  that were proved using the failure of $\square_\k$ are now part of our hypothesis. 

%

\bibliographystyle{plain}
\bibliography{square}
\end{document}